\newcommand{\lleft}{\left}
\newcommand{\rrvert}{\vert}
\newcommand{\rright}{\right}
\newcommand{\llvert}{\vert}
\renewcommand{\mathring}[1]{\accentset{\circ}{#1}}
\newcommand{\eqref}[1]{(\ref{#1})}
\newtheorem{them}{Theorem}[section]
\newtheorem{pro}[them]{Proposition}
\newtheorem{cor}[them]{Corollary}
\newtheorem{lem}[them]{Lemma}
\newcommand{\iint}{\int\!\!\!\int}
\newcommand{\iiint}{\int\!\!\!\int\!\!\!\int}
\newcommand{\id}{\operatorname{Id}}
\newcommand{\proj}{\operatorname{proj}}
\newcommand{\Spt}{\operatorname{spt}}
\newcommand{\graph}{\operatorname{graph}}
\newcommand{\card}{\operatorname{card}}
\newcommand{\Conv}{\operatorname{conv}}
\newcommand{\law}{\operatorname{law}}
\newcommand{\eps}{\varepsilon}
\renewcommand{\P}{\mathbb{P}}
\newcommand{\M}{\Pi_M} 
\newcommand{\V}{\mathcal{V}} 
\newcommand{\R}{\mathbb{R}}
\newcommand{\Q}{\mathbb{Q}}
\newcommand{\N}{\mathbb{N}}
\newcommand{\AND}{ \mbox{and} }
\newcommand{\dd}{\mathrm{d}}
\renewcommand{\d}{\dd}
\newcommand{\E}{\mathbb{E}}
\newcommand{\I}{\mathbh{ 1}}
\newcommand{\step}[1]{\textit{#1}}
\renewcommand{\phi}{\varphi}
\newcommand{\interval}[1]{\mathring{\operatorname{conv}(\operatorname{spt}(#1))}}
\newcommand{\leqc}{\preceq_C} 
\newcommand{\leqe}{\preceq_E} 
\newcommand{\geqc}{\succeq_C} 
\newcommand{\geqe}{\succeq_E} 
\newcommand{\hf}{\mathrm{HF}}
\newcommand{\lc}{\mathrm{lc}}
\newcommand{\hn}{\mathrm{HN}}
\newcommand{\abs}{\mathrm{abs}}
\renewcommand{\subset}{\subseteq}
\begin{document}
\begin{frontmatter}

\title{On a problem of optimal transport under marginal martingale constraints}
\runtitle{Optimal martingale transport problem}

\begin{aug}
\author[A]{\fnms{Mathias}~\snm{Beiglb\"ock}\corref{}\ead
[label=e1]{mathias.beiglboeck@univie.ac.at}\thanksref{T1}}
\and
\author[B]{\fnms{Nicolas}~\snm{Juillet}\ead
[label=e2]{nicolas.juillet@math.unistra.fr}\thanksref{T2}}
\thankstext{T1}{Supported by FWF Grants P21209 and P26736.}
\thankstext{T2}{Supported in part by the Agence Nationale de la
Recherche, ANR-09-BLANC-0364-01.}
\runauthor{M. Beiglb\"ock and N. Juillet}
\affiliation{Universit\"at Wien and Universit\'e de Strasbourg et CNRS}
\address[A]{Fakult\"at f\"ur Mathematik\\
Universit\"at Wien\\
Oskar-Morgensternplatz 1\\
1090 Wien\\
Austria\\
\printead{e1}}
\address[B]{Institut de Recherche Math\'ematique Avanc\'ee\\
Universit\'e de Strasbourg et CNRS\\
7 rue Ren\'e Descartes\\
67000 Strasbourg\\
France\\
\printead{e2}}
\end{aug}

%
\received{\smonth{11} \syear{2012}}
%
\revised{\smonth{8} \syear{2014}}

%
\begin{abstract}
The basic problem of optimal transportation consists in minimizing the
expected costs $\E[c(X_1,X_2)]$ by varying the joint distribution
$(X_1,X_2)$ where the marginal distributions of the random variables
$X_1$ and $X_2$ are fixed.

Inspired by recent applications in mathematical finance and connections
with the peacock problem, we study this problem under the additional
condition that $(X_i)_{i=1,2}$ is a martingale, that is, $\E[X_2|X_1]=X_1$.

We establish a variational principle for this problem which enables us
to determine optimal martingale transport plans for specific cost functions.
In particular, we identify a martingale coupling
that resembles the classic monotone quantile coupling in several
respects. In analogy with the celebrated theorem of Brenier, the
following behavior can be observed: If the initial distribution is
continuous, then this ``monotone martingale'' is supported by the
graphs of \emph{two} functions $T_1,T_2\dvtx \R\to\R$.


\end{abstract}
%

%
\begin{keyword}[class=AMS]
\kwd{60G42}
\kwd{49N05}
\end{keyword}
\begin{keyword}
\kwd{Optimal transport}
\kwd{convex order}
\kwd{martingales}
\kwd{model-independence}
\end{keyword}
\end{frontmatter}

%
\section{Introduction}\label{sec:intro}

\subsection{Presentation of the martingale transport problem}
We will denote by $\mathcal{P}$ the set of probability measures on $\R
$ having
finite first moments. We are given measures $\mu, \nu\in\mathcal
{P}$, and
a (measurable) \emph{cost function} $c\dvtx \R\times\R\to\R$ which
will be
continuous in most of our applications.
We assume moreover that $c(x,y)\geq a(x)+b(y)$ where $a$ (resp., $b$)
is integrable with respect to $\mu$ (resp., $\nu$). Hence if $(X, Y)$
is a joint law with marginal distributions $\law X=\mu$ and $\law Y
=\nu
$, the expectation of $c(X,Y)\geq a(X)+b(Y)$ is well defined, taking
its value in $ [ \E[a(X)]+\E[b(Y)],+\infty ]$. We will refer to
this technical hypothesis as \emph{the sufficient integrability
condition}. The basic problem of optimal transport consists in the
minimization problem
%
\begin{equation}
\label{big_cost} \mbox{Minimize}\quad 
\E\bigl[c(X, Y)\bigr]\quad \mbox{subject to}\quad
\law(X)=\mu, \law(Y)=\nu,
\end{equation}
where the infimum is taken over all joint distributions. We denote the
infimum in~\eqref{big_cost} by $C(\mu,\nu)$. The joint laws on $\R
\times\R$ are usually called \emph{transport plans} after the
classical concrete problem of Monge \cite{Mo1781}: How can one
transport a heap of soil distributed according to $\mu$ to a target
distribution $\nu$? A transport plan $\pi$ prescribes that for
$(x,y)\in\R^2$ a quantity of mass $\pi(\d x\,\d y)$ is transported from
$x$ to $y$. Minimizers of the problem \eqref{big_cost} are called
{optimal transport plans}. Note that we will also use the more
probabilistic term \emph{coupling} for transport plans. Following
\cite
{Vi03}, we denote the set of all transport plans by $\Pi(\mu, \nu)$ so
that one has the alternative definition
\[
C(\mu,\nu)=\inf_{\pi\in\Pi(\mu,\nu)}\iint c(x,y) \,\d\pi(x,y).
\]
Our main interest lies in a \emph{martingale version} of the transport
problem. That is, our aim is to minimize $\E[c(X, Y)]$ over the set of
all \emph{martingale transport plans}
\[
\M(\mu, \nu)= \bigl\{\pi\in\Pi(\mu, \nu)\dvtx \pi=\law(X, Y)\mbox{ and }\E
[Y|X]=X\bigr\}.
\]
A transport plan $\pi$ is equivalently described through its
disintegration $(\pi_x)_{x\in\R}$ with respect to the initial
distribution $\mu$. The probabilistic interpretation is that
$(x,A)\mapsto\pi_x(A)$ is the transition kernel of the two-step process
$(X_i)_{i=1,2}$ where $X_1=X$ and $X_2=Y$, that is, $\pi_x(A)= \P(Y\in
A|X=x)$. In these terms, $\pi$ is an element of $\M(\mu, \nu)$, if and
only if $\int y \,\d\pi_x(y)=x$ holds $\mu$-a.s.
Hence, in this paper we study the minimization problem
%
\begin{equation}
\label{PrimalMart} \mbox{Minimize} \quad\E_\pi[c]=\iint c(x,y) \,\d\pi(x,y)\quad
\mbox{subject to}\quad \pi\in\M (\mu, \nu)
\end{equation}
for various costs. Let $C_M(\mu,\nu)$ denote the infimum $\inf\{\E
_\pi
[c]\dvtx \pi\in\M(\mu, \nu)\}$.

Our optimal transport approach permits to distinguish some special
couplings of $\M(\mu,\nu)$ that are comparable to the monotone (or
Hoeffding--Fr\'echet) coupling $\pi_\hf\in\Pi(\mu,\nu)$. Indeed, we
have developed our martingale transport theory parallel to the
classical theory and the optimizer of \eqref{PrimalMart} will enjoy
canonical properties. Nevertheless, notable differences occur between
the theories. An obvious one is the fact that $\M(\mu,\nu)$ can be
empty while $\Pi(\mu,\nu)$ always contains the element $\mu\otimes
\nu$.
The existence of a martingale transport plan is actually quite an old
topic that is present (but under different names) at least since the
study of Muirhead's inequality by Hardy, Littlewood and P\'olya \cite
{HaLiPo52}. Several articles in different fields (analysis,
combinatorics, potential theory and probability) deal with this
question in different settings, often for marginal distributions in
spaces much more general than the real line (see, e.g., \cite
{Bl51,Sh51,Mi61,CaFeMe64,St65,Do68,Ke72,FiHo80}). The interest in
finding an explicit coupling has appeared recently in the peacock
problem (see \cite{HiPrRoYo11} and the references therein): a peacock
is a stochastic process $(X_t)_{t\in I}$ such that there exists at
least one martingale $(M_t)_{t\in I}$ satisfying $\law(X_t)=\law(M_t)$
for every $t$. The problem consists in building as explicitly as
possible such a martingale $(M_t)$ from~$(X_t)$.
The martingale transport problem is maybe even closer linked to the
theory of model-independent pricing in mathematical finance.\setcounter{footnote}{2}\footnote
{We refer to the recent survey by Hobson \cite{Ho11} for a very
readable introduction to this area. Arguably, the most important tool
in model-independent finance is the Skorokhod-embedding approach; an
extensive overview is given by Ob{\l}{\'o}j in \cite{Ob04}.}
Indeed, the problem \eqref{PrimalMart} has been first studied in this
context by Hobson and Neuberger \cite{HoNe11} for the specific cost
function $c(x,y)=-|y-x|$. 
The link between optimal transport and model-independent pricing has
been made explicit in \cite{BeHePe11} in a discrete time framework and
by Galichon, Henry-Labordere and Touzi \cite{GaHeTo11} in a continuous
time setup.

We note that several of the basic features of the problem \eqref
{PrimalMart} are similar to the usual optimal transport problem. This
appeals, for instance, to the weak compactness of $\Pi(\mu,\nu)$ and
$\M
(\mu,\nu)$. If $c$ is lower semicontinuous,
this carries over to the mapping $\pi\mapsto\E_\pi[c]$ for either
space of transport plans.
In particular, the infimum is attained. Note also that as in the
standard setup the problem has a natural dual formulation \cite
{BeHePe11}. However, as we already mentioned in the previous paragraph,
while there is always a transport plan which moves $\mu$ to $\nu$, the
marginal distributions need to satisfy additional assumptions to
guarantee that a \emph{martingale} transport plan exists: The set $\M
(\mu, \nu)$ is nonempty if and only if $\mu$ is smaller than $\nu$ in
the \emph{convex order} (see Definition~\ref{ConvexOrderDef}). More
details are provided in Section~\ref{sec:basic} along with a construction of a
martingale transport plan between two given marginals.


\subsection{Summary on the classical transport problem on $\mathbb{R}$}\label
{summary}

%

A cornerstone in the modern theory of optimal transportation is
Brenier's theorem (or Brenier--Rachev--R\"uschendorf theorem); see
\cite
{Br91,RuRa90}. It treats the optimal transport problem in the
particular case $c(x,y)= |y-x|^2$, where $|\cdot |$ denotes the Euclidean
norm on $\R^n$. This is simply problem \eqref{big_cost} when $\mu$ and
$\nu$ are interpreted as measures on $\R^n$. Under appropriate
regularity conditions on $\mu$, the optimal transport $\pi\in\Pi
(\mu,\nu
)$ is unique and supported by the graph of a function $T\dvtx \R^n\to\R^n$
that is the gradient of some convex function. In particular, the
optimal transport is realized by a mapping. Note that in dimension one
the gradient of a convex function is simply a monotonically increasing
function so that the optimal coupling is the usual monotone coupling.
This fact can be directly proved without too many difficulties (see,
e.g., \cite{Ju11}) but nevertheless it is interesting as one of
the rare cases where an optimal transport plan can be so easily
understood. Moreover, even without any assumption on $\mu$, the
monotone coupling is the unique optimal transport plan. In this paper,
we will see that similar results are valid in the martingale case, for
example, the uniqueness of the minimizer or the fact that the optimal
coupling is concentrated on a special set comparable to the graph of a
monotone mapping.

We present the classical (nonmartingale) optimal transport problem on
the real line that will serve as a guideline to our paper. The results
are given for an arbitrary strictly convex cost. Any cost of this type
activates the same theory, which again is characteristic of dimension one.

\begin{them}\label{classic}
Let $\mu, \nu$ be probability measures and $c$ a cost function defined
by $c(x,y)= h(y-x)$, where $h\dvtx \R\to\R$ is a strictly convex function.
We assume that $c$ satisfies the sufficient integrability condition
with respect to $\mu$ and $\nu$ and that $C(\mu,\nu)<\infty$. The
following statements are equivalent:
\begin{longlist}[(1)]
\item[(1)] The measure $\pi$ is optimal.
\item[(2)] The transport preserves the order, that is, there is a set
$\Gamma$ with $\pi(\Gamma)=1$ such that whenever $(x,y), (x',y')\in
\Gamma$, if $x<x'$ one has also $y\leq y'$.
\end{longlist}
\end{them}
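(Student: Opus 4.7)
The plan is to reduce everything to the submodularity (Monge) property: for any $x<x'$ and $y<y'$,
\begin{equation*}
c(x,y) + c(x',y') < c(x,y') + c(x',y).
\end{equation*}
To prove this, observe that $(y-x)+(y'-x') = (y-x')+(y'-x)$, so both pairs share a common midpoint $m$. Writing $y-x = m-s$, $y'-x' = m+s$, $y-x' = m-t$, $y'-x = m+t$, a direct computation gives $2|s| = |(y'-y)-(x'-x)| < (y'-y)+(x'-x) = 2t$ since $x<x'$ and $y<y'$. Strict convexity of $h$ then yields $h(m-s)+h(m+s) < h(m-t)+h(m+t)$, which is the claim.

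For the implication (1) $\Rightarrow$ (2), I would invoke the classical fact that any optimizer of the standard transport problem is concentrated on a $c$-cyclically monotone set $\Gamma$, so in particular
\begin{equation*}
c(x,y) + c(x',y') \leq c(x,y') + c(x',y) \quad\text{for all } (x,y), (x',y')\in\Gamma.
\end{equation*}
If we had $(x,y),(x',y')\in\Gamma$ with $x<x'$ and $y>y'$, then the Monge inequality applied with the roles of $y$ and $y'$ interchanged would yield the strict reverse inequality, a contradiction. Hence $y\leq y'$.

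For (2) $\Rightarrow$ (1), I would argue by uniqueness: any $\pi\in\Pi(\mu,\nu)$ concentrated on a monotone set as in (2) must coincide with the Hoeffding--Fr\'echet coupling $\pi_\hf$, obtained by pushing Lebesgue measure on $[0,1]$ forward through $(F_\mu^{-1}, F_\nu^{-1})$. Indeed, the monotonicity of $\Gamma$ forces the support of $\pi$ to be exhausted by the graph of a nondecreasing multifunction, which is uniquely determined by $\mu$ and $\nu$. Now lower semi-continuity of $\pi\mapsto\E_\pi[c]$ together with the weak compactness of $\Pi(\mu,\nu)$ (after subtracting $a(x)+b(y)$) furnishes at least one optimizer $\pi^\ast$; by (1) $\Rightarrow$ (2) this $\pi^\ast$ is monotone, hence equal to $\pi_\hf$. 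Thus any monotone $\pi$ equals $\pi^\ast$ and is optimal.

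\textbf{Main obstacle.} The delicate step I expect to spend real effort on is the uniqueness statement underlying (2) $\Rightarrow$ (1): verifying that a monotone support genuinely pins down $\pi$ from its marginals is transparent away from atoms of $\mu$, but demands some case analysis on the vertical fibers sitting above those atoms, where the monotone ``function'' becomes multi-valued. The appeal to $c$-cyclical monotonicity in (1) $\Rightarrow$ (2) is also nontrivial: under the mere sufficient integrability condition one cannot rely on a naive two-point swapping argument and must instead quote (or reprove) the Kantorovich-duality based version of this classical fact.
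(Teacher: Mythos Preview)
The paper does not actually prove this theorem: it is stated in Section~1.2 as classical background, with a pointer to \cite{Ju11} for a direct argument. There is therefore no proof in the paper to compare against.

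Your proposal is correct and follows the standard line. The submodularity inequality is established cleanly via the midpoint trick, and for $(1)\Rightarrow(2)$ invoking $c$-cyclical monotonicity of optimizers is the right move (the paper itself refers to \cite[Theorem~5.10]{Vi09} for this in its discussion of the variational lemma, and the hypotheses here suffice). For $(2)\Rightarrow(1)$ your indirect strategy---existence of an optimizer by compactness/lower semi-continuity, combined with uniqueness of the monotone coupling---is valid.

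One remark on the ``main obstacle'' you flag: no case analysis on atoms is really needed for the uniqueness step. Monotonicity of $\Gamma$ implies that for every $(x,y)$ the sets $(-\infty,x]\times(y,\infty)$ and $(x,\infty)\times(-\infty,y]$ cannot both meet $\Gamma$, hence at least one has $\pi$-measure zero. Writing $p=\pi\big((-\infty,x]\times(-\infty,y]\big)$, this forces $p\in\{F_\mu(x),F_\nu(y)\}$, and since trivially $p\leq\min(F_\mu(x),F_\nu(y))$ one gets $p=\min(F_\mu(x),F_\nu(y))$. This determines $\pi$ on a generating $\pi$-system, so $\pi=\pi_\hf$ regardless of atoms.
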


We have the two following corollaries.

\begin{cor}\label{MonCou}
For given measures $\mu$ and $\nu$, if $C(\mu,\nu)$ is finite then
there exists a unique minimizer to the transport problem \eqref
{big_cost} and it is the monotone (Hoeffding--Fr\'echet) coupling $\pi
_{\hf}$.
\end{cor}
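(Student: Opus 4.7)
The plan is to combine Theorem \ref{classic} with a uniqueness statement for order-preserving couplings. First I would invoke the weak compactness of $\Pi(\mu,\nu)$ and the lower semi-continuity of $\pi\mapsto \E_\pi[c]$ (both noted earlier in the introduction) to produce at least one minimizer $\pi^\star$. Since $C(\mu,\nu)<\infty$ and the sufficient integrability condition provides an integrable lower bound for $c$, existence is routine. By Theorem \ref{classic} any such $\pi^\star$ is concentrated on a set $\Gamma$ whose $x$-coordinates order its $y$-coordinates non-strictly.

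The heart of the matter is then a purely structural claim: any $\pi\in\Pi(\mu,\nu)$ concentrated on such a $\Gamma$ coincides with the monotone coupling $\pi_\hf$. Writing $F_\mu, F_\nu$ for the cumulative distribution functions of $\mu,\nu$, I would argue on the joint CDF $F_\pi(x,y):=\pi((-\infty,x]\times(-\infty,y])$. One always has $F_\pi(x_0,y_0)\leq \min(F_\mu(x_0),F_\nu(y_0))$; for the reverse inequality, assume without loss of generality $F_\mu(x_0)\leq F_\nu(y_0)$. If $F_\pi(x_0,y_0)<F_\mu(x_0)$ were to hold, then both $\pi((-\infty,x_0]\times(y_0,\infty))>0$ and $\pi((x_0,\infty)\times(-\infty,y_0])=F_\nu(y_0)-F_\pi(x_0,y_0)>0$ would hold simultaneously; picking one $\Gamma$-typical point in each of these two quadrants directly contradicts the order-preserving property. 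Hence $F_\pi\equiv \min(F_\mu,F_\nu)$, which by the classical Hoeffding--Fr\'echet identification forces $\pi=\pi_\hf$.

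Putting these two steps together, every optimizer must equal $\pi_\hf$, yielding at once existence, uniqueness, and the explicit identification. The only subtle point I anticipate is the handling of atoms of $\mu$ and $\nu$: since the order-preservation in Theorem \ref{classic} is non-strict, at an atom of $\mu$ the disintegration $\pi_x$ can a priori spread over an interval of $y$-values, so the CDF comparison must be carried out with care about strict versus non-strict inequalities at jump points. This is handled by standard cosmetic adjustments (working with right-continuous thresholds, or equivalently with the quantile representation $\pi_\hf=(F_\mu^{-1},F_\nu^{-1})_\#\mathcal{L}|_{[0,1]}$) and does not affect the overall structure of the argument.
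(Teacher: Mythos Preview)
Your argument is correct. The existence step via compactness and lower semi-continuity is standard, the appeal to Theorem \ref{classic} is exactly what is intended, and the CDF computation is clean: the Fr\'echet upper bound $F_\pi\leq\min(F_\mu,F_\nu)$ is automatic, and your quadrant argument correctly rules out strict inequality by producing a pair $(x,y),(x',y')\in\Gamma$ with $x<x'$ but $y>y'$. The identification $F_\pi=\min(F_\mu,F_\nu)\Leftrightarrow\pi=\pi_\hf$ is the classical Hoeffding--Fr\'echet characterization, so the conclusion follows. Your caveat about atoms is well placed but, as you say, cosmetic: the quadrant argument already uses the right mix of strict and non-strict inequalities, so no further adjustment is needed.

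As for comparison with the paper: this corollary sits in the summary section on the classical one-dimensional transport problem and is not given its own proof in the paper; it is quoted as a known fact (with a pointer to \cite{Ju11}). So there is no ``paper's proof'' to compare against beyond the implicit suggestion that the result is a direct consequence of Theorem \ref{classic}, which is precisely the route you take.
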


One has in fact $\pi_\hf=(G_\mu\otimes G_\nu)_\#\lambda_{[0,1]}$ where
$\lambda$ is the Lebesgue measure and $G_\mu$ and $G_\nu$ are the
\emph
{quantile functions} of $\mu$ and $\nu$, that is, the nondecreasing
and left-continuous functions obtained from the cumulative distribution
functions $F_\mu$ and $F_\nu$ as a generalized inverse by the formula
$G(s)=\inf\{t\in\R\dvtx s\leq F(t) \}$.\footnote{Note that the function $G$
may take infinite values at the boundary of its domain $[0,1]$.}
This observation is the reason why the coupling $\pi_\hf$ is also known
under the alternative name \emph{quantile coupling}.

For the following corollary, we recall that a measure $\mu$ is said to
be continuous if $\mu(\{x\})=0$ for every $x\in\R$.

\begin{cor}\label{MonMap}
Under the assumptions of Corollary~\ref{MonCou}, if $\mu$ is continuous
then the optimal transport plan $\pi_{\hf}$ is concentrated on the
graph of an increasing mapping $T\dvtx \R\to\R$. Moreover, $T_\# \mu=\nu$.
\end{cor}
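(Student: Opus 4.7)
The plan is to exploit the explicit quantile description $\pi_\hf = (G_\mu \otimes G_\nu)_\# \lambda_{[0,1]}$ proved in the preceding corollary, and to define the candidate transport map by $T := G_\nu \circ F_\mu$. As a composition of two non-decreasing functions, $T$ is automatically non-decreasing, so the only things to verify are that $\pi_\hf$ sits on its graph and that $T_\#\mu = \nu$.

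First I would record the crucial consequence of the hypothesis: since $\mu$ is atomless, the distribution function $F_\mu$ is continuous on $\R$. This continuity has two standard consequences that I will take from the one-dimensional toolbox. On the one hand $F_\mu\circ G_\mu(s)=s$ for every $s\in(0,1)$ (the infimum in the definition of $G_\mu$ is attained and, by continuity, $F_\mu$ cannot jump across $s$); on the other hand $F_{\mu\#}\mu = \lambda_{[0,1]}$ (the classical computation of the pushforward of a continuous measure by its own distribution function). Combined with $G_{\nu\#}\lambda_{[0,1]} = \nu$, this yields immediately
\[
T_\#\mu \;=\; (G_\nu\circ F_\mu)_\#\mu \;=\; G_{\nu\#}\bigl(F_{\mu\#}\mu\bigr) \;=\; G_{\nu\#}\lambda_{[0,1]} \;=\; \nu.
\]

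Next, to see that $\pi_\hf$ is concentrated on $\graph(T)$, introduce a random variable $U$ uniform on $[0,1]$ and set $X:=G_\mu(U)$, $Y:=G_\nu(U)$. By the previous corollary $(X,Y)$ has law $\pi_\hf$, and by the identity $F_\mu\circ G_\mu = \mathrm{id}$ recorded above,
\[
Y \;=\; G_\nu(U) \;=\; G_\nu\bigl(F_\mu(G_\mu(U))\bigr) \;=\; G_\nu(F_\mu(X)) \;=\; T(X) \quad \text{almost surely},
\]
which is precisely the statement that $\pi_\hf$ gives full mass to the graph of $T$.

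There is no genuine obstacle here; the only subtle point is the asymmetry between the two compositions $F_\mu\circ G_\mu$ and $G_\mu\circ F_\mu$ (the second cannot be expected to be the identity, since $G_\mu$ misses flat pieces of $F_\mu$ on the complement of $\spt(\mu)$). This asymmetry is exactly why continuity of $\mu$—equivalently continuity of $F_\mu$, which is the side of the identity that \emph{does} hold—is indispensable: were $\mu$ to carry an atom at some $x_0$, the conditional distribution of $Y$ given $X=x_0$ under $\pi_\hf$ would typically be spread over a non-trivial interval and could not possibly be described by a map. An alternative route, using Theorem~\ref{classic} directly, would exploit the order-preserving support $\Gamma$ and rule out double fibers $(x,y),(x,y')\in\Gamma$ for $\mu$-a.e.~$x$ via the atomlessness of $\mu$, but the quantile-formula argument above bypasses this step entirely.
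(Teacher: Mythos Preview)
Your argument is correct and matches the paper's own (very brief) treatment: the paper does not spell out a proof but simply records the formula $T=G_\nu\circ F_\mu$ and calls the verification ``straightforward''. Your proposal is exactly that verification, carried out via the probability integral transform identities $F_\mu\circ G_\mu=\id$ on $(0,1)$ and $F_{\mu\#}\mu=\lambda_{[0,1]}$, both of which rely on the continuity of $\mu$ in the way you explain.
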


It is straightforward to see that $T=G_\nu\circ F_\mu$. This formula
determines $T$, $\mu$-a.s.

\subsubsection*{Quadratic costs in the martingale setting}
While $c(x,y )=(y-x)^2$ is arguably the most important cost function in
the theory of optimal transport, we stress that it plays a rather
different role in the martingale setup. Assume that $\law(X)=\mu$ and
$\law(Y)=\nu$ are linked by a martingale coupling $\pi$ and posses
second moments. Then
\[
\E[X Y]=\E \bigl[\E[X Y|X] \bigr]=\E\bigl[X^2\bigr],
\]
hence we have the Pythagorean relation
\[
\int(y-x)^2 \,\d\pi(x,y)= \E\bigl[(Y-X)^2\bigr]= \E
\bigl[Y^2\bigr]-\E\bigl[X^2\bigr].
\]
Thus, the cost associated to $\pi$ depends only on the marginal
distributions, that is, not on the particular choice of $\pi\in\M
(\mu
,\nu)$.

We record the following consequence: Let $c$ be a cost function and
assume that
\[
\tilde c(x,y)= c(x,y)+ p\cdot(y-x)^2 + q\cdot(y-x)
\]
for some real constants $p$ and $q$. Then in problem \eqref{PrimalMart}
the minimizers are the same for the costs $c$ and $\tilde c$. In
particular, if $c(x,y)=h(y-x)$, we do not expect that monotonicity or
convexity properties of the function $h$ are relevant for the structure
of the optimizer.

\subsection{A new coupling: The monotone martingale coupling, main
results}
In this section, we will discuss a particular coupling which may be
viewed as a martingale analogue to the monotone (Hoeffding--Fr\'echet)
coupling. Notable similarities are that it is canonical with respect to
the convex order as well as that it is optimal for a range of different
cost functions.


\begin{defi}\label{defi_monoto}
A martingale transport plan $\pi$ on $\R\times\R$ is \emph
{left-monotone} or simply \emph{monotone} if there exists a Borel set
$\Gamma\subseteq\R\times\R$ with $\pi(\Gamma)=1$ such that whenever
$(x,y^-), (x,y^+), (x',y')\in\Gamma$ we cannot have (see Figure~\ref
{fig:forbidden} where this situation is represented)
%
\begin{equation}
\label{BadConfiguration}x < x' \quad\mbox{and}\quad y^-<y'<y^+.
\end{equation}
Respectively, $\pi$ is said to be \emph{right-monotone} if there exists
$\Gamma$ such that if $(x,y^-)$, $(x,y^+)$ and $(x',y')$ are elements
of $\Gamma$ then we do not have
\[
x >x' \quad\mbox{and}\quad y^-<y'<y^+.
\]
We will refer to the set $\Gamma$ as the \emph{monotonicity set} of
$\pi$.
\end{defi}

In this paper, we will only state the results for (left-)monotone
couplings. The corresponding results for right-monotone couplings can
be deduced easily. We illustrate the forbidden situation \eqref
{BadConfiguration} in Figure~\ref{fig:forbidden}. Note that the top
line represents the measure $\mu$ while $\nu$ is distributed on the
bottom line; this convention will also be used in the subsequent pictures.

\begin{figure}

\includegraphics{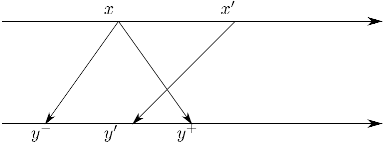}

\caption{The forbidden mapping.}\label{fig:forbidden}
\end{figure}

%

The next theorem is proved in Section~\ref{sec:unique}.

\begin{them}
Let $\mu, \nu$ be probability measures in convex order. Then there
exists a unique (left-)monotone transport plan in $\Pi_M(\mu,\nu)$. We
denote this coupling by $\pi_\lc$ and call it \emph
{left-curtain}\footnote{This name is explained in some detail before
Theorem~\ref{lc_defi}.} coupling.
\end{them}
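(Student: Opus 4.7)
The plan is to treat existence and uniqueness separately.

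\emph{Existence.} I would realize $\pi_\lc$ as an optimal martingale transport plan for a carefully chosen cost $c$ such that any optimizer is automatically left-monotone. A natural candidate is $c(x,y)=\phi(x)h(y)$ with $\phi$ strictly increasing and bounded and $h$ strictly concave and bounded; more generally, any continuous cost whose variational structure penalizes the configuration in Figure \ref{fig:forbidden}. The convex order hypothesis guarantees $\M(\mu,\nu)\neq\emptyset$ (see Section 2), the set is weakly compact, and lower semicontinuous costs attain their infimum on it, so a minimizer $\pi^*$ exists. To verify that every optimizer is left-monotone, I would invoke the variational lemma announced in the abstract: if a positive-measure subset of $\mathrm{supp}(\pi^*\otimes\pi^*)$ exhibits the forbidden configuration $(x,y^-),(x,y^+),(x',y')$ with $x<x'$ and $y^-<y'<y^+$, then a local mass exchange preserving both marginals and the martingale property can be designed to strictly decrease $\E_{\pi^*}[c]$, contradicting optimality.

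\emph{Uniqueness.} Suppose $\pi,\pi'\in\M(\mu,\nu)$ are both left-monotone with monotonicity sets $\Gamma,\Gamma'$. The key observation is that the forbidden configuration imposes a ``leftmost-fit'' structure on the disintegration: the support of $\pi_x$ is essentially confined to two points $T^-(x)\leq x\leq T^+(x)$ (possibly collapsing to $\{x\}$), and the interval $[T^-(x),T^+(x)]$ is determined by the requirement that no mass from $x'>x$ can land in $(T^-(x),T^+(x))$. Consequently $\nu$ restricted to $(T^-(x),T^+(x))$ must coincide with the $\pi$-image of $\mu|_{(-\infty,x]}$ inside $(T^-(x),T^+(x))$, and the martingale barycenter condition then fixes the mass split between $T^-(x)$ and $T^+(x)$. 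To turn this heuristic into a proof, I would set
\[
x^*=\inf\{x\in\R:\pi|_{(-\infty,x]\times\R}\neq\pi'|_{(-\infty,x]\times\R}\}
\]
and analyze the behaviour at $x^*$: any discrepancy there must be compensated by discrepancies at some $x'>x^*$ (to preserve the marginal $\nu$), and propagating this compensation along the combined monotonicity set $\Gamma\cup\Gamma'$ produces a triple witnessing the forbidden configuration \eqref{BadConfiguration} for one of the two couplings, a contradiction.

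\emph{Main obstacle.} The heart of the matter is the uniqueness argument: making the leftmost-fit principle rigorous when $\mu$ has no atoms (or is otherwise singular) requires a careful measure-theoretic induction along the order of $\R$, with delicate bookkeeping at the boundary of each ``curtain'' $[T^-(x),T^+(x)]$ and consistent treatment of the places where $\pi_x$ degenerates to $\delta_x$. The existence part is comparatively standard once the correct cost and the variational exchange from Figure \ref{fig:forbidden} are identified.
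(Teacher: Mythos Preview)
Your existence argument is essentially correct and is in fact explicitly mentioned in the paper (opening of Section \ref{sec:curtain}) as the ``short way''. The paper instead builds $\pi_\lc$ constructively via the \emph{shadow projection} $S^\nu(\cdot)$: it defines $\pi_\lc$ by requiring $\proj^y_\#(\pi|_{]-\infty,x]\times\R})=S^\nu(\mu|_{]-\infty,x]})$ for every $x$, and then proves left-monotonicity by observing that $\pi_\lc$ simultaneously minimizes all the costs $c_{s,t}(x,y)=\I_{]-\infty,s]}(x)|y-t|$ and applying the variational lemma to that countable family. Your single-cost route with $c(x,y)=\phi(x)h(y)$ would work just as well for existence (the computation $\int c\,\d\alpha-\int c\,\d\alpha'=(\phi(x)-\phi(x'))[\lambda h(y^+)+(1-\lambda)h(y^-)-h(y')]>0$ goes through), but it does not by itself yield the structural information the paper needs for uniqueness.

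Your uniqueness argument, however, has a genuine gap. Comparing two monotone plans $\pi,\pi'$ directly via $x^*=\inf\{x:\pi|_{]-\infty,x]\times\R}\neq\pi'|_{]-\infty,x]\times\R}\}$ and then ``propagating the compensation along $\Gamma\cup\Gamma'$'' does not produce a contradiction: a point $(x',y')\in\Gamma'$ need not lie in $\Gamma$, so it cannot witness a forbidden configuration for $\pi$, and vice versa. Your heuristic that $\pi_x$ is supported on two points $T^\pm(x)$ determined by a leftmost-fit principle is only valid when $\mu$ is continuous (this is Corollary \ref{FamousCoro}, proved \emph{after} uniqueness), and even in that case you have not shown why two monotone plans must share the same $T^\pm$.

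The paper's uniqueness proof (Theorem \ref{mono_unique}) does not compare two arbitrary monotone plans. It compares an arbitrary monotone $\pi$ to the \emph{specific} $\pi_\lc$ built from shadows, and the decisive ingredient is the convex-order minimality of the shadow: $\nu^{\pi_\lc}_x=S^\nu(\mu|_{]-\infty,x]})\leqc\nu^\pi_x$ for \emph{every} $\pi\in\M(\mu,\nu)$ and every $x$. If the two differ, a short lemma (Lemma \ref{yahoo}) locates a point $u\in\spt(\nu^{\pi_\lc}_x-\nu^\pi_x)^+$ and a piecewise-linear test function $g_{u,v}$ with $\int g_{u,v}\,\d(\nu^{\pi_\lc}_x-\nu^\pi_x)>0$. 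The left-monotonicity of $\pi$ then forces a splitting of the picture at $u$: every fibre $\Gamma_t$ with $t\le x$ lies entirely in $]-\infty,u]$ or entirely in $[u,\infty[$. Combining this with the shadow's minimality on each side of $u$ (via Lemmas \ref{interv} and \ref{interv2}) contradicts the inequality for $g_{u,v}$. The shadow construction is thus not a detour but the missing tool in your plan: it furnishes the canonical reference object and the convex-order inequality against which monotonicity can be played off.
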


Of course, one does not expect that a martingale is concentrated on the
graph of a deterministic mapping $T$; this holds only in the trivial
case when $\mu= \nu$ and $T(x) \equiv x$. Rather we have the
following result.

\begin{cor}\label{FamousCoro}
Let $\mu, \nu$ be probability measures in convex order and assume that
$\mu$ is continuous. Then there exist a Borel set $S\subseteq\R$ and
two measurable functions $T_1, T_2\dvtx S\to\R$ such that:
\begin{longlist}[(1)]
\item[(1)]$\pi_\lc$ is concentrated on the graphs of $T_1$ and $T_2$.
\item[(2)] For all $x\in\R, T_1(x)\leq x \leq T_2(x)$.
\item[(3)]For all $x<x'\in\R$, $T_2(x) < T_2(x')$ and $T_1(x')\notin
\,]T_1(x), T_2(x)[$.
\end{longlist}
\end{cor}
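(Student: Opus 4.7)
The plan is to extract $T_1, T_2$ directly from a monotonicity set for $\pi_\lc$ and verify each of the three properties in turn. Let $\Gamma$ be a Borel set with $\pi_\lc(\Gamma)=1$ satisfying Definition~\ref{defi_monoto}, chosen (after intersecting with $\{(x,y): y\in\spt(\pi_x)\}$, which is still a monotonicity set of full $\pi_\lc$-measure) so that $\Gamma_x:=\{y:(x,y)\in\Gamma\}$ is dense in $\spt(\pi_x)$ for $\mu$-a.e.\ $x$. Let $S$ be the projection of $\Gamma$ on the first coordinate and define
\[
T_1(x)=\inf\Gamma_x,\qquad T_2(x)=\sup\Gamma_x,
\]
which, after a measurable-selection argument, are Borel on a full-$\mu$-measure Borel set still called $S$. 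Since $\pi_x$ has mean $x$ and is supported in $[T_1(x),T_2(x)]$, property (2) is immediate.

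Next I would deduce (3) from the forbidden configuration \eqref{BadConfiguration}. For $x<x'$ in $S$ with $T_1(x)<T_2(x)$, pick sequences $y^-_n\downarrow T_1(x)$ and $y^+_n\uparrow T_2(x)$ in $\Gamma_x$. The forbidden configuration excludes every $y'\in(y^-_n,y^+_n)$ from $\Gamma_{x'}$, so $\Gamma_{x'}\cap(T_1(x),T_2(x))=\emptyset$ in the limit, and therefore $\pi_{x'}((T_1(x),T_2(x)))=0$. In particular $T_1(x')\notin(T_1(x),T_2(x))$. Since $\pi_{x'}$ has mean $x'>x\ge T_1(x)$ yet is supported in $(-\infty,T_1(x)]\cup[T_2(x),+\infty)$, it must place positive mass in $[T_2(x),+\infty)$, giving $T_2(x')\ge T_2(x)$. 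The strict inequality $T_2(x)<T_2(x')$ then uses continuity of $\mu$: a $\mu$-positive plateau $\{T_2=y_0\}$ would route all upper mass to a single $\nu$-atom, and a null-set modification of $T_2$ within the plateau (equivalently, a suitable choice of version) upgrades monotonicity to strict.

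The main obstacle is property (1), namely that $\pi_\lc$ is concentrated on $\graph(T_1)\cup\graph(T_2)$, equivalently $\spt(\pi_x)\subseteq\{T_1(x),T_2(x)\}$ for $\mu$-a.e.\ $x$. I plan to argue by contradiction: set
\[
A=\{x\in S:\spt(\pi_x)\cap(T_1(x),T_2(x))\ne\emptyset\}
\]
and assume $\mu(A)>0$. Stratify $A$ by rational quadruples $q_1<q_2<q_3<q_4$ so that on the stratum $A_{\mathbf q}$ one has $T_1(x)\in(q_1,q_2)$, $\spt(\pi_x)\cap(q_2,q_3)\ne\emptyset$, and $T_2(x)\in(q_3,q_4)$; some $A_{\mathbf q}$ has positive $\mu$-measure. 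The continuity (atomlessness) of $\mu$ then produces two ordered points $x_1<x_2$ in $A_{\mathbf q}$. The preceding paragraph applied to $x_1<x_2$ forces $\pi_{x_2}((T_1(x_1),T_2(x_1)))=0$, contradicting $(q_2,q_3)\subseteq(T_1(x_1),T_2(x_1))$ and $\spt(\pi_{x_2})\cap(q_2,q_3)\ne\emptyset$. Hence $\mu(A)=0$, proving (1). The continuity hypothesis on $\mu$ enters essentially only at this point, to guarantee that a $\mu$-positive stratum contains an ordered pair of points.
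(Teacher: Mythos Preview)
Your argument for the central claim (1) is correct and takes a genuinely different route from the paper. The paper proceeds by showing the set $\{x:|\Gamma_x|\ge 3\}$ is \emph{countable}: it applies Lemma~\ref{AccumulatingBadPoints} to extract, from any uncountable such set, a point $x$ with $y^-<y<y^+$ in $\Gamma_x$ together with a nearby $(x',y')\in\Gamma$, $x'>x$, $y'\in(y^-,y^+)$, landing directly in the forbidden configuration~\eqref{BadConfiguration}. The countable bad set is then discarded (harmlessly since $\mu$ is continuous), and Lemma~\ref{Borel_lemma} produces Borel $T_1,T_2$ with $\Gamma=\graph(T_1)\cup\graph(T_2)$. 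Your rational-quadruple stratification instead shows the bad set is $\mu$-null directly; this is more elementary in that it bypasses Lemma~\ref{AccumulatingBadPoints} entirely, at the cost of leaving the Borel measurability of $T_1=\inf\Gamma_x$, $T_2=\sup\Gamma_x$ to a hand-waved ``measurable-selection argument'' (sections of a Borel set give a priori only analytic, hence universally measurable, functions; the paper's route via Lemma~\ref{Borel_lemma} yields Borel functions for free once $|\Gamma_x|\le 2$).

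One genuine weak spot: your passage from $T_2(x)\le T_2(x')$ to the \emph{strict} inequality is not sound. If $\nu$ has an atom there is nothing preventing $T_2$ from being constant on a set of positive $\mu$-measure (take $\mu$ uniform on $[0,1]$ and $\nu=\tfrac12\delta_{-1}+\tfrac12\delta_{2}$; the unique element of $\Pi_M(\mu,\nu)$ has $T_2\equiv 2$), and no null-set modification can repair that. The paper's own proof does not address property (3) explicitly either, so this is less a defect of your method than a point where the corollary as stated is stronger than what either argument actually delivers; the robust conclusion is $T_2(x)\le T_2(x')$.
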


The following picture (Figure~\ref{fig:curtain}) illustrates the
coupling $\pi_\lc$ in a specific case.
The measures $\mu$ and $\nu$ are Gaussian distributions having the same
mean, the variance of $\nu$ being greater than the variance of $\mu$.
There exist two points at which the density of $\mu$ (w.r.t. Lebesgue
measure) equals the density of $\nu$. Denote the smaller of these
points by $x_0$. Then we have $T_1(x)=T_2(x)=x$ for $x<x_0$. For
$x>x_0$, the map $T_1$ is strictly decreasing and $T_2$ is strictly increasing.


\begin{figure}

\includegraphics{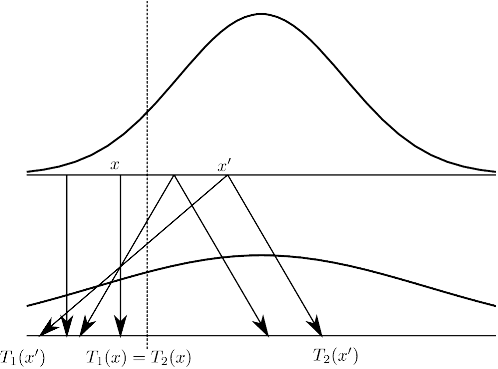}

\caption{Scheme of the left-curtain $\pi_\lc$ coupling between two
Gaussian measures.}\label{fig:curtain}
\end{figure}

%

The subsequent result states that the transport plan $\pi_\lc$ is
optimal for a variety of different cost functions. (See Theorem~\ref
{OptForExp} below.)

\begin{them}[($\pi_\lc$ is optimal)]\label{MonIsOpt}
Let $\mu,\nu$ be probability measures in convex order.
Assume that $c(x,y)=h(y-x)$ for some differentiable function $h$ whose
derivative is strictly convex and that $c$ satisfies the sufficient
integrability condition. If $C_M(\mu,\nu)<\infty$, then $\pi_\lc$ is
the unique optimizer.
\end{them}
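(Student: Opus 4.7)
The plan is to reduce to the uniqueness theorem for left-monotone couplings: weak compactness of $\M(\mu,\nu)$ together with the lower semi-continuity of $\pi\mapsto\E_\pi[c]$ (already recorded in the introduction) guarantees the existence of a minimizer $\pi^\star$. If one can show that every optimizer is left-monotone, then the previously stated uniqueness of the left-monotone martingale coupling forces $\pi^\star=\pi_\lc$ and in particular uniqueness of the optimizer.

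To prove left-monotonicity, I would invoke the variational lemma announced in the abstract: it provides a Borel set $\Gamma$ with $\pi^\star(\Gamma)=1$ such that no admissible, i.e.\ marginal- and martingale-preserving, finite perturbation built from points of $\Gamma$ can strictly decrease $\E_{\pi^\star}[c]$. Assume by contradiction that $(x,y^-),(x,y^+),(x',y')\in\Gamma$ with $x<x'$ and $y^-<y'<y^+$. Writing $y'=\alpha y^-+(1-\alpha)y^+$ with $\alpha\in(0,1)$, consider the perturbation of total mass $\eps>0$ that, at first coordinate $x$, removes $\alpha\eps$ at $(x,y^-)$ and $(1-\alpha)\eps$ at $(x,y^+)$ and adds $\eps$ at $(x,y')$, and at first coordinate $x'$ does the symmetric thing: removes $\eps$ at $(x',y')$ and adds $\alpha\eps$ at $(x',y^-)$ and $(1-\alpha)\eps$ at $(x',y^+)$. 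A direct check shows that this signed measure preserves both marginals and the conditional means at $x$ and $x'$.

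The resulting cost variation equals $\eps(f(x)-f(x'))$ with
\[
f(t):=h(y'-t)-\alpha h(y^- - t)-(1-\alpha)h(y^+ - t).
\]
Since $y'-t=\alpha(y^- - t)+(1-\alpha)(y^+-t)$ and $h'$ is strictly convex, Jensen's inequality gives
\[
f'(t)=\alpha h'(y^- - t)+(1-\alpha)h'(y^+-t)-h'(y'-t)>0
\]
for every $t\in\R$. Hence $f$ is strictly increasing, $f(x)<f(x')$, and the perturbation strictly lowers the cost, contradicting the optimality of $\pi^\star$. Therefore $\Gamma$ contains no such forbidden triple, $\pi^\star$ is left-monotone, and the conclusion follows.

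The main obstacle is turning this formal three-point perturbation into a rigorous one: on a generic support the three ``points'' must be replaced by small neighbourhoods in suitable disintegrations, and one has to argue that the modified plan is genuinely an element of $\M(\mu,\nu)$ with strictly smaller cost. This bookkeeping is exactly what the variational lemma of the paper is designed to handle; once granted, the entire analytic content of the proof is the one-line application of Jensen to the strictly convex function $h'$, and the integrability needed to justify the perturbation is furnished by the sufficient integrability condition together with $C_M(\mu,\nu)<\infty$.
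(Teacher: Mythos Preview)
Your proof is correct and is essentially identical to the paper's own argument (Theorem \ref{OptForExp}): one applies the variational lemma to an optimizer, tests the forbidden configuration with the same three-point competitor $\alpha\leadsto\alpha'$, and reduces the cost comparison to the strict monotonicity of the function you call $f$ (the paper calls it $-d$), which follows from Jensen's inequality for the strictly convex derivative $h'$; uniqueness then comes from the uniqueness of the left-monotone coupling. Your closing worry about replacing points by neighbourhoods is unfounded---the variational lemma is stated precisely for finitely supported $\alpha$ with $\spt(\alpha)\subset\Gamma$ and arbitrary competitors $\alpha'$, so no approximation is needed.
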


Natural examples of cost functions to which the result applies are
given by $c(x,y)=(y-x)^3$ and $c(x,y)=\exp(y-x)$.

We discuss a further characteristic property of the transport plan $\pi
_\lc$. For a real number $t$ and $\pi\in\Pi(\mu,\nu)$, consider the
measure
\[
\nu^\pi_t:=\proj^y_\# \pi|_{]{-}\infty, t]\times\R},
\]
where $\proj^y\dvtx (a,b)\in\R^2\mapsto b\in\R$. Loosely speaking, the mass
$\mu|_{]{-}\infty, t]}$ is moved to $\nu_t^\pi$ by the transport plan
$\pi
$. It is intuitively clear (and not hard to verify) that a transport
plan $\pi\in\Pi(\mu, \nu)$ is uniquely determined by the family
$(\nu
_t^\pi)_{t\in\R}$.

Using this notation, the classic monotone transport plan $\pi_\hf$ is
characterized by the fact that for each $t$, the measure $\nu^{\pi
_\hf
}_t$ is as \emph{left} as possible. More precisely, for every $t$ the
measure $\nu^{\pi_\hf}_t$ is minimal with respect to the first-order
stochastic dominance in the family
\[
\bigl\{\nu^\pi_t\dvtx \pi\in\Pi(\mu,\nu)\bigr\}.
\]

We have the following, analogous characterization for the monotone
martingale coupling $\pi_\lc$. This is in fact the way we will formally
define $\pi_\lc$ in Theorem~\ref{lc_defi}.

\begin{them}[($\pi_\lc$ is canonical with respect to the convex
order)]\label{cano}
For every real number $t$, the measure $\nu^{\pi_\lc}_t$ is minimal
with respect to the convex order (i.e., second-order stochastic
dominance) in the family
\[
\bigl\{\nu^\pi_t\dvtx \pi\in\M(\mu,\nu)\bigr\}.
\]
\end{them}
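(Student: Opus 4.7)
The plan is to characterize the convex order via potentials and reduce the theorem to a cost-minimization statement that can be treated by a direct three-point swap argument using the left-monotonicity of $\pi_\lc$.

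First, for any $\pi\in\M(\mu,\nu)$, the sub-probability measure $\nu_t^\pi$ has total mass $\mu((-\infty,t])$ (obvious from the definition) and barycenter $\int_{(-\infty,t]}x\,\d\mu(x)$, since
\[
\int y\,\d\nu_t^\pi(y)=\int_{(-\infty,t]}\!\int y\,\d\pi_x(y)\,\d\mu(x)=\int_{(-\infty,t]}x\,\d\mu(x)
\]
by the martingale property $\int y\,\d\pi_x(y)=x$. Hence $\nu_t^{\pi_\lc}$ and $\nu_t^\pi$ have matching mass and mean, so their comparison in the convex order is well posed, and by the standard characterization via the test functions $y\mapsto (y-z)^+$ the desired dominance is equivalent to
\[
\iint (y-z)^+\I_{x\leq t}\,\d\pi_\lc(x,y)\;\leq\;\iint (y-z)^+\I_{x\leq t}\,\d\pi(x,y)\qquad\text{for every }z\in\R.
\]
Thus the theorem amounts to showing that $\pi_\lc$ minimizes the cost $c_{t,z}(x,y):=(y-z)^+\I_{x\leq t}$ over $\M(\mu,\nu)$ for every $t,z\in\R$.

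I would establish this minimization by contradiction. Suppose some $\tilde\pi\in\M(\mu,\nu)$ achieves strictly smaller $c_{t,z}$-cost. The goal is to produce inside the monotonicity set $\Gamma$ of $\pi_\lc$ a triple $(x_1,b),(x_2,a),(x_2,c)$ with $x_2<x_1$ and $a<b<c$, i.e.\ exactly the configuration forbidden in Definition \ref{defi_monoto}. The bridge between the two is an elementary three-point, martingale- and marginal-preserving perturbation of $\pi_\lc$: writing $b=(\lambda a+\mu c)/(\lambda+\mu)$ with $\lambda,\mu>0$, one transfers mass $\eps(\lambda+\mu)$ from $(x_1,b)$ to $(x_1,a)$ and $(x_1,c)$ in the proportions $\lambda:\mu$, and performs the reverse swap at $x_2$. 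A direct computation gives
\[
\int c_{t,z}\,\d(\pi'-\pi_\lc)=\eps\bigl(\I_{x_1\leq t}-\I_{x_2\leq t}\bigr)\Bigl[\lambda(a-z)^+ +\mu(c-z)^+ -(\lambda+\mu)(b-z)^+\Bigr],
\]
and the bracket is non-negative by Jensen's inequality for the convex function $(\cdot-z)^+$ at the barycenter $b$, and strictly positive precisely when $a<z<c$. Hence whenever such a triple lies in $\Gamma$ with $x_2\leq t<x_1$ and $z\in(a,c)$, the swap strictly reduces $\int c_{t,z}\,\d\pi_\lc$.

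The main obstacle is to extract such a triple from the mere hypothesis that $\tilde\pi$ strictly improves on $\pi_\lc$; this is the content of the martingale analogue of $c$-cyclical monotonicity (the variational lemma announced in the introduction). A measurable-selection argument on the region where $\pi_\lc$ and $\tilde\pi$ differ converts any global strict improvement against $\pi_\lc$ into an elementary improving three-point swap of exactly the type above, hence into a forbidden configuration on $\Gamma$, contradicting the left-monotonicity of $\pi_\lc$. This contradiction establishes the convex-order minimality of $\nu_t^{\pi_\lc}$.
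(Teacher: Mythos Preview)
Your reduction to minimizing the costs $c_{t,z}(x,y)=(y-z)^+\I_{x\le t}$ is correct and is exactly how the paper relates the convex-order property to an optimization (compare the costs $c_{s,t}$ in the proof of Theorem~\ref{them:mono}). But the core step---extracting from a hypothetical global improvement a three-point improving swap supported on the monotonicity set $\Gamma$---is not justified, and in fact uses the \emph{wrong direction} of the variational lemma. Lemma~\ref{GlobalLocal} says: \emph{if} $\pi$ is optimal, \emph{then} some full-measure set $\Gamma$ carries no finitely supported improving $\alpha$. You are invoking the converse (non-optimality $\Rightarrow$ every full-measure set carries an improving finite configuration), which the paper never establishes; this is the martingale analogue of ``$c$-cyclically monotone $\Rightarrow$ optimal,'' already a non-trivial implication in the classical theory. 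Even granting that converse, you would still have to reduce an arbitrary finitely supported improving $\alpha$ to your specific three-point swap with the right ordering of $x_1,x_2$ relative to $t$ and of $a,b,c$ relative to $z$, and that reduction is not automatic.

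There is also a circularity issue. In the paper, the left-monotonicity of $\pi_\lc$ (Theorem~\ref{them:mono}) is \emph{derived from} the very convex-order minimality you are trying to prove: the proof shows that $\pi_\lc$ is optimal for each $c_{s,t}$ precisely because of the shadow property, and then applies Lemma~\ref{GlobalLocal} to obtain the monotonicity set. Taking monotonicity as an input here therefore begs the question.

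The paper's route is the opposite and essentially definitional. By construction (Theorem~\ref{lc_defi}) one has $\nu_t^{\pi_\lc}=S^\nu(\mu_{]-\infty,t]})$. For any $\pi\in\M(\mu,\nu)$ the measure $\nu_t^\pi$ satisfies $\nu_t^\pi\le\nu$ and, by Jensen's inequality applied to the martingale kernel, $\mu_{]-\infty,t]}\leqc\nu_t^\pi$; hence $\nu_t^\pi\in F^{\nu}_{\mu_{]-\infty,t]}}$. The defining minimality of the shadow (Lemma~\ref{MinimalImage}(iii)) then gives $\nu_t^{\pi_\lc}\leqc\nu_t^\pi$ at once.
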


The next theorem summarizes the properties of $\pi_\lc$.

\begin{them} \label{synth} Let $\mu, \nu$ be probability measures in
convex order.
Let $h\dvtx \R\to\R$ be a differentiable function such that $h'$ is strictly
convex and assume that the cost function $c\dvtx (x,y)\mapsto h(y-x)$
satisfies the sufficient integrability condition.

We assume moreover $C_M(\mu,\nu)<+\infty$. Let $\pi$ be a martingale
coupling in $\Pi_M(\mu,\nu)$. The following statements are equivalent:
\begin{itemize}
\item The coupling $\pi$ is monotone.
\item The coupling $\pi$ is optimal.
\item The coupling $\pi$ is the left-curtain coupling $\pi_\lc$: for
every $(\pi',t)\in\Pi_M(\mu,\nu)\times\R$, the measure $\nu^\pi
_t$ is
smaller than $\nu^{\pi'}_t$ in the convex order.
\end{itemize}
\end{them}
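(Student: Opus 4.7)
The plan is to recognise that Theorem~\ref{synth} is a synthesis result: each of the three bulleted properties has already been shown (in the preceding theorems) to characterise a unique element of $\Pi_M(\mu,\nu)$, and those three unique elements are all $\pi_\lc$. Hence the proof reduces to citing the relevant earlier statements and chaining the equivalences.

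More concretely, I would argue as follows. First, the uniqueness theorem for left-monotone martingale couplings (the first theorem after Definition~\ref{defi_monoto}) asserts that there is exactly one $\pi\in\Pi_M(\mu,\nu)$ which is monotone, and this coupling is by definition $\pi_\lc$; this immediately gives the equivalence between the first and third bullet. Second, Theorem~\ref{lc_defi} \emph{defines} $\pi_\lc$ precisely by the convex-order minimality property listed in the third bullet, namely that for every $t\in\R$ and every $\pi'\in\Pi_M(\mu,\nu)$ the measure $\nu^{\pi_\lc}_t$ is smaller than $\nu^{\pi'}_t$ in the convex order; so the convex-order characterisation in the third bullet is literally the defining property of $\pi_\lc$. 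Third, Theorem~\ref{MonIsOpt} applied to the hypotheses of Theorem~\ref{synth} (cost of the form $h(y-x)$ with $h'$ strictly convex, the sufficient integrability condition, and $C_M(\mu,\nu)<\infty$) states that $\pi_\lc$ is the \emph{unique} minimiser of $\E_\pi[c]$ on $\Pi_M(\mu,\nu)$; this yields the equivalence between the second bullet and $\pi=\pi_\lc$, hence with the first and third bullets as well.

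The only items worth double-checking during the write-up are that the hypotheses transfer correctly and that each cited statement has been proved with uniqueness (not merely existence). In particular, I would verify that Theorem~\ref{MonIsOpt} was established as an ``if and only if'' (optimal $\Leftrightarrow$ monotone), since otherwise one needs the extra step of combining its conclusion with the uniqueness of monotone plans to conclude uniqueness of the optimiser. I would also confirm that the convex-order minimality formulated in the third bullet of Theorem~\ref{synth} quantifies over all $\pi'\in\Pi_M(\mu,\nu)$ and all $t\in\R$ in exactly the form used in Theorem~\ref{lc_defi}, so that the match is literal rather than up to a reformulation.

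Since all the substance resides in the earlier results, there is no real obstacle here; the main (mild) risk is an inconsistency of hypotheses between Theorems~\ref{MonIsOpt}, \ref{lc_defi} and~\ref{synth}, which a careful reading of the three statements eliminates. The proof thus consists essentially of the two-line remark ``monotone $\Leftrightarrow$ $\pi=\pi_\lc$ $\Leftrightarrow$ convex-order canonical (by the uniqueness of the monotone coupling and by Theorem~\ref{lc_defi}), and $\pi=\pi_\lc$ $\Leftrightarrow$ optimal (by Theorem~\ref{MonIsOpt})''.
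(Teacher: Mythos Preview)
Your proposal is correct and matches the paper's own treatment, which simply notes that Theorem~\ref{synth} is a consequence of the other results stated above. The one small refinement: the convex-order minimality in the third bullet is not quite the literal content of Theorem~\ref{lc_defi} (which defines $\pi_\lc$ via the shadow $S^\nu(\mu_{]-\infty,t]})$), so you need the defining property of the shadow (Lemma~\ref{MinimalImage}) to pass between the two formulations; alternatively cite the ``$\pi_\lc$ is canonical for the convex order'' theorem from the introduction directly.
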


Note that Theorem~\ref{synth} is a consequence of the other results
stated above.

\subsection{A ``variational principle'' for the martingale transport problem}

An important basic tool in optimal transport is the notion of \emph
{$c$-cyclical monotonicity} (see~\cite{Vi09}, Chapter~4) which links
the optimality of transport plans to properties of the support of the
transport plan.
A parallel statement holds true in the present setup and plays a
fundamental role in our considerations. Heuristically, we expect that
if $\pi\in\M(\mu, \nu)$ is optimal, then it will prescribe optimal
movements for single particles.
To make this precise, we use the following notion.

\begin{defi}\label{def:competitor}
Let $\alpha$ be a measure on $\R\times\R$ with finite first moment in
the second variable.
We say that $\alpha'$, a measure on the same space, is a \emph
{competitor} of $\alpha$ if $\alpha'$ has the same marginals as
$\alpha
$ and for $(\proj^x_\#\alpha)$-a.e. $x\in\R$
\[
\int y \,\d\alpha_x(y)=\int y \,\d\alpha_x'(y),
\]
where $(\alpha_x)_{x\in\R}$ and $(\alpha'_x)_{x\in\R}$ are
disintegrations of the measures with respect to $\proj^x_\#\alpha$.
\end{defi}

We can now formulate a ``variational principle'' for the martingale
transport problem.

\begin{lem}[(Variational lemma)]\label{GlobalLocal}
Assume that $\mu, \nu$ are probability measures in convex order and
that $c\dvtx \R^2\to\R$ is a Borel measurable cost function satisfying the
sufficient integrability condition. Assume that $\pi\in\M(\mu,\nu
)$ is
an optimal martingale transport plan which leads to finite costs.
Then there exists a Borel set $\Gamma$ with $\pi(\Gamma)=1$ such that
the following holds:

If $\alpha$ is a measure on $\R\times\R$ with $|\operatorname
{spt}(\alpha)|<\infty$
and $\operatorname{spt}(\alpha) \subseteq\Gamma$, then we have
$\int c \,\d\alpha\leq
\int c \,\d\alpha'$ for every competitor $\alpha'$ of $\alpha$.
\end{lem}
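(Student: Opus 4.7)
The statement is the martingale analogue of the classical fact that optimal transport plans are $c$-cyclically monotone. My plan is to prove it by a direct perturbation argument in the spirit of Beiglb\"ock--Goldstern--Maresch--Schachermayer: I will suppose that no such $\Gamma$ exists, exhibit a positive-measure set of ``bad'' finite configurations admitting strictly cheaper competitors, and use them to build a non-negative, martingale-preserving modification of $\pi$ that strictly lowers the cost, contradicting optimality.

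\textbf{Step 1 (bad configurations, measurably).} For each $n\in\N$ I introduce
\begin{align*}
B_n:=\Bigl\{(z_1,\ldots,z_n)\in(\R\times\R)^n \; : \; \textstyle\sum_i\delta_{z_i}\text{ admits a competitor } \beta \text{ with } \int c\,\dd\beta<\sum_i c(z_i)\Bigr\}.
\end{align*}
The set $B_n$ is analytic, and a Jankov--von Neumann selection furnishes a universally measurable map $\bar z\mapsto\beta_{\bar z}$ assigning to each bad tuple a strictly cheaper competitor, with quantitative gain $g(\bar z):=\sum_i c(z_i)-\int c\,\dd\beta_{\bar z}>0$. Passing to a Borel subset $A_n\subseteq B_n$, I can further assume that $g$, the number and positions of the atoms of $\beta_{\bar z}$, their weights, and the relevant values of $c$ are all bounded, and that $\bar z\mapsto\beta_{\bar z}$ is Borel.

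\textbf{Step 2 (perturbation and contradiction).} If the lemma failed, then for some $n$ one would have $\pi^{\otimes n}(A_n)>0$. Set $\rho:=\pi^{\otimes n}|_{A_n}$ and consider the signed measure $\sigma:=\int_{A_n}\bigl(\beta_{\bar z}-\sum_i\delta_{z_i}\bigr)\,\dd\rho(\bar z)$ on $\R^2$. Each coordinate projection $(e_i)_\#\rho$ of $\rho$ onto a single factor is dominated by $\pi$, so the negative part of $\sigma$ is dominated by $n\pi$; hence $\pi^\eps:=\pi+\eps\sigma$ is a non-negative measure for $\eps\leq 1/n$. By the competitor condition, each $\beta_{\bar z}-\sum_i\delta_{z_i}$ has zero marginals and zero conditional first moment in $y$ at every $x$-slice -- properties linear in the measure and stable under averaging -- so $\sigma$ inherits them and $\pi^\eps\in\M(\mu,\nu)$. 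Finally
\begin{align*}
\int c\,\dd\pi^\eps-\int c\,\dd\pi=\eps\int c\,\dd\sigma=-\eps\int_{A_n}g(\bar z)\,\dd\rho(\bar z)<0,
\end{align*}
contradicting the optimality of $\pi$. Hence the assumption fails; exhausting over $n$ yields a Borel $\Gamma$ with $\pi(\Gamma)=1$ whose finite subsets lie outside every $B_n$.

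\textbf{Main obstacle.} The delicate point is Step 2, where I need the signed measure $\sigma$ to be dominated by a finite multiple of $\pi$ so that $\pi+\eps\sigma$ remains positive for small $\eps$. This is secured by the bounded-weight selection of Step 1 together with the trivial observation that $(e_i)_\#(\pi^{\otimes n}|_{A_n})\leq\pi$. The martingale property causes no extra trouble, since the defining constraints on competitors are linear and commute with the averaging producing $\sigma$. The subsidiary technical issue is the measurable selection in Step 1: producing a Borel-parameterized family of competitors with a uniform positive lower bound on the gain $g$ requires a Jankov--von Neumann argument followed by truncation to control the weights, the atoms, and the values of $c$ simultaneously. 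A mild loose end is the passage from integer-weight configurations (captured by $B_n$) to general positive-weight $\alpha$: this is handled by approximating rational-weight $\alpha$ via repetition of points and then invoking continuity in the weights.
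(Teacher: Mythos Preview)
Your perturbation step is essentially the paper's argument, but the proof has a genuine gap at the sentence ``If the lemma failed, then for some $n$ one would have $\pi^{\otimes n}(A_n)>0$.'' This implication is neither obvious nor true in general, and it is precisely the point where the non-trivial input enters.

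The negation of the lemma says: for every Borel set $\Gamma$ with $\pi(\Gamma)=1$ there exist $n$ and $(z_1,\ldots,z_n)\in\Gamma^n\cap B_n$. From this you cannot conclude $\pi^{\otimes n}(B_n)>0$. Conversely, knowing $\pi^{\otimes n}(B_n)=0$ for every $n$ does \emph{not} furnish a single $\Gamma$ of full $\pi$-measure with $\Gamma^n\cap B_n=\emptyset$. A product-null set in $Z^n$ can perfectly well meet $\Gamma^n$ for every full-measure $\Gamma\subseteq Z$; think of the diagonal in $[0,1]^2$ under Lebesgue measure. So the dichotomy ``either there is a good $\Gamma$, or the product measure charges $B_n$'' is false as stated.

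What the paper uses instead is Kellerer's dichotomy (their Theorem~3.1, from \cite{Ke84} and ultimately Choquet's capacitability theorem): for analytic $M\subseteq Z^n$, either $M$ is covered by finitely many sets each projecting to a $\pi$-null set in one coordinate (which immediately yields the desired $\Gamma_n$), or there exists a measure $\gamma$ on $Z^n$ with $\gamma(M)>0$ and $\proj^i_\#\gamma\leq\pi$ for every $i$. The second alternative is strictly weaker than $\pi^{\otimes n}(M)>0$, and it is exactly what is needed: the marginal domination $\proj^i_\#\gamma\leq\pi$ is what makes your Step~2 go through, since $\omega=\sum_i\proj^i_\#\gamma\leq n\pi$. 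Your construction of $\sigma$ and the verification that $\pi+\eps\sigma\in\M(\mu,\nu)$ is cheaper are then correct and match the paper's argument with $\rho$ replaced by $\gamma$.

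In short: once you substitute Kellerer's theorem for the unjustified product-measure claim, your proof becomes the paper's proof. The ``loose end'' about non-uniform weights is harmless: if some $\alpha$ supported on $\{z_1,\ldots,z_n\}$ has a strictly cheaper competitor $\alpha'$, then $\frac1n\sum_i\delta_{z_i}+t(\alpha'-\alpha)$ is a competitor of the uniform measure for small $t>0$, so your $B_n$ already captures all bad configurations.
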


Indeed, under the additional assumption that the cost function $c$ is
continuous and bounded we can prove that the condition given in the
variational lemma is not only necessary but also sufficient to
guarantee that a measure is optimal; see Lemma~\ref{LocalGlobal} in
Appendix \ref{appA}.

The variational Lemma~\ref{GlobalLocal} is one of the key ingredients
in our investigation of the monotone martingale transport plan $\pi
_\lc
$ introduced above.
Moreover, it turns out to be very useful if one seeks to derive results
on the optimizers for various specific cost functions.
Assuming for simplicity that $\mu$ is continuous, Lemma~\ref
{GlobalLocal} allows us to derive the following results:
\begin{longlist}[(1)]
%
\item[(1)] If $c(x,y)=(y-x)^4$, then $\card(\Spt\pi_x)\leq3$, $\mu(x)$-a.s.
\item[(2)] Assume that $c(x,y)= h(y-x)$ for some continuously differentiable
function $h$ and that the derivative $h'$ intersects every affine
function at most in $k\in\N$ points. Then $\card(\Spt\pi_x)\leq k$,
$\mu(x)$-a.s. for the optimizing $\pi$. (See Theorem~\ref{LessThanK},
and also Theorem~\ref{ClassicLessThanK} for a similar result which
appeals to the classical transport problem.)
\item[(3)] If $c(x,y)=-|y-x|$, then there is a unique optimizer $\pi\in\M
(\mu,\nu)$. Moreover, $\card(\Spt\pi_x)\leq2$, $\mu(x)$-a.s. (This
was first shown in \cite{HoNe11}; see Theorem~\ref{HobsonNeuberger}.)
\item[(4)] If $c(x,y)=|y-x|$, then there is a unique optimizer $\pi\in\M
(\mu
,\nu)$. Moreover, $\card(\Spt\pi_x)\leq3$ and $\card(\Spt\pi_x
\setminus\{x\} )\leq2$, $\mu(x)$-a.s. (see Theorem~\ref{RHN}).
\end{longlist}
Having financial applications in mind, the cost functions
$c(x,y)=|y-x|$ and $c(x,y)=-|y-x|$ are particularly relevant, we refer
to the work of Hobson and Neuberger \cite{HoNe11}.%

\subsection{Organization of the paper}

We will start with a warm up section (Section~\ref{sec:basic}) in which
we derive some basic properties and explain a procedure that allows to
find a martingale coupling for two given measures in convex order.
Then, in Section~\ref{sec:gamma}, we establish the variational Lemma~\ref{GlobalLocal} which will play a crucial role throughout the paper.
In Section~\ref{sec:curtain}, we introduce and study the shadow
projection, which permits us to introduce the left-curtain transport
plan $\pi_\lc$. We define it in Theorem~\ref{lc_defi} through its
canonical property with respect to the convex order, we explain the
name ``left-curtain'' and prove that it is monotone in Theorem~\ref
{them:mono}. The particular properties of the transport plan $\pi_\lc$
are established in Sections~\ref{sec:unique} and \ref{sec:optimal}. In
Section~\ref{sec:general}, we present results related to other costs
and other couplings. Finally, in the \hyperref[app]{Appendix}, we present a converse to
the variational Lemma~\ref{GlobalLocal}. We also provide an alternative
derivation of Lemma~\ref{GlobalLocal} which is longer than argument
presented in Section~\ref{sec:gamma} but has the advantage to be
constructive and self-contained. 

\section{Construction of a martingale transport plan for
measures}\label
{sec:basic}

In this section, we extend the martingale optimal transport problem to
general finite measures with finite first moment and we define the
convex order on this space. We prove that there exists a martingale
transport plan between two measures in convex order and give a very
short description of the duality theory linked to our optimization problem.
\subsection{Basic notions}\label{ParBasicNotions}

Denote by $\mathcal{M}$ the set of finite measures on $\R$ having
finite first
moment. We consider it with the usual topology, that is, we say
that a sequence $(\nu_n)_n$ converges weakly in $\mathcal{M}$ to an
element $\nu
\in\mathcal{M}$ if:
\begin{longlist}[(1)]
\item[(1)]$(\nu_n)_n$ converges weakly in the usual sense, that is, using
continuous bounded functions as test functions;
\item[(2)] the sequence $\int|x| \,\d\nu_n$ converges to $\int|x| \,\d\nu$.
\end{longlist}
Note that this is the same as adding all functions that grow at most
linearly in $\pm\infty$ to the set $\mathcal C_b$ of continuous and
bounded test functions.

The reason we are interested in the space $\mathcal{M}$ is that we
will need to
consider also transport plans between measures $\mu, \nu\in\mathcal
{M}$ which
have (the same) mass $k$, where $k$ is possibly different from $1$.
In direct generalization of the earlier definition, the set of
transport plans $\Pi(\mu, \nu)$ then consists of all Borel measures
$\pi
$ on $\R\times\R$ satisfying $\proj^x_\# \pi=\mu$, $\proj^y_\#
\pi=\nu$.
As a consequence of Prohorov's theorem, the set $\Pi(\mu, \nu)$ is
compact; see, for example, \cite{Vi09}, Lemma~4.4, for details. If $c$
is a continuous (or lower semicontinuous) cost function satisfying the
sufficient integrability condition with respect to $\mu$ and $\nu$,
then the cost functional
\[
\pi\in\Pi(\mu,\nu) \mapsto\int c \,\d\pi\in\,]{-}\infty,+\infty]
\]
is lower semicontinuous w.r.t. the weak topology (\cite{Vi09}, Lemma~4.3). It follows that the infimum in the classic transport
problem is attained.

We proceed analogously in the martingale setup. If $\mu$ and $\nu$ are
not necessarily probabilities, we define $\M(\mu, \nu) $ to consist of
all transport plans $\pi$ such that the disintegration in probability
measures $(\pi_x)_{x\in\R}$ w.r.t. $\mu$ satisfies
\[
\int y \,\d\pi_x(y)=x
\]
for $\mu$-almost every $x$.
Then
$\pi\in\Pi(\mu, \nu) $ is a martingale measure if and only if
%
\begin{equation}
\label{MartTest}\int\rho(x) (y-x) \,\d\pi(x,y)=0
\end{equation}
%
for all bounded measurable functions $\rho\dvtx \R\to\R$.
To see whether $\pi$ is a martingale measure, it is of course enough to
test \eqref{MartTest} for a sufficiently rich class of functions,
for example, for all functions of the form $\rho=\I_{]{-}\infty, x]},
x\in\R$ or for all continuous bounded functions (see \cite{BeHePe11}, Lemma~2.3).

Hence, the set $\Pi_M(\mu, \nu) $ is compact in the weak topology (see
\cite{BeHePe11}, Proposition~2.4). Precisely as in the usual setup it
follows that the value of the minimization problem \eqref{PrimalMart}
is attained provided that the set $\M(\mu, \nu) $ is nonempty.

Of course, it is a fundamental question on which conditions martingale
transport plans exist. In the usual optimal transport setup, the
problem is simple enough: the properly renormalized product measure
$\frac{1}{\mu(\R)} \mu\otimes\nu$ witnesses that $\Pi(\mu, \nu)$ is
nonempty. As mentioned in the \hyperref[sec:intro]{Introduction}, the proper notion which
guarantees existence of a martingale transport plan is the convex
order. As it plays a crucial role throughout the paper, we will discuss
it in some detail.

\subsection{The convex order of measures}
Let us start with the definition.

\begin{defi}\label{ConvexOrderDef}
Two measures $\mu$ and $\nu$ are said to be in convex order\footnote
{The convex order is also called Choquet order or second-order
stochastic dominance.} if:
\begin{longlist}[(1)]
\item[(1)] they have finite mass and finite first moments, that is, lie in
$\mathcal{M}$,
\item[(2)] for convex functions $\varphi$ defined on $\R$, $\int\varphi
\,\dd
\mu\leq\int\varphi\,\dd\nu$.
\end{longlist}
In that case, we will write $\mu\leqc\nu$.
\end{defi}

Note that if $\mu\leqc\nu$, then one can apply (2) to all affine
functions. Using the particular choices $\phi(x)\equiv1$ and $\phi
(x)\equiv-1$, one obtains that $\mu$ and $\nu$ have the same total
mass and considering the functions $\phi(x)\equiv x$ and $\phi
(x)\equiv
-x$ one finds that $\mu$ and $\nu$ have the same barycenter.\footnote
{The \emph{barycenter} or \emph{mean} of a measure $\mu$ is $\frac{1}{\mu
(\R)}\int x \,\d\mu(x)$. }

It is useful to know that it is sufficient to test hypothesis (2)
against suitable subclasses of the convex functions. For instance,
measures $\mu, \nu$ having the same finite mass and the same first
moments are in convex order if and only if
\[
\int(x-k)_+ \,\d\mu(x)\leq\int(x-k)_+ \,\d\nu(x)
\]
for all real $k$. This follows from simple approximation arguments (see
\cite{HiRo12} and also Section \ref{pot_func}) using monotone
convergence. In particular, it is sufficient to check (2) for
positive convex functions with finite asymptotic slope in $-\infty$ and
$+\infty$. 

We give some examples of measures in convex order.

\begin{ex}\label{delta_a}
If $\delta$ is an atom of mass $\alpha>0$ at the point $x$, then
$\delta
\leqc\nu$ simply means that $\nu$ has mass $\alpha$ and barycenter $x$.
\end{ex}

\begin{ex}\label{delta_b}
If $\mu_i\leqc\nu_i$ for $i=1, \ldots, n$ then $\sum_{i=1}^n\mu
_i\leqc
\sum_{i=1}^n\nu_i$.
\end{ex}

\begin{ex} \label{ex_order}
If two measures $\mu$ and $\mu'$ have the same barycenter and the same
mass, $\mu$ is concentrated on $[a,b]$ and $\mu'$ is concentrated on
$\R
\setminus\,]a,b[$ then $\mu\leqc\mu'$. Indeed it can be proved for convex
functions $\varphi$ defined on $\R$ that
\[
\int\varphi\,\dd\mu\leq\int\psi\,\dd\mu= \int\psi\,\dd\mu'\leq \int
\varphi\,\dd\mu',
\]
where $\psi$ is the linear function satisfying $\psi=\phi$ in $a$
and $b$.
\end{ex}

\begin{ex} \label{ex_order2}
If two measures $\mu$ and $\mu'$ have the same barycenter and the same
mass, $\mu-(\mu\wedge\mu')$ is concentrated on $[a,b]$ and $\mu
'-(\mu
\wedge\mu')$ is concentrated on $\R\setminus\,]a,b[$ then we have
$\mu
\leqc\mu'$. To see this, apply Example~\ref{ex_order} to the two
reduced measures and note that adding $\mu\wedge\mu'$ preserves the order.
\end{ex}

The following result formally states the connection between the convex
order and the existence of martingale transport plans.

\begin{them}\label{BasicExistence}
Let $\mu, \nu\in\mathcal M$. The condition $\mu\leqc\nu$ is
necessary and sufficient for the existence of a martingale transport
plan in $\M(\mu, \nu)$.
\end{them}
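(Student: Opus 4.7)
For \emph{necessity}, I would argue by Jensen's inequality. If $\pi\in\M(\mu,\nu)$ has disintegration $(\pi_x)_x$ with $\int y\,d\pi_x(y)=x$ for $\mu$-a.e.\ $x$, then for any convex $\varphi:\R\to\R$
\[
\varphi(x)=\varphi\Bigl(\int y\,d\pi_x(y)\Bigr)\leq\int\varphi(y)\,d\pi_x(y),
\]
and integration against $\mu$ gives $\int\varphi\,d\mu\leq\int\varphi\,d\nu$, i.e.\ $\mu\leqc\nu$.

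For \emph{sufficiency} the plan is two-step: first build a martingale coupling when $\mu$ is finitely supported, then approximate. First I would treat atomic $\mu=\sum_{i=1}^n\alpha_i\delta_{x_i}$ by induction on $n$. The base case $n=1$ is Example \ref{delta_a}: $\mu\leqc\nu$ forces $\nu$ to have mass $\alpha_1$ and barycenter $x_1$, and $\pi:=\delta_{x_1}\otimes\nu$ is a martingale transport plan. For the inductive step I would isolate one atom $\alpha_1\delta_{x_1}$ and seek a decomposition $\nu=\nu_1+\nu_2$ in $\m$ with
\[
\alpha_1\delta_{x_1}\leqc\nu_1 \AND \textstyle\sum_{i\geq 2}\alpha_i\delta_{x_i}\leqc\nu_2.
\]
Given such a split, Example \ref{delta_b} together with the inductive hypothesis (applied to $\sum_{i\geq 2}\alpha_i\delta_{x_i}\leqc\nu_2$) produce a martingale coupling on $\mu$ to $\nu$ by juxtaposing $\delta_{x_1}\otimes\nu_1$ with the coupling furnished by induction.

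To pass from the atomic case to a general $\mu$, I would approximate: partition $\R$ into finitely many intervals $I_k^{(n)}$ of diameter tending to $0$ and replace $\mu|_{I_k^{(n)}}$ by an atom of the same mass placed at its local barycenter, producing a sequence $\mu_n\to\mu$ of finitely supported measures with $\mu_n\leqc\mu\leqc\nu$ (Jensen applied interval-by-interval). The atomic case yields $\pi_n\in\M(\mu_n,\nu)$, and the sequence $(\pi_n)$ has uniformly bounded first moments, so by the weak compactness recalled in Section \ref{ParBasicNotions} it has a cluster point $\pi$ in the topology of $\m$. Its marginals are $\mu$ and $\nu$, and passing to the limit in the martingale test \eqref{MartTest} (with $\rho$ bounded continuous, so the integrand has at most linear growth) shows $\pi\in\M(\mu,\nu)$.

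\textbf{Main obstacle.} The crux is the splitting lemma used in the atomic step: extracting $\nu_1\leq\nu$ with $\alpha_1\delta_{x_1}\leqc\nu_1$ \emph{and} such that the complement $\nu-\nu_1$ still dominates $\mu-\alpha_1\delta_{x_1}$ in the convex order. The natural candidate is a minimal (in convex order) sub-measure of $\nu$ of mass $\alpha_1$ and barycenter $x_1$, which is precisely the ``shadow'' construction of Section \ref{sec:curtain}. Checking that this minimal choice indeed leaves enough of $\nu$ to dominate the remaining atoms reduces, via the wedge-function criterion recalled after Definition \ref{ConvexOrderDef}, to a comparison of the potential functions $k\mapsto\int(y-k)_+\,d\rho(y)$ of the various measures involved.
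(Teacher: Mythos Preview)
Your proposal is correct and follows essentially the same route as the paper: Jensen for necessity, then the atomic case by peeling off one atom at a time, then approximation plus compactness. The one point worth flagging is your ``main obstacle'': you defer the splitting $\nu=\nu_1+\nu_2$ to the shadow construction of Section~\ref{sec:curtain}, but the paper handles it directly and more cheaply in Lemma~\ref{AtomShadowSplitting}. There one simply takes $\nu_1$ to be the restriction of $\nu$ between two quantiles $s$ and $s+\alpha_1$, uses the intermediate value theorem to choose $s$ so that the barycenter is $x_1$, and then checks $\sum_{i\geq 2}\alpha_i\delta_{x_i}\leqc\nu-\nu_1$ by the elementary trick of replacing a convex test function $\varphi$ by its linear interpolant $\psi$ on the interval $I=\interval{\nu_1}$. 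This is self-contained in Section~\ref{sec:basic} and avoids the forward reference; the full shadow theory is only needed later for the \emph{canonical} choice of splitting that defines $\pi_\lc$.
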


It is a simple consequence of Jensen's inequality that the condition
$\mu\leqc\nu$ is necessary to have $\M(\mu,\nu)\neq\varnothing$:
if $\pi
$ is a martingale transport plan and $\phi$ is convex then
\begin{eqnarray*}
\int\phi(y) \,\d\nu(y)&=& \int\phi(y) \,\d\pi (x,y)
\\
&=&\iint\phi(y) \,\d\pi_x(y) \,\d\mu(x)\geq\int\phi(x) \,\d\mu(x).
\end{eqnarray*}
The fact that the condition is also sufficient is well known and goes
back at least to a paper by Strassen \cite{St65}. Nevertheless, we
think that it is worthwhile to describe a procedure which allows to
obtain a martingale transport plan. This is what we do in the next subsection.

\subsection{Construction of a martingale transport}\label
{ConsMatPlan} %

We fix finite measures $\mu,\nu$ having finite first moments and
satisfying $\mu\leqc\nu$; our aim is to show that $\M(\mu, \nu)$ is
nonempty. The desired result will first be given in the case where $\mu
$ is concentrated on finitely many points. The construction in
Proposition~\ref{SimpleBasicExistence} will rely on the elementary fact
(related to Example~\ref{delta_b}) that $\pi_1\in\M(\mu_1,\nu_1),
\pi
_2\in\M(\mu_2,\nu_2)$ implies that $\pi_1+\pi_2\in\M(\mu_1+\mu
_2,\nu
_1+\nu_2)$.

\begin{pro}\label{SimpleBasicExistence}
Assume that $\mu= \sum_{i=1}^n \delta_i$, where each $\delta_i$ is an
atomic measure. If $\nu$ satisfies $\mu\leqc\nu$, then $\M(\mu,
\nu) $
is nonempty.
\end{pro}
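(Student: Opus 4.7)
The proof will go by induction on $n$. For the base case $n=1$, write $\mu=m_1\delta_{x_1}$; by Example~\ref{delta_a}, $\mu\leqc\nu$ forces $\nu$ to have mass $m_1$ and barycenter $x_1$. I would then take $\pi$ to be the measure on $\R\times\R$ defined by $\pi(A\times B)=\nu(B)\mathbf{1}_{x_1\in A}$; its disintegration against $\mu$ is $\pi_{x_1}=\nu/m_1$, a probability measure of mean $x_1$, so $\pi\in\M(\mu,\nu)$.

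For the inductive step, order the atoms so that $x_1<x_2<\ldots<x_n$ and aim to peel off the leftmost atom $m_1\delta_{x_1}$. Concretely, I seek a sub-measure $\eta\leq\nu$ with
\begin{equation*}
\eta(\R)=m_1,\quad \int y\,\d\eta(y)=m_1 x_1,\quad\text{and}\quad \mu-m_1\delta_{x_1}\leqc \nu-\eta.
\end{equation*}
Given such an $\eta$, the base case provides $\pi_0\in\M(m_1\delta_{x_1},\eta)$ and the inductive hypothesis applied to the $n-1$ remaining atoms provides $\pi'\in\M(\mu-m_1\delta_{x_1},\nu-\eta)$; their sum $\pi_0+\pi'$ then lies in $\M(\mu,\nu)$.

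The main obstacle is producing such an $\eta$. Existence of a sub-measure of $\nu$ with mass $m_1$ and barycenter $x_1$ can be extracted from convex order by an intermediate-value argument: testing $\mu\leqc\nu$ against $\phi(y)=(a-y)_+$ at the splitting point $a$ of any leftmost $m_1$-mass $L$ of $\nu$ gives $\int y\,\d L\leq m_1 x_1$, a symmetric test yields the reverse inequality for the rightmost $m_1$-mass, and continuously deforming between these produces a candidate whose first moment is exactly $m_1 x_1$. The harder task is arranging $\mu-m_1\delta_{x_1}\leqc \nu-\eta$: expanding this condition against a convex $\phi$ requires
\begin{equation*}
\int \phi\,\d\eta-m_1\phi(x_1)\ \leq\ \int \phi\,\d\nu-\int \phi\,\d\mu,
\end{equation*}
and although both sides are automatically non-negative (by Jensen and by $\mu\leqc\nu$ respectively), neither dominates the other for a generic candidate. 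I therefore expect the correct choice to be a convex-order-minimal candidate---morally the shadow of $m_1\delta_{x_1}$ in $\nu$ that will be formalized in Section~\ref{sec:curtain}---and would construct it here by a greedy sweeping procedure: begin with the leftmost $m_1$-mass $L$ (whose barycenter is $\leq x_1$) and iteratively swap pieces of its leftmost portion for pieces of $\nu-L$ further to the right, pushing the running first moment upwards until it reaches $m_1 x_1$, checking at each step that $\nu-\eta$ still majorizes $\mu-m_1\delta_{x_1}$. Verifying that this procedure can be carried out and preserves the convex order on the complement will be the technical crux of the proof.
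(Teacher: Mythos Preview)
Your inductive framework and the base case are correct, and you have in fact already found the right sub-measure $\eta$: the candidate produced by your intermediate-value argument---the restriction of $\nu$ between two quantiles, i.e.\ $\eta=\nu_s=(G_\nu)_\#\lambda_{[s,s+m_1]}$ for the unique $s$ giving barycenter $x_1$---is precisely the $\nu'$ that the paper uses (Lemma~\ref{AtomShadowSplitting}). You then abandon it for a vague ``greedy sweeping'' construction, and this is where the proposal goes off track.

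The point you are missing is the geometric feature of the quantile-restriction candidate that makes the verification of $\tilde\mu\leqc\tilde\nu$ short: by construction $\tilde\nu=\nu-\eta$ is concentrated on $\R\setminus I$, where $I$ is the interior of the convex hull of $\spt\eta$. For any non-negative convex $\varphi$ of at most linear growth, let $\psi$ be convex, equal to $\varphi$ on $\R\setminus I$, and affine on $I$ (so $\psi\geq\varphi$). Then
\[
\int\varphi\,\d\tilde\mu\ \leq\ \int\psi\,\d\tilde\mu\ =\ \int\psi\,\d\mu-\int\psi\,\d(m_1\delta_{x_1})\ \leq\ \int\psi\,\d\nu-\int\psi\,\d\eta\ =\ \int\psi\,\d\tilde\nu\ =\ \int\varphi\,\d\tilde\nu,
\]
using $\mu\leqc\nu$ for the middle inequality, the affinity of $\psi$ on $I$ together with $m_1\delta_{x_1}\leqc\eta$ for the equality $\int\psi\,\d(m_1\delta_{x_1})=\int\psi\,\d\eta$, and the fact that $\tilde\nu$ lives on $\R\setminus I$ for the last equality. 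This is the entire ``technical crux''; no iterative swapping is needed. Incidentally, the left-to-right ordering of the atoms is irrelevant for this proposition---any atom can be peeled off first.
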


First, note that by Example~\ref{delta_a} this proposition is clear if
$n=1$. The general case will be established by induction. To perform
the inductive step, we need to understand how to couple a single atom,
say $\delta:=\delta_1$, with a properly chosen portion $\nu'$ of
$\nu$
so that the other atoms ($\sum_{i=2}^n\delta_i$) are smaller than
$\nu
-\nu'$ in convex order. Assume that $\delta$ has mass $\alpha$ and is
concentrated on $x$. Recalling Example~\ref{delta_a}, we should pick
$\nu'$ so that it has mass $\alpha$ and barycenter $x$. Clearly, it
also needs to satisfy $\nu'\leq\nu$, where $\leq$ refers to the usual
pointwise order of measures.

As $\delta$ is a part of $\mu$ and $\mu\leqc\nu$, we can introduce the
measure $\tilde\mu=\mu-\delta$ which has mass $t=\nu(\R)-\alpha$.
Obviously, we then have $\delta+\tilde\mu\leqc\nu$. We are looking for
the measure $\nu'$ among the measures $\{\nu_s\dvtx s\in[0,t]\}$ obtained
as the restriction of $\nu$ between two quantiles $s$ and $s'=s+\alpha
$. More precisely, we consider $\nu_s=G_\#\lambda_{[s,s+\alpha]}$ where
$G\dvtx [0,t+\alpha]\to\R$ is the quantile function of $\nu$, and
$\lambda
_{[s,s']}$ is the Lebesgue measure restricted to $[s,s']$. In Section
\ref{summary}, we have discussed quantile functions only for
probability measures but of course the notion carries over to measures
in $\mathcal{M}$. 
For completeness, note that $\nu=G_\#\lambda_{[0,t+\alpha]}$.

The barycenter $B(s,\nu)$ of $\nu_s$ depends continuously on the
parameter $s\in[0,t]$ and we claim that
%
\begin{equation}
\label{SmallerLarger}B(0, \nu)\leq x,\qquad B(t, \nu) \geq x.
\end{equation}
This is a consequence of the convex order relation $(\delta+\tilde\mu
)\leqc\nu$ applied to the convex and nonnegative functions $u\mapsto
(u-G(\alpha))_-$ and $u\mapsto(u-G(t))_+$. For instance,
\begin{eqnarray*}
\int u-G(t) \,\dd\delta(u)&\leq&\int\bigl(u-G(t)\bigr)_+ \,\dd\delta(u)\leq\int
\bigl(u-G(t)\bigr)_+ \,\dd\nu(u)\\
&=& \int u-G(t) \,\dd\nu_t(u).
\end{eqnarray*}
By the intermediate value theorem, the continuity of $s\mapsto B(s,\nu
)$ implies that there exists some $s\in[0,t]$ such that $\nu_s$ has
barycenter $x$. Moreover, if $B(s,\nu)=B(s',\nu)$, the measures $\nu_s$
and $\nu_{s'}$ are equal so that there exists a unique measure with
barycenter $x$. We denote it by $\nu'$.

This discussion leads us to the following lemma.

\begin{lem}
\label{AtomShadowSplitting}
Let $\mu$ be of the form $\mu= \tilde\mu+\delta$, where $\delta$
is an
atom and assume that $\mu\leqc\nu$. Then there exists a unique
splitting of the measure $\nu$ into two positive measures $\nu'$ and
$\tilde\nu=\nu-\nu'$ in such a way that:
\begin{longlist}[(1)]
\item[(1)]$\delta\leq c\nu'$,
\item[(2)]$\tilde\nu(I)=0$ where $I=\interval{\nu'}$ is the interior of the
smallest interval containing the support of $\nu'$.
\end{longlist}
Moreover, the measures $\tilde\mu$ and $\tilde\nu$ satisfy $\tilde
\mu
\leqc\tilde\nu$.
\end{lem}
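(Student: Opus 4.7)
The plan is to construct $\nu'$ as one of the quantile slices $\nu_s=G_\#\lambda_{[s,s+\alpha]}$ introduced in the paragraph immediately preceding the lemma. Properties (1), (2), and the uniqueness claim can then be read off from the construction, while the convex-order assertion (3) is the substantive point and requires a potential-function argument.

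For existence I would take $\nu' := \nu_{s^*}$, where $s^*\in[0,t]$ is chosen so that $B(s^*,\nu)=x$ (which exists by the intermediate value theorem applied to the continuous function $B(\cdot,\nu)$, using \eqref{SmallerLarger}). Property (1) is immediate from Example \ref{delta_a}, since $\nu'$ has mass $\alpha$ and barycenter $x$. For (2), observe that $\tilde\nu=\nu-\nu' = G_\#(\lambda_{[0,s^*]} + \lambda_{[s^*+\alpha, t+\alpha]})$; monotonicity of $G$ places its support outside the open interval $(G(s^*), G(s^*+\alpha))$, which contains $I$. For uniqueness, any valid $\bar\nu'$ is forced to be a quantile slice as well: writing $\bar a=\inf\spt\bar\nu'$, $\bar b=\sup\spt\bar\nu'$, and $m_{\bar a}=\bar\nu'(\{\bar a\})$, condition (2) identifies $\bar\nu'$ with $\nu_{\bar s}$ for $\bar s=\nu((-\infty,\bar a))+\nu(\{\bar a\})-m_{\bar a}$. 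The observation concluding the paragraph preceding the lemma---that different $s$-values with the same barycenter give the same $\nu_s$---then forces $\bar\nu'=\nu'$.

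The main obstacle is (3). First note that $\tilde\mu$ and $\tilde\nu$ have the same mass and barycenter, inherited from the corresponding equalities for $\mu,\nu$ and $\delta,\nu'$. By the potential-function characterization of convex order recalled after Definition \ref{ConvexOrderDef}, it suffices to show the pointwise bound $u_{\tilde\mu}(k)\le u_{\tilde\nu}(k)$ for every $k\in\R$, where $u_\lambda(k):=\int(y-k)_+\,\dd\lambda(y)$. The difficulty is that the two given convex-order inequalities $\mu\leqc\nu$ and $\delta\leqc\nu'$ produce nonnegative differences $u_\nu-u_\mu$ and $u_{\nu'}-u_\delta$, so a naive subtraction does not settle the question; the argument has to exploit the structural condition (2) directly. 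Set $a=\inf\spt\nu'$, $b=\sup\spt\nu'$; the degenerate case $a=b$ forces $\nu'=\delta$, whereupon the claim reduces to $\mu\leqc\nu$, so assume $a<x<b$. A direct computation yields $u_{\nu'}=u_\delta$ on $\R\setminus(a,b)$: for $k\le a$ both sides equal $\alpha(x-k)$ (since $\spt\nu'\subseteq[k,\infty)$), and for $k\ge b$ both vanish. Consequently $u_{\tilde\nu}-u_{\tilde\mu}=u_\nu-u_\mu\ge 0$ on $\R\setminus(a,b)$ by $\mu\leqc\nu$. On $[a,b]$, condition (2) forces $u_{\tilde\nu}$ to be affine---its distributional second derivative $\tilde\nu$ vanishes on $(a,b)$---call this affine function $L$. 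The boundary inequalities $u_{\tilde\mu}(a)\le L(a)$ and $u_{\tilde\mu}(b)\le L(b)$ are special cases of the exterior bound just established, and since $u_{\tilde\mu}$ is convex and $L$ is affine, these boundary inequalities propagate to $u_{\tilde\mu}\le L=u_{\tilde\nu}$ throughout $[a,b]$, completing the proof.
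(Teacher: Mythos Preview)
Your proof is correct. The treatment of existence and uniqueness matches the paper's (which also defers these to the preceding paragraph), and your argument for the convex-order claim $\tilde\mu\leqc\tilde\nu$ is valid.

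The route you take for this last claim differs from the paper's. The paper argues directly with an arbitrary non-negative convex test function $\varphi$ of at most linear growth: it replaces $\varphi$ by the convex function $\psi$ that agrees with $\varphi$ outside $I$ and is affine on $I$, so that $\int\psi\,\dd\delta=\int\psi\,\dd\nu'$; then $\int\varphi\,\dd\tilde\mu\leq\int\psi\,\dd\tilde\mu=\int\psi\,\dd\mu-\int\psi\,\dd\delta\leq\int\psi\,\dd\nu-\int\psi\,\dd\nu'=\int\psi\,\dd\tilde\nu=\int\varphi\,\dd\tilde\nu$. Your argument instead works with the specific family $k\mapsto(y-k)_+$, checking the pointwise inequality $u_{\tilde\mu}\leq u_{\tilde\nu}$ by splitting into the exterior of $I$ (where $u_{\nu'}=u_\delta$ reduces the claim to $u_\mu\leq u_\nu$) and the interior (where $u_{\tilde\nu}$ is affine and convexity of $u_{\tilde\mu}$ propagates the boundary inequalities). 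Both proofs hinge on the same structural fact---that $\tilde\nu$ carries no mass on $I$---but the paper's linearization trick is a one-line manipulation that applies uniformly, whereas your version makes the geometry of the potential functions more visible.

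One small omission: you tacitly assume $a$ and $b$ are finite when you invoke the ``boundary values plus convexity below the chord'' step. The paper explicitly notes that its construction of $\psi$ goes through when $I$ is unbounded. In your framework the unbounded case is handled by replacing the missing endpoint with the asymptotic relation $u_{\tilde\mu}(k)-u_{\tilde\nu}(k)\to 0$ as $k\to\pm\infty$ (both functions have the same mass and mean), and then observing that a convex function that is $\leq 0$ at one finite endpoint and has limit $0$ at the infinite end must be $\leq 0$ throughout. This is routine, but worth stating.
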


\begin{pf}
Having already constructed $\nu'$ (and $I$, i.e., $]G(s),G(s+\alpha
)[$) in the paragraph above Lemma~\ref{AtomShadowSplitting} it remains
to show (2): $\tilde\mu$ is smaller than $\tilde\nu$ in the convex order.
Let $\varphi$ be a nonnegative convex function which satisfies
\[
\limsup_{|x|\to+\infty}\bigl|\varphi(x)/x\bigr|<+\infty.
\]
We will prove that $\int\varphi\,\dd\tilde\mu\leq\int\varphi\,\dd
\tilde\nu$. To this end, we introduce a new function $\psi$ which
equals $\varphi$ on $\R\setminus I$ and is linear on $I$. The function
$\psi$ can be chosen to be convex and satisfy $\psi\geq\varphi$. (Note
that this is possible also in the case where $I$ is unbounded.) The
functions $\varphi$ and $\psi$ coincide on the border of $I$. We have
\[
\int\varphi\,\dd\tilde\mu\leq\int\psi\,\dd\tilde\mu= \int\psi \,\dd \mu- \int\psi\,\dd
\delta.
\]
But as $\psi$ is linear on $I$, one has $\int\psi\,\d\delta=\int
\psi
\,\d\nu'$ and because $\mu\leqc\nu$ one has $\int\psi\,\d\mu\leq
\int
\psi\,\dd\nu$. It follows that
\[
\int\varphi\,\d\tilde\mu\leq\int\psi\,\d\nu- \int\psi\,\d\nu' =\int\psi
\,\d\tilde\nu=\int\varphi\,\d\tilde\nu.
\]
The last equality is due to the fact that $\tilde{\nu}$ is concentrated
on $\R\setminus I$.
We have thus established our claim that $\tilde\mu\leqc\tilde\nu$.
\end{pf}
\begin{pf*}{Proof of Proposition~\ref{SimpleBasicExistence}}
In the first step, we apply Lemma~\ref{AtomShadowSplitting} to the
measures $\delta= \delta_1$ and $\tilde\mu= \sum_{i=2}^n \delta
_i$ to
obtain a splitting $\nu=\hat\nu_1+ \tilde\nu$ that satisfies
$\delta
_1\leqc\hat\nu_1$ and $\tilde\mu\leqc\tilde\nu$.
Trivially, $\M(\delta_1, \hat\nu_1)$ consists of a single element
$\pi_1$.

In the next step, we repeat the procedure with $\tilde\mu$ and
$\tilde
\nu$ in the place of $\mu, \nu$ and continue until the $n$th step where
$\delta_n$ can be martingale-transported to the remaining part of $\nu$
because the convex order relation $\delta_n\leqc(\nu-\sum_{i=1}^{n-1}\hat{\nu}_i)$ is satisfied in Example~\ref{delta_a}. Hence,
we have obtained recursively a sequence $(\hat\nu_i)_{i=1}^n$ such that
$\delta_i\leqc\hat\nu_i$ and $\hat\nu_1+\cdots+\hat\nu_n= \nu
$. We
have constructed $n$ martingale transport plans $\pi_1, \ldots, \pi_n$
where $\pi_i$ is the unique element of $\Pi_M(\delta_i,\hat\nu_i)$.
Thus, $\pi_1+ \cdots+ \pi_n$ is an element of $\M(\mu, \nu)$.
\end{pf*}

To extend Proposition~\ref{SimpleBasicExistence} to the case of general
$\mu\in\mathcal M$, we need the following simple and straightforward
fact that will also be useful in Section~\ref{sec:curtain}.

\begin{lem}[(Approximation of a measure in the convex order)]\label
{approx_measure}
Assume $\gamma\in\mathcal{M}$. There exists a sequence $(\gamma
^{(n)})_n$ of
finitely supported measures such that $\gamma^{(n+1)}\geqc\gamma
^{(n)}$, the sequence $(\gamma^{(n)})_n$ converges weakly to $\gamma$
in $\mathcal{M}$ and $\gamma^{(n)}\leqc\gamma$ holds for every $n$.
\end{lem}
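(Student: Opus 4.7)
The plan is to realize $\gamma^{(n)}$ as the law of a Doob martingale associated with a refining sequence of finite partitions of $\R$. After rescaling we may assume $\gamma$ is a probability measure; let $X$ be a random variable of law $\gamma$ on some probability space, so by hypothesis $X\in L^1$. I would fix a nested sequence of finite Borel partitions $\mathcal{P}_n$ of $\R$ whose atoms generate $\mathcal{B}(\R)$; concretely, $\mathcal{P}_n$ can consist of the two tail pieces $(-\infty,-n)$ and $[n,+\infty)$ together with the dyadic intervals $[k\,2^{-n},(k+1)2^{-n})$ for $-n\,2^n\leq k<n\,2^n$. Set $\mathcal{F}_n:=\sigma(\mathcal{P}_n)$, $X_n:=\E[X\mid \mathcal{F}_n]$, and $\gamma^{(n)}:=\law(X_n)$. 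On each $P\in\mathcal{P}_n$ with $\gamma(P)>0$, the variable $X_n$ is constantly equal to the $\gamma$-barycenter $b_P:=\gamma(P)^{-1}\int_P x\,\d\gamma(x)$, so $\gamma^{(n)}=\sum_P \gamma(P)\,\delta_{b_P}$ is finitely supported.

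The convex-order increase $\gamma^{(n)}\leqc \gamma^{(n+1)}$ is conditional Jensen: for every convex $\varphi$, $\int \varphi\,\d\gamma^{(n)}=\E[\varphi(X_n)]=\E[\varphi(\E[X_{n+1}\mid \mathcal{F}_n])]\leq \E[\varphi(X_{n+1})]=\int \varphi\,\d\gamma^{(n+1)}$. Equivalently, on each block $P$ of $\mathcal{P}_n$ the single atom $\gamma(P)\,\delta_{b_P}$ of $\gamma^{(n)}$ is replaced in $\gamma^{(n+1)}$ by a cluster of atoms with the same total mass and barycenter, so the comparison can also be read off directly from Examples \ref{delta_a} and \ref{delta_b}.

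Convergence of $\gamma^{(n)}$ to $\gamma$ in $\m$ follows from Doob's martingale convergence theorem applied to $(X_n)$, which is uniformly integrable because it is closed by $X\in L^1$. Since the chosen partitions satisfy $\mathcal{F}_\infty=\mathcal{B}(\R)$, one has $X_n\to X$ almost surely and in $L^1$. The $L^1$ convergence yields both $\int \varphi\,\d\gamma^{(n)}\to \int \varphi\,\d\gamma$ for every bounded continuous $\varphi$ and, via $\bigl||X_n|-|X|\bigr|\leq |X_n-X|$, the first moment convergence $\int |x|\,\d\gamma^{(n)}\to \int |x|\,\d\gamma$, which are precisely the two conditions defining the topology on $\m$. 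The only step requiring some care is the choice of partitions: the tail pieces $(-\infty,-n)$ and $[n,+\infty)$ must themselves be subdivided at the next level, so that no mass remains trapped in an unbounded cell in the limit; otherwise one would only recover a strict conditional expectation of $X$ rather than $X$ itself, and the first moment condition could fail.
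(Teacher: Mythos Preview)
Your argument is correct, and in fact your construction coincides with the paper's: both replace $\gamma$ on each cell $P$ of a refining sequence of finite interval partitions (tail pieces plus dyadic blocks on $[-n,n]$) by the atom $\gamma(P)\,\delta_{b_P}$ at the barycenter. The convex-order monotonicity is the same observation, phrased by you as conditional Jensen and by the paper as ``refining the partition only increases $\gamma_{\mathcal{J}}$ in $\leqc$.''

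The only genuine difference is in the convergence step. The paper gives a hands-on $\varepsilon/3$ argument: uniform continuity of the test function on $[-N,N]$ handles the bulk, and the tails are controlled by noting that $\gamma^{(n)}|_{\{|x|\geq N\}}\leqc \gamma|_{\{|x|\geq N\}}$ and testing against the convex function $a|x|+b$. You instead recognise $(\gamma^{(n)})_n$ as the laws of the closed martingale $X_n=\E[X\mid\mathcal F_n]$ and invoke Doob's $L^1$ convergence theorem, which immediately yields both weak convergence and convergence of first moments. Your route is shorter and more conceptual but imports a nontrivial probabilistic tool; the paper's route is fully elementary and self-contained. Either is perfectly acceptable here, and your cautionary remark about subdividing the tail cells is exactly the point needed to ensure $\mathcal F_\infty=\mathcal B(\R)$.
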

\begin{pf}
To any partition $\mathcal{J}$ of $\R$ into finitely many intervals,
we can associate some $\gamma_\mathcal{J}$ smaller than $\gamma$ in the
convex order. We simply replace $\gamma=\sum_{I\in\mathcal
{J}}\gamma
|_I$ by $\gamma_\mathcal{J}=\sum_{\mathcal{J}}\delta_I$ where
$\delta
_I$ is an atom with the same mass and same barycenter as $\gamma|_I$.
Note that if $\mathcal{J}'$ is finer than $\mathcal{J}$ (the intervals
of $\mathcal{J}$ are broken in subintervals) then $\gamma_{\mathcal
{J}}\leqc\gamma_{\mathcal{J}'}$.
For $k,N\in\N$, we consider the partition 
%
\[
\mathcal{J}_{k,N}= \Biggl(\bigcup_{i=-2^kN}^{(2^k-1)N}
\biggl]\frac
{i}{2^k},\frac{i+1}{2^k} \biggr] \Biggr)\,\cup\,]N,+\infty[\, \cup\,]{-}\infty,-N],
\]
and set $\gamma_{k,N}=\gamma_{\mathcal{J}_{k,N}}$. We have $\gamma
_{k,N}\leqc\gamma_{k+1,N}$ and $\gamma_{k,N}\leqc\gamma_{k,N+1}$.
Write $\gamma^{(n)}$ for $\gamma_{n,n}$. Let $f$ be a continuous
function that grows less than linearly in $\pm\infty$. There exist
$a,b>0$ such that $|f(x)|\leq a|x|+b$. Let $\varepsilon>0$ and $N$ be
such that $\int_{|x|\geq N} a|x|+b \,\d\gamma(x)\leq\eps/3$. The
function $f$ is uniformly continuous on $[-N,N]$. Thus, there exists
$\omega$ such that if $x,y\in[-N,N]$ and $|x-y|\leq\omega$ we have
$|f(x)-f(y)|\leq\eps/3$. Let $k$ be such that $1/2^k\leq\omega$. For
$n\geq\max\{k,N\}$, we have
\begin{eqnarray*}
 \bigl|\gamma(f)-\gamma^{(n)}(f)\bigr|&\leq& \biggl\llvert \int
_{-N}^N f \,\d \gamma - \int_{-N}^N
f \,\d\gamma^{(n)} \biggr\rrvert
\\
&&{}+\biggl\llvert \int_{|x|\geq N}f \,\d\gamma\biggr\rrvert +
\biggl\llvert \int_{|x|\geq
N}f \,\d\gamma^{(n)}\biggr\rrvert
\leq\frac{\eps}3+\frac{\eps}3+\frac
{\eps}3.
\end{eqnarray*}
The first two estimates are a consequence of our preparations: To see
this, note that
\[
\biggl\llvert \int_{|x|\geq N}f \,\d\gamma^{(n)}\biggr
\rrvert \leq\int_{|x|\geq
N}a|x|+b \,\d\gamma^{(n)}\leq\int
_{|x|\geq N}a|x|+b \,\d\gamma,
\]
where the convexity of $x\mapsto a|x|+b$ and $\gamma^{(n)}|_{\{|x|\geq
N\}}\leqc\gamma|_{\{|x|\geq N\}}$ are used.
\end{pf}

We are now finally in the position to complete the proof of Theorem~\ref
{BasicExistence}.
\begin{pf*}{Proof of {sufficiency} in Theorem~\ref{BasicExistence}}
Pick a sequence of finitely supported measures $(\mu_n)_{n\geq1}$
satisfying $\mu_n \leqc\nu$ such that $\mu_n$ converges to $\mu$
weakly. (By Lemma~\ref{approx_measure}, the sequence could be chosen to
be increasing in the convex order, but we do not need this here.)
We have already solved the problem of transporting a discrete
distribution. Pick martingale measures $(\pi_n)_{n\geq1}$ which
transport $\mu_n $ to $\nu$ for each $n$. To be able to pass to a
limit, we note that the
set
\[
\Omega:=\M(\mu, \nu)\cup\bigcup_{n=1}^\infty
\M(\mu_n, \nu)
\]
is compact. Hence, the sequence $(\pi_n)_{n\geq1}$ has an accumulation
point $\pi$ in $\Omega$ and of course $\pi$ is as desired: Its
marginals are $\mu$ and $\nu$ and it is a martingale transport plan.
\end{pf*}

We have thus seen a self-contained proof to Theorem~\ref
{BasicExistence}. Of course, the reader may object that the martingale
established in the course of the proof was in no sense canonical and
that the derivation was not constructive since we have invoked a
compactness argument to prove the existence in the case of a general
measure $\mu$. In Section~\ref{sec:curtain}, we will be concerned with a modification
of the above ideas which does not suffer from these shortfalls.

\subsection{A dual problem}\label{dual_pro}
We mention that the martingale transport problem \eqref{PrimalMart}
admits a dual formulation. In analogy to the dual part of the optimal
transport problem, one may consider 
\[
\mbox{Maximize} \quad \int\phi\,\d\mu+ \int\psi\,\d\nu,
\]
where one maximizes over all functions $\phi\in L^1(\mu), \psi\in
L^1(\nu)$ such that there exists $ \Delta\in{\mathcal C}_b(\R)$ satisfying
%
\begin{equation}
\label{DualCond} c(x,y)\geq\phi(x)+\psi(y)+\Delta(x) (y-x)
\end{equation}
for all $x,y\in\R$.
Denote the corresponding supremal value by $D$.
The inequality $D\leq C_M(\mu, \nu)$ then follows by integrating
\eqref
{DualCond} against $\pi\in\M(\mu, \nu)$. In the case of lower
semicontinuous costs $c$, the duality relation $D= C_M(\mu, \nu)$ is
established in \cite{BeHePe11}, Theorem~1.1. We also note that the dual
part of the problem appears naturally in mathematical finance where it
has a canonical interpretation in terms of \emph{replication}. We refer
to \cite{BeHePe11} for more details on this topic.

Duality results for a continuous time martingale transport problem are
obtained by Galichon, Henry-Labordere, Touzi \cite{GaHeTo11} and
Dolinsky and Soner \cite{DoSo12}.
\section{A short proof of the variational lemma}\label{sec:gamma}
The aim of this section is to establish the \emph{variational lemma},
Lemma~\ref{GlobalLocal}.
That is, for a given optimal martingale transport plan $\pi$ we want to
construct a Borel set $\Gamma$, $\pi(\Gamma)=1$ such that the
following holds:
if $\alpha$ is a measure on $\R\times\R$ with $|\operatorname
{spt}(\alpha)|<\infty$
and $\operatorname{spt}(\alpha) \subseteq\Gamma$ then we have $\int
c \,\d\alpha
\leq\int c \,\d\alpha'$ for every competitor $\alpha'$ of $\alpha$.

As mentioned above, this result can be viewed as a substitute for the
characterization of optimality through the notion of $c$-cyclical
monotonicity in the classical setup. Under mild regularity assumptions,
it is not too hard to show that a transport plan $\pi$ which is optimal
for the (usual) transport problem is $c$-cyclically monotone; we refer
to \cite{Vi09}, Theorem~5.10. 
However, this approach does not translate effortlessly to the
martingale case. Roughly speaking, the main problem in the present
setup is that the martingale condition makes manipulation of transport
plans a relatively delicate issue.

Instead, we give here a proof of Lemma~\ref{GlobalLocal} that is based
on certain measure theoretic tools: It requires a general duality
theorem of Kellerer (\cite{Ke84}, Lemma~1.8(a), Corollary~2.18), which in
turn requires Choquet's capacability theorem \cite{Ch59}.\hskip.2pt\footnote{This
approach is inspired by \cite{BeGoMaSc08} where $c$-cyclical
monotonicity is linked to optimality with the help of Kellerer's
result.} See the \hyperref[app]{Appendix} for an alternative and constructive proof of
the variational lemma.

%

%

The crucial ingredient is the following result.

\begin{them}\label{KellDich}
Let $(Z, \zeta)$ be a Polish probability space and $M\subseteq Z^n$.
Then either of the following holds true:
\begin{longlist}[(1)]
\item[(1)] There exist subsets $(M_i)_i$ of $Z^n$ such that $\zeta(\proj^i
M_i)=0$ for $i=1,\ldots,n$ and
\[
M\subseteq\bigcup_{i=1}^n
M_i.
\]
\item[(2)] There exists a measure $\gamma$ on $Z^n$ such that $\gamma(M)>0$
and $\proj^i_{\#}\gamma\leq\zeta$ for $i=1,\ldots, n$.
\end{longlist}
\end{them}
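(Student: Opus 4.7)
The plan is to recast the dichotomy as a duality between the ``admissible mass'' one can put on $M$ and the ``cheapest cover'' of $M$ by cylinders over $\zeta$-small sets, and then to use Choquet's capacitability theorem to pass from open sets to general Borel (or analytic) $M$. First I would let $\mathcal{A}$ denote the convex set of Borel measures $\gamma$ on $Z^n$ with $\proj^i_\#\gamma \leq \zeta$ for every $i = 1, \ldots, n$. Since $\zeta$ is Radon on the Polish space $Z$, Prohorov's theorem yields tightness, so $\mathcal{A}$ is weakly compact. Set $\kappa(B) := \sup_{\gamma \in \mathcal{A}} \gamma(B)$. Case $(2)$ holds precisely when $\kappa(M) > 0$: any $\gamma \in \mathcal{A}$ with $\gamma(M) > 0$ already witnesses it, so attainment of the supremum is not needed. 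The whole content of the theorem therefore reduces to the implication $\kappa(M) = 0 \Rightarrow$ case $(1)$.

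The technical core of the proof is a min-max identity for open $U \subseteq Z^n$:
\[
\kappa(U) = \inf\Bigl\{\sum_{i=1}^n \zeta(A_i) : A_i \subseteq Z \text{ open and } U \subseteq \bigcup_{i=1}^n (\proj^i)^{-1}(A_i)\Bigr\}.
\]
The inequality ``$\leq$'' is immediate: for any such cover and any $\gamma \in \mathcal{A}$ one has $\gamma(U) \leq \sum_i \gamma((\proj^i)^{-1}(A_i)) \leq \sum_i \zeta(A_i)$. The reverse inequality is the main obstacle; I would obtain it through a Hahn-Banach or minimax argument, pairing $\mathcal{A}$ with $n$-tuples of nonnegative bounded continuous functions on $Z$ that encode coverings, and using the weak compactness of $\mathcal{A}$ to interchange $\sup$ and $\inf$. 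This is essentially the content of the abstract duality result of Kellerer invoked in the excerpt.

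With the identity in hand I would verify that $\kappa$ is a Choquet capacity: monotone, continuous from below on increasing sequences, and continuous from above on decreasing sequences of compacts. The first two follow from $\sigma$-additivity of each $\gamma \in \mathcal{A}$; the third from the upper semicontinuity of $\gamma \mapsto \gamma(K)$ for compact $K$ together with the weak compactness of $\mathcal{A}$. Choquet's capacitability theorem then extends the min-max identity to arbitrary analytic subsets of $Z^n$. To finish, assume $\kappa(M) = 0$: for each $k \in \N$ choose open sets with $M \subseteq \bigcup_{i=1}^n (\proj^i)^{-1}(A_i^{(k)})$ and $\sum_{i=1}^n \zeta(A_i^{(k)}) < 2^{-k}$, and set $B_i := \limsup_k A_i^{(k)}$, so that $\zeta(B_i) = 0$ by a Borel-Cantelli estimate. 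For any $x = (x_1, \ldots, x_n) \in M$ the pigeonhole principle yields an index $i^\ast$ with $x_{i^\ast} \in A_{i^\ast}^{(k)}$ for infinitely many $k$, hence $x_{i^\ast} \in B_{i^\ast}$. The sets $M_i := M \cap (\proj^i)^{-1}(B_i)$ then cover $M$ and satisfy $\proj^i M_i \subseteq B_i$, so $\zeta(\proj^i M_i) = 0$ as required for case $(1)$.
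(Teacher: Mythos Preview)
Your outline is correct and follows precisely the route the paper points to: the paper does not prove Theorem~\ref{KellDich} itself but cites \cite[Proposition 2.1]{BeGoMaSc08}, where the argument is exactly Kellerer's duality for open sets combined with Choquet's capacitability theorem, followed by the Borel--Cantelli/pigeonhole passage from $\kappa(M)=0$ to the null cylinders. One minor point worth flagging is that the statement as written places no measurability hypothesis on $M$, whereas your capacitability step requires $M$ to be analytic; this is harmless in the paper's application (the set $M$ there is Borel) but you should state the hypothesis explicitly.
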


We refer to \cite{BeGoMaSc08}, Proposition~2.1, for a detailed proof of
Theorem~\ref{KellDich} from Kellerer's result. 

\begin{pf*}{Proof of Lemma~\ref{GlobalLocal}}
Fix a number $n\in\N$. We want to construct a set $\Gamma_n$ for which
the optimality property holds for all $\alpha$ satisfying
$|\operatorname{spt}\alpha
|\leq n$. This set $\Gamma_n$ will satisfy $\pi(\Gamma_n)=1$. Clearly,
$\Gamma=\bigcap_{n\in\N}\Gamma_n$ is then as required to establish
the lemma.

For a fixed $n\in\N$, define a Borel set $M$ by
\[
M:= \lleft\{(x_i, y_i)_{i=1}^n
\dvtx \exists\alpha\mbox{ s.t. } %
\begin{array} {ll} \mbox{(1) $\alpha$
is a measure on $\R\times\R$, }
\\
\mbox{(2) $\operatorname{spt}\alpha\subseteq\bigl\{(x_i,y_i)
\dvtx i=1,\ldots , n\bigr\}$, and }
\\
\mbox{(3) $\exists$ competitor $\alpha'$ satisfying $\int c \,\d
\alpha' < \int c \,\d\alpha$} \end{array} %
 \rright
\}. %
\]
We then apply Theorem~\ref{KellDich} to the space $(Z, \zeta)=(\R
^2,\pi
)$ and the set $M$.

If we are in case (1), let $N$ be $\bigcup_{i=1}^n \proj^i(M_i)$ so
that $\pi(N)=0$ and
$M\subseteq(N\times Z^{n-1})\cup\cdots\cup(Z^{n-1}\times
N)=Z^n\setminus{(Z\setminus N)^n}$. We can then simply define $\Gamma
_n:= Z\setminus N=\R^2 \setminus N$ to obtain a set which does not
support any nonoptimal $\alpha$ with $|\operatorname{spt}\alpha|\leq
n$. Moreover,
$\pi(\Gamma_n)=1$ as we want, hence the proof is complete.

It remains to show that case (2) cannot occur. Striving for a
contradiction, we assume that there is a measure $\gamma$ such that
$\gamma(M)>0$ and $\proj^i_\#\gamma\leq\pi$ for $i=1,\ldots, n$.
Restricting $\gamma$ to $M$, we may of course assume that $\gamma(\R
\times\R\setminus M)=0$. Rescaling $\gamma$ if necessary, we may also
assume that $\proj^i_\#\gamma\leq\frac{1}n\pi$.

Consider the measure $\omega=\sum_{i=1}^n \proj^i_\#\gamma$ on $\R^2$.
It is smaller than $\pi$ and has positive mass. In particular $\mu
_\omega= \proj^x_\# \omega\leq\mu$. We will find a competitor
$\omega
'$ (recall Definition~\ref{def:competitor}) such that $\omega'$ leads
to smaller costs than $\omega$, that is,
\[
\int c(x,y) \,\d\omega'<\int c(x,y) \,\d\omega.
\]
If such a measure $\omega'$ exists then the measure $\pi-\omega
+\omega
'$ is a martingale transport plan which leads to smaller costs than
$\pi
$, contradicting the optimality of $\pi$. It remains to explain how
$\omega'$ is obtained. For each $p= ((x_1, y_1), \ldots,\break (x_n,
y_n) )\in(\R\times\R)^n$, let $\alpha_p$ be the measure which is
uniformly distributed on the set $\{(x_1, y_1), \ldots,  (x_n, y_n)\}$. Then
\[
\omega= \int_{p \in(\R\times\R)^n} \alpha_p \,\d\gamma(p).
\]
%
For each $p\in(\R\times\R)^n$, let $\alpha'_p$ be an optimizer of
the problem
\[
\mbox{Minimize}\quad \int_{(x,y)\in\R\times\R} c(x,y) \,\d\beta(x,y)\qquad \mbox {$
\beta$ competitor of $\alpha_p$.}
\]
%
We emphasize that $\alpha'_p$ exists and can be taken to depend
measurably on $p$. This follows, for instance, by calculating $\alpha'_p$
using the simplex algorithm.\footnote{It is well known that the optimal
transport problem for finite spaces falls into the realm of linear
programming; see, for instance, \cite{Vi03}, page~23. The same holds true
in the martingale case.}

As $\gamma$ is concentrated on $M$, for $\gamma$-almost all points $p$
the measure $\alpha'_p$ satisfies
\[
\int_{(x,y)\in\R\times\R} c(x,y) \,\d\alpha'_p(x,y)<
\int_{(x,y)\in\R
\times\R} c(x,y) \,\d\alpha_p(x,y).
\]
(Note that $\alpha'_p$ is in general not concentrated on the same set
as $\alpha_p$.) Then $\omega'$ defined by
\[
\omega'=\int_{p \in(\R\times\R)^n} \alpha'_p
\,\d\gamma(p)
\]
satisfies the above conditions as required. For instance, we have
\begin{eqnarray*}
\int_{\R\times\R} c \,\d\omega' &=&\int
_{p \in
(\R
\times\R)^n} \int_{(x,y)\in\R\times\R} c(x,y) \,\d
\alpha'_p(x,y) \,\d \gamma(p)
\\
& <& \int_{p \in(\R\times\R)^n} \int_{(x,y)\in\R\times\R} c(x,y) \,\d
\alpha_p(x,y) \,\d\gamma(p)= \int_{\R\times\R}c \,\d\omega.
\end{eqnarray*}
The other properties are checked analogously.
\end{pf*}
We note that the just given proof of Lemma~\ref{GlobalLocal} is likely
to extend to more general setups. In particular, we expect that the
result remains valid if martingale transport plans between higher
dimensional spaces and with a finite number of time steps [i.e.,
$(X_i)_{i=1}^n$ rather then just $X_1=X$ and $X_2= Y$] are considered.


Subsequently,
Lemma~\ref{GlobalLocal} will several times be applied in conjunction
with the following technical assertion. 
Given $\Gamma\subseteq\R^2$ we will use the notation $\Gamma_x$ for
$\{y\in\R\dvtx (x,y)\in\Gamma\}$.


\begin{lem}\label{AccumulatingBadPoints}
Let $k$ be a positive integer and $\Gamma\subseteq\R^2$. Assume also
that there are uncountably many $a\in\R$ satisfying $|\Gamma_a|\geq k$.

There exist $a$ and $b_1< \cdots< b_k\in\Gamma_a$ such that for every
$\eps>0$ one may find $a'>a$ and $b_1'< \cdots< b_k'\in\Gamma_{a'}$ with
\[
\max\bigl(\bigl|a-a'\bigr|, \bigl|b_1-b'_1\bigr|,
\ldots, \bigl|b_k-b'_k\bigr|\bigr)< \eps.
\]
Moreover, one may also find $ a''<a$ and $b_1''< \cdots< b_k''\in
\Gamma_{a''}$ with
\[
\max\bigl(\bigl|a-a''\bigr|,\bigl |b_1-b''_1\bigr|,
\ldots, \bigl|b_k-b''_k\bigr|\bigr)<
\eps.
\]
\end{lem}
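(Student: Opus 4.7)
The plan is to pick, for each $a \in A := \{a \in \R : |\Gamma_a| \geq k\}$, an ordered $k$-tuple $\phi(a) = (b_1(a), \ldots, b_k(a)) \in \Gamma_a^k$ with $b_1(a) < \ldots < b_k(a)$ (using the Axiom of Choice), and then to consider the associated ``graph'' $G := \{(a, \phi(a)) : a \in A\} \subseteq \R^{k+1}$. I will show that all but countably many $a \in A$ are such that $(a, \phi(a))$ is a \emph{two-sided} accumulation point of $G$ in the $\ell^\infty$-norm, where ``two-sided'' refers to the first coordinate. Once this is done, taking any such $a$ and setting $b_i := b_i(a)$, the desired conclusion follows immediately: for each $\eps > 0$ the right-accumulation property yields $a' > a$ with $|a-a'|<\eps$, and setting $b'_i := b_i(a') \in \Gamma_{a'}$ produces automatically ordered points within $\eps$ of the $b_i$; the left side is analogous.

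The key estimate is that the set $B$ of $a \in A$ where $(a, \phi(a))$ fails to be a right-accumulation point of $G$ is countable. For every $a \in B$ there is some $\eps_a > 0$ such that no $a' \in A$ satisfies simultaneously $a < a' < a + \eps_a$ and $|\phi(a) - \phi(a')|_\infty < \eps_a$. Write $B = \bigcup_{n, N \in \N} B_{n, N}$ with $B_{n, N} := \{a \in B : \eps_a \geq 1/n,\ |a| \leq N,\ |\phi(a)|_\infty \leq N\}$. Suppose some $B_{n, N}$ were uncountable; then the uncountably many points $(a, \phi(a)) \in [-N, N]^{k+1}$ would, by the pigeonhole principle applied to a partition of $[-N, N]^{k+1}$ into finitely many sub-boxes of side strictly less than $1/n$, force two distinct elements $a_1, a_2 \in B_{n,N}$, which we may assume satisfy $a_1 < a_2$, with $(a_1, \phi(a_1))$ and $(a_2, \phi(a_2))$ in a common sub-box. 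Then $a_2 \in (a_1, a_1 + 1/n) \subseteq (a_1, a_1 + \eps_{a_1})$ and $|\phi(a_1) - \phi(a_2)|_\infty < 1/n \leq \eps_{a_1}$, contradicting the definition of $\eps_{a_1}$. Hence $B$ is countable, and symmetric reasoning handles the analogous set of non-left-accumulation points.

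Since $A$ is uncountable and only countably many elements of $A$ fail to be two-sided accumulation points of $G$, such an $a$ exists and furnishes the required $a$ and $b_1 < \ldots < b_k \in \Gamma_a$. The main technical step is the pigeonhole/Cantor--Bendixson-style argument in the second paragraph: one must introduce the bounded truncation by $N$ in order to reduce to a finite partition of $[-N,N]^{k+1}$, and handle strict versus non-strict inequalities carefully so that the two pigeonholed elements genuinely witness a contradiction against the definition of $\eps_{a_1}$. Note that no regularity of $\Gamma$ is used; the argument is purely combinatorial, using only the uncountability of the set of fibers with at least $k$ points.
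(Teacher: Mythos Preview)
Your proof is correct and follows essentially the same strategy as the paper: both lift to the graph $\{(a,b_1^a,\ldots,b_k^a)\}\subseteq\R^{k+1}$, define right-isolated points via an $\eps_a$, stratify by a uniform lower bound on $\eps_a$, and derive a contradiction from uncountability. The only cosmetic difference is the final contradiction: you add a bounded truncation $|a|\le N$, $|\phi(a)|_\infty\le N$ and pigeonhole into finitely many sub-boxes, whereas the paper argues directly that the associated half-boxes $\{p\}+((0,\zeta/2)\times(-\zeta/2,\zeta/2)^k)$ are pairwise disjoint and invokes the impossibility of uncountably many disjoint open sets in $\R^{k+1}$ --- two standard packagings of the same separability fact.
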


\begin{pf}
Write $A$ for the set of all $a $ such that $|\Gamma_a|\geq k$ and pick
for each $a\in A$ distinct elements $b_1^a, \ldots, b_k^a\in\Gamma_a$.
Set $\Gamma_A=\{(a, b_1^a, \ldots, b_k^a)\dvtx a\in A\}$.
We call $(a, b_1^a, \ldots, b_k^a)\in\Gamma_A$ a \emph
{right-accumulation} point if for every $\eps>0$ there exists $a'\in
]a, a+\eps[$ such that $|b_i^{a}- b_i^{a'}| < \eps$ for every $i$. We
call it \emph{right-isolated} otherwise. If $p$ belongs to the set of
right-isolated points $I_r\subset\Gamma_A$, then there exists some
$\eps_p>0$ such that
\[
\bigl[\{p\}+ \bigl(]0,\eps_p[\,\times\,]{-}\eps_p,
\eps_p[^k \bigr) \bigr] \cap\Gamma_A=
\varnothing,
\]
where $+$ refers to the Minkowski sum of sets.

Assume for contradiction that the set $I_r$ is uncountable. Then there
exists some $\zeta>0$ such that $K=\{p\in I_r\dvtx \eps_p>\zeta\}$ is
uncountable. Given $p_1, p_2\in K$, we have $p_2\notin p_1+
((0,\zeta)\times(-\zeta,\zeta)^k )$. Since $p_1$ and $p_2 $ have
different first coordinates, this implies
\[
\bigl[\{p_1\} + \bigl(]0,\zeta/2[\,\times\,]{-}\zeta/2,\zeta/2
[^k \bigr) \bigr] \cap \bigl[\{p_2\} + \bigl(]0,
\zeta/2[\,\times\,]{-}\zeta/2,\zeta /2[^k \bigr) \bigr]= \varnothing.
\]
This is a contradiction since there cannot be uncountably many disjoint
open sets in $\R^{k+1}$.

It follows that all but countably many elements of $A$ are
right-accumulation points. Arguing the same way with left replacing
right we obtain the desired conclusion.
\end{pf}

\section{Existence of a monotone martingale transport plan: The
left-curtain transport plan} \label{sec:curtain}
A short way to prove that there exists some monotone martingale
transport plan would be to take a minimizer of problem \eqref
{PrimalMart} for $c(x,y)=h(y-x)$ where $h$ is chosen appropriately.
Then one may apply Lemma~\ref{GlobalLocal} to prove that this minimizer
is monotone. This kind of argument will be encountered in Sections~\ref
{sec:optimal} and \ref{sec:general} below. Here, however, we find it
useful to give a construction which yields more insight in the
structure of the martingale transport plan. In particular, it will also
allow us to prove the uniqueness of a monotone martingale transport
plan in Section~\ref{sec:unique} and it will not require any
assumptions on $\mu$ and $\nu$.

For our argument, we reconsider the construction used in Proposition~\ref{SimpleBasicExistence} and decide to transport the atoms $\delta_i$
of $\mu=\sum_{i}\delta_i$ to $\nu$ in a particular order, starting with
the left-most atom and continuing to the right. It turns out that one
can characterize the martingale coupling that we obtain in terms of an
\emph{extended convex order} and \emph{shadow} introduced below (see
Definition~\ref{defi_ext} and Lemma~\ref{MinimalImage}). These notions
enable us to adapt the construction directly to the continuous case,
thus making the approximation procedure used in Section \ref
{ConsMatPlan} obsolete.

\subsection{Potential functions}\label{pot_func}

An important tool in this section will be the so-called \emph{potential
functions}. For each $\mu\in\mathcal{M}$, we define the potential
function $u_\mu
\dvtx \R\to\R$ by
\[
u_\mu(x)=\int_{-\infty}^\infty|y-x| \,\d\mu(y)
\]
for $x\in\R$. Set $k=\mu(\R)$ and $m=\frac{1}{k}\int x \,\d\mu$.

\begin{pro}\label{meas2pot}
If $\mu$ is in $\mathcal{M}$ and $k=\mu(\R), m=\frac{1}{k}\int x
\,\d\mu$, then
$u_\mu$ has the following properties:
\begin{longlist}[(ii)]
\item[(i)]$u_\mu$ is convex,
%
\item[(ii)]$\lim_{x\to-\infty} u_\mu(x)-k|x-m|=0$ and $\lim_{x\to
+\infty}
u_\mu(x)-k|x-m|=0$.
\end{longlist}
Conversely, if $f$ is a function satisfying these properties for some
numbers $m\in\R$ and $k\in[0,+\infty[$, then there exists a unique
measure $\mu\in\mathcal{M}$ such that $f=u_\mu$. The measure $\mu$
is one-half
the second derivative $f''$ in the sense of distributions.
\end{pro}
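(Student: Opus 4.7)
For the forward direction, both properties of $u_\mu$ follow from fairly direct manipulation. The convexity (i) is immediate because $u_\mu$ is the integral of the family of convex functions $x\mapsto|y-x|$ against $\mu$. For the asymptotic behaviour (ii), I would split
$$u_\mu(x)=\int_{y\geq x}(y-x)\,\d\mu(y)+\int_{y<x}(x-y)\,\d\mu(y),$$
and show $u_\mu(x)+kx\to km$ as $x\to-\infty$. The second integral equals $2x\mu((-\infty,x))-\int_{y<x}y\,\d\mu$, and both of these terms vanish in the limit: the finiteness of the first moment gives $|x|\mu((-\infty,x))\to 0$, and $\int_{y<x}y\,\d\mu\to 0$ by dominated convergence. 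The symmetric computation handles $x\to+\infty$.

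For the converse, assume $f$ satisfies (i) and (ii) for parameters $k,m$, and set $\mu:=\tfrac{1}{2}f''$ in the sense of distributions. Since $f$ is convex, this is a nonnegative Radon measure. The first task is to identify the limits of the right derivative $f'_+$ at $\pm\infty$. Using monotonicity of $f'_+$, I would argue that if $\lim_{x\to+\infty}f'_+(x)$ were strictly less than (resp.\ greater than) $k$, then $f(x)-kx$ would diverge to $-\infty$ (resp.\ $+\infty$), contradicting property (ii) which forces $f(x)-kx\to -km$. Hence $f'_+(+\infty)=k$ and, symmetrically, $f'_+(-\infty)=-k$. It follows that $\mu(\R)=\tfrac{1}{2}(f'_+(+\infty)-f'_+(-\infty))=k$, so $\mu$ has the correct mass.

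The delicate step, which I expect to be the main obstacle, is establishing that $\mu$ has finite first moment and barycenter $m$. I would use Fubini and the identity $\mu((u,\infty))=\tfrac{1}{2}(k-f'_+(u))$ to compute
$$\int_0^\infty\mu((u,\infty))\,\d u=\tfrac{1}{2}\int_0^\infty(k-f'_+(u))\,\d u=\tfrac{1}{2}\lim_{x\to\infty}\bigl(kx-f(x)+f(0)\bigr)=\tfrac{1}{2}(f(0)+km),$$
where the limit is evaluated using (ii). The symmetric calculation gives $\int_0^\infty\mu((-\infty,-u])\,\d u=\tfrac{1}{2}(f(0)-km)$. Both are finite, so $\mu\in\m$, and taking the difference yields $\int y\,\d\mu(y)=km$, i.e.\ the barycenter is $m$.

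Having shown $\mu\in\m$ with the same $k,m$ as $f$, the forward direction applies to $u_\mu$. Since $(u_\mu)''=2\mu=f''$ distributionally, the difference $f-u_\mu$ is affine; since both $f$ and $u_\mu$ satisfy (ii) with identical $k,m$, this affine function tends to $0$ at $\pm\infty$ and must be identically $0$. Thus $f=u_\mu$. Uniqueness of $\mu$ is then immediate: any representing measure must equal $\tfrac{1}{2}f''$ in the distributional sense, which determines it completely.
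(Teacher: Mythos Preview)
Your argument is correct. The paper does not actually prove this proposition; it simply refers to \cite{HiRo11}, Proposition~2.1. Your sketch is the standard one and goes through without difficulty.

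One small slip: the ``second integral'' $\int_{y<x}(x-y)\,\d\mu(y)$ equals $x\mu((-\infty,x))-\int_{y<x}y\,\d\mu(y)$, not $2x\mu((-\infty,x))-\int_{y<x}y\,\d\mu(y)$. The factor $2$ only appears once you combine this with the corresponding term coming from the first integral after adding $kx$. This does not affect the argument, since the limiting facts you invoke---$|x|\mu((-\infty,x))\to 0$ and $\int_{y<x}y\,\d\mu\to 0$---are exactly what is needed.

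In the converse direction your computation of $\int_0^\infty \mu((u,\infty))\,\d u$ and its mirror image is clean and gives both $\mu\in\m$ and the barycenter in one stroke; the conclusion $f=u_\mu$ via ``affine and vanishing at infinity'' is the right way to finish.
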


\begin{pf}
See, for instance, the proof of Proposition~2.1 in \cite{HiRo12}.
\end{pf}

Let us list some relevant properties of potential functions.

\begin{pro} \label{convergence}
Let $\mu$ and $\nu$ be in $\mathcal{M}$.
\begin{itemize}
\item If $\mu$ and $\nu$ have the same mass, $\mu\leqc\nu$ is
equivalent to $ u_\mu\leq u_\nu$.
\item We have $ \mu\leq\nu$ if and only if $u_\mu$ has smaller
curvature than $u_\nu$. More precisely, $\mu\leq\nu$ if and only if
$u_\nu-u_\mu$ is convex.
\item A sequence of measures $(\mu_n)_n$ in $\mathcal{M}$ with mass
$k$ and mean
$m$ converges weakly in $\mathcal{M}$ to some $\mu$ if and only if
$(u_{\mu
_n})_n$ converges pointwise to the potential function of some $\mu'\in
\mathcal{M}
$. In that case, $\mu=\mu'$.
\end{itemize}
\end{pro}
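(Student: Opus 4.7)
For the first item, I would argue both implications separately. The forward direction is essentially by definition: the function $y\mapsto |y-x|$ is convex for each fixed $x$, so if $\mu\leqc\nu$ then $u_\mu(x)=\int|y-x|\,\d\mu(y)\leq\int|y-x|\,\d\nu(y)=u_\nu(x)$. For the converse, assume $\mu$ and $\nu$ share a common mass $k$ and satisfy $u_\mu\leq u_\nu$. First I would extract equality of barycenters from Proposition~\ref{meas2pot}(ii): as $x\to+\infty$, $u_\mu(x)-k(x-m_\mu)\to 0$ and similarly for $\nu$, forcing $m_\mu\geq m_\nu$; the symmetric argument at $-\infty$ gives $m_\mu=m_\nu$. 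Then I would recall the criterion mentioned in Paragraph~\ref{ParBasicNotions}: for measures sharing mass and mean, convex order is equivalent to $\int(y-t)_+\,\d\mu\leq\int(y-t)_+\,\d\nu$ for all $t\in\R$. Writing $|y-t|=(y-t)_++(t-y)_+$ and $(y-t)-(t-y)=(y-t)_+-(t-y)_+$, we see that the $(y-t)_+$-integrals can be recovered from $u_\mu(t)$ and the common mass/mean, so $u_\mu(t)\leq u_\nu(t)$ for every $t$ does imply $\mu\leqc\nu$.

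The second item would follow almost immediately from the identification of the measure as half the distributional second derivative of the potential function (Proposition~\ref{meas2pot}). Since $u_\mu$ and $u_\nu$ are both convex and share the same decay property in the sense of $(ii)$, the function $u_\nu-u_\mu$ has distributional second derivative $2(\nu-\mu)$. A locally integrable function is convex iff its distributional second derivative is a non-negative measure; thus $u_\nu-u_\mu$ is convex iff $\nu-\mu\geq 0$, which is precisely $\mu\leq\nu$.

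For the third item, the easy direction is that weak convergence in $\m$ tests against all continuous functions of at most linear growth, in particular against each $y\mapsto|y-t|$, yielding pointwise convergence $u_{\mu_n}(t)\to u_\mu(t)$. For the converse, assume $(u_{\mu_n})_n$ converges pointwise to some function $g$. The sequence is tight: all $\mu_n$ share mass $k$, and $u_{\mu_n}(0)=\int|y|\,\d\mu_n(y)$ converges so is bounded, giving uniform control on the tails. By Prokhorov, some subsequence converges weakly to a measure $\tilde\mu\in\m$ with mass $k$. To upgrade to convergence in $\m$, note that the uniform bound on $\int|y|\,\d\mu_n$ combined with the extra assumption forces convergence of the first moments along any weakly convergent subsequence (since the identities $u_{\mu_n}(\pm R)\to g(\pm R)$ pin down the mass escaping to $\pm\infty$ in the limit). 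Hence $\int|y-t|\,\d\mu_{n_k}(y)\to\int|y-t|\,\d\tilde\mu(y)$ for every $t$, i.e.\ $u_{\tilde\mu}=g$. Uniqueness in the converse of Proposition~\ref{meas2pot} then identifies every accumulation point with the same $\mu=\tilde\mu$, so the full sequence converges in $\m$.

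The main obstacle I anticipate is the last converse: extracting genuine $\m$-convergence (including convergence of the first moment) from pointwise convergence of the potentials, rather than just tight weak convergence of measures. The key technical device is to exploit the asymptotic behavior of $u_{\mu_n}$ near $\pm\infty$, provided by Proposition~\ref{meas2pot}(ii), together with the uniform mass and mean hypothesis, to prevent mass or first moment from leaking to infinity along the approximating sequence.
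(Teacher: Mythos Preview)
The paper's own proof consists entirely of citations to \cite{HiPrRoYo11} and \cite{HiRo11} for items one and three, together with the one-line observation that item two follows from Proposition~\ref{meas2pot} because $2\mu$ and $2\nu$ are the distributional second derivatives of $u_\mu$ and $u_\nu$. Your proposal therefore does strictly more than the paper: you actually sketch the arguments. Your treatment of item two matches the paper's reasoning exactly, and your argument for item one is correct (the reference should be to the remark after Definition~\ref{ConvexOrderDef} rather than to Paragraph~\ref{ParBasicNotions}, but this is cosmetic).

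For item three, your plan is sound but the step you flag as the ``main obstacle'' is indeed the only place requiring real work, and your parenthetical hint is a bit thin. One clean way to close it: from $u_{\mu_n}(R)+u_{\mu_n}(-R)-2kR=2\int(|y|-R)_+\,\d\mu_n$ you see that this quantity is non-increasing in $R$ for each $n$ and converges, for fixed $R$, to $g(R)+g(-R)-2kR$. Using that each $u_{\mu_n}$ is convex with slopes in $[-k,k]$ one checks that the pointwise limit $g$ also has asymptotic slopes $\pm k$, hence $g(R)+g(-R)-2kR\to 0$ as $R\to\infty$; combining this with the monotonicity in $R$ gives the uniform integrability needed to pass from weak convergence to convergence in $\m$. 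With that detail supplied your argument goes through.
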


\begin{pf}
For the first property, see \cite{HiPrRoYo11}, Exercise 1.7, for the
third \cite{HiRo12}, Proposition~2.3. The second property is a
consequence Proposition~\ref{meas2pot}. Namely, $2\mu$ and $2\nu$ are
the second derivatives of $u_\mu$ and $u_\nu$.
\end{pf}
%

We will need the following generalization of the convex order.

\begin{defi}[(Extended convex order on $\mathcal{M}$)] \label{defi_ext}
Let $\mu$ and $\nu$ be measures in~$\mathcal{M}$. We write $\mu
\leqe\nu$ and
say that $\nu$ is greater than $\mu$ in the \emph{extended convex
order} if for any \emph{nonnegative} convex function $\varphi\dvtx\R\to
\R$
we have
\[
\int\varphi\,\d\mu\leq\int\varphi\,\d\nu.
\]
\end{defi}

The partial order $\leqc$ on $\mathcal{M}$ is extended by the order
$\leqe$ in
the sense that $\leqe$ keeps the old relations and gives rise to new
ones. By definition, if $\mu\leqc\nu$ then we have $\mu\leqe\nu$
(since nonnegative convex functions are convex). But if $\mu\leq\nu$,
we will also have $\mu\leqe\nu$ (as nonnegative convex functions are
nonnegative). Note that in this second case the two measures may have
neither the same mass nor the same barycenter.

As $x\mapsto1$ is a convex function, a trivial consequence of $\mu
\leqe\nu$ is $\mu(\R)\leq\nu(\R)$. More precisely, let us prove that
if the two measures have the same mass, $\mu\leqe\nu$ is equivalent to
$\mu\leqc\nu$. Indeed if $\mu\leqe\nu$, for a convex function
$\varphi
\dvtx \R\to\R$ and any (negative) constant $y$, the convex function
$\varphi
_y\dvtx x\mapsto\varphi(x)\vee y$ satisfies $\int\varphi_y \,\dd\mu\leq
\int
\varphi_y \,\dd\nu$ because $\int\varphi_y-y \,\dd\mu\leq\int
\varphi
_y-y \,\dd\nu$. Letting $y$ go to $-\infty$ we obtain $\int\varphi
\,\dd
\mu\leq\int\varphi\,\dd\nu$. Hence, $\mu\leqc\nu$.

In terms of $\leqc$, the extend convex order can be characterized as follows.

\begin{pro}\label{ExtendedOrderChar}
Assume that $\mu\leqe\nu$. Then there exists a measure $\theta\leq
\nu
$ such that $\mu\leqc\theta$.
\end{pro}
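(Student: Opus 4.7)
The plan is to construct $\theta$ by augmenting $\mu$ with a single atom $\rho_0$ so that $\mu+\rho_0$ has the same mass and mean as $\nu$ and is dominated by $\nu$ in the \emph{ordinary} convex order; a martingale kernel from $\mu+\rho_0$ to $\nu$ (which exists by Theorem~\ref{BasicExistence}), restricted to $\mu$, will then produce the desired $\theta$.

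First I would unpack the hypothesis $\mu\leqe\nu$. Testing against the constant $1$ yields $k_\mu:=\mu(\R)\leq\nu(\R)=:k_\nu$, while testing against $y\mapsto|y-x|$ (over all $x\in\R$) yields $u_\mu\leq u_\nu$. The trivial case $k_\mu=0$ is handled by $\theta=0$. When $k_\mu=k_\nu>0$, the asymptotic expansions $u_\mu(x)-k_\mu|x-m_\mu|\to 0$ and $u_\nu(x)-k_\nu|x-m_\nu|\to 0$ of Proposition~\ref{meas2pot}, combined with $u_\mu\leq u_\nu$ taken at $x\to\pm\infty$, force $m_\mu=m_\nu$; then for any convex $\varphi$, picking an affine minorant $L\leq\varphi$ and applying $\leqe$ to $\varphi-L\geq 0$ yields $\mu\leqc\nu$, because $\int L\,\dd\mu=k_\mu L(m_\mu)=k_\nu L(m_\nu)=\int L\,\dd\nu$, so $\theta:=\nu$ works.

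In the main case $k_\mu<k_\nu$, I set $m_\rho:=(k_\nu m_\nu-k_\mu m_\mu)/(k_\nu-k_\mu)$ and $\rho_0:=(k_\nu-k_\mu)\delta_{m_\rho}$, chosen precisely so that $\mu+\rho_0$ has mass $k_\nu$ and barycenter $m_\nu$. The key claim is $\mu+\rho_0\leqc\nu$. To prove it, I fix a convex $\varphi$ and let $L(x):=\varphi(m_\rho)+p(x-m_\rho)$, with $p\in\partial\varphi(m_\rho)$, be an affine support of $\varphi$ at $m_\rho$; then $L\leq\varphi$, so $\varphi-L$ is non-negative convex and $\leqe$ gives $\int(\varphi-L)\,\dd\mu\leq\int(\varphi-L)\,\dd\nu$. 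A direct computation that exploits the definition of $m_\rho$ yields $\int L\,\dd\nu-\int L\,\dd\mu=(k_\nu-k_\mu)\varphi(m_\rho)=\int\varphi\,\dd\rho_0$ (the coefficient of $p$ vanishes by the very choice of $m_\rho$), so rearranging produces $\int\varphi\,\dd(\mu+\rho_0)\leq\int\varphi\,\dd\nu$.

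To conclude, I invoke Theorem~\ref{BasicExistence} to obtain a martingale transport plan $\pi\in\M(\mu+\rho_0,\nu)$, disintegrate $\pi=(\mu+\rho_0)\otimes K$ with each $K_x$ a probability measure of barycenter $x$, and define $\theta(A):=\int K_x(A)\,\dd\mu(x)$. Then $\theta\leq\int K_x\,\dd(\mu+\rho_0)=\nu$; the mass and mean of $\theta$ agree with those of $\mu$; and Jensen applied to each $K_x$ gives $\int\varphi\,\dd\theta\geq\int\varphi\,\dd\mu$ for every convex $\varphi$, yielding $\mu\leqc\theta$. The main obstacle lies in identifying the right auxiliary measure $\rho_0$: concentrating the excess mass $k_\nu-k_\mu$ as a single atom at $m_\rho$ is exactly what is needed so that an affine support of $\varphi$ at $m_\rho$ produces the extra term $(k_\nu-k_\mu)\varphi(m_\rho)$ in $\int L\,\dd\nu-\int L\,\dd\mu$, thereby upgrading the hypothesis $\mu\leqe\nu$ (which controls only non-negative test functions) into the full convex order statement $\mu+\rho_0\leqc\nu$.
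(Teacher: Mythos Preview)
Your proof is correct, but the route is genuinely different from the paper's. The paper constructs $\theta$ \emph{explicitly} as a quantile restriction of $\nu$: writing $\nu=(G_\nu)_\#\lambda_{[0,1]}$, one removes a middle interval $[\zeta,\zeta+(1-k)]$ of quantiles, choosing $\zeta$ by the intermediate value theorem so that the remaining piece $\theta=(G_\nu)_\#\lambda^\zeta$ has mass $k_\mu$ and the same barycenter as $\mu$; the relation $\mu\leqc\theta$ is then checked by hand using that $\theta$ equals $\nu$ outside an interval on which $\theta$ vanishes. Your argument instead reduces to the ordinary convex order by padding $\mu$ with a single atom $\rho_0$ at the ``missing'' barycenter, and then passes through Theorem~\ref{BasicExistence} (Strassen) and a disintegration to extract $\theta$ as the image of $\mu$ under the resulting martingale kernel. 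There is no circularity: Theorem~\ref{BasicExistence} is established in Section~\ref{sec:basic} independently of Section~\ref{sec:curtain}.

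Each approach has its virtue. Yours is conceptually clean: the affine-support trick at $m_\rho$ neatly upgrades $\leqe$ to $\leqc$ for $\mu+\rho_0$, and the restriction-of-a-kernel step makes $\theta\leq\nu$ and $\mu\leqc\theta$ automatic. The paper's construction is more elementary (it avoids invoking the existence of martingale couplings) and, crucially, yields a \emph{specific} $\theta$ that is later reused verbatim: in Lemma~\ref{MaximalImage} the very same quantile restriction is shown to be the \emph{maximal} element $T^\nu(\mu)$ of $F_\mu^\nu$ in the convex order. Your $\theta$ need not have that maximality property, so if you were writing the full section you would still have to supply the explicit construction for Lemma~\ref{MaximalImage}.
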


Of course, the converse statement is true as well: If there exists
$\theta$ such that $\mu\leqc\theta$ and $\theta\leq\nu$, then we have
also $\mu\leqe\nu$.
\begin{pf*}{Proof of Proposition \ref{ExtendedOrderChar}}
Let $\mu$ and $\nu$ satisfy $\mu\leqe\nu$. We can assume that $\nu
$ is
a probability measure and denote by $k$ and $m$ the mass,
respectively, the mean of $\mu$. We define a measure $\theta\leq\nu$
as follows. Consider the quantile function $G_\nu$ of $\nu$. Recall
that $\lambda$ is the Lebesgue measure on $\R$. For a parameter
$\zeta
\in[0,k]$, we denote by $\lambda^\zeta$ the restriction of $\lambda
$ to
$[0,1]\setminus[\zeta,\zeta+(1-k)]$. This measure has mass $k$ as well
as does $\theta=(G_\nu)_\#\lambda^\zeta$. We now pick $\zeta$ such that
$\theta$ has mean $m$. To see that this can be done, we will apply the
intermediate value theorem in the same fashion as in the discussion
preceding Lemma~\ref{AtomShadowSplitting}:
To see that $m$ is indeed an intermediate value between the means of
$\theta$ obtained for $\zeta=0$ and $\zeta=k$, we consider the
nonnegative and convex functions $x\mapsto(x-G_\nu(1-k))_+$ and
$x\mapsto(G_\nu(k)-x)_+$ and integrate them against $\mu$ and $\nu$ in
the same way as we did above to obtain the inequalities in~\eqref
{SmallerLarger}. Clearly, the mean of $\theta$ depends continuously on
$\zeta$, and hence the intermediate value theorem yields the existence
of the desired $\zeta$.


We are now given two measures $\mu$ and $\theta$ of the same mass and
the same mean. Consider a convex function $\varphi$. We want to prove
that its integral with respect to $\mu$ is smaller than the one with
respect to $\theta$. For that, we can assume without loss of generality
$\varphi(G_\nu(\zeta))=\varphi(G_\nu(\zeta+(1-k)))=0$.
Then
\begin{eqnarray*}
\int\varphi\,\dd\mu(x)&\leq&\int\varphi_+(x) \,\dd\mu(x)
\\
&\leq& \int \varphi_+(x) \,\dd\nu(x)=\int\varphi_+(x) \,\dd\theta(x)=\int \varphi(x)
\,\dd\theta(x).
\end{eqnarray*}
This completes the proof.
\end{pf*}

\subsection{Maximal and minimal elements}
For $\mu\leqe\nu$, let $F_\mu^\nu$ be the set of measures $\eta$ such
that $\mu\leqc\eta$ and $\eta\leq\nu$. Note that the measures in
$F_\mu
^\nu$ have the same mass and the same barycenter as $\mu$. In the next
lemmas, we consider the partially ordered set $(F_\mu^\nu,\leqc)$ and
show that it has both a maximal and a minimal element.

\begin{lem}\label{MaximalImage}
For $\mu\leqe\nu$, the set $F_\mu^\nu$ has an element which is maximal
w.r.t. the convex order, that is, there exists $T^\nu(\mu)$ such that:
\begin{longlist}[(iii)]
\item[(i)]$T^\nu(\mu)\leq\nu$.
\item[(ii)]$\mu\leqc T^\nu(\mu)$.
\item[(iii)] If $\eta$ is another measure satisfying \textup{(i)} and \textup{(ii)} then we have
$\eta\leqc T^\nu(\mu)$.
\end{longlist}
\end{lem}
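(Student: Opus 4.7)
My plan is to realize $T^\nu(\mu)$ as the measure whose potential is
\[
f^*(x) \;:=\; \sup_{\eta\in F_\mu^\nu} u_\eta(x),
\]
working throughout via the bijection of Proposition~\ref{meas2pot}. I would first observe that since $\mu\leqc\eta$ forces $\eta$ to share the mass $k := \mu(\R)$ and mean $m$ of $\mu$, every element of $F_\mu^\nu$ is a measure of mass $k$ and mean $m$, and the defining constraints translate into: $u_\eta$ convex with the common asymptote $k|x-m|$ at $\pm\infty$, $u_\eta \geq u_\mu$, and $u_\nu - u_\eta$ convex.

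To check that $f^*$ is itself a potential function, convexity is automatic (a pointwise supremum of convex functions is convex), and the correct asymptote follows from the uniform estimate
\[
0 \;\leq\; u_\eta(x) - k(x-m) \;=\; 2\!\int_{y>x}(y-x)\,\d\eta(y) \;\leq\; 2\!\int_{y>x}(y-x)\,\d\nu(y),
\]
valid for every $\eta \in F_\mu^\nu$ and $x \geq m$ (and symmetrically for $x \leq m$); the right-hand side tends to $0$ as $x \to +\infty$. Proposition~\ref{meas2pot} then provides a unique measure $\eta^*$ with $u_{\eta^*} = f^*$. Setting $T^\nu(\mu) := \eta^*$, both (ii) and (iii) come essentially for free: $u_\mu \leq u_\eta \leq f^*$ for each $\eta \in F_\mu^\nu$ yields $\mu \leqc \eta^*$, while $u_\eta \leq f^* = u_{\eta^*}$ for every competitor is exactly $\eta \leqc \eta^*$.

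The real obstacle is (i), namely $\eta^* \leq \nu$, equivalently that $u_\nu - f^* = \inf_{\eta}(u_\nu - u_\eta)$ is convex; pointwise infima of convex functions are not convex in general. My plan to resolve this is to exhibit $f^*$ as the pointwise limit of $u_{\eta_n}$ along a sequence $(\eta_n) \subset F_\mu^\nu$ that is \emph{increasing} in convex order, so that $u_\nu - u_{\eta_n}$ becomes a pointwise \emph{decreasing} sequence of convex functions whose limit $u_\nu - f^*$ is then automatically convex. Producing such a monotone sequence reduces, via a diagonal argument against a countable dense subset of $\R$, to establishing upward directedness of $(F_\mu^\nu, \leqc)$: for $\eta_1, \eta_2 \in F_\mu^\nu$, I would build a common upper bound $\eta_3$ as the measure whose complement $\nu - \eta_3$ has potential equal to the largest convex minorant $g$ of $\min(u_\nu - u_{\eta_1},\, u_\nu - u_{\eta_2})$. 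The fact that both $u_\nu - u_{\eta_i}$ share the same asymptotics and the Jensen lower bound $(k_\nu - k)|x - \tilde m|$, with $\tilde m := (k_\nu m_\nu - km)/(k_\nu - k)$, makes $g$ itself a valid potential, and an analysis of the affine segments produced by the convexification shows that $\nu - \eta_3$ is indeed a positive measure, placing $\eta_3$ in $F_\mu^\nu$ with $u_{\eta_3} \geq \max(u_{\eta_1}, u_{\eta_2})$. The delicate verification of this last convexity is where I expect the main technical work to lie.
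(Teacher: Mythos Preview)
Your approach is the natural ``dual'' of the paper's proof of Lemma~\ref{MinimalImage} (the shadow): take the pointwise supremum of the potentials in $F_\mu^\nu$ instead of the infimum, and then fight to show the result still lies in $F_\mu^\nu$. The strategy is sound, and your bound $u_\eta(x)-k(x-m)\le 2\int_{y>x}(y-x)\,\d\nu(y)$ is a clean way to get the correct asymptotics for $f^*$. The directedness step you leave open is genuinely the crux, and your sketch (convex minorant of $\min(u_\nu-u_{\eta_1},u_\nu-u_{\eta_2})$) can be made to work, though verifying that $u_\nu-g$ is convex across the affine patches created by the convexification is indeed the delicate part you flag; as written the argument is incomplete.

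The paper, however, takes a much shorter and more concrete route that bypasses all of this. It simply \emph{names} the maximal element: it is the measure $\theta$ already built in Proposition~\ref{ExtendedOrderChar}, namely the restriction of $\nu$ to the complement of a suitable quantile interval $I$ (so that $\theta$ has the mass and barycenter of $\mu$). Because $\theta$ agrees with $\nu$ on $\R\setminus\bar I$ and vanishes on $I$, any competitor $\eta\le\nu$ with the same mass and barycenter satisfies that $\eta-(\eta\wedge\theta)$ is concentrated on $\bar I$ while $\theta-(\eta\wedge\theta)$ is concentrated on $\overline{\R\setminus I}$; Example~\ref{ex_order2} then gives $\eta\leqc\theta$ in one line. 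So the paper trades your abstract sup-construction for an explicit candidate whose extremal geometry (``push the mass of $\nu$ as far out as possible'') makes maximality obvious. A pleasant by-product is that this $\theta$ is also a common upper bound for any two $\eta_1,\eta_2\in F_\mu^\nu$, which would in fact close the directedness gap in your argument for free.
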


\begin{pf}
Consider the measure $\theta$ defined as in the proof of Proposition~\ref{ExtendedOrderChar} and let $\eta$ be another measure in $F_\mu
^\nu
$. We know that $\theta$ is concentrated outside an open interval $I$
and that it coincides with $\nu$ on ${\R\setminus\bar{I}}$ so that
$\theta|_{\R\setminus\bar{I}}\geq\eta|_{\R\setminus\bar{I}}$. Thus,
$\eta-(\eta\wedge\theta)$ is concentrated on $\bar I$ whereas
$\theta
-(\eta\wedge\theta)$ is concentrated on $\overline{\R\setminus
I}$. It
follows from Example~\ref{ex_order2} that $\eta\leqc\theta$.
\end{pf}

The existence of a minimal element is more involved and will play an
important role subsequently.

\begin{lem}[(Shadow embedding)]\label{MinimalImage}
Let $\mu,\nu\in\mathcal{M}$ and assume $\mu\leqe\nu$. Then there
exists a
measure $S^\nu(\mu)$, called the \emph{shadow} of $\mu$ in $\nu$,
such that:
\begin{longlist}[(iii)]
\item[(i)]$S^\nu(\mu)\leq\nu$.
\item[(ii)]$\mu\leqc S^\nu(\mu)$.
\item[(iii)] If $\eta$ is another measure satisfying \textup{(i)} and \textup{(ii)}, then we
have $S^\nu(\mu)\leqc\eta$.
\end{longlist}
As a consequence of \textup{(iii)}, the measure $S^\nu(\mu)$ is uniquely
determined. Moreover, it satisfies the following property:
\begin{longlist}[(iii$'$)]
\item[(iii$'$)] If $\eta$ is a measure such that $\eta\leq\nu$
and $\mu\leqe\eta$, then we have $S^\nu(\mu)\leqe\eta$.
\end{longlist}
\end{lem}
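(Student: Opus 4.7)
The approach is to translate every constraint into a statement about potential functions and solve the resulting ``sandwich'' problem by taking a largest convex minorant.

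By Proposition~\ref{meas2pot} and the potential-function characterisations of $\leqc$ and $\leq$, the set $F_\mu^\nu$ corresponds to potentials $u_\eta$ which are convex, satisfy $u_\mu \leq u_\eta$ and are such that $u_\nu - u_\eta$ is convex (this encodes $\eta\leq\nu$), all with the same asymptotic slopes $\pm k$ as $u_\mu$, where $k=\mu(\R)$. For measures of equal mass and mean, $\eta_1\leqc \eta_2$ is equivalent to $u_{\eta_1}\leq u_{\eta_2}$, so constructing the shadow reduces to finding the pointwise \emph{smallest} such~$u^{*}$. Testing $\mu\leqe\nu$ against the non-negative convex function $y\mapsto |y-x|$ gives $u_\mu\leq u_\nu$ pointwise, so $f:=u_\nu-u_\mu$ is non-negative and asymptotically comparable to $(k_\nu-k)|x|$.

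I would let $\hat f$ be the \emph{largest convex minorant} of $f$ and set $u^{*}:=u_\nu-\hat f$. By construction $u^{*}\geq u_\mu$, and $u_\nu-u^{*}=\hat f$ is convex and non-negative, which, once we know $u^{*}$ is itself a potential, delivers $\eta^{*}\leq\nu$ for the measure $\eta^{*}$ with $u_{\eta^{*}}=u^{*}$. A short asymptotic analysis, based on the fact that for each $s\in(-(k_\nu-k),k_\nu-k)$ the function $f(x)-sx$ is bounded below on $\R$, ensures that $\hat f$ inherits the exact asymptotic slopes $\pm(k_\nu-k)$ of $f$, so $u^{*}$ has the right asymptotic behaviour to come from a mass-$k$ measure with the same mean as $\mu$.

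The main obstacle is to verify that $u^{*}$ is itself convex. On any maximal open interval $J$ where $\hat f<f$ the largest-minorant property forces $\hat f$ to be affine on~$J$, hence $u^{*}=u_\nu-(\text{affine})$ is convex on~$J$; on the complement $\{\hat f=f\}$ one has $u^{*}=u_\mu$, which is convex. Convexity across a contact endpoint $x_0$ of such an interval~$J$ reduces to comparing the left derivative of $u_\mu$ at $x_0$ with $u_\nu'(x_0^+)-s$, where $s$ is the slope of $\hat f$ on~$J$; since $\hat f\leq f$ with equality at $x_0$, one has $s\leq f'(x_0^+)=u_\nu'(x_0^+)-u_\mu'(x_0^+)$, and the monotonicity of $u_\mu'$ then closes the gap. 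This piecewise-and-gluing argument is where I expect most of the technical care to be needed, especially if one wants to avoid appealing to differentiability and instead work directly with one-sided derivatives of convex functions.

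Granting the convexity of $u^{*}$, the minimality property~(iii) is immediate: for any $\eta\in F_\mu^\nu$, the function $u_\nu-u_\eta$ is a convex minorant of~$f$, so $u_\nu-u_\eta\leq\hat f$ and $u_\eta\geq u^{*}=u_{\eta^{*}}$, which via the potential criterion gives $\eta^{*}\leqc\eta$. Uniqueness of $S^\nu(\mu)$ follows from antisymmetry of~$\leqc$. For the extended statement~(iii'), given $\eta\leq\nu$ with $\mu\leqe\eta$, I would apply Proposition~\ref{ExtendedOrderChar} \emph{inside}~$\eta$ to produce a measure $\theta\leq\eta$ with $\mu\leqc\theta$; then $\theta\leq\nu$ places $\theta$ in $F_\mu^\nu$, so (iii) yields $S^\nu(\mu)\leqc\theta\leq\eta$. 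The transitivity of $\leqe$, together with the fact that both $\leqc$ and $\leq$ imply $\leqe$, then delivers $S^\nu(\mu)\leqe\eta$.
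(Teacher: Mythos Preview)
Your approach is correct and genuinely different from the paper's, though it attacks the dual difficulty.  Both routes encode the problem via potential functions, but you fix the curvature constraint first (by defining $u^*=u_\nu-\hat f$ with $\hat f$ the largest convex minorant of $u_\nu-u_\mu$) and then have to \emph{check convexity of $u^*$}, whereas the paper fixes convexity first (defining $\bar h$ as the convex closure of $\inf_{h\in u_F}h$) and then has to \emph{check that $u_\nu-\bar h$ is convex}.  The paper resolves its hard step by a short algebraic trick: writing $\bar h$ as an infimum over the enlarged family $U=\{x\mapsto \tfrac{b\,h_1(x-a)+a\,h_2(x+b)}{a+b}:a,b\ge 0,\ h_1,h_2\in u_F\}$ and showing directly that each $h\in U$ has $h-u_\nu$ concave (this uses only the convexity of $u_\nu$).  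Your hard step, by contrast, proceeds through the classical structure of the largest convex minorant (affine on every component of $\{\hat f<f\}$) together with the one-sided derivative comparison $ \hat f'_{+}(x_0)\le f'_{+}(x_0)$ and $\hat f'_{-}(x_0)\ge f'_{-}(x_0)$ at contact points, which indeed yields $(u^*)'_-(x_0)\le u_\mu'_-(x_0)\le u_\mu'_+(x_0)\le (u^*)'_+(x_0)$.

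Two comments on the execution.  First, your sketch only handles a contact endpoint with contact on one side; to finish you should either treat isolated contact points and accumulation points of $\{\hat f=f\}$, or argue once and for all in the distributional sense that $\hat f''\le u_\nu''$ (on affine pieces $\hat f''=0$, on the interior of the contact set $\hat f''=u_\nu''-u_\mu''\le u_\nu''$, and at any contact point the jump of $\hat f'$ is bounded by the jump of $f'=(u_\nu-u_\mu)'$, hence by that of $u_\nu'$).  Second, the asymptotic check is fine: $f$ has slopes $\pm(k_\nu-k)$ at infinity with intercepts $\mp(k_\nu m_\nu-km)$, and $\hat f$ inherits both, so $u^*=u_\nu-\hat f$ is the potential of a mass-$k$ measure with the same barycenter as~$\mu$.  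Your derivation of~(iii$'$) via Proposition~\ref{ExtendedOrderChar} is exactly the paper's argument.

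In short: same destination, dual route.  The paper's averaged-translates identity is slicker and case-free; your largest-convex-minorant argument is more geometric but, as you anticipate, needs a bit more bookkeeping at contact points to be complete.
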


Note that if $\mu\leqc\nu$, that is, if $\mu$ and $\nu$ have the same
mass, then the shadow $S^\nu(\mu)$ is just $\nu$ itself because this is
the only measure $\eta$ with mass $\mu(\R)=\nu(\R)$ that satisfies
$\eta
\leq\nu$.
\begin{pf*}{Proof of Lemma~\ref{MinimalImage}}
First observe that (iii$'$) follows from Proposition~\ref
{ExtendedOrderChar} applied to $\mu$ and $\eta$.

We write $k$ (resp., $m$) for the mass (resp., the mean) of $\mu$.
The principal strategy of our proof is to rewrite the problem in terms
of potential functions. Set $f=u_\mu$ and $g=u_\nu$.

The task is to find a convex function $h$ (corresponding to $u_{S^\nu
(\mu)}$) such that:
\begin{longlist}[(1)]
\item[(1)]$h-g$ is concave, that is, $h''\leq g''$ in a weak sense.
\item[(2)]$f\leq h$ and $\lim_{|x|\to\infty} h(x)-k |x-m|=0$.
\item[(3)] We have $h\leq h_2$ for all functions $h_2$ in the set
\[
U_F=\bigl\{\mbox{$h$ is convex and satisfies (1) and (2)}\bigr\}=
\{h=u_\eta \dvtx \eta \in F\}.
\]
%
\end{longlist}
We note that by
Proposition~\ref{ExtendedOrderChar} there exist functions satisfying
conditions (1) and~(2). Hence, the sets $F=\{\eta\dvtx \mu\leqc\eta,
\eta\leq
\nu\}$ and $U_F$ 
are not empty. Looking for a function which also satisfies the third
property we define
%
\begin{equation}
\label{bar} \bar{h}=\inf_{h\in U_F}h.
\end{equation}
If this function is convex, which we shall show below, it will satisfy
the three required conditions. Conditions (2) and (3) are clear; let us
briefly prove (1): Every function $h\in U_F$ is ``less convex'' than
$g$, that is, the function $h-g$ is concave. Hence, $\bar{h}-g=(\inf_{h\in U_F} h)-g=\inf_{h\in U_F} (h-g)$ is also concave.

The convexity of $\bar{h}$ will be proved if we can establish that its
epigraph $\mathcal{E}(\bar{h})$ is convex, that is, that every segment
of $\R^2$ with both ends in $\mathcal{E}(\bar{h})$ is included in this
set. This will be the case if $U_F$ is stable under the following
operation: take $h_1,h_2$ in $U_F$ and let $h_{\min}$ be the convex
hull of $x\mapsto\min(h_1(x),h_2(x))$. 
More precisely,
\[
h_{\min}(x)=\inf_{ab\geq0, (a,b)\neq(0,0)}\frac{bh_1(x-a)+ah_2(x+b)}{a+b}.
\]
Since $\lim_{|c|\to\infty} (h_1-h_2)(x+c)=0$, this infimum is in fact
a minimum. Condition~(2) holds for $h_{\min}$. It remains to prove that
$h_{\min}-g$ is concave.

We use a nonusual but clear characterization of concavity: A real
function is concave if and only if it has locally an upper tangent in
every point. More precisely, $f$ is concave if for every $x\in\R$,
there exists an affine function $l$ with $l(x)=f(x)$ and $l\geq f$ in a
neighborhood of $x$. With respect to the definition of $h_{\min}$,
there are two kinds of real $x$. A point $x$ such that $h_{\min}(x)$
equals $h_i(x)$ for some $i\in\{1,2\}$ is of the first kind. In this
case, the property is true because $h_i\geq h_{\min}$ so that
$h_i-g\geq h_{\min}-g$ where the first function is concave. These
relations even hold globally. In the other case, there exist $a, b$
with $ab>0$ such that $h_{\min}(x)=\frac{bh_1(x-a)+ah_2(x+b)}{a+b}$.
Without loss of generality, we may assume $a>0$ and $b>0$. As $h_{\min
}$ both is convex and its graph is
\textit{below} the cord $[(x-a,h_1(x-a)),(x+b,h_2(x+b))]$ we can
conclude that it is affine on $[x-a,x+b]$. Hence, $h_{\min}-g$ is
concave in a neighborhood of~$x$. Summing up, the property holds for
the two kinds of real~$x$. Finally, $h_{\min}-g$ is concave and
$h_{\min
}\in U_F$. Hence, $\bar{h}$ is convex and satisfies conditions (1)--(3).
\end{pf*}

Note that in Lemma~\ref{AtomShadowSplitting} we have implicitly
encountered the shadow in the case where the starting distribution
consists of an atom.
%
\begin{ex}[(Shadow of an atom)]\label{one_atom}
Let $\delta$ be an atom of mass $\alpha$ at a point $x$. Assume that
$\delta\leqe\nu$.
Then $S^\nu(\delta)$ is the restriction of $\nu$ between two quantiles,
that is, it is $\nu'=(G_\nu)_\#\lambda_{[s,s']}$ where $s'-s=\alpha$
and the barycenter of $\nu'$ is $x$. Indeed, for another measure $\eta
\in\mathcal{M}$ with $\delta\leqc\eta$ and $\eta\leq\nu$,
applying the
observation from Example~\ref{ex_order2} to $\nu'$ and $\eta$ we obtain
$\nu'\leqc\eta$.
\end{ex}

\subsection{Associativity of shadows}
In this section, we will establish the following associativity property
of the shadow.

\begin{them}[(Shadow of a sum)]\label{two_measures}
Let $\gamma_1, \gamma_2$ and $ \nu$ be elements of $\mathcal{M}$
and assume that
$\mu= \gamma_1+\gamma_2 \leqe\nu$.
Then we have $\gamma_2\leqe\nu-S^\nu(\gamma_1)$ and
\[
S^\nu(\gamma_1+\gamma_2)=S^\nu(
\gamma_1)+S^{\nu-S^\nu(\gamma
_1)}(\gamma_2).
\]
\end{them}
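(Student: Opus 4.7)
The plan is to verify that the measure $T := S^\nu(\gamma_1) + S^{\nu - S_1}(\gamma_2)$, where $S_1 := S^\nu(\gamma_1)$, satisfies the three properties (i), (ii), (iii) of the shadow $S^\nu(\gamma_1+\gamma_2)$ from Lemma \ref{MinimalImage}, thereby forcing equality by uniqueness. The measure $S_1$ is well defined since $\gamma_1 \leq \gamma_1+\gamma_2 \leqe \nu$ entails $\gamma_1 \leqe \nu$; the well-definedness of the second summand of $T$ is the first assertion $\gamma_2 \leqe \nu - S_1$ of the theorem, which must be established alongside. Conditions (i) $T \leq \nu$ and (ii) $\gamma_1+\gamma_2 \leqc T$ are immediate from the defining properties of the two shadows entering $T$ combined with Example \ref{delta_b}.

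It is most convenient to reformulate everything via potential functions as in Lemma \ref{MinimalImage}. Writing $f_i = u_{\gamma_i}$, $g = u_\nu$, $h_1 = u_{S_1}$, $h_2' = u_{S^{\nu-S_1}(\gamma_2)}$, and $H = u_{S^\nu(\gamma_1+\gamma_2)}$, the inequality $S^\nu(\gamma_1+\gamma_2) \leqc T$ (equivalently $H \leq h_1 + h_2'$) is obtained by noting that $h_1+h_2'$ is admissible in the variational problem defining $H$: it is convex, dominates $f_1+f_2$, satisfies $g - (h_1+h_2') = (g-h_1) - h_2'$ convex, and has the correct asymptotics $(k_1+k_2)|x-m_{1+2}|$ at infinity (since $(k_1+k_2)m_{1+2} = k_1 m_1 + k_2 m_2$). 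Minimality of $H$ then yields the claim.

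The reverse inequality $H \geq h_1 + h_2'$ is the main obstacle. The natural strategy is to show that $H - h_1$ is itself admissible in the variational problem defining $h_2'$, so that $h_2' \leq H - h_1$ by minimality. The key missing step is the convexity of $H - h_1$, which via Proposition \ref{meas2pot} is equivalent to the pointwise measure inequality $S_1 \leq S^\nu(\gamma_1+\gamma_2)$. This same inequality simultaneously produces the witness $\theta := S^\nu(\gamma_1+\gamma_2) - S_1 \leq \nu - S_1$ required by Proposition \ref{ExtendedOrderChar} to verify the first assertion $\gamma_2 \leqe \nu - S_1$; the bound $\gamma_2 \leqc \theta$ (matching masses and means are automatic) is obtained by using any martingale transport in $\Pi_M(\gamma_1+\gamma_2, S^\nu(\gamma_1+\gamma_2))$ to compare integrals against convex test functions.

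To establish the pointwise monotonicity $S_1 \leq S^\nu(\gamma_1+\gamma_2)$, I would argue by approximation. Using Lemma \ref{approx_measure}, one reduces to the case $\gamma_1 = \sum_{i=1}^n \delta_i$ of finitely many atoms, where iterating Lemma \ref{AtomShadowSplitting} together with the explicit quantile-window description of Example \ref{one_atom} allows one to peel off atoms of $\gamma_1$ one at a time and track the resulting nested quantile windows against those arising from $\gamma_1+\gamma_2$. A continuity argument using the weak topology on $\m$ and its compatibility with potential functions recalled in Paragraph \ref{ParBasicNotions} then transfers the pointwise inequality to general $\gamma_1 \in \m$, completing the proof.
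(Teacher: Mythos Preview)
Your potential-function reformulation is a clean way to organize the two inequalities, but the argument contains a genuine gap at the step ``$\gamma_2 \leqc \theta$ is obtained by using any martingale transport in $\Pi_M(\gamma_1+\gamma_2, S^\nu(\gamma_1+\gamma_2))$.'' Given such a transport $\pi$, you can indeed split $\pi = \pi_1 + \pi_2$ along $\gamma_1, \gamma_2$ via disintegration; this yields $\gamma_2 \leqc \eta_2 := \proj^y_\# \pi_2$. But there is no reason for $\eta_2$ to equal $\theta = S^\nu(\gamma_1+\gamma_2) - S_1$: the companion piece $\eta_1$ satisfies $\gamma_1 \leqc \eta_1$, hence $S_1 \leqc \eta_1$ in \emph{convex} order, not $S_1 = \eta_1$ or even $S_1 \leq \eta_1$ pointwise. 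Since convex order is not preserved under subtraction, you cannot conclude $\gamma_2 \leqc \theta$ this way. And you do need exactly this: in potential-function language your reverse inequality requires $H - h_1 \geq f_2$, while from $H \geq f_1+f_2$ and $h_1 \geq f_1$ one only gets $H - h_1 \geq f_2 + (f_1 - h_1) \leq f_2$, the wrong direction.

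The paper avoids this issue entirely by a different route: it first proves the special case where $\gamma_1$ is a single atom (Lemma~\ref{atom_plus_measure}), exploiting the explicit quantile-interval structure of $S^\nu(\delta)$; iterates to finitely many atoms (Lemma~\ref{many_atoms}); then, crucially, proves the ``measure $+$ atom'' case (Lemma~\ref{measure_plus_atom}) using the continuity of $\nu' \mapsto S^{\nu'}(\delta)$ from Lemma~\ref{atom_shadow continuity}; and finally approximates $\gamma_2$ (not $\gamma_1$) by atoms and passes to the limit via Proposition~\ref{limit_measure}. Your approximation sketch instead approximates $\gamma_1$, cites Lemma~\ref{AtomShadowSplitting} (which is about a different decomposition), and does not explain how to control the target-measure dependence of the second shadow in the limit. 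In fact, once you invoke Lemma~\ref{atom_plus_measure} to handle the atomic case---which you must, since the pointwise inequality $S_1 \leq S^\nu(\gamma_1+\gamma_2)$ for atomic $\gamma_1$ already requires the identity $S^\nu(\delta + \gamma) = S^\nu(\delta) + S^{\nu - S^\nu(\delta)}(\gamma)$---the full theorem follows directly by approximation and your potential-function machinery becomes redundant.
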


In Figure~\ref{fig:shadow}, we can see the shadow of $\mu=\gamma
_1+\gamma_2$ in $\nu$ for two different ways of labeling the $\gamma
_i$'s. In both cases, $\nu_1:=S^\nu(\gamma_1)$ is simply $\gamma
_1$. On
the left part of the figure $S^{\nu-\nu_1}(\gamma_2)$ is quite
intuitive while on the right part it is deduced from the associativity
of the shadow projection. Of course, it has to be $S^\nu(\mu)-\nu_1$.


\begin{figure}

\includegraphics{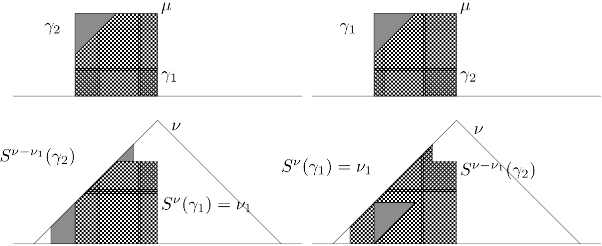}

\caption{Shadow of $\mu=\gamma_1+\gamma_2$ in $\nu$.}\label{fig:shadow}
\end{figure}

Our proof of Theorem~\ref{two_measures} will rely on approximations of
$\mu$ by atomic measures and we need several auxiliary results. In our
argument, we will require a certain continuity property of the mapping
$\nu\mapsto S^\nu(\delta)$ stated in Lemma~\ref{atom_shadow
continuity}. We will derive it now with the help of the Kantorovich metric.

\begin{pro}[(Metric on $\mathcal{M}$)]
The function $W$ defined on $\mathcal{M}$ by
%
\begin{equation}
\label{kanto} W(\nu,\hat\nu)= \cases{ +\infty,&\quad $\mbox{if }\nu(\R)\neq\hat\nu(
\R)$,\vspace*{2pt}
\cr
\displaystyle\sup_{f} \biggl(\int f \,\d\nu-\int f \,\d
\hat\nu \biggr),&\quad $\mbox{otherwise,}$}
\end{equation}
where the supremum is taken over all $1$-Lipschitz functions $f\dvtx \R\to
\R
$ is a metric with values in $[0,+\infty]$. For $k>0$, the associated
topology on the subspaces of measure of mass $k$ coincides with the
weak topology introduced in Section
\ref{ParBasicNotions}.
\end{pro}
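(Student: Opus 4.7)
The plan is to split the proof into two parts: (a) verify that $W$ defines a metric on $\m$ with values in $[0,+\infty]$, and (b) compare the $W$-topology with the weak topology on the subspace of measures of fixed mass $k$.

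For part (a), the case of unequal masses is vacuous. When $\nu(\R)=\hat\nu(\R)$, I would first reduce the supremum to $1$-Lipschitz functions with $f(0)=0$: since a constant integrates to the same value against $\nu$ and $\hat\nu$, adding one does not change $\int f\,\d\nu-\int f\,\d\hat\nu$. Any such $f$ satisfies $|f(x)|\leq|x|$, hence the two integrals are finite by the first moment condition and the sup is bounded by $\int|x|\,\d(\nu+\hat\nu)$. Symmetry is immediate from $f\mapsto -f$, and the triangle inequality follows from splitting $\int f\,\d\nu_1-\int f\,\d\nu_3$ through a common intermediate $\nu_2$. For definiteness, $W(\nu,\hat\nu)=0$ forces $\int f\,\d\nu=\int f\,\d\hat\nu$ for every $1$-Lipschitz $f$, hence by scaling for every bounded Lipschitz $f$; since bounded Lipschitz functions are dense in $C_c(\R)$ in the sup norm and the latter separate finite Borel measures, one concludes $\nu=\hat\nu$.

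For part (b), I would first handle the easy implication: assume $W(\nu_n,\nu)\to 0$ with all measures of mass $k$. Applied to rescaled bounded Lipschitz test functions this yields $\int f\,\d\nu_n\to\int f\,\d\nu$ for every bounded Lipschitz $f$, and by the standard density argument also for every $f\in C_b(\R)$. Applying the definition (and its symmetric counterpart) to $f(x)=|x|$, which is $1$-Lipschitz, gives $\int|x|\,\d\nu_n\to\int|x|\,\d\nu$, so $\nu_n\to\nu$ weakly in $\m$ in the sense defined in Paragraph \ref{ParBasicNotions}.

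The main work is the converse: assume $\nu_n\to\nu$ weakly in $\m$; show $W(\nu_n,\nu)\to 0$. Fix $\eps>0$. The crucial preliminary observation is uniform integrability of first moments: from $\int|x|\,\d\nu_n\to\int|x|\,\d\nu$ combined with weak convergence against bounded truncations of $|x|$, one obtains $\sup_n\int_{|x|>R}|x|\,\d\nu_n\to 0$ as $R\to\infty$. Choose $R$ (a continuity point of $\nu$) so that both $\int_{|x|>R}|x|\,\d\nu$ and $\sup_{n\geq n_0}\int_{|x|>R}|x|\,\d\nu_n$ are less than $\eps/3$. Given a $1$-Lipschitz $f$ with $f(0)=0$, let $f_R$ be the function obtained by freezing $f$ at $\pm R$ outside $[-R,R]$; then $f_R$ is $1$-Lipschitz, bounded by $R$, and $|f(x)-f_R(x)|\leq|x|\mathbf{1}_{|x|>R}$. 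Splitting $\int f\,\d\nu_n-\int f\,\d\nu$ through $f_R$ reduces the problem to making $\sup_f|\int f_R\,\d\nu_n-\int f_R\,\d\nu|$ small. The family $\{f_R\}$ is uniformly bounded and equicontinuous, hence relatively compact in $C_b(\R)$ with sup norm by Arzelà--Ascoli, and a standard subsequence argument converts the pointwise convergence provided by weak convergence into the required uniform convergence.

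The main obstacle is precisely this last uniformity step: passing from pointwise convergence $\int g\,\d\nu_n\to\int g\,\d\nu$ for each fixed $g\in C_b$ to convergence uniform over the full $1$-Lipschitz family requires the compactness of the truncated family together with the tail control afforded by uniform integrability of first moments; everything else is bookkeeping.
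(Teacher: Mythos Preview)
The paper does not prove this proposition: it is stated as a well-known fact (the $1$-Wasserstein/Kantorovich distance) and immediately followed by the alternative representations $W(\nu,\hat\nu)=\|F_\nu-F_{\hat\nu}\|_1=\|G_\nu-G_{\hat\nu}\|_1$, which are used in the subsequent lemmas but not offered as a proof of the proposition itself. So there is no ``paper's own proof'' to compare against, and your argument stands on its own.

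Your proof is correct and follows the standard direct route. Part (a) is fine. In part (b), the easy direction is clean; for the converse, the truncation/uniform-integrability scheme is the right idea. One point deserves a word of clarification: you invoke Arzel\`a--Ascoli to get relative compactness of $\{f_R\}$ in $C_b(\R)$ with the sup norm. On a non-compact space, uniform boundedness and equicontinuity alone do not give sup-norm compactness; what saves you here is that each $f_R$ is \emph{constant} outside $[-R,R]$, so the family is isometric (for the sup norm) to a family of $1$-Lipschitz functions on the compact interval $[-R,R]$, where Arzel\`a--Ascoli does apply. With that remark made explicit, the final $\eps/3$-net argument (the linear functionals $g\mapsto\int g\,\d(\nu_n-\nu)$ have operator norm bounded by $2k$, so pointwise convergence on a sup-norm compact set upgrades to uniform convergence) goes through exactly as you indicate.

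An alternative, slightly slicker route---hinted at by the paper's remarks---is to use the quantile representation $W(\nu,\hat\nu)=\|G_\nu-G_{\hat\nu}\|_{L^1([0,1])}$: weak convergence in $\m$ is equivalent to $G_{\nu_n}\to G_\nu$ in $L^1([0,1])$, which immediately identifies the two topologies. Your approach has the advantage of being self-contained and not relying on this one-dimensional representation.
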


In the case where $\nu$, $\hat\nu$ are probability measures, $W(\nu
,\hat
\nu)$ is the classical Kantorovich metric (also called $1$-Wasserstein
distance, or transport distance). We state here two useful relations
that are well known (and straightforward) in the case of probability
measures and extended to finite measures through normalization. If $\nu
(\R)=\hat{\nu}(\R)$, we have
\[
W(\nu,\hat{\nu})=\|F_\nu-F_{\hat\nu}\|_{1}=
\|G_\nu-G_{\hat\nu
}\|_1,
\]
where $F_\nu$, $F_{\hat{\nu}}$ and $G_\nu$, $G_{\hat{\nu}}$ are the
cumulative distribution functions and the quantile functions of $\nu$
and $\hat{\nu}$, respectively.
The norm $\|\cdot\|_1$ refers to the $L^1$-norm for the Lebesgue measure
on $\R$, respectively, $[0,\nu(\R)]$. Recall that $\nu=(G_\nu)_\#
\lambda$ and $\hat{\nu}=(G_{\hat{\nu}})_\#\lambda$.

Let us now fix some notation in preparation to Lemma~\ref{atom_shadow
continuity}. First, let $\nu$ and $\hat{\nu}$ be of mass 1. We also fix
a quantity $\alpha\leq1$ and set $t= 1-\alpha$. As in the discussion
preceding Lemma~\ref{AtomShadowSplitting}, we consider for $s\in[0,t]$
the restriction $\nu_s=(G_\nu)_\#\lambda_{[s,s+\alpha]}$ of $\nu$
between the quantiles $s$ and $s+\alpha$. We adopt the same convention
for $\hat\nu$. Note that the barycenter of $\nu_s$ can be written
%
\begin{equation}
\label{bary} B(s,\nu)=\frac{1}\alpha\int_\R x
\,\d\nu_s(t) \quad\mbox{or}\quad B(s,\nu )=\frac{1}\alpha\int
_0^{\alpha}G_\nu(s+t) \,\d\lambda(t).
\end{equation}
Indeed, the function $t\in[0,\alpha]\mapsto G_\nu(s+t)$ is simply
$G_{\nu_s}$ and $\nu_s=(G_{\nu_s})_\#\lambda_{[0,\alpha]}$.

Together with \eqref{kanto} applied to the functions $f\dvtx x\mapsto\pm
x$, the first formula for the barycenter implies
\[
\bigl|B(s,\nu)-B(s,\hat\nu)\bigr|\leq\frac{1}{\alpha}W(\nu_s,\hat
\nu_s).
\]
Moreover, we can prove that
\[
W(\nu_{r},\nu_s)=\alpha\bigl|B(r,\nu)-B(s,\nu)\bigr|
\]
without difficulty by using $W(\nu_r,\nu_s)=\|G_{\nu_r}-G_{\nu_s}\|_1$
and the fact that $G_{\nu_s}$ and $G_{\nu_r}$ are equal to the
nondecreasing function $G_\nu$ up to translation.
Another simple property is
\[
W(\nu_s,\hat\nu_s)\leq W(\nu,\hat\nu).
\]
Again this can be seen as a consequence of the representation of $W$ by
quantile functions: We have $W(\nu_s,\hat{\nu}_s)=\|G_{\nu
_s}-G_{\hat
{\nu}_s}\|_1\leq\|G_{\nu}-G_{\hat{\nu}}\|_1$.

Let $x$ be an element of $\R$ and consider the subset of measures $\nu
\in\mathcal{P}$ such that $B(0,\nu)\leq x\leq B(t,\nu)$.
These are exactly the measures such that there exists $s\in\R$
satisfying $B(s,\nu)=x$; for such $\nu$ the shadow $S^\nu(\delta
)=\nu
_{s}$ is well defined.


\begin{lem}\label{atom_shadow continuity} Let $\delta= \alpha\delta
_x$ be an atom of mass $\alpha< 1$.
The map $\nu\mapsto S^\nu(\delta)$ is continuous on its domain of
definition inside the probability measures.
\end{lem}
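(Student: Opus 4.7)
The plan is to exploit the three estimates for the Kantorovich metric $W$ that are collected just before the lemma, together with the fact (also recorded there) that $W$ metrises weak convergence on the set of probability measures of fixed mass. Concretely, suppose $\nu^{n}$ converges to $\nu$ in the domain of definition, i.e.\ $W(\nu^{n},\nu)\to 0$ and $B(0,\nu^{n})\le x\le B(t,\nu^{n})$, $B(0,\nu)\le x\le B(t,\nu)$. Pick $s^{n}\in[0,t]$ with $B(s^{n},\nu^{n})=x$ and $s\in[0,t]$ with $B(s,\nu)=x$, so that $S^{\nu^{n}}(\delta)=\nu^{n}_{s^{n}}$ and $S^{\nu}(\delta)=\nu_{s}$ by the description in Example \ref{one_atom}. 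The goal is to show $W(\nu^{n}_{s^{n}},\nu_{s})\to 0$.

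The first step is a triangle-inequality split,
\[
W(\nu^{n}_{s^{n}},\nu_{s})\;\le\; W(\nu^{n}_{s^{n}},\nu_{s^{n}})\;+\;W(\nu_{s^{n}},\nu_{s}).
\]
For the first summand, the third displayed inequality in the discussion above ($W(\nu_{s},\hat\nu_{s})\le W(\nu,\hat\nu)$) gives $W(\nu^{n}_{s^{n}},\nu_{s^{n}})\le W(\nu^{n},\nu)\to 0$. For the second summand, the identity $W(\nu_{r},\nu_{s})=\alpha|B(r,\nu)-B(s,\nu)|$ reduces everything to showing $B(s^{n},\nu)\to B(s,\nu)=x$. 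But this follows from the first inequality in the discussion, together with what we have already bounded:
\[
|B(s^{n},\nu)-x|\;=\;|B(s^{n},\nu)-B(s^{n},\nu^{n})|\;\le\;\tfrac{1}{\alpha}W(\nu^{n}_{s^{n}},\nu_{s^{n}})\;\le\;\tfrac{1}{\alpha}W(\nu^{n},\nu)\;\longrightarrow\;0.
\]
Combining the two estimates yields $W(\nu^{n}_{s^{n}},\nu_{s})\to 0$, i.e.\ $S^{\nu^{n}}(\delta)\to S^{\nu}(\delta)$ weakly in $\m$.

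The argument is essentially bookkeeping on top of the already-collected estimates; no serious obstacle is expected. The only mildly subtle point is that the parameter $s^{n}$ need \emph{not} itself converge to $s$ (since $B(\,\cdot\,,\nu)$ may be constant on intervals, several $s$'s may realise the barycenter $x$), so one should not try to prove $s^{n}\to s$. Instead, as above, one argues directly at the level of the shadow measures, where the formula $W(\nu_{r},\nu_{s})=\alpha|B(r,\nu)-B(s,\nu)|$ precisely eliminates this ambiguity: whenever two parameters yield the same barycenter against $\nu$, they yield the same measure, and the Kantorovich distance sees only the barycenter.
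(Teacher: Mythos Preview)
Your proof is correct and is essentially the same as the paper's: the paper performs the identical triangle-inequality split and, using the same three displayed estimates, obtains the explicit Lipschitz bound $W\bigl(S^{\nu}(\delta),S^{\hat\nu}(\delta)\bigr)\le 2\,W(\nu,\hat\nu)$, of which your sequential argument is just the specialisation $\hat\nu=\nu^{n}$. Your remark that $s^{n}$ need not converge to $s$ is a nice clarification, but the computations coincide line for line.
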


\begin{pf}
Let $\nu, \hat\nu$ be probability measures in $\mathcal{M}$ and
assume that $
S^{\nu}( \delta),\break  S^{\hat\nu}( \delta)$ exist. Let $r,s$ be such that
$\nu_r=S^{\nu}( \delta)$ and $\hat\nu_s=S^{\hat\nu}( \delta)$. Of
course, both measures have the same barycenter. Then
\begin{eqnarray*}
W \bigl(S^\nu(\delta),S^{\hat\nu}(\delta) \bigr)&=& W(
\nu_{r},\hat \nu_{s})
\\
&\leq& W(\nu_{r}, \nu_{s})+W( \nu_{s},\hat
\nu_{s})
\\
&=& \alpha\bigl|B(r,\nu)-B(s,\nu)\bigr|+W( \nu_{s},\hat\nu_{s})
\\
&=& \alpha\bigl|B(s,\hat{\nu})-B(s,\nu)\bigr|+W( \nu_{s},\hat
\nu_{s})
\\
&\leq& W(\nu_s,\hat\nu_s)+W( \nu_{s},\hat
\nu_{s})\leq2W(\nu,\hat \nu).%
\end{eqnarray*}
%
\upqed\end{pf}

\begin{lem}\label{monoton_lem}
Let $\delta$ be an atom and assume $\delta\leqe\eta$, where $\eta
\leq
\nu$. Then we have
\[
\eta-S^\eta(\delta)\leq\nu-S^\nu(\delta).
\]
\end{lem}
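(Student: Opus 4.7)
My plan is to compare the two shadows by establishing a containment of their support intervals and then verifying the pointwise inequality piece by piece on the resulting partition of $\R$.

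First I would apply the minimality property (iii) of Lemma \ref{MinimalImage}: since $S^\eta(\delta)\le\eta\le\nu$ with $\delta\leqc S^\eta(\delta)$, the measure $S^\eta(\delta)$ is an admissible candidate in the shadow problem defining $S^\nu(\delta)$, so $S^\nu(\delta)\leqc S^\eta(\delta)$. By Example \ref{one_atom}, both shadows are quantile restrictions of their base measures and are supported on closed intervals $J_\eta=[a_\eta,b_\eta]\ni x$ and $J_\nu=[a_\nu,b_\nu]\ni x$. Evaluating the convex order at the non-negative convex function $\varphi(y)=(a_\eta-y)_+$ yields $\int\varphi\,dS^\nu(\delta)\le\int\varphi\,dS^\eta(\delta)=0$, so $S^\nu(\delta)$ charges no set in $(-\infty,a_\eta)$; hence $a_\nu\ge a_\eta$, and symmetrically $b_\nu\le b_\eta$. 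Thus $J_\nu\subseteq J_\eta$.

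I would then partition $\R$ using the four points $a_\eta\le a_\nu\le b_\nu\le b_\eta$ and verify the inequality on each open piece, relying on the consequence of Example \ref{one_atom} that the shadow of an atom coincides with its base measure on the open interior of its supporting interval. Outside $J_\eta$ both shadows vanish and the inequality reduces to $\eta\le\nu$; on the interior of $J_\nu$ both sides vanish; on $(a_\eta,a_\nu)\cup(b_\nu,b_\eta)$ the left-hand side vanishes while the right-hand side is non-negative. When a boundary is non-coincident (say $a_\eta<a_\nu$), at $a_\eta$ one has $(\eta-S^\eta(\delta))(\{a_\eta\})\le\eta(\{a_\eta\})\le\nu(\{a_\eta\})=(\nu-S^\nu(\delta))(\{a_\eta\})$ and at $a_\nu$ the left-hand side vanishes because $a_\nu$ lies in the open interior of $J_\eta$, where $S^\eta(\delta)$ equals $\eta$.

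The main obstacle, and the only nontrivial step, is a coincident boundary $a_\eta=a_\nu=:a$ (the case $b_\eta=b_\nu$ being symmetric). Using the explicit formula $S^\rho(\delta)=(G_\rho)_\#\lambda_{[s_\rho,s_\rho+\alpha]}$ from Example \ref{one_atom}, set $p_\rho:=S^\rho(\delta)(\{a\})=F_\rho(a)-s_\rho$ and $H_\rho(u):=G_\rho(F_\rho(a)+u)$, so that the barycenter condition reads $\alpha x-ap_\rho=\int_0^{\alpha-p_\rho}H_\rho(u)\,du$. The pointwise bound $\eta\le\nu$ restricted to $(a,\infty)$ translates into $H_\nu\le H_\eta$ on $[0,\infty)$. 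The auxiliary function $\phi(p):=\int_0^{\alpha-p}H_\nu(u)\,du+ap$ has derivative $\phi'(p)=a-H_\nu(\alpha-p)<0$ on $[0,\alpha)$ in the non-degenerate case $x\ne a$, hence is strictly decreasing. Now $\phi(p_\nu)=\alpha x=\int_0^{\alpha-p_\eta}H_\eta(u)\,du+ap_\eta\ge\int_0^{\alpha-p_\eta}H_\nu(u)\,du+ap_\eta=\phi(p_\eta)$, which forces $p_\eta\ge p_\nu$; combined with $\eta(\{a\})\le\nu(\{a\})$ this yields $(\eta-S^\eta(\delta))(\{a\})\le(\nu-S^\nu(\delta))(\{a\})$, closing the remaining boundary case.
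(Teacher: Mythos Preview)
Your proof is correct and follows essentially the same strategy as the paper: establish that the support interval of $S^\nu(\delta)$ is contained in that of $S^\eta(\delta)$, then verify the pointwise inequality on the resulting partition of $\R$. The paper carries this out in the quantile space of $\nu$, writing $S^\nu(\delta)=G_\#\lambda_Q$ and $S^\eta(\delta)=G_\#\lambda_{Q'}\wedge\eta$ for intervals $Q\subseteq Q'\subseteq[0,\nu(\R)]$, then checking the inequality on $G(Q)$, $G(Q'\setminus Q)$, and $G([0,\nu(\R)]\setminus Q')$; working in quantile coordinates automatically ``unfolds'' the atoms of $\nu$, so the boundary analysis you carry out explicitly is absorbed into the inclusion $Q\subseteq Q'$.

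Your route via the convex order $S^\nu(\delta)\leqc S^\eta(\delta)$ and the test functions $(a_\eta-y)_+$, $(y-b_\eta)_+$ is a clean way to get the interval containment (the paper's justification ``same mass and barycenter implies $Q\subseteq Q'$'' is terse by comparison). The price you pay for staying in real space is the coincident-endpoint computation with $H_\rho$ and $\phi$, which is correct but noticeably longer than the paper's one-line conclusion; you should also note explicitly that the degenerate case $x=a$ forces $S^\eta(\delta)=S^\nu(\delta)=\delta$, so the inequality reduces to $\eta\le\nu$.
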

\begin{pf}
First note that $S^\eta(\delta)\leq\eta\leq\nu$. Hence, $\delta
\leqe
\nu$ and $S^\nu(\delta)$ is well defined. As explained in Example~\ref
{one_atom}, there exists an interval $Q\subseteq[0,\nu(\R)]$ such that
$S^\nu(\delta)$ equals ${G_\nu}_\#\lambda_Q$. The same is true for
$\delta, \eta, G_\eta$ and some interval of $[0,\eta(\R)]$ but we will
represent the ``quantile coordinates'' of $S^\eta(\delta)$ under
$\eta$
in a slightly different way. Indeed, $S^\eta(\delta)$ is the
restriction of $\eta$ to a real interval plus possibly some atomic
parts of $\eta$ at the ends of this interval. In any case, it is
smaller than $\eta$ and $\nu$. Thus, we can parameterize it with a
subinterval $Q'$ of $[0,\nu(\R)]$ such that $S^\eta(\delta)=({G_\nu
}_\#
\lambda_{Q'})\wedge\eta$. Note that the length of $Q'$ is greater than
the length of $Q$ which equals the mass of $\delta$. The measures
$S^\nu
(\delta)$ and $S^\eta(\delta)$ have the same mass and the same
barycenter and both are smaller than $\nu$.

We prove by contradiction that $Q\subseteq Q'$. By symmetry, it is
enough to prove $b'\geq b$ where we denote $Q$ and $Q'$ by $[a,b]$ and
$[a',b']$, respectively. If it were not the case, $S^\eta(\delta)$
would be stochastically strictly smaller than $S^\nu(\delta)$, which is
the right-most measure that stays smaller than quantile $b$, has the
same mass as $\delta$ and is smaller than $\nu$. In particular, the
barycenters would be strictly ordered (see the discussion before Lemma~\ref{AtomShadowSplitting} for a similar and more detailed argument).
This is a contradiction since the barycenters coincide by the
definition of the shadow. Finally,
\[
\eta-S^\eta(\delta)= \eta-\bigl[({G_\nu}_\#
\lambda_{Q'})\wedge\eta\bigr] \leq\nu -{G_\nu}_\#
\lambda_{Q'}\leq\nu-{G_\nu}_\#\lambda_{Q}.
\]
Here, we used the fact that for three measures $\alpha, \beta, \gamma$
satisfying the relations $\alpha\leq\gamma$ and $\beta\leq\gamma$,
the measure $\gamma-\alpha$ is greater than the positive part of
$\beta
-\alpha$, which is $\beta-(\alpha\wedge\beta)$.
\end{pf}

\begin{lem}[(Shadow of one atom and one measure)]\label{atom_plus_measure}
Consider now $\delta+\gamma$ where $\delta$ is an atom. Assume
$(\delta
+\gamma)\leqe\nu$. Then we have $\gamma\leqe\nu-S^\nu(\delta)$ and
%
\begin{equation}
\label{decompo1} S^\nu(\delta+\gamma)=S^\nu(
\delta)+S^{\nu-S^\nu(\delta)}(\gamma).
\end{equation}
\end{lem}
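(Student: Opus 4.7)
Set $\nu_1 := S^\nu(\delta)$. The plan is to first establish the extended-order relation $\gamma \leqe \nu - \nu_1$ (so that the shadow on the right-hand side of the claimed identity is well-defined), and then to verify the identity by showing that $\eta := \nu_1 + S^{\nu - \nu_1}(\gamma)$ satisfies the three characterizing properties of Lemma~\ref{MinimalImage} for the shadow $S^\nu(\delta+\gamma)$.

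For the first step I would pass from the extended to the ordinary convex order: by Proposition~\ref{ExtendedOrderChar} the hypothesis $\delta+\gamma\leqe\nu$ produces a measure $\theta\leq\nu$ with $\delta+\gamma\leqc\theta$. Now Lemma~\ref{AtomShadowSplitting}, applied to $\mu = \delta+\gamma$ and $\theta$, yields a splitting $\theta=\theta'+\tilde\theta$ with $\delta\leqc\theta'$ and $\gamma\leqc\tilde\theta$. The uniqueness of this splitting (compare Example~\ref{one_atom}) identifies $\theta'=S^\theta(\delta)$. Applying Lemma~\ref{monoton_lem} to the pair $\theta\leq\nu$ gives $\tilde\theta=\theta-S^\theta(\delta)\leq\nu-\nu_1$. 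Chaining $\gamma\leqc\tilde\theta\leq\nu-\nu_1$ then delivers $\gamma\leqe\nu-\nu_1$, as $\tilde\theta$ and $\nu-\nu_1$ differ by a non-negative measure and non-negative convex test functions respect this.

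To establish the identity, I would verify the three conditions for $\eta$. The pointwise bound $\eta\leq\nu$ is automatic, and the convex-order relation $\delta+\gamma\leqc\eta$ follows from $\delta\leqc\nu_1$ and $\gamma\leqc S^{\nu-\nu_1}(\gamma)$ by Example~\ref{delta_b}. The key task is minimality: given a competitor $\eta'$ with $\delta+\gamma\leqc\eta'\leq\nu$, one must show $\eta\leqc\eta'$. Rerun the splitting argument of the previous paragraph inside $\eta'$ rather than $\nu$: Lemma~\ref{AtomShadowSplitting} provides $\eta'=\xi+(\eta'-\xi)$ with $\xi:=S^{\eta'}(\delta)$ and $\gamma\leqc\eta'-\xi$, while Lemma~\ref{monoton_lem} gives $\eta'-\xi\leq\nu-\nu_1$. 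The minimality of $\nu_1=S^\nu(\delta)$ produces $\nu_1\leqc\xi$, and since $\eta'-\xi$ is now a valid candidate in the definition of $S^{\nu-\nu_1}(\gamma)$, the minimality of that shadow produces $S^{\nu-\nu_1}(\gamma)\leqc\eta'-\xi$. Summing these two convex-order inequalities via Example~\ref{delta_b} yields $\eta\leqc\xi+(\eta'-\xi)=\eta'$.

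The hard part is the careful coordination of three orders ($\leqe$, $\leqc$, pointwise $\leq$): Lemma~\ref{AtomShadowSplitting} is stated for $\leqc$, whereas the hypothesis only provides $\leqe$, and one needs to transfer information across pointwise inequalities without losing the convex-order content. Lemma~\ref{monoton_lem} is the technical workhorse that makes this transfer possible, and once its consequence $\theta-S^\theta(\delta)\leq\nu-\nu_1$ (and its analogue for $\eta'$) is available, both the extended-order statement and the minimality step reduce to routine applications of the sum rule for the convex order.
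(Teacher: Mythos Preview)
Your proof is correct. The second half (verifying the identity via the three properties of the shadow and the minimality argument with the competitor $\eta'$) is essentially the paper's own argument: they also split $\eta'$ as $S^{\eta'}(\delta)+(\eta'-S^{\eta'}(\delta))$, invoke Lemma~\ref{monoton_lem} to push $\eta'-S^{\eta'}(\delta)$ below $\nu-\nu_1$, and then sum the two convex-order inequalities via Example~\ref{delta_b}.

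The first half, however, follows a genuinely different route. The paper proves $\gamma\leqe\nu-\nu_1$ by a direct integral computation exploiting the interval structure of $S^\nu(\delta)$: since $\nu_1$ is concentrated on the closure of an interval $I$ with $(\nu-\nu_1)(I)=0$, one replaces a non-negative convex test function $\varphi$ by the convex function $\psi\geq\varphi$ that is linear on $I$ and agrees with $\varphi$ outside, and then compares integrals term by term (this is the same trick as in Lemma~\ref{AtomShadowSplitting}). Your approach instead \emph{reduces} to earlier results: Proposition~\ref{ExtendedOrderChar} manufactures a $\theta\leq\nu$ with $\delta+\gamma\leqc\theta$, Lemma~\ref{AtomShadowSplitting} splits $\theta$, and Lemma~\ref{monoton_lem} transfers the pointwise inequality across. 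This is more modular and avoids repeating the $\psi$-construction, at the cost of invoking three external statements; the paper's version is self-contained but duplicates an argument already carried out in Section~\ref{sec:basic}. Either way the key technical ingredient is Lemma~\ref{monoton_lem}, which both proofs use in exactly the same place in the minimality step.
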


\begin{pf}
We first prove that $\gamma$ is smaller than $\nu':=\nu-S^\nu
(\delta)$
in the extended order. Note that there exists an interval $I$ such that
$S^\nu(\delta)$ is concentrated on $\bar I$ and $\nu'(I)=0$. Let
$\varphi$ be a nonnegative convex function which satisfies $\limsup_{|x|\to+\infty}|\varphi(x)/x|<+\infty$. We will prove $\int
\varphi
\,\dd\gamma\leq\int\varphi\,\dd\nu'$. For that, we introduce $\psi$
which equals $\varphi$ on $\R\setminus I$ and is linear on $I$. We can
assume that $\psi$ is convex and $\psi\geq\varphi$ (even if $I$ is
unbounded). Note that $\varphi$ and $\psi$ coincide on the border of
$I$. We have
\[
\int\varphi\,\dd\gamma\leq\int\psi\,\dd\gamma\leq\int\psi\,\dd \nu - \int\psi\,\dd
\delta.
\]
But $\int\psi\,\dd\delta=\int\psi\,\d S^\nu(\delta)$ because $\psi$
is linear on $I$. 
Moreover, $\int\psi\,\d\nu'=\int\varphi\,\d\nu'$ because $\nu'$ is
concentrated on $\R\setminus I$. It follows that
\[
\int\varphi\,\d\gamma\leq\int\psi\,\d\nu- \int\psi\,\d\delta\leq \int\varphi\,\d
\nu'.
\]
As in the case of the usual convex order, it is of course sufficient to
test against convex functions of linear growth, hence $\gamma\leqe\nu'$.

It remains to establish (\ref{decompo1}). It is clear (see, e.g., Example~\ref{delta_b}) that both sides of the equation are
greater than $\delta+\gamma$ in the convex order and $\leq\!\nu$. Hence,
by the definition of the shadow it follows $S^\nu(\delta+\gamma
)\leqc
S^\nu(\delta)+S^{\nu-S^\nu(\delta)}(\gamma)$. The other
inequality is
shown as follows: we will prove that for $\eta\geqc\delta+\gamma$ and
satisfying $\eta\leq\nu$ we have $S^\nu(\delta)+S^{\nu-S^\nu
(\delta
)}(\gamma)\leqc\eta$. In fact, if $\eta\geqc\delta+\gamma$ then
$S^\eta(\delta)\leq\eta$ and $S^{\eta-S^\eta(\delta)}(\gamma
)\leq\eta
-S^\eta(\delta)$ so that, since measures in the convex order have the
same mass,
\[
\eta=S^\eta(\delta)+S^{\eta-S^\eta(\delta)}(\gamma).
\]
(Note that we have already proved that all terms exist in this
decomposition since $\geqe$ extends $\geqc$.)
But it follows from $\eta\leq\nu$ and $\eta-S^\eta(\delta)\leq
\nu
-S^\nu(\delta)$ (proved in Lemma~\ref{monoton_lem}) that $F_\gamma
^\eta
\subset F_\gamma^\nu$ and $F_\gamma^{\eta-S^\eta(\delta
)}\subseteq
F_\gamma^{\nu-S^\nu(\delta)}$ so that $S^\eta(\delta)\geqc S^\nu
(\delta
)$ and $S^{\eta-S^\eta(\delta)}(\gamma)\geqc S^{\nu-S^\nu(\delta
)}(\gamma)$. As in Example~\ref{delta_b}, the compatibility of sum and
convex order completes the proof.
\end{pf}

\begin{lem}[(Shadow of finitely many atoms)]\label{many_atoms}
Let $(\delta_i)_{i\in\N}$ be a family of atoms at point $x_i$ and of
mass $\alpha_i\in[0,+\infty[$ (where we allow the weight $\alpha_i$ to
be~$0$). For every $n\geq1$, let $\mu_n=\delta_1+\cdots+\delta_n$ and
assume that $\mu_n\leqe\nu$. The sequence $(\nu_n)_{n\in\N}$ defined
by $\nu_n=S^\nu(\mu_n)$ satisfies the following recurrence relation:
\begin{itemize}
\item$\nu_0=0$,
\item$\nu_{n}=\nu_{n-1}+S^{\nu-\nu_{n-1}}(\delta_n)$ for every
$n\geq1$.
\end{itemize}
\end{lem}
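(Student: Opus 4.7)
The plan is to proceed by induction on $n$, stating the inductive hypothesis uniformly over the ambient measure so that it can be re-invoked on a residual problem, with Lemma~\ref{atom_plus_measure} as the main tool. The base case is immediate: $\nu_0=0=S^\nu(0)$, and for $n=1$ the recursion gives $\nu_1=S^\nu(\delta_1)=S^\nu(\mu_1)$.

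For the inductive step, assume the claim holds for any family of $n-1$ atoms in any ambient measure, and consider $\mu_n\leqe\nu$ with $n$ atoms. The strategy is to peel off the \emph{first} atom: Lemma~\ref{atom_plus_measure} applied with $\delta=\delta_1$ and $\gamma=\delta_2+\cdots+\delta_n$ yields $\delta_2+\cdots+\delta_n\leqe\nu-\nu_1$ together with
\[
S^\nu(\mu_n) \;=\; \nu_1 + S^{\nu-\nu_1}(\delta_2+\cdots+\delta_n).
\]
Applying the inductive hypothesis to the atoms $\delta_2,\ldots,\delta_n$ in the ambient measure $\nu-\nu_1$ yields a well-defined sequence $(\tilde\nu_k)_{k=0}^{n-1}$ with $\tilde\nu_0=0$, $\tilde\nu_k=\tilde\nu_{k-1}+S^{(\nu-\nu_1)-\tilde\nu_{k-1}}(\delta_{k+1})$, and $\tilde\nu_{n-1}=S^{\nu-\nu_1}(\delta_2+\cdots+\delta_n)$.

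To conclude, a short inner induction on $k$ verifies that $\nu_k=\nu_1+\tilde\nu_{k-1}$ for $1\leq k\leq n$: the identity $\nu-\nu_{k-1}=(\nu-\nu_1)-\tilde\nu_{k-2}$ translates the defining recursion for $\nu_k$ directly into that for $\tilde\nu_{k-1}$, and in particular shows that all the shadows appearing in the definition of the $\nu_k$'s exist. Specialising at $k=n$ and substituting into the displayed equation above gives $\nu_n=S^\nu(\mu_n)$, as required.

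The main subtlety is that Lemma~\ref{atom_plus_measure} only allows one to extract a \emph{single} atom from a sum; one cannot directly apply it to the decomposition $\mu_n=\mu_{n-1}+\delta_n$, since the resulting formula would involve $S^\nu(\delta_n)$ rather than the $S^\nu(\mu_{n-1})=\nu_{n-1}$ that the claimed recursion calls for. Peeling off $\delta_1$ first and recursing on the residual problem in the ambient measure $\nu-\nu_1$ is precisely the correct way to iterate the lemma, and is also the reason the inductive hypothesis must be formulated uniformly in the ambient measure.
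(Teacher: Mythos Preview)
Your proof is correct and follows essentially the same approach as the paper: both argue by induction, peel off the first atom $\delta_1$ via Lemma~\ref{atom_plus_measure}, apply the inductive hypothesis to the remaining atoms $\delta_2,\ldots,\delta_n$ in the residual ambient measure $\nu-\nu_1$, and then identify the resulting auxiliary sequence with the original one through the relation $\nu_k=\nu_1+\tilde\nu_{k-1}$. Your explicit remark that the inductive hypothesis must be uniform in the ambient measure, and your explanation of why the decomposition $\mu_n=\mu_{n-1}+\delta_n$ cannot be used directly, make the logic slightly more transparent than in the paper.
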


\begin{pf}
The lemma is proved by induction. The basis holds with $\nu_{1}=S^\nu
(\delta_1)$. Fix $n\geq1$ and assume that the recurrence relation
holds until $n$. Let $(\mu_i)_i$, $\nu$ and $(\nu_i)_i$ be as in the
statement of the lemma. 
Denote $\sum_{i=2}^{n+1}\delta_i$ by $\mu'_n$ and more generally
$\sum_{i=2}^{i+1}\delta_i$ by $\mu'_i$. As $\mu_{n+1}\leqe\nu$, we can
apply Lemma~\ref{atom_plus_measure} to the decomposition $\mu
_{n+1}=\delta_1+\mu'_n$. So $\mu'_n\leqe\nu-\nu_1$ and
%
\begin{equation}
\label{induc} S^\nu(\mu_{n+1})=S^\nu(
\delta_1)+S^{\nu'}\bigl(\mu'_n
\bigr),
\end{equation}
where we denoted $\nu-\nu_1$ by $\nu'$. But because of the inductive
hypothesis applied to $\mu'_n$ and $\nu'$, the shadow $S^{\nu'}(\mu
'_n)$ is $\nu'_n=\nu'_{n-1}+S^{\nu'-\nu'_{n-1}}(\delta_{n+1})$ where
the measures $\nu'_i$ denote the shadows of $\mu'_i$ in $\nu'$. Note
also that $\nu_n=\nu_1+\nu'_{n-1}$ by Lemma~\ref{atom_plus_measure}.
Starting from (\ref{induc}), we now have
\[
\nu_{n+1}=\nu_1+\nu'_n=
\nu_1+\nu'_{n-1}+S^{\nu'-\nu
'_{n-1}}(\delta
_{n+1})=\nu_n+S^{\nu'-\nu'_{n-1}}(\delta_{n+1}).
\]
But ${\nu'-\nu'_{n-1}}=(\nu_1+\nu')-(\nu_1+\nu'_{n-1})={\nu-\nu_n}$.
This completes the proof.
\end{pf}

\begin{rem}
An important consequence of the lemma above is that $\nu_n-\nu_k$ is
the shadow of $\mu_n-\mu_k$ in $\nu-S^\nu(\mu_k)$. Even though the
above construction is of inductive nature, when permuting the $n$ first
atoms, the measure $\nu_n=\sum_{i=1}^n \nu_i-\nu_{i-1}$ is always the
same: it simply equals $S^\nu(\mu_n)$. The same assertions apply to
Proposition~\ref{many_measures} below.
\end{rem}

\begin{pro}\label{limit_measure}
Assume that $(\mu_n)_n$ is increasing in the convex order and $\mu
_n\leqc\mu\leqe\nu$ for every $n\in\N$. Then both $(\mu_n)_n$ and
$(S^\nu(\mu_n))_n$ converge in $\mathcal{M}$. If we call $\mu
_\infty$,
respectively, $S_\infty$ the limits, then the measure $S_\infty$ is the
shadow of $\mu_\infty$ in $\nu$.
\end{pro}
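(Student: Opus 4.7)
The plan is to work entirely with potential functions $u_\mu$, relying on the three bullet points at the end of Paragraph~\ref{pot_func}: for measures of the same mass, $\mu\leqc\nu$ is equivalent to $u_\mu\leq u_\nu$; $\mu\leq\nu$ is equivalent to $u_\nu-u_\mu$ being convex; and weak convergence in $\m$ for a sequence of fixed mass and mean is the same as pointwise convergence of the potentials. First observe that the chain $\mu_n\leqc\mu_{n+1}\leqc\nu$ forces every $\mu_n$ to share the common mass $k=\nu(\R)$ and the common barycenter $m$ of $\nu$.

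To get a candidate limit for $(\mu_n)_n$, note that $(u_{\mu_n})_n$ is pointwise non-decreasing and bounded above by $u_\nu$, hence converges pointwise to a convex function $u_\infty$. Jensen's inequality gives $u_{\mu_n}(x)\geq k|x-m|$ for every $n$, whereas $u_\nu(x)-k|x-m|\to 0$ at $\pm\infty$; a squeeze argument then shows $u_\infty(x)-k|x-m|\to 0$ at $\pm\infty$, so Proposition~\ref{meas2pot} produces a unique $\mu_\infty\in\m$ with $u_{\mu_\infty}=u_\infty$, and the third bullet point yields $\mu_n\to\mu_\infty$ weakly in $\m$. For the shadows, one first needs that $(S^\nu(\mu_n))_n$ is $\leqc$-increasing: from $\mu_n\leqc\mu_{n+1}\leqc S^\nu(\mu_{n+1})$ together with $S^\nu(\mu_{n+1})\leq\nu$, the minimality clause (iii) of Lemma~\ref{MinimalImage} applied to $\mu_n$ yields $S^\nu(\mu_n)\leqc S^\nu(\mu_{n+1})$. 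Since each $S^\nu(\mu_n)\leq\nu$ and they share the mass $k$ and mean $m$ of $\mu_n$, the very same squeeze argument produces a measure $S_\infty\in\m$ with $S^\nu(\mu_n)\to S_\infty$ weakly.

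It then remains to identify $S_\infty$ with $S^\nu(\mu_\infty)$ by checking the three characterizing properties of Lemma~\ref{MinimalImage}. Property (ii), $\mu_\infty\leqc S_\infty$, is obtained by passing to the pointwise limit in $u_{\mu_n}\leq u_{S^\nu(\mu_n)}$. For property (i), each $u_\nu-u_{S^\nu(\mu_n)}$ is convex (since $S^\nu(\mu_n)\leq\nu$), and the pointwise limit $u_\nu-u_{S_\infty}$ remains convex, so $S_\infty\leq\nu$. For the minimality property (iii), any $\eta\in\m$ with $\mu_\infty\leqc\eta\leq\nu$ also satisfies $\mu_n\leqc\eta$ for every $n$, so minimality of $S^\nu(\mu_n)$ gives $u_{S^\nu(\mu_n)}\leq u_\eta$; letting $n\to\infty$ yields $u_{S_\infty}\leq u_\eta$, i.e.\ $S_\infty\leqc\eta$. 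Uniqueness in Lemma~\ref{MinimalImage} then forces $S_\infty=S^\nu(\mu_\infty)$.

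The main obstacle I anticipate is subtle and really only concerns the correct asymptotic behavior of the pointwise limits $u_\infty$ and $u_{S_\infty}$ at $\pm\infty$: this is what licenses the application of Proposition~\ref{meas2pot} to convert a convex pointwise limit of potentials into an actual measure in $\m$. The uniform squeeze between $k|x-m|$ and $u_\nu(x)$, combined with the fact that every measure appearing in the argument shares the same mass $k$ and barycenter $m$, resolves this cleanly and simultaneously ensures that the convergence really takes place in $\m$ (with convergence of first absolute moments, not merely narrow convergence).
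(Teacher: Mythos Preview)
Your proof is correct. For the convergence of $(\mu_n)_n$ you follow exactly the paper's argument via monotone pointwise convergence of potentials squeezed between $k|x-m|$ and $u_\nu$. For the shadow part your route is slightly different and in fact more explicit: you first prove that $(S^\nu(\mu_n))_n$ is $\leqc$-increasing from the minimality clause, obtain $S_\infty$ by the same potential-function squeeze, and then verify the three defining properties of the shadow directly; the paper instead argues more abstractly through the decreasing chain of sets $F^\nu_{\mu_n}$ and their intersection $F_\infty$, which is terser but leaves more to the reader. Your treatment has the advantage of making the convergence of the shadows and the identification $S_\infty=S^\nu(\mu_\infty)$ fully transparent.
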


\begin{pf}
First note that the assumptions imply $u_{\mu_0}\leq u_{\mu_1}\leq
\cdots\leq u_{\mu_n}$ and $u_{\mu_n}\leq u_\mu$. The limit
$u_\infty
:=\lim_{n\in\N} u_{\mu_n}$ exists because for every $x\in\R$,
$(u_{\mu
_n}(x))_n$ is increasing and bounded from above. Of course, the limit
$u_\infty$ is a convex function and since $u_\mu$ is an upper bound it
has the correct asymptotic behavior. Therefore, $u_\infty$ is a
potential function and by Proposition~\ref{meas2pot} it is the
potential function of some $\mu_\infty\in\mathcal{M}$ with the same
mass and
mean as $\mu$ and the $\mu_n$'s.

On the other hand, for $n\in\N$ we consider the set $F_{\mu_n}^\nu$ of
measures $\eta_n$ satisfying $\mu_n\leqc\eta_n$ and $\eta_n\leq
\nu$.
(We are using the notation of the proof of Lemma~\ref{MinimalImage}.)
The measure $S^\nu(\mu_n)$ is the smallest element of $F_{\mu_n}^\nu$
with respect to the convex order. The family $F_{\mu_n}^\nu$ is
decreasing in $n$ and it is not difficult to see that $F^\nu_\mu
\subseteq\cap F^\nu_{\mu_n}$ so that it is not empty. Hence
$S^\nu
(\mu_n)$ is increasing in the convex order and it is bounded from above
by $S^\nu(\mu)$. Exactly for the same reasons as for the sequence
$(\mu
_n)_n$, it converges to some $S_\infty$ in $\mathcal{M}$. We now have to
conclude that $S^\nu(\mu_\infty)=S_\infty$. We will in fact prove that
$S_\infty\leqc S^\nu(\mu_\infty)$ and $S^\nu(\mu_\infty)\leqc
S_\infty$.

For every $n$, we have $\mu_n\leqc\mu_\infty\leqc S^\nu(\mu
_\infty)$
and $S^\nu(\mu_\infty)\leq\nu$. Thus, $S^\nu(\mu_n)\leqc S^\nu
(\mu
_\infty)$. By Proposition~\ref{convergence}, we have $S_\infty\leqc
S^\nu(\mu_\infty)$. Conversely, using again Proposition~\ref
{convergence}, the relation $\mu_n\leqc S^\nu(\mu_n)$ yields $\mu
_\infty
\leqc S_\infty$ as $n$ goes to $+\infty$. But $S_\infty\leq\nu$ [the
limit of a converging sequence $(u_\nu-u_{S^\nu(\mu_n)})_n$ is convex].
Hence, $S^\nu(\mu_\infty)\leqc S_\infty$.
\end{pf}

\begin{lem}[(Shadow of one measure and one atom)]\label{measure_plus_atom}
Consider now $\gamma+\delta$ where $\delta$ is an atom. Assume
$(\gamma
+\delta)\leqe\nu$. Then we have $\delta\leqe S^\nu(\gamma+\delta
)-S^\nu
(\gamma)$ and
%
\begin{equation}
\label{decompo} S^\nu(\gamma+\delta)=S^\nu(
\gamma)+S^{\nu-S^\nu(\gamma)}(\delta).
\end{equation}
\end{lem}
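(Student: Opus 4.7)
The plan is to approximate $\gamma$ by a sequence of finitely supported measures and reduce the statement to the already-handled atomic case of Lemma \ref{many_atoms}, where the order in which we attach atoms is irrelevant.

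First I would invoke Lemma \ref{approx_measure} to pick an increasing (in convex order) sequence $(\gamma^{(n)})_n$ of finitely supported atomic measures converging weakly to $\gamma$ in $\m$. Writing $\gamma^{(n)} = \sum_{i=1}^{k_n} \delta_i^{(n)}$, the assumption $\gamma + \delta \leqe \nu$ gives $\gamma^{(n)} + \delta \leqc \gamma + \delta \leqe \nu$. We can thus apply Lemma \ref{many_atoms} to the finite family $(\delta_1^{(n)}, \ldots, \delta_{k_n}^{(n)}, \delta)$. By the remark following Lemma \ref{many_atoms}, the resulting shadow does not depend on the ordering of these atoms, so we are free to process the atoms of $\gamma^{(n)}$ first and $\delta$ last; this produces the identity
\begin{equation}\label{eq:approxdecomp}
S^\nu(\gamma^{(n)} + \delta) \;=\; S^\nu(\gamma^{(n)}) + S^{\,\nu - S^\nu(\gamma^{(n)})}(\delta).
\end{equation}

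Next I would pass to the limit $n \to \infty$ in \eqref{eq:approxdecomp}. Since $\gamma^{(n)} \to \gamma$ and $\gamma^{(n)}+\delta \to \gamma + \delta$ weakly in $\m$, Proposition \ref{limit_measure} yields $S^\nu(\gamma^{(n)}) \to S^\nu(\gamma)$ and $S^\nu(\gamma^{(n)}+\delta) \to S^\nu(\gamma+\delta)$ weakly. Therefore $\nu - S^\nu(\gamma^{(n)}) \to \nu - S^\nu(\gamma)$, and Lemma \ref{atom_shadow continuity} (after suitable normalization to handle the mass of $\delta$, which is the main technical obstacle) gives
\begin{equation*}
S^{\,\nu - S^\nu(\gamma^{(n)})}(\delta) \;\longrightarrow\; S^{\,\nu - S^\nu(\gamma)}(\delta)
\end{equation*}
weakly; to apply the lemma we must first know that $\delta \leqe \nu - S^\nu(\gamma^{(n)})$, which is already established in the atomic case by Lemma \ref{many_atoms}, and this relation is preserved in the limit, giving $\delta \leqe \nu - S^\nu(\gamma)$ so that the right-hand side of \eqref{decompo} is well defined. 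Combining the three limits gives the desired identity \eqref{decompo}.

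Finally, the first claim $\delta \leqc S^\nu(\gamma+\delta) - S^\nu(\gamma)$ is an immediate consequence: by the identity just established, the difference equals $S^{\,\nu - S^\nu(\gamma)}(\delta)$, which by the very definition of the shadow (property (ii) in Lemma \ref{MinimalImage}) dominates $\delta$ in the convex order. The main obstacle I anticipate is verifying the continuity of $\nu \mapsto S^\nu(\delta)$ in the regime required here, since Lemma \ref{atom_shadow continuity} was stated for atoms of mass less than one inside probability measures; this is handled by rescaling to unit total mass and checking that the barycenter parameter varies continuously with the reference measure, exactly as in the Kantorovich metric estimate used in the proof of that lemma.
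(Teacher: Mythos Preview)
Your proof is correct and follows essentially the same route as the paper: approximate $\gamma$ by finitely supported measures via Lemma~\ref{approx_measure}, invoke the atomic case from Lemma~\ref{many_atoms} (and its permutation remark) to obtain the decomposition at each stage, and then pass to the limit using Proposition~\ref{limit_measure} together with the continuity of the atom shadow from Lemma~\ref{atom_shadow continuity}. Your observation that the rescaling needed to fit the hypotheses of Lemma~\ref{atom_shadow continuity} is the only technical wrinkle matches the paper's own treatment exactly.
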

\begin{pf}
If $\gamma$ is the sum of finitely many atoms, the result follows from
Lemma~\ref{many_atoms}. Let us consider an approximating sequence
$(\gamma^{(n)})_n$ of $\gamma$ as in Lemma~\ref{approx_measure}. We can
write the decomposition of the shadow of $\gamma^{(n)}+\delta$ in
$\nu$
as in the statement of the lemma and apply Proposition~\ref
{limit_measure} to the sequence $(S^\nu(\gamma^{(n)}))_n$. It follows
that the limit exists and equals $S^\nu(\gamma)$. Write $\nu^{(n)}$ for
$S^\nu(\gamma^{(n)})$ and $\nu^{(\infty)}$ for $S^\nu(\gamma)$.
For the
same reasons as above, the shadows of $\gamma^{(n)}+\delta$ converge to
$S^\nu(\gamma+\delta)$.

We still have to show that $S^{\nu-\nu^{(n)}}(\delta)$ converges to
$S^{\nu-\nu^{(\infty)}}(\delta)$.
We know that $\nu^{(n)}$ converges to $\nu^{(\infty)}$ in $\mathcal
{M}$ so $\nu
-\nu^{(n)}$ tends to $\nu-\nu^{(\infty)}$ and all these measures are
bounded by $\nu$. We also know that $S^{\nu-\nu^{(n)}}(\delta)$ is the
restriction of $\nu-\nu^{(n)}$ to the (uniquely determined) ``quantile
interval'' with the correct mass and barycenter. Rescaling masses if
necessary, the continuity Lemma~\ref{atom_shadow continuity} implies
that $S^{\nu-\nu^{(n)}}(\delta)$ converges to $S^{\nu-\nu^{(\infty
)}}(\delta)$.
\end{pf}

We are now finally in the position to prove the desired associativity
property of the shadow mapping.
\begin{pf*}{Proof of Theorem~\ref{two_measures}}
If $\gamma_2$ is the sum of finitely many atoms, the property holds
since by Lemma~\ref{measure_plus_atom} it is possible to construct
recursively $S^\nu(\gamma_1+\gamma_2)$ using a decomposition with one
atom from $\gamma_2$ and the rest of $\gamma_1+\gamma_2$ as the second
measure. Let us consider a sequence $(\gamma^{(n)}_2)_n$ of measures
consisting of finitely many atoms that weakly converge to $\gamma_2$
and satisfy $\gamma^{(n)}_2\leqc\gamma_2$. Moreover, we may assume
that $(\gamma^{(n)}_2)_n$ is increasing in the convex order as in Lemma~\ref{approx_measure}.

We can write the decomposition of the shadow of $\gamma_1+\gamma
_2^{(n)}$ in $\nu$ as in the statement of the theorem and apply
Proposition~\ref{limit_measure} to the sequence $(S^{\nu-S^\nu
(\gamma
_1)}(\gamma^{(n)}_2))_n$. We obtain that the limit exists and equals
$S^{\nu-S^\nu(\gamma_1)}(\gamma_2)$. For the same reasons, the shadow
of $\gamma_1+\gamma^{(n)}_2$
converges to $S^\nu(\gamma_1+\gamma_2)$. This completes the proof.
\end{pf*}

Before we define the left-curtain transport plan, it seems worthwhile
to record the following result.

\begin{pro}[(Shadow of the sum of finitely many measures)]\label{many_measures}
Let $(\gamma_i)_i$ be a family of measures (that possibly vanish
identically). Let $\mu_n=\gamma_1+\cdots+\gamma_n$. Assume also that
$\mu_n\leqe\nu$ for every $n\geq1$. The sequence $(\nu_n)_{n\in\N}$
defined by $\nu_n=S^\nu(\mu_n)$ satisfies the following recurrence relation:
\begin{itemize}
\item$\nu_0=0$,
\item$\nu_{n}-\nu_{n-1}=S^{\nu-\nu_{n-1}}(\gamma_n)$.
\end{itemize}
\end{pro}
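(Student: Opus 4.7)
The plan is to proceed by induction on $n \in \N$, using Theorem \ref{two_measures} (the two-measure associativity of the shadow) as the engine for the inductive step. Morally, this proposition is simply an iteration of the two-term case: once we know that $S^\nu(\alpha + \beta)$ splits as $S^\nu(\alpha) + S^{\nu - S^\nu(\alpha)}(\beta)$, we peel off the summands of $\mu_n$ one at a time. This mirrors exactly what was done in Lemma \ref{many_atoms} for atomic measures, where Lemma \ref{atom_plus_measure} played the role that Theorem \ref{two_measures} plays here.

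The base case $n = 0$ is tautological: both the recursion and the formula $\nu_n = S^\nu(\mu_n)$ yield the zero measure. For the inductive step, I would assume that $\nu_n = S^\nu(\mu_n)$ has been constructed and check that the next term of the recursion is well defined and agrees with $S^\nu(\mu_{n+1})$. Since $\mu_{n+1} = \mu_n + \gamma_{n+1}$ and $\mu_{n+1} \leqe \nu$, Theorem \ref{two_measures} applied to this decomposition gives first that $\gamma_{n+1} \leqe \nu - S^\nu(\mu_n) = \nu - \nu_n$, so that $S^{\nu - \nu_n}(\gamma_{n+1})$ is meaningful, and then the identity
$$S^\nu(\mu_{n+1}) \;=\; S^\nu(\mu_n) + S^{\nu - S^\nu(\mu_n)}(\gamma_{n+1}) \;=\; \nu_n + S^{\nu - \nu_n}(\gamma_{n+1}).$$
Setting $\nu_{n+1} := S^\nu(\mu_{n+1})$ simultaneously establishes $\nu_{n+1} - \nu_n = S^{\nu - \nu_n}(\gamma_{n+1})$ and $\nu_{n+1} = S^\nu(\mu_{n+1})$, completing the induction.

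Uniqueness of the sequence is essentially free: the recursion determines $\nu_n$ from $\nu_{n-1}$ (the shadow itself being uniquely characterized by Lemma \ref{MinimalImage}), and the starting value $\nu_0 = 0$ is prescribed. I do not anticipate a real obstacle in this proof: the only point that requires any vigilance is verifying at each step that $\gamma_{n+1}$ lies in the extended convex order below the residual measure $\nu - \nu_n$, but this is precisely one of the two conclusions handed to us by Theorem \ref{two_measures}, so it propagates automatically through the induction.
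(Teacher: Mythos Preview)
Your proof is correct and follows essentially the same strategy as the paper: induction with Theorem~\ref{two_measures} supplying the inductive step, exactly as Lemma~\ref{atom_plus_measure} did in the proof of Lemma~\ref{many_atoms}. The only difference is organizational: the paper (via the verbatim reference to Lemma~\ref{many_atoms}) peels off the \emph{first} summand, writing $\mu_{n+1}=\gamma_1+\mu'_n$ and invoking the inductive hypothesis on the shifted family $(\gamma_2,\dots,\gamma_{n+1})$ inside the residual target $\nu-S^\nu(\gamma_1)$, whereas you peel off the \emph{last} summand via $\mu_{n+1}=\mu_n+\gamma_{n+1}$. Your version is the more direct one---it is available precisely because Theorem~\ref{two_measures}, unlike Lemma~\ref{atom_plus_measure}, allows both arguments to be general measures---and avoids the auxiliary primed sequence.
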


\begin{pf}
The statement is the same as Lemma~\ref{many_atoms} except that we do
not require the measures $\gamma_i$ to be atoms. Lemma~\ref{many_atoms}
relies on Lemma~\ref{atom_plus_measure} which characterizes the shadow
of $\gamma_1+\gamma_2$ under the assumption that $\gamma_1$ is an atom.
Substituting it with Theorem~\ref{two_measures} the present claim
follows verbatim.
\end{pf}
Let us now formally define the left-curtain coupling $\pi_\lc$ that has
been discussed in the \hyperref[sec:intro]{Introduction} and whose properties will be derived
in the sequel. We baptize it the ``left-curtain transport plan''
because it projects shadow measures as a curtain that one closes
starting from the left-hand side.

Note that given measures $\mu\leq\mu' \leqe\nu$, Theorem~\ref
{two_measures} implies that $S^\nu(\mu)\leq S^\nu(\mu')$. This property
is essential for the definition of $\pi_\lc$.



\begin{them}[(Definition of ${\pi_\lc}$)]\label{lc_defi} Assume that
$\mu
\leqc\nu$. There is a unique probability measure $\pi_\lc$ on $\R
\times
\R$ which transports $\mu|_{]{-}\infty,x]}$ to\break $S^\nu(\mu|_{]{-}\infty
,x]})$, that is, satisfies $\proj^x_\#({\pi_\lc|}_{]{-}\infty
,x]\times\R
})=\mu|_{]{-}\infty,x]}$ and\break $\proj^y_\#({\pi_\lc|}_{]{-}\infty
,x]\times\R
})=S^\nu(\mu|_{]{-}\infty,x]})$ for all $x\in\R$. Moreover, $\pi
_\lc$ is
a martingale transport plan which takes $\mu$ to $\nu$, that is,
${\pi
_\lc}\in\Pi_M(\mu,\nu) $.
\end{them}
\begin{pf}
Plainly, the condition given in the statement prescribes the value of
\[
\pi_\lc\bigl(]{-}\infty,x]\times A\bigr)= S^\nu(\mu|_{]{-}\infty,x]})
(A)
\]
for $x\in\R$ and every Borel set $A\subseteq\R$, thus giving rise to
a unique measure on the product space. Here we use that, by Theorem~\ref
{two_measures}, $S^\nu(\mu|_{]{-}\infty,x]})\leq S^\nu(\mu
|_{]{-}\infty
,x']})$ whenever $x\leq x'$.

Clearly, the first marginal of $\pi_\lc$ equals $\mu$. By construction,
the second marginal satisfies $\proj^y_\# \pi_\lc\leq\nu$. Since
$\mu
$ and $\nu$ have the same mass, this implies $\proj^y_\# \pi_\lc=
\nu$
as required.


To establish the martingale property, we show that property \eqref
{MartTest} holds for any function $\rho=\I_{]{-}\infty,x']}, x'\in\R$.
Indeed, we have
\begin{eqnarray*}
\int(y-x)\rho(x) \,\dd\pi_\lc(x,y)&=&\int y \,\dd S^\nu(
\mu|_{]{-}\infty
,x']}) (y)-\int x \,\dd\mu|_{]{-}\infty,x']}(x)\\
&=&0.
\end{eqnarray*}
\upqed\end{pf}

%

\begin{rem}
The family of intervals $(]{-}\infty,x])_{x\in\R}$ is totally ordered
with respect to $\subseteq$ and it spans the $\sigma$-field of Borel
measurable sets. In the proof of Theorem~\ref{lc_defi}, we used these
properties to show that there is a unique martingale transport plan
which transports $\mu|_{]{-}\infty,x]}$ to $S^\nu(\mu|_{]{-}\infty,x]})$.
This construction can be applied to more general families of sets: Let
$I$ be some index set and $(C_\iota)_{\iota\in I}$ a family of Borel
sets that both is totally ordered with respect to $\subseteq$ and spans
the $\sigma$-field of Borel sets. 
Then a measure $\pi\in\Pi_M(\mu,\nu)$ is defined uniquely by the
relations $\pi(C_\iota\times A)= S^\nu(\mu|_{C_\iota})(A)$ for all
indices $\iota\in I$ and Borel sets $A\subseteq\R$.
\end{rem}

\begin{ex}
In the case of a finitely supported measure $\mu=\sum_{i=1}^n \delta
_i$, it follows that if the ordering is done so that the support of
$\delta_i$ is $\{x_i\}$ with $x_1\leq\cdots\leq x_n$, then the $\pi
_\lc
$-coupling is $\pi_\lc=\sum_{i=1}^n \tilde\delta_{i}\otimes S^{\nu
-\nu
_{i-1}}(\delta_i)$ where $\tilde\delta_i=\delta_i/\delta_i(x_i)$ are
the properly renormalized versions of $\delta_i$ and the measures $\nu
_i$ are $S^\nu(\mu_i)$ with $\mu_i=\delta_1+\cdots+\delta_i$ as in
Lemma~\ref{many_atoms}.
\end{ex}

\begin{them} \label{them:mono}
The martingale $\pi_\lc$ is left-monotone in the sense of Definition~\ref{defi_monoto}.
\end{them}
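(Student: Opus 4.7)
My plan is to prove $\pi_\lc$ is left-monotone by contradiction, using the fact that $\pi_\lc$ is defined so that $\nu_x := \proj^y_\#(\pi_\lc|_{]-\infty,x]\times\R}) = S^\nu(\mu_{]-\infty,x]})$ is the convex-order minimum of the set $F^\nu_{\mu_{]-\infty,x]}}$ (Lemma \ref{MinimalImage}, Theorem \ref{lc_defi}). Suppose a forbidden configuration $(x,y^-),(x,y^+),(x',y')\in\spt\pi_\lc$ with $x<x'$ and $y^-<y'<y^+$ existed. The idea is to construct $\tilde\pi\in\M(\mu,\nu)$ whose associated cumulative second marginal $\tilde\nu_x$ at level $x$ lies in $F^\nu_{\mu_{]-\infty,x]}}$ but is \emph{strictly} smaller than $\nu_x$ in the convex order, contradicting the minimality of the shadow.

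Concretely, write $\lambda:=(y^+-y')/(y^+-y^-)\in(0,1)$ so that $\lambda y^- +(1-\lambda)y^+=y'$. For $\eps>0$ small, define the signed adjustment
\begin{align*}
\delta\pi:=\eps\bigl(&-\lambda\,\delta_{(x,y^-)}-(1-\lambda)\,\delta_{(x,y^+)}-\delta_{(x',y')}\\
&+\lambda\,\delta_{(x',y^-)}+(1-\lambda)\,\delta_{(x',y^+)}+\delta_{(x,y')}\bigr)
\end{align*}
and set $\tilde\pi:=\pi_\lc+\delta\pi$. A direct check shows that $\delta\pi$ has vanishing $x$- and $y$-marginals and vanishing $x$- and $x'$-slice barycentric contributions (the $\eps\delta_{(x,y')}$ balances $-\eps\lambda\delta_{(x,y^-)}-\eps(1-\lambda)\delta_{(x,y^+)}$ by the choice of $\lambda$, and symmetrically at $x'$). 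Hence $\tilde\pi\in\M(\mu,\nu)$ provided $\eps$ is chosen small enough that $\tilde\pi$ stays non-negative.

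Now $\tilde\pi|_{]-\infty,x]\times\R}$ is itself a martingale transport plan from $\mu_{]-\infty,x]}$ to
$$\tilde\nu_x=\nu_x+\eps\bigl(\delta_{y'}-\lambda\,\delta_{y^-}-(1-\lambda)\,\delta_{y^+}\bigr).$$
Jensen's inequality (as used in the necessity part of Theorem \ref{BasicExistence}) gives $\mu_{]-\infty,x]}\leqc\tilde\nu_x$, and $\tilde\nu_x\leq\nu$ because it is dominated by the full $y$-marginal of $\tilde\pi$. Thus $\tilde\nu_x\in F^\nu_{\mu_{]-\infty,x]}}$. However, for any strictly convex $\phi$,
$$\int\phi\,\d\tilde\nu_x-\int\phi\,\d\nu_x=\eps\bigl(\phi(y')-\lambda\phi(y^-)-(1-\lambda)\phi(y^+)\bigr)<0,$$
so $\tilde\nu_x\leqc\nu_x$ with $\tilde\nu_x\neq\nu_x$, contradicting the minimality of $\nu_x=S^\nu(\mu_{]-\infty,x]})$ in $F^\nu_{\mu_{]-\infty,x]}}$ granted by Lemma \ref{MinimalImage}.

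The main obstacle is that in general the points $(x,y^\pm),(x',y')$ are not atoms of $\pi_\lc$; they merely lie in the support. The swap above must therefore be implemented on small product neighborhoods. I will choose disjoint boxes $V\times U_-$, $V\times U_+$ around $(x,y^-),(x,y^+)$ and $V'\times U'$ around $(x',y')$ with $V$ to the left of $V'$ and with $U_-<U'<U_+$ in the obvious order, all carrying positive $\pi_\lc$-mass. The atomic swap is then replaced by redistributing positive fractions of these three pieces of $\pi_\lc$; the martingale constraint is enforced using the \emph{conditional barycenters} on each neighborhood (which approximate $y^\pm$ and $y'$) in place of the point values, with the weight $\lambda$ adjusted correspondingly. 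After passing to a Borel set $\Gamma\subseteq\spt\pi_\lc$ with $\pi_\lc(\Gamma)=1$ on which such averaging behaves well (invoking Lemma \ref{AccumulatingBadPoints} if necessary to locate good auxiliary points), the same convex-order contradiction goes through and establishes that no forbidden triple can lie in $\Gamma$.
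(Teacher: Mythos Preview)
Your core idea coincides with the paper's: both exploit that $\nu_x=S^\nu(\mu_{]-\infty,x]})$ is convex-order minimal, and both derive a contradiction from a forbidden triple via exactly the three-point swap you describe. The difference is that the paper does not attempt the swap directly on $\pi_\lc$. Instead it observes that $\pi_\lc$ simultaneously minimises all the costs $c_{s,t}(x,y)=\I_{]-\infty,s]}(x)\,|y-t|$ (an immediate consequence of shadow minimality), applies the variational Lemma~\ref{GlobalLocal} to each $c_{s,t}$, and intersects the resulting sets over rational $(s,t)$ to obtain the monotonicity set $\Gamma$. On that $\Gamma$ your swap is then compared against the \emph{finite} measures $\alpha,\alpha'$ rather than against $\pi_\lc$ itself, so no approximation is needed.

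Your argument has a real gap in the non-atomic passage. You need to \emph{exhibit} a full-measure $\Gamma$ and show the swap works for every forbidden triple in it; taking $\Gamma=\spt\pi_\lc$ does not do this. Concretely, knowing $(x,y^-),(x,y^+)\in\spt\pi_\lc$ does \emph{not} give a positive-$\mu$-measure set of $t$ near $x$ for which both $\pi_{\lc,t}(U_-)>0$ and $\pi_{\lc,t}(U_+)>0$; without such common $t$'s you cannot build a $\delta\pi$ that simultaneously keeps the $x$-marginal fixed and the conditional barycenters equal to $t$. Your appeal to Lemma~\ref{AccumulatingBadPoints} is misplaced---that lemma locates accumulation points among uncountably many large fibres, it says nothing about regularity of the disintegration. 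What is actually required is either Lemma~\ref{GlobalLocal} (which packages this difficulty once and for all) or the ``regular points'' construction of the Appendix (Lemma~\ref{biglemma}/Corollary~\ref{utile}), which is designed precisely to guarantee that near a regular $x$ one finds a set $A$ of positive $\mu$-measure on which all the chosen boxes $B_j$ are charged by $\pi_{\lc,t}$. With either tool in hand your convex-order contradiction does go through; without one, the final paragraph is not a proof.
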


\begin{pf}
Note that $\pi_\lc$ is simultaneously a minimizer for all cost
functions of the form $c_{s,t}(x,y)= \I_{]{-}\infty,s]}(x)|y-t|$,
where $s,t$ are real numbers. Indeed, if $\pi$ is an arbitrary
martingale transport plan then
\begin{eqnarray*}
\iint c_{s,t}(x,y) \,\d\pi(x,y)&=&\iint_{]{-}\infty,s]\times\R}|y-t| \,\d \pi (x,y)\\
&=&
\int|y-t| \,\d\bigl(\proj^y_\#\pi|_{]{-}\infty,s]\times\R}\bigr) (y).
\end{eqnarray*}
Setting $\nu_s^\pi=\proj^y_\#\pi|_{]{-}\infty,s]\times\R}$ we have
$\nu
_s^\pi\leq\nu$ and $\mu|_{]{-}\infty,s]}\leqc\nu_s^\pi$ which implies
$S^\nu(\mu|_{]{-}\infty,s]})\leqc\nu_s^\pi$. Therefore,
\[
\int|y-t| \,\d S^\nu(\mu|_{]{-}\infty,s]}) (y)\leq\int|y-t| \,\d\nu
_s^\pi(y),
\]
where equality holds for all $s,t \in\R$ if (and only if) $\pi= \pi
_\lc
$. 

Applying Lemma~\ref{GlobalLocal} to the costs $c_{s,t}$ for $s,t\in\Q$,
we obtain a Borel set $\Gamma_{s,t}$ of $\pi_\lc$-measure $1$. Set
$\Gamma=\bigcap_{s,t\in\Q}\Gamma_{s,t}$. We claim that a configuration
as in \eqref{BadConfiguration} cannot appear in $\Gamma$. Indeed, if
$(x,y^-), (x,y^+)$ and $(x',y')$ are in $\Gamma$ and satisfy $x<x'$ and
$y^-<y'<y^+$, they are also in $\Gamma_{s,t}$ where $(s,t)$ satisfies
$s\in\,]x,x'[$ and $t\in\,]y',y^+[$. Let $\lambda\in\,]0,1[$ be such that
$y'=\lambda y^+ + (1-\lambda)y^-$. The measure $\alpha=\lambda\delta
_{(x,y^+)}+(1-\lambda)\delta_{(x,y^-)}+\delta_{(x',y')}$ is
concentrated on $\Gamma$ but the competitor $\alpha'=\lambda\delta
_{(x',y^+)}+(1-\lambda)\delta_{(x',y^-)}+\delta_{(x,y')}$ leads to a
lower global cost. This yields the desired contradiction.
\end{pf}

\section{Uniqueness of the monotone martingale transport}\label{sec:unique}

In this section, we establish that the left-curtain coupling $\pi_\lc$
is the unique monotone martingale coupling. Our proof of this result is
specific to the present setup. We will also explain a more classical
argument that is often invoked in the optimal transport theory to
establish some uniqueness property. This so-called \emph{half sum}
argument will be used several times subsequently but requires the
initial distribution $\mu$ to be continuous. 

We start with two preliminary lemmas which are required to derive the
main result of this part, Theorem~\ref{mono_unique}.

\begin{lem}\label{interv}
If $\mu\leqc\nu$, then one of the following statements holds true:
\begin{itemize}
\item we have $\mu(]a,+\infty[)>0$ and $\nu(]a,+\infty[)>0$ for every
$a$; 
%
\item the number $a=\sup(\Spt(\mu))$ is finite and $\nu(]a,+\infty[)>0$;
\item the number $a=\sup(\Spt(\mu))$ is finite and $\nu(]a,+\infty
[)=0$. Moreover,\break $\nu(\{a\})\geq\mu(\{a\})$.
\end{itemize}
The corresponding result for intervals of the form $]{-}\infty,b[$ is
true as well.
\end{lem}

\begin{pf} Integrating the convex function $x\mapsto(x-a')^+$ for
different values of $a'$ we obtain $\sup(\Spt(\mu))\leq\sup(\Spt
(\nu
))$. Therefore, the first case corresponds to $\sup(\Spt(\mu))=\sup
(\Spt
(\nu))=+\infty$, the second to $\sup(\Spt(\mu))<\sup(\Spt(\nu
))$ and
the third to
$\sup(\Spt(\mu))=\sup(\Spt(\nu))<+\infty$.

Let us prove that in the third case we also have $\mu(\{a\})\leq\nu
(\{
a\})$. If $\mu(\{a\})=0$ we are done. If $\mu(\{a\})>0$, the
conditional transport measure $\pi_a$ must be the static transport
because it is a martingale transport plan and $\sup(\Spt(\nu))= a$.
This completes the proof.
\end{pf}
%

For $u,v\in\R, u< v$ let $g_{u,v}$ be defined by
%
\begin{equation}
g_{u,v}(x)= \cases{ v-x,&\quad $\mbox{if $x\in[u,v]$,}$\vspace*{2pt}
\cr
0,&\quad $\mbox{otherwise.}$}
\end{equation}

\begin{lem}\label{yahoo}
Let $\sigma$ be a nontrivial signed measure of mass $0$ and denote its
Hahn decomposition by $\sigma=\sigma^+-\sigma^-$. There exist $a\in
\Spt
(\sigma^+)$ and $b>a$ such that $\int g_{a,b}(x) \,\d\sigma(x)>0$.
\end{lem}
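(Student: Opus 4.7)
I would start by writing $F(t) := \sigma((-\infty, t])$, a bounded càdlàg function with $F(\pm\infty) = 0$, and applying Fubini to get
\[
\int g_{a, b}\,\d\sigma \;=\; \int_{[a, b]}(b - x)\,\d\sigma(x) \;=\; \int_a^b \sigma([a, t])\,\d t \;=\; \int_a^b F(t)\,\d t \;-\; (b - a)\,F(a^-).
\]
The plan is then to split according to whether $\sigma^+$ charges a point.

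If $\sigma^+(\{a\}) > 0$ for some $a$, the Hahn decomposition forces $\sigma^-(\{a\}) = 0$ and $a \in \Spt(\sigma^+)$, with $\sigma(\{a\}) = \sigma^+(\{a\}) > 0$. Decomposing
\[
\int_{[a, b]}(b - x)\,\d\sigma = (b - a)\sigma(\{a\}) + \int_{(a, b]}(b - x)\,\d\sigma,
\]
the second term is bounded in absolute value by $(b - a)\,|\sigma|((a, b]) = o(b - a)$ as $b \downarrow a$ (since $|\sigma|$ is finite), and so the whole integral is strictly positive for $b$ sufficiently close to $a$.

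If $\sigma^+$ is atomless, I would invoke the Lebesgue differentiation theorem applied to the mutually singular pair $(\sigma^+, \sigma^-)$: for $\sigma^+$-a.e.\ point $a$, $\sigma^-([a, a + r])/\sigma^+([a, a + r]) \to 0$ as $r \downarrow 0$. The right-isolated points of $\Spt(\sigma^+)$ — those $a$ with $\sigma^+([a, a + r]) = 0$ for some $r > 0$ — are exactly the left endpoints of the (countably many) gaps of the closed set $\Spt(\sigma^+)$, hence form a countable and therefore $\sigma^+$-null set. One may therefore pick $a \in \Spt(\sigma^+)$ with $\sigma^+([a, a + r]) > 0$ for every $r > 0$ and at which the above density limit holds. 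For all sufficiently small $s > 0$, then, $\sigma([a, a + s]) > \tfrac{1}{2}\sigma^+([a, a + s]) > 0$, and hence
\[
\int g_{a, a + r}\,\d\sigma \;=\; \int_0^r \sigma([a, a + s])\,\d s \;>\; 0
\]
for $r > 0$ small enough.

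The main obstacle is the atomless case. A fully elementary alternative would require a longer case analysis according to the sign of $\inf F$: if $\inf F < 0$, pick $a$ just to the right of a near-minimizer of $F$ (so that $F(a^-) < 0$) and let $b \to \infty$, using $F(+\infty) = 0$ to make $(b-a)^{-1}\int_a^b F\,\d t$ small; if $F \geq 0$ and $\Spt(\sigma^+)$ is bounded below, take $a = \inf \Spt(\sigma^+)$ so that $F(a^-) = 0$ and use $\sigma \ne 0$ together with right-continuity of $F$ to force $\int_a^b F\,\d t > 0$ for some $b$. The residual subcase $F \geq 0$ with $\Spt(\sigma^+)$ unbounded below appears to genuinely require a density-type argument, so invoking Lebesgue differentiation at the outset handles all subcases uniformly.
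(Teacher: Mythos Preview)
Your proof is correct, but it takes a rather different and heavier route than the paper's. The paper's argument is a two-line averaging trick: since $\int_{\R} g_{u,u+1}(x)\,\d u$ is a constant independent of $x$ and $\sigma(\R)=0$, Fubini gives $\int_{\R}\big(\int g_{u,u+1}\,\d\sigma\big)\,\d u=0$; as the integrand is not identically zero, there is some $u$ with $\int g_{u,u+1}\,\d\sigma>0$. One then slides the left endpoint up to $a:=\min\big(\Spt(\sigma^+)\cap[u,u+1]\big)$, using that $\sigma^+$ vanishes on $[u,a)$ so the integral can only increase, and obtains $a\in\Spt(\sigma^+)$ with $\int g_{a,u+1}\,\d\sigma>0$. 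No case distinction, no differentiation of measures.

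By contrast, you split on whether $\sigma^+$ has an atom and, in the atomless case, invoke the one-sided differentiation theorem for the mutually singular pair $(\sigma^+,\sigma^-)$. This is valid (on $\R$ the family of one-sided intervals $[x,x+r]$ is a differentiation basis in the Besicovitch sense), and your handling of right-isolated points is clean, so the argument goes through. What you gain is a concrete local description: your point $a$ is a genuine density point of $\sigma^+$ relative to $\sigma^-$, whereas the paper's $a$ is only \emph{some} point of $\Spt(\sigma^+)$. What you lose is brevity and elementarity: the paper's proof needs nothing beyond Fubini, while yours imports a measure-differentiation theorem and a structural fact about closed subsets of $\R$. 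Your closing ``elementary alternative'' sketch does not clearly secure $a\in\Spt(\sigma^+)$ in the $\inf F<0$ branch, and as you note it leaves a residual subcase open; the paper's averaging trick is precisely the uniform elementary argument you were looking for.
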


\begin{pf}
First, notice that $u\mapsto\int g_{u,u+1}(x) \,\d\sigma(x)$ does not
vanish identically. 
Since, by Fubini's theorem,
\[
\iint g_{u,u+1}(x) \,\d\sigma(x) \,\d u=0
\]
there exists $u\in\R$ such that $\int g_{u,u+1}(x) \,\d\sigma(x)>0$. The
set $\Spt(\sigma^+\cap[u,u+1[)$ cannot be empty, so let $a=\min
(\Spt
(\sigma^+\cap[u,u+1])$. It follows that
\[
0<\int g_{u,u+1} \,\d\sigma\leq\int g_{a,u+1} \,\d\sigma.
\]
%
\upqed\end{pf}

\begin{them}[(Uniqueness of the monotone martingale coupling)]\label
{mono_unique}
Let $\pi$ be a monotone martingale transport plan and $\mu=\proj^x_\#
\pi
$ and $\nu=\proj^y_\#\pi$. Then $\pi$ is
the left-curtain coupling $\pi_\lc$ from $\mu$ to $\nu$.
\end{them}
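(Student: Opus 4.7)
The plan is to establish that any monotone martingale coupling $\pi$ must satisfy the characterising property of $\pi_\lc$, namely
$$ \nu^\pi_s := \proj^y_\#\bigl(\pi|_{]-\infty, s] \times \R}\bigr) = S^\nu(\mu_{]-\infty, s]}) \quad \text{for every } s \in \R. $$
By Theorem \ref{lc_defi} this forces $\pi = \pi_\lc$. One inclusion is immediate: since $\pi \in \Pi_M(\mu,\nu)$, the measure $\nu^\pi_s$ lies in $F^\nu_{\mu_{]-\infty, s]}}$ (it is $\leq \nu$ because $\pi$ has marginal $\nu$, and $\mu_{]-\infty, s]} \leqc \nu^\pi_s$ by the martingale property and Jensen's inequality). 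The extremality of the shadow then yields $\hat\nu_s := S^\nu(\mu_{]-\infty, s]}) \leqc \nu^\pi_s$, and the task is to upgrade this to equality.

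I would argue by contradiction, assuming that for some $s$ one has $\hat\nu_s \neq \nu^\pi_s$, and considering the signed measure $\sigma := \nu^\pi_s - \hat\nu_s$. This $\sigma$ is non-zero, has total mass $0$, and satisfies $\int \varphi \, \d\sigma \geq 0$ for every convex $\varphi$. Lemma \ref{yahoo} then supplies points $a \in \Spt(\sigma^+)$ and $b > a$ with $\int g_{a,b} \, \d\sigma > 0$, witnessing that $\nu^\pi_s$ places strictly more weighted mass on $[a,b]$ than $\hat\nu_s$ does, with some of the surplus located at $a$ itself.

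The heart of the argument is to turn this analytic information into a forbidden configuration in the monotonicity set $\Gamma$ of $\pi$. Because $a \in \Spt(\sigma^+)$, the coupling $\pi$ sends strictly more $]-\infty, s]$-mass near $a$ than the shadow does. Applying Lemma \ref{interv2} to the measure $\hat\nu_s$, with a threshold $d$ chosen at the right-end of the support of the ``extra'' piece supplied by Lemma \ref{yahoo}, shows that $\hat\nu_s$ already saturates the available $\nu$-mass on intervals of the form $]-\infty, d]$. Consequently, any extra $\pi$-mass coming from $]-\infty, s]$ that lies below $d$ must be compensated, via the martingale condition and the conservation of barycentres, by $\pi$-mass landing at a point $y^+ > b$. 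This produces $(x_0, y^-), (x_0, y^+) \in \Gamma$ with $x_0 \leq s$, $y^- \leq a$ and $y^+ > b$. Conversely, the deficit of $\hat\nu_s$ on $]a,b[$ forced by the extremality of the shadow must be compensated by $\pi$-mass from some $(x_1, y_1) \in \Gamma$ with $x_1 > s \geq x_0$ and $y_1 \in \,]y^-, y^+[$, which is exactly the forbidden configuration of Definition \ref{defi_monoto}.

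The main obstacle I expect is the last stage: translating the smooth, integrated statement $\int g_{a,b} \, \d\sigma > 0$ into the existence of honest points of $\Gamma$ in the positions above, rather than merely an averaged statement about pushforwards. I anticipate that this will require an accumulation argument in the spirit of Lemma \ref{AccumulatingBadPoints}, together with Lemma \ref{interv} to rule out degenerate placements of the supports of $\pi_{x_0}$ and $\pi_{x_1}$ relative to $a$ and $b$. Once the configuration has been extracted, the contradiction with the monotonicity of $\pi$ is immediate, concluding the proof.
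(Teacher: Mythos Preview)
Your overall strategy --- reduce to showing $\nu^\pi_s = S^\nu(\mu_{]-\infty,s]})$ for all $s$, note that the shadow property already gives $\hat\nu_s \leqc \nu^\pi_s$, and apply Lemma~\ref{yahoo} to the difference --- is exactly the paper's strategy. However, two choices in your execution create a genuine gap.

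\textbf{The sign of $\sigma$.} You set $\sigma = \nu^\pi_s - \hat\nu_s$ and take $a\in\Spt(\sigma^+)$, i.e.\ a point where $\pi$ \emph{over}-delivers from $]-\infty,s]$ compared to the shadow. The paper takes the opposite sign, $\sigma_x = \hat\nu_s - \nu^\pi_s$, and picks $u\in\Spt(\sigma_x^+)$, a point where $\pi$ \emph{under}-delivers. This is not cosmetic. With the paper's sign, near $u$ one has $\nu^\pi_s < \hat\nu_s \leq \nu$, so $\nu - \nu^\pi_s$ has mass near $u$; since $\nu - \nu^\pi_s$ is exactly what $\pi$ delivers from $]s,+\infty[$, this immediately produces points $(x',u')\in\Gamma$ with $x'>s$ and $u'\to u$. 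With your sign you only know $\nu^\pi_s > \hat\nu_s$ near $a$, which says nothing about $\nu - \nu^\pi_s$ there, and hence gives no direct handle on $\Gamma\cap(]s,+\infty[\times\R)$. Your paragraph invoking ``the deficit of $\hat\nu_s$ on $]a,b[$'' conflates what $\pi_\lc$ does with what $\pi$ does; it is the $\pi$-mass from $]s,+\infty[$ that you need, and your hypotheses do not locate it.

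\textbf{Seeking a forbidden triple directly.} You aim to exhibit $(x_0,y^-),(x_0,y^+),(x_1,y')\in\Gamma$ in the configuration \eqref{BadConfiguration}. The paper does not do this and, as you yourself anticipate, extracting such a triple from the integrated inequality $\int g_{a,b}\,\d\sigma>0$ is the hard part you have not carried out. The paper's route is more indirect and avoids this difficulty entirely: from the sequence $(x'_n,u_n)\in\Gamma$ with $x'_n>s$ and $u_n\to u$, monotonicity forces each $\Gamma_t$, $t\leq s$, to lie entirely on one side of $u$ (this is the key display \eqref{vide}). One then splits into the cases $u<s$ and $u\geq s$ and argues purely analytically: in both cases the one-sidedness of $\Gamma_t$, together with Lemmas~\ref{interv} and~\ref{interv2}, forces $\int g_{u,v}\,\d\nu^\pi_s \geq \int g_{u,v}\,\d\hat\nu_s$, contradicting the choice of $u,v$. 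No accumulation argument in the style of Lemma~\ref{AccumulatingBadPoints} is needed.

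In short: flip the sign of $\sigma$, and instead of hunting for a single forbidden triple, use monotonicity to deduce that every fibre $\Gamma_t$ with $t\leq s$ is one-sided at $u$; the contradiction then comes from comparing $\int g_{u,v}$ against the two measures, not from a pointwise configuration.
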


\begin{pf}
Let $\pi$ be left-monotone with monotonicity set $\Gamma$ as in
Definition~\ref{defi_monoto} and let $\pi_\lc$ be the left-curtain
transport plan between $\mu$ and $\nu$. We consider the target measures
$\nu^\pi_x$ and $\nu^{\pi_\lc}_x$ obtained when transporting the
$\mu
$-mass of $]{-}\infty,x]$ into $\nu$, that is,
\[
\nu^{\pi}_x=\proj^y_\#\pi|_{]{-}\infty,x]\times\R}
\]
and
\[
\nu^{\pi_\lc}_x=S^\nu(\mu|_{]{-}\infty,x]})=
\proj^y_\#{\pi_\lc }|_{]{-}\infty
,x]\times\R}.
\]
If $\nu^\pi_x=\nu^{\pi_\lc}_x$ for every $x$, then $\pi=\pi_\lc
$ by the
definition of the curtain-coupling in Theorem~\ref{lc_defi}.

Assume for contradiction that there exists some $x$ with $\nu^\pi
_x\neq
\nu^{\pi_\lc}_x$. This means in particular that $\sigma_x=(\nu
^{\pi_\lc
}_x-\nu^\pi_x)\neq0$. The shadow property implies that $\nu^{\pi
_\lc
}_x\leqc\nu^\pi_x$. By Lemma~\ref{yahoo}, we can pick $u\in\Spt
(\sigma
_x^+)$ and $v>u$ such that
\[
\int g_{u,v} \,\d\sigma_x>0.
\]
As $u\in\Spt\sigma_x^+$, $\sigma^+_x\leq\nu-\nu_x^\pi=\proj
^y_\#\pi
|_{]x,+\infty[\times\R}$, and $\pi(\Gamma)=1$, there is a sequence
$(x_n',u_n)_n$ such that: 
%
\begin{itemize}
\item$x'_n>x$,
\item$(x'_n,u_n)\in\Gamma$,
\item$u_n\rightarrow u$.
\end{itemize}
By the monotonicity property of $\Gamma$, for every $t\leq x$ and
$n\in
\N$, the set $\Gamma_t$ defined by $\{y\in\R\dvtx (t,y)\in\Gamma\}$ cannot
intersect $]{-}\infty,u_n[$ \emph{and} $]u_n,+\infty[$. Hence, for
$t\leq x$,
%
\begin{equation}
\label{vide} \Gamma_t\,\cap\,]{-}\infty,u[\, =\varnothing\quad\mbox{or}\quad
\Gamma_t\,\cap\, ]u,+\infty[\, =\varnothing.
\end{equation}
This remark will be important in the sequel of the proof.

We distinguish two cases depending on the respective positions of $u$
and $x$.
\begin{enumerate}[(1)]
\item[(1)] First case: $u<x$. Note that we have
\[
\nu^\pi_x-\nu^\pi_u=
\proj^y_\#\pi|_{]u,x]\times\R}
\]
and
\[
\nu^{\pi_\lc}_x-\nu^{\pi_\lc}_u=
\proj^y_\#{\pi_\lc }|_{]u,x]\times\R
}=S^{\nu-\nu^{\pi_\lc}_u}(
\mu|_{]u,x]}).
\]
As a consequence of \eqref{vide} and of the fact that $\pi$ is a
martingale transport plan, $\pi$ transports the mass of $]{-}\infty,u]$
to $]{-}\infty,u]$ and the mass of $]u,x]$ to $[u,+\infty[$. We show
below that the same applies to $\pi_\lc$, more precisely that $\nu
^{\pi
_\lc}_u\leqc\nu^\pi_u$ 
and $(\nu^{\pi_\lc}_x-\nu^{\pi_\lc}_u)\leqc(\nu^\pi_x-\nu^\pi
_u)$. 
%
\begin{itemize}
\item The measure $\nu^{\pi_\lc}_u$ is the shadow of $\mu
|_{]{-}\infty
,u]}$ in $\nu$. We have also $\mu|_{]{-}\infty,u]}\leqc\nu^\pi_u$
and $\nu
^\pi_u\leq\nu$ so that $\nu^{\pi_\lc}_u\leqc\nu^\pi_u$. We
apply now
Lemma~\ref{interv} and obtain that $\nu_u^{\pi_{\lc}}$ is concentrated
on $]{-}\infty,u]$ and $\nu^{\pi_\lc}_u(\{u\})\leq\nu^{\pi}_u(\{u\})$.

\item We have $\pi|_{]u,x]\times\R}\in\Pi_M(\mu_{]u,x]},\eta)$ where
$\eta:=\proj^y_\#\pi|_{]u,x]\times\R}=\nu^\pi_x-\nu^\pi_u$ is
concentrated on $[u,+\infty[$. More precisely, we have
\[
\eta\leq\bigl(\nu-\nu^\pi_u\bigr)|_{[u,+\infty[}\leq
\bigl(\nu-\nu^{\pi_{\lc
}}_u\bigr)|_{[u,+\infty[}\leq\nu-
\nu^{\pi_{\lc}}_u
\]
because $\nu^{\pi_\lc}_u$ and $\nu^{\pi}_u$ are concentrated on
$]{-}\infty,u]$ and $\nu^{\pi_\lc}_u(\{u\})\leq\nu^{\pi}_u(\{u\})$
as we
have seen above. Moreover, we have $\mu|_{]u,x]}\leqc\eta$. Hence,
\[
\nu^{\pi_\lc}_x-\nu^{\pi_\lc}_u=S^{\nu-\nu^{\pi_\lc}_u}(
\mu _{]u,x]})\leqc\eta=\nu^\pi_x-
\nu^\pi_u.
\]
\end{itemize}
Note that $g_{u,v}$ is convex on $[u,+\infty[$ so that $\int g_{u,v}
\,\d
(\nu^{\pi_\lc}_x-\nu^{\pi_\lc}_u)\leq\break  \int g_{u,v} \,\d(\nu^\pi
_x-\nu^\pi
_u)$. Moreover, we have $\int g_{u,v} \,\d\nu^{\pi_\lc}_u\leq\int
g_{u,v} \,\d\nu^\pi_u$ because $\nu^{\pi_\lc}_u(\{u\})\leq\nu
^{\pi}_u(\{
u\})$. Summing these inequalities, we obtain $\int g_{u,v} \,\d\nu^{\pi
_\lc}_x\leq\int g_{u,v} \,\d\nu^{\pi}_x$, which is a contradiction to
$\int g_{u,v} \,\d\sigma_x>0$.

\item[(2)] Second case: $x\leq u$. The measure $\pi$ cannot transport mass
from $]{-}\infty,x]$ to $]u,+\infty[$. Indeed, because of the martingale
property it then would also transport mass to the set $]{-}\infty,u[$,
contradicting \eqref{vide}. Thus, $\nu^{\pi}_x$ is concentrated on
$]{-}\infty,u]$. But we have $\nu^{\pi_\lc}_x\leqc\nu^{\pi}_x$ so that
considering Lemma~\ref{interv}, $\int g_{u,v} \,\d\nu^{\pi_\lc
}_x\leq\int
g_{u,v} \,\d\nu^{\pi}_x$ holds (even in the third case of this lemma
where $a=u$). This contradicts $\int g_{u,v} \,\d\nu^\pi_x>0$.\quad\qed
\end{enumerate}
\noqed\end{pf}

%
\begin{rem} 
The two cases in the proof are actually not very different. In both of
them, $\pi|_{]{-}\infty,x]\times\R}$ and ${\pi_\lc}|_{]{-}\infty
,x]\times
\R}$ (roughly speaking the transport plans restricted to $\mu
|_{]{-}\infty
,x]}$) are concentrated on
\[
\bigl(]{-}\infty,u]\,\times\,]{-}\infty,u] \bigr)\cup
 \bigl(]u,+\infty [\,\times\, [u,+\infty[ \bigr)
\]
and this lies at the core of the argument.
\end{rem}
%
\subsection{Structure of the monotone martingale coupling}
It remains to establish Corollary~\ref{FamousCoro} which states that if
$\mu$ is continuous, then $\pi_\lc$ is concentrated on the graph of two
functions. We need the following lemma.

\begin{lem}\label{Borel_lemma}
Assume that $\Gamma\subseteq\R^2$ is a Borel set such that for each
$x\in\R$ we have $|\Gamma_x|\leq2$. Then $S=\proj^x(\Gamma)$ is a
Borel set and there exist Borel functions $T_1,T_2\dvtx S\to\R$ with
$T_1\leq T_2$ such that
\[
\Gamma= \graph(T_1) \cup\graph(T_2).
\]
\end{lem}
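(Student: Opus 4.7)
The plan is to define $T_2(x) := \max \Gamma_x$ and $T_1(x) := \min \Gamma_x$ for $x \in S$, to check that $S$ is Borel and the $T_i$ are Borel measurable, and finally to observe that the decomposition $\Gamma = \graph(T_1) \cup \graph(T_2)$ follows from the bound $|\Gamma_x|\leq 2$.

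The first step is descriptive set theoretic. A general projection of a Borel set of $\R^2$ is only analytic, so the finiteness of fibers is essential. I would invoke the Lusin--Novikov uniformization theorem: if $B \subseteq \R \times \R$ is Borel with at most countable vertical fibers then $\proj^x(B)$ is a Borel subset of $\R$, and $B$ can be written as a countable union of graphs of Borel partial functions. Applied to $\Gamma$ itself, this immediately gives that $S = \proj^x(\Gamma)$ is Borel.

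Second, to see that $T_1$ and $T_2$ are Borel, I would fix $t \in \R$ and consider the Borel sets $\Gamma \cap (\R \times (t,+\infty))$ and $\Gamma \cap (\R \times (-\infty,t))$. Their fibers still contain at most two points, so Lusin--Novikov applied again shows that
\[
\{x \in S : T_2(x) > t\} = \proj^x\bigl(\Gamma \cap (\R \times (t,+\infty))\bigr)
\]
and the analogous set for $T_1$ are Borel. Letting $t$ range over a countable dense set of $\R$ gives Borel measurability of $T_1$ and $T_2$.

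Third, the decomposition is immediate: by construction $T_i(x) \in \Gamma_x$, so $\graph(T_1) \cup \graph(T_2) \subseteq \Gamma$; conversely, since $|\Gamma_x|\leq 2$ every element of $\Gamma_x$ equals $\min \Gamma_x$ or $\max \Gamma_x$, giving the reverse inclusion. The inequality $T_1 \leq T_2$ holds by definition. The only nontrivial ingredient is the appeal to Lusin--Novikov; a hands-on alternative would be to approximate $\Gamma$ by cylindrical sets and argue using monotone classes, but invoking the classical uniformization result keeps the proof short and self-contained.
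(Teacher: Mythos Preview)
Your proof is correct and follows essentially the same approach as the paper: the paper's proof consists of a single citation to \cite[Theorem 18.11]{Ke95}, which is precisely the Lusin--Novikov uniformization theorem you invoke. Your version simply spells out how the decomposition into $\min$ and $\max$ selectors is read off from that theorem.
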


\begin{pf}
This is a consequence of \cite{Ke95}, Theorem~18.11.
\end{pf}
We can now complete the proof.
\begin{pf*}{Proof of Corollary~\ref{FamousCoro}}
Consider the left-curtain coupling $\pi_\lc$ between measures $\mu
\leqc
\nu$, where $\mu$ is continuous. As $\pi_\lc$ is left-monotone there
exists a Borel monotonicity set $\Gamma$ as in Definition~\ref
{defi_monoto}. Note that if $\mu(A)=0$, the set $\Gamma\setminus
(A\times\R)$ is still a monotonicity set. This applies in particular
to all countable sets since $\mu$ is continuous.

With the notation of Lemma~\ref{AccumulatingBadPoints} let us show that
$A=\{x\in\R\dvtx |\Gamma_x|\geq3\}$ is countable. If not, we can apply
this lemma and obtain $x\in\R$ with three points $y^-<y<y^+$ in the set
$\Gamma_x$ that can be approximated from the right-hand side. In particular,
there exists $(x',y')\in\Gamma$ with $x'>x$ and $y'\in\,]y^-,y^+[$,
which is the forbidden configuration \eqref{BadConfiguration}.
Therefore, $A$ is countable so that we can assume that $|\Gamma_x|\leq
2$ for every $x$.
Applying Lemma~\ref{Borel_lemma}, we obtain the desired assertion. %
\end{pf*}
The following lemma permits to obtain uniqueness of the optimal
martingale transport plan, provided that we know that every optimal
martingale transport is concentrated on the graphs of two mappings (see
Section~\ref{sec:general}). We can apply it to the martingale transport
plans when $\mu$ is continuous and recover the uniqueness of the
monotone transport plan in this particular case.

\begin{lem}\label{half-sum}
Let $\mu$ and $\nu$ be in convex order and $\mathcal{E}$ a nonempty
convex set of martingale transport plans. Assume that every $\pi\in
\mathcal{E}$ is concentrated on some $\Gamma^\pi\subset\R^2$ with
$|\Gamma^\pi_x|\leq2$ for every $x\in\R$. Then the set $\mathcal{E}$
consists of a single point.
\end{lem}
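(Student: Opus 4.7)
The plan is to use the convexity of $\mathcal{E}$ and exploit the 2-point support constraint through disintegration. Suppose, for contradiction, that $\pi_1, \pi_2 \in \mathcal{E}$ are distinct. By convexity of $\mathcal{E}$, the half-sum $\pi := \tfrac{1}{2}(\pi_1 + \pi_2)$ lies in $\mathcal{E}$ as well, so by hypothesis there exists $\Gamma^{\pi} \subset \R^2$ with $\pi(\Gamma^{\pi})=1$ and $|\Gamma^{\pi}_x|\leq 2$ for every $x\in\R$.

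Next I would disintegrate all three martingale plans with respect to their common first marginal $\mu$: write $\pi_i = \mu \otimes (\pi_i)_x$ for $i=1,2$, and $\pi = \mu \otimes \pi_x$ with $\pi_x = \tfrac{1}{2}((\pi_1)_x + (\pi_2)_x)$ for $\mu$-a.e.\ $x$. The martingale property of each $\pi_i$ means $(\pi_i)_x$ has barycenter $x$, and the support hypothesis gives $|\mathrm{supp}((\pi_i)_x)| \leq 2$. The support constraint on $\pi$ forces $\mathrm{supp}(\pi_x) = \mathrm{supp}((\pi_1)_x) \cup \mathrm{supp}((\pi_2)_x)$ to contain at most two points for $\mu$-a.e.\ $x$.

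The key step is then a case analysis on this 2-point-union constraint, carried out pointwise in $x$. If some $(\pi_i)_x = \delta_x$, then the martingale property combined with the barycenter of the other being $x$ forces the other measure also to be $\delta_x$ (otherwise it would be of the form $p\delta_a+(1-p)\delta_b$ with $a<x<b$ and the union would contain three distinct points). If both $(\pi_1)_x$ and $(\pi_2)_x$ are genuinely 2-atomic, the 2-point-union constraint forces them to be supported on the same pair $\{a,b\}$, and the barycenter condition $pa+(1-p)b = x = qa + (1-q)b$ with $a\neq b$ then forces $p=q$. In every case $(\pi_1)_x = (\pi_2)_x$ for $\mu$-a.e.\ $x$, hence $\pi_1 = \pi_2$, contradicting our assumption.

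I do not expect any particular obstacle: the proof is a clean half-sum argument, and the only care needed is to check the case where one conditional distribution is a Dirac mass, which is ruled out precisely by the strict inequality $a<x<b$ forced by the martingale constraint together with $a\neq b$.
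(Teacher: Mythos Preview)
Your proof is correct and follows essentially the same half-sum argument as the paper: form $\bar\pi=\tfrac12(\pi_1+\pi_2)\in\mathcal{E}$, use that $\pi_1,\pi_2$ are both concentrated on the two-point fibers $\Gamma^{\bar\pi}_x$, and conclude that two probability measures on a set of at most two points with the same barycenter $x$ must coincide. The paper states this last step in one line, while you spell out the Dirac-versus-two-atom case analysis explicitly; your separate use of the individual constraints $|\mathrm{supp}((\pi_i)_x)|\le 2$ is harmless but not actually needed once you have the union bound from $\Gamma^{\bar\pi}$.
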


\begin{pf}
Let $\pi$ and $\pi'$ be elements of $\mathcal{E}$. We consider $\bar
\pi
=\frac{\pi+\pi'}{2}\in\mathcal{E}$ and $\Gamma^{\bar\pi}$,
which can be
seen as the graph of two functions according to Lemma~\ref
{Borel_lemma}. The measures $\pi$ and $\pi'$ are also concentrated on
$\Gamma^{\bar\pi}$. For two disintegrations $(\pi_x)_{x\in\R}$ and
$(\pi'_x)_{x\in\R}$ with respect to $\mu$, we know that $\mu
$-a.s. $\pi
_x$ and $\pi'_x$ are probability measures concentrated on $\Gamma
^{\bar
\pi}_x$ and with the same barycenter, namely $x$. It follows that $\pi
'_x=\pi_x$, $\mu$-a.s. so that $\pi'=\pi$.
\end{pf}

\section{Optimality properties of the monotone martingale transport}
\label{sec:optimal}
In this section, we prove that $\pi_\lc$ is the unique optimal coupling
for the martingale optimal transport problem \eqref{PrimalMart}
associated to two different kinds of cost functions. The special case
$c(x,y)=\exp(y-x)$ is in the intersection of these two families of cost
functions.

\begin{them}\label{OptForExp}
Assume that $c(x,y)=h(y-x)$ for some differentiable function $h$ whose
derivative is strictly convex and that $c$ satisfies the sufficient
integrability condition. If there exists a finite martingale transport
plan, then $\pi_\lc$ is the unique optimizer.
\end{them}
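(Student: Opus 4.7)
The plan is to combine the variational lemma (Lemma \ref{GlobalLocal}) with the uniqueness of left-monotone couplings (Theorem \ref{mono_unique}): I will show that every optimizer is necessarily left-monotone in the sense of Definition \ref{defi_monoto}, and then Theorem \ref{mono_unique} forces it to coincide with $\pi_\lc$. Existence of an optimizer $\pi \in \M(\mu, \nu)$ follows from the weak compactness of $\M(\mu, \nu)$ and the lower semi-continuity of $\pi \mapsto \int c\, d\pi$ (valid under the sufficient integrability condition), together with the assumption that $C_M(\mu,\nu) < \infty$. Lemma \ref{GlobalLocal} then furnishes a Borel set $\Gamma$ with $\pi(\Gamma) = 1$ on which no finitely supported sub-configuration can be strictly improved by a competitor.

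To rule out the forbidden configuration, suppose for contradiction that $(x, y^-), (x, y^+), (x', y') \in \Gamma$ satisfy $x < x'$ and $y^- < y' < y^+$. Pick $\lambda = (y' - y^-)/(y^+ - y^-) \in (0,1)$ so that $y' = \lambda y^+ + (1-\lambda) y^-$ and put
\[
\alpha = \lambda \delta_{(x, y^+)} + (1-\lambda) \delta_{(x, y^-)} + \delta_{(x', y')}, \qquad
\alpha' = \lambda \delta_{(x', y^+)} + (1-\lambda) \delta_{(x', y^-)} + \delta_{(x, y')}.
\]
A direct verification shows that $\alpha'$ is a competitor of $\alpha$: both have first marginal $\delta_x + \delta_{x'}$ and second marginal $\lambda \delta_{y^+} + (1-\lambda) \delta_{y^-} + \delta_{y'}$, and both exhibit conditional barycenter $y'$ at each of $x$ and $x'$ (the arithmetic $\lambda y^+ + (1-\lambda) y^- = y'$ does the work on both sides). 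Writing $s_\pm = y^\pm - x$, $t = x' - x > 0$ and $g(s) := h(s) - h(s - t)$, a straightforward rearrangement yields
\[
\int c\, d\alpha - \int c\, d\alpha' = \lambda g(s_+) + (1-\lambda) g(s_-) - g\bigl(\lambda s_+ + (1-\lambda) s_-\bigr).
\]

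The crucial analytic step is to show that $g$ is strictly convex. Since $g'(s) = h'(s) - h'(s-t)$, strict convexity of $h'$ gives for $s_1 < s_2$
\[
g'(s_2) - g'(s_1) = \bigl[h'(s_2) - h'(s_1)\bigr] - \bigl[h'(s_2 - t) - h'(s_1 - t)\bigr] > 0,
\]
because the secant slope of the strictly convex function $h'$ over $[s_1, s_2]$ strictly exceeds its secant slope over the translated interval $[s_1 - t, s_2 - t]$ (the map $(a,b)\mapsto(h'(b)-h'(a))/(b-a)$ is monotone in each variable). Since $s_+ \neq s_-$, this forces $\int c\, d\alpha > \int c\, d\alpha'$, contradicting Lemma \ref{GlobalLocal} applied to $\alpha$ with $\spt(\alpha) \subseteq \Gamma$. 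Hence no forbidden triple sits in $\Gamma$, $\pi$ is left-monotone, and Theorem \ref{mono_unique} identifies $\pi$ with $\pi_\lc$; in particular the optimizer is unique.

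The main subtlety is the design of the competitor $\alpha'$ that simultaneously preserves both marginals and conditional barycenters (this is the novelty compared to the classical transport setup), and the recognition that the relevant convexity condition is on $h'$ rather than on $h$ itself, a distinction already foreshadowed by the Pythagorean remark of the introduction (adding $p(y-x)^2 + q(y-x)$ to $c$ does not alter the minimizer, so convexity of $h$ could not possibly be the right notion).
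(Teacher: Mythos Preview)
Your proof is correct and follows essentially the same route as the paper: apply Lemma \ref{GlobalLocal}, assume a forbidden triple $(x,y^-),(x,y^+),(x',y')\in\Gamma$, build the same competitor pair $\alpha,\alpha'$, and derive a contradiction from the strict convexity of $h'$; then invoke Theorem \ref{mono_unique}. The only cosmetic difference is in the cost comparison: the paper shows that $d(x)=\lambda h(y^+-x)+(1-\lambda)h(y^--x)-h(y'-x)$ is strictly decreasing by computing $d'(x)<0$ via Jensen's inequality for $h'$, whereas you rewrite the cost gap as a Jensen defect for the auxiliary function $g(s)=h(s)-h(s-t)$ and prove $g$ strictly convex by a secant-slope argument---the two computations are equivalent reformulations of the same inequality.
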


\begin{pf} 
We have to show that every finite optimizer $\pi$ is monotone.
Pick a set $\Gamma$ such that $\pi(\Gamma)=1$ and $\Gamma$ resists
improvements by barycenter preserving reroutings as in Lemma~\ref
{GlobalLocal}. Pick $(x,y^-),(x,y^+),(x',y')\in\Gamma$. Striving for a
contradiction we assume that they satisfy \eqref{BadConfiguration}. Let
us define a transport $\alpha$ on these edges and a competitor $\alpha
'$ of it. We pick $\lambda\in\,]0,1[$ such that $\lambda y^++(1-\lambda)
y^-=y'$. The measure $\alpha$ puts mass $\lambda$ on $(x,y^+)$, mass
$1-\lambda$ on $(x,y^-)$ and mass $1$ on $(x',y')$. Our candidate for
$\alpha'$ will assert mass $1-\lambda$ on $(x',y^-)$, mass $\lambda$ on
$(x',y^+)$ and mass $1$ on $(x,y')$. Clearly, $\alpha'$ is a competitor
of $\alpha$. It leads to smaller costs if and only if
\[
\lambda c\bigl(x,y^+\bigr)+ (1-\lambda) c\bigl(x,y^-\bigr)+c
\bigl(x',y'\bigr) > \lambda c\bigl(x',y^+
\bigr)+ (1-\lambda) c\bigl(x',y^-\bigr)+c\bigl(x,y'
\bigr).
\]
A sufficient condition for this is that
%
\begin{equation}
d(t):=\lambda c\bigl(t,y^+\bigr)+ (1-\lambda) c\bigl(t,y^-\bigr)-c
\bigl(t,y'\bigr)
\end{equation}
is strictly decreasing in $x$. In terms of $h$, the function $d$ can be
written as
\[
d(t)=\lambda h\bigl(y^+-t\bigr)+(1-\lambda) h\bigl(y^--t\bigr)-h
\bigl(y'-t\bigr).
\]
To have it decreasing, it is sufficient that
\begin{eqnarray*}
0&>&d'(t)\\
&=&-\lambda h'\bigl(y^+-t\bigr)-(1-\lambda)
h'\bigl(y^--t\bigr)+h'\bigl(y'-t\bigr)
\\
&=&h'\bigl(\lambda\bigl(y^+-t\bigr)+(1-\lambda) \bigl(y^--t\bigr)
\bigr)-\bigl[\lambda h'\bigl(y^+ -t\bigr)+(1-\lambda)
h'\bigl(y^--t\bigr)\bigr].
\end{eqnarray*}
Finally, it is sufficient to know that $h'$ is strictly convex which
holds by assumption.
\end{pf}
%
\begin{rem}
The left-curtain transport plan is also a solution to the problem of
minimizing the essential supremum of $y-x$ among all martingale
transport plans with the same marginals. To see this, note that the
function $h_n\dvtx x\mapsto\exp(n x)$ has a strictly convex derivative for
every $n>0$ and that $\frac{1}{n}\ln(\int\exp(n(y-x)) \,\d\pi(x,y))$
tends to $\mathrm{essup}_\pi(y-x)$ as $n\to+\infty$ for every
martingale transport plan $\pi$.\footnote{We thank Fillipo Santambrogio
for pointing this out to us.}
\end{rem}

We mention another class of cost functions for which the monotone
martingale transport plan $\pi_\lc$ is optimal.

\begin{them}\label{MonIsOptII}
Let $\psi$ be a nonnegative strictly convex function and $\phi$ a
nonnegative decreasing function. Consider the cost function $c(x,y)=
\phi(x)\psi(y)\geq0$. For two finite measures $\mu$ and $\nu$ in convex
order, the left-curtain coupling $\pi_\lc$ is the unique optimal transport.
\end{them}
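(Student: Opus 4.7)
The plan is to mirror the proof of Theorem \ref{OptForExp}: it suffices to show that every optimal martingale transport plan $\pi \in \M(\mu,\nu)$ is left-monotone in the sense of Definition \ref{defi_monoto}, since Theorem \ref{mono_unique} then identifies it with $\pi_\lc$. Because $c = \phi\psi \geq 0$, the sufficient integrability condition holds trivially (take $a = b = 0$), so I will assume $C_M(\mu,\nu) < \infty$ (ensuring existence of a finite optimizer by weak compactness) and apply Lemma \ref{GlobalLocal} to obtain a Borel set $\Gamma$ with $\pi(\Gamma) = 1$ that resists every barycenter-preserving finite rerouting. It then remains to rule out the forbidden configuration on $\Gamma$.

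Suppose toward a contradiction that there exist $(x,y^-), (x,y^+), (x',y') \in \Gamma$ with $x < x'$ and $y^- < y' < y^+$. Choose $\lambda \in (0,1)$ with $\lambda y^+ + (1-\lambda)y^- = y'$ and define
$$\alpha = \lambda \delta_{(x,y^+)} + (1-\lambda)\delta_{(x,y^-)} + \delta_{(x',y')}, \qquad \alpha' = \lambda \delta_{(x',y^+)} + (1-\lambda)\delta_{(x',y^-)} + \delta_{(x,y')}.$$
A direct check shows that $\alpha$ and $\alpha'$ have the same $x$-marginal $\delta_x + \delta_{x'}$ and the same $y$-marginal $\lambda\delta_{y^+} + (1-\lambda)\delta_{y^-} + \delta_{y'}$, and that both conditional barycenters (at $x$ and at $x'$) equal $y'$ in each measure. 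Hence $\alpha'$ is a competitor of $\alpha$ in the sense of Definition \ref{def:competitor}, and $\spt(\alpha) \subseteq \Gamma$.

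The product structure of the cost makes the comparison transparent:
\begin{align*}
\int c\,\d\alpha - \int c\,\d\alpha' &= \phi(x)\bigl[\lambda\psi(y^+) + (1-\lambda)\psi(y^-)\bigr] + \phi(x')\psi(y') \\
&\quad - \phi(x')\bigl[\lambda\psi(y^+) + (1-\lambda)\psi(y^-)\bigr] - \phi(x)\psi(y') \\
&= \bigl(\phi(x) - \phi(x')\bigr)\bigl(\lambda\psi(y^+) + (1-\lambda)\psi(y^-) - \psi(y')\bigr).
\end{align*}
The second factor is strictly positive by strict convexity of $\psi$ and $y^- < y^+$, while the first factor is nonnegative because $\phi$ is decreasing, and strictly positive once $\phi$ is strictly decreasing. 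This strict decrease in cost contradicts Lemma \ref{GlobalLocal}, so no such configuration exists; hence $\pi$ is left-monotone, and Theorem \ref{mono_unique} yields $\pi = \pi_\lc$.

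The main obstacle is the strictness of the first factor: if $\phi$ is merely non-increasing, the rerouting can be cost-preserving (indeed, constant $\phi$ makes every martingale coupling optimal), so the theorem truly requires $\phi$ to be strictly decreasing. Under that reading the above argument closes; otherwise one can only assert that $\pi_\lc$ is \emph{an} optimizer, by approximating $\phi$ from above by strictly decreasing functions and passing to a limit via lower semi-continuity of the cost functional on $\M(\mu,\nu)$.
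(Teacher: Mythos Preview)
Your argument is correct (under the reading, which the paper's own proof also requires, that ``decreasing'' means \emph{strictly} decreasing), but it follows a genuinely different route from the paper. The paper in fact remarks just before its proof that one could argue exactly as you do, mimicking Theorem \ref{OptForExp}, and then deliberately chooses an alternative method based on the convex-order characterization of $\pi_\lc$: writing $\phi(x)=\int_0^\infty \I_{]-\infty,\phi^{-1}(t)]}(x)\,\d t$ gives the layer-cake representation
\[
\int c\,\d\pi=\int_0^\infty\Big(\int \psi\,\d\nu^\pi_{\phi^{-1}(t)}\Big)\,\d t,
\]
and since $\nu^{\pi_\lc}_u\leqc\nu^\pi_u$ for every $u$ (the defining property of the curtain coupling) while $\psi$ is strictly convex, each inner integral is minimal for $\pi_\lc$, with equality forcing $\nu^\pi_u=\nu^{\pi_\lc}_u$.

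What each approach buys: your variational argument treats this cost uniformly with Theorem \ref{OptForExp} and reduces the comparison to a single one-line factorization $(\phi(x)-\phi(x'))\cdot(\lambda\psi(y^+)+(1-\lambda)\psi(y^-)-\psi(y'))$, which is very clean. The paper's route avoids Lemma \ref{GlobalLocal} altogether, shows that $\pi_\lc$ is optimal even without assuming $C_M(\mu,\nu)<\infty$ (only uniqueness needs finiteness), and makes transparent \emph{why} the product form $\phi(x)\psi(y)$ interacts so well with the curtain coupling: the cost becomes a weighted average of $\int\psi\,\d\nu^\pi_u$, precisely the quantities that the shadow minimality of $\pi_\lc$ controls. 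Your final remark on the necessity of strict decrease of $\phi$ is well taken and applies equally to the paper's argument.
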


One could show that optimal martingale couplings are monotone in a very
similar way as in the proof of Theorem~\ref{OptForExp}. We prefer to
give an alternative proof relying on the order properties of the
left-curtain coupling.
\begin{pf*}{Proof of Theorem \ref{MonIsOptII}}
Let $\pi$ be optimal for the problem and assume that $\int c \,\d\pi
<+\infty$. We want to prove $\int c \,\d\pi_\lc\leq\int c \,\d\pi$ with
equality if and only if $\pi=\pi_\lc$. First of all note that for
positive measurable functions $f$
\[
\int f(x) \phi(x) \,\d\mu(x)=\int_0^{+\infty} \biggl(
\int\I _{]{-}\infty, \phi
^{-1}(t)]} f(x) \,\d\mu(x) \biggr)\,\d t,
\]
where $\phi^{-1}(t)$ means $\sup\{x\in\R\dvtx t\leq\phi(x)\}$. Taking
$f(x)=\int\psi(y) \,\d\pi_x(y)$, we obtain
%
\begin{equation}
\label{avant} \int c(x,y) \,\d\pi(x,y)=\int_{0}^{+\infty}
\biggl(\int\psi(y) \,\d \nu^\pi |_{\phi^{-1}(t)}(y) \biggr)\,\d t,
\end{equation}
where $\nu^\pi_u$ denotes $\proj^y_\#\pi|_{]{-}\infty,u]}$ as in the
\hyperref[sec:intro]{Introduction} or in Section~\ref{sec:unique}. In particular, $\nu^{\pi
_\lc}_u$ equals $S^\nu(\mu_{]{-}\infty,u]})$. Of course the
representation \eqref{avant} remains true if we replace all occurrences
of $\pi$ by $\pi_\lc$.

The measures $\nu^{\pi_\lc}_u$ and $\nu^\pi_u$ are in convex order and
$\psi$ is strictly convex. Thus, $\int\psi\,\d\nu^{\pi_\lc}_u\leq
\int
\psi\,\d\nu^{\pi}_u$ and equality holds if and only if the two measures
coincide. This follows from Strassen's theorem (Theorem~\ref
{BasicExistence}) and the equality case in Jensen's inequality.
Finally, it follows from \eqref{avant} that $\pi$ is the left-curtain
coupling.
\end{pf*}

\section{Other cost functions---other optimal martingale couplings}
\label{sec:general}

In this section, we use Lemma~\ref{GlobalLocal} to derive results that
appeal to general cost functions.

\subsection{Cost functions of the form $c(x,y)=h(y-x)$}

\begin{them}\label{LessThanK}
Assume that the cost function $c(x,y)$ is given by $h(y-x) $ for some
function $ h$ which is twice continuously differentiable. If affine
functions $x\mapsto ax+b$ meet $h'(x)$ in at most 
$k$ points and $\pi$ is an optimal transport plan, then there exists a
disintegration $(\pi_x)_{x\in\R}$ such that for any $x\in\R$ at least
one of the two following statements holds:
\[
\mu\bigl(\{x\}\bigr)>0 \quad\mbox{or}\quad \card\bigl(\Spt(\pi_x)\bigr)\leq k.
\]
In particular, if $\mu$ is continuous then $\card(\Spt(\pi_x))\leq k$
is satisfied $\mu$-a.s. for any disintegration of $\pi$.
\end{them}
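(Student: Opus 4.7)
The plan is to invoke the variational lemma (Lemma \ref{GlobalLocal}) to obtain a Borel set $\Gamma$ with $\pi(\Gamma)=1$, and to reduce the theorem to showing that the set
$$B := \{x \in \R : \mu(\{x\}) = 0 \text{ and } |\Gamma_x| \geq k+1\}$$
is countable, hence $\mu$-null. Once this is done, for $\mu$-a.e.\ $x$ the identity $\pi_x(\Gamma_x) = 1$ gives $\Spt\pi_x \subseteq \Gamma_x$, and for non-atom $x \notin B$ this support is finite with at most $k$ elements. Redefining $\pi_x$ on the remaining countable (and $\mu$-null) set---say by taking any measure on $\leq 2$ points with barycenter $x$---yields the pointwise conclusion of the theorem, and the last sentence follows because all disintegrations of $\pi$ agree $\mu$-almost surely.

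Suppose for contradiction that $B$ is uncountable. I would apply Lemma \ref{AccumulatingBadPoints} with parameter $k+1$ to select $a \in B$ and points $b_1 < \cdots < b_{k+1} \in \Gamma_a$ admitting approximating data from both sides: for arbitrarily small $\varepsilon>0$ there exist $a' > a$ and $a'' < a$ within $\varepsilon$ of $a$, carrying points $b_i' \in \Gamma_{a'}$ and $b_i'' \in \Gamma_{a''}$ within $\varepsilon$ of each $b_i$. Fix an interior index $j \in \{2,\ldots,k\}$ (so some $b_i$ lie on each side of $b_j$) and consider the probe measure
$$\alpha = \sum_{i=1}^{k+1} \lambda_i\,\delta_{(a,b_i)} + \delta_{(a',b_j')}$$
with weights $\lambda_i$ chosen sufficiently large. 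Build a candidate competitor $\alpha'$ by reassigning mass $\varepsilon_i \geq 0$ from $(a,b_i)$ to $(a',b_i)$ for each $i$ and compensating by transferring $\sum_i \varepsilon_i$ units from $(a',b_j')$ to $(a,b_j')$. A direct marginals-and-barycenters bookkeeping shows that $\alpha'$ is a competitor of $\alpha$ precisely when
$$\sum_{i=1}^{k+1} \varepsilon_i (b_i - b_j') = 0.$$

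The cost difference computes to
$$\int c\,\d\alpha - \int c\,\d\alpha' = \sum_{i=1}^{k+1} \varepsilon_i \bigl[h(b_i-a) + h(b_j'-a') - h(b_j'-a) - h(b_i-a')\bigr],$$
and a first-order Taylor expansion in $\delta := a' - a$ gives $\delta \sum_i \varepsilon_i \bigl[h'(b_i-a) - h'(b_j'-a)\bigr] + O(\delta^2)$, whose limiting coefficient as $a' \to a$ is $S := \sum_i \varepsilon_i \bigl[h'(b_i-a) - h'(b_j-a)\bigr]$. If $S$ vanished for every admissible $\varepsilon \geq 0$ solving $\sum_i \varepsilon_i (b_i - b_j)=0$, then the vector $\bigl(h'(b_i-a)-h'(b_j-a)\bigr)_i$ would be a scalar multiple of $(b_i-b_j)_i$, placing the $k+1$ points $\bigl(b_i-a,\,h'(b_i-a)\bigr)$ on a common affine line and contradicting the hypothesis that $h'$ meets any affine function in fewer than $k$ points. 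Hence one may pick $\varepsilon$ supported on two indices straddling $j$ with $S\neq 0$; continuity of the linear solve in $(a',b_j')$ allows a small perturbation enforcing the exact constraint with $b_j'$ while keeping all $\varepsilon_i > 0$. Selecting the side of approximation matching the sign of $S$ makes $\delta S > 0$ for $\delta$ small, so the cost difference becomes strictly positive, contradicting Lemma \ref{GlobalLocal}. The main technical obstacle is preserving non-negativity of the $\varepsilon_i$ through the perturbation---addressed by first fixing $\varepsilon$ at two strictly positive coordinates, so small perturbations remain admissible.
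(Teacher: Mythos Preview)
Your proof is correct and follows essentially the same route as the paper: apply Lemma~\ref{GlobalLocal}, suppose uncountably many non-atoms $x$ have $|\Gamma_x|\ge k+1$, invoke Lemma~\ref{AccumulatingBadPoints}, and derive a three-point contradiction via a Taylor expansion in $\delta=a'-a$.

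The only genuine difference is how the ``bad'' triple is located. The paper fixes the two extreme points $b_0,b_k$ and observes that the secant through $(b_0-a,h'(b_0-a))$ and $(b_k-a,h'(b_k-a))$ can meet $h'$ in at most $k$ points, so at least one interior $b_i$ lies off it; this $i$ is the one approximated from $\Gamma_{a'}$. You instead fix an arbitrary interior $j$ and argue that if $S(\varepsilon)=0$ for every nonnegative $\varepsilon$ with $\sum_i\varepsilon_i(b_i-b_j)=0$ then all $k{+}1$ points lie on one affine line (using that this cone spans the hyperplane $\{v:\langle v,b-b_j\rangle=0\}$), hence some extreme ray $v^{(i_1,i_2)}$ gives $S\neq 0$. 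After this reduction your competitor coincides with the paper's: mass is exchanged between $(a,b_{i_1}),(a,b_{i_2})$ and $(a',b_j')$. So the underlying variational step is identical; your linear-algebraic detour is slightly longer but yields the same conclusion. Two small points of phrasing: when you write ``keeping all $\varepsilon_i>0$'' you mean the two supported coordinates, not all $k{+}1$; and the role of the $\lambda_i$'s (and the bound $\sum_i\varepsilon_i\le 1$) is only to keep $\alpha'$ nonnegative, which is achieved by scaling $\varepsilon$---you might say this explicitly.
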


\begin{pf}
Let $\pi$ be optimal and $\Gamma$ according to Lemma~\ref
{GlobalLocal}. If there are only countably many continuity points of
$\mu$ such that $\card(\Gamma_x)\geq k+1$, then we can remove them.
Assume for contradiction that there are uncountably many.
Consider the set
\[
\tilde\Gamma=\bigl\{(x,y)\in\Gamma\dvtx \mu\bigl(\{x\}\bigr)=0\bigr\}
\]
to obtain $a\in\R$ and $b_0< \cdots< b_{k}\in\Gamma_a$ verifying the
assertions of Lemma~\ref{AccumulatingBadPoints}.

Let $a'\in\R$, $\lambda\in\,]0,1[$ and set $b_\lambda= (1-\lambda)
b_0 +\lambda b_{k}$. We will compare
%
\begin{equation}
\label{cost1} h(b_\lambda-a)+\lambda h\bigl(b_k-a'
\bigr)+(1-\lambda)h\bigl(b_0-a'\bigr)
\end{equation}
and
%
\begin{equation}
\label{cost2} h\bigl(b_\lambda-a'\bigr)+\lambda
h(b_k-a)+(1-\lambda)h(b_0-a).
\end{equation}
As $a'$ tends to $a$, $b_i-a'$ tends to $b_i-a$. Considering a Taylor
expansion of $h$ at $b_i-a$, we find some $\eps>0$ such that
$|a-a'|<\eps$ implies
\[
\bigl\llvert \bigl[h\bigl(b_i-a'\bigr)-h(b_i-a)
\bigr]-h'(b_i-a)\cdot\bigl(a-a'\bigr)
\bigr\rrvert \leq \bigl|h''(b_i-a)\bigr|
\bigl(a-a'\bigr)^2
\]
for $i\in\{0,\lambda,k\}$. Hence, if we subtract (\ref{cost1}) from
(\ref{cost2}) we obtain
%
\begin{equation}
\label{comb} \bigl(h'(b_\lambda-a)- \bigl[(1-\lambda)
h'(b_0-a)+\lambda h'(b_k-a)
\bigr] \bigr) \bigl(a'-a\bigr)
\end{equation}
up to an error of
\[
\bigl[(1-\lambda)\bigl|h''(b_0-a)\bigr|+
\lambda\bigl|h''(b_k-a)\bigr|+\bigl|h''(b_\lambda
-a)\bigr| \bigr]\cdot\bigl(a-a'\bigr)^2.
\]
But $h'$ is not linear so that \eqref{comb} is not identically zero.
Moreover, according to the assumption on $h'$ and the affine functions
there is an index $i\in\{1,\ldots,k-1\}$ such that if $b_\lambda=b_i$
and $a'\neq a$ then \eqref{comb} is not zero. More precisely, as $h''$
is continuous there exists some $\eps_1<\eps$ such that if
$|b_i-b_\lambda|<\eps_1$ and $0<|a-a'|<\eps_1$ then the difference of
\eqref{cost1} and \eqref{cost2} is not zero and its sign is determined
by the one of $a-a'$.


Since $a, b_0, \ldots, b_k$ were chosen according to Lemma~\ref
{AccumulatingBadPoints}, we may
pick $a'$ and $b_\lambda\in\Gamma_{a'}$ such that $(a',b_\lambda)$ is
sufficiently close to $(a,b_i)$ and $a'$ is on the correct side of $a$,
making (\ref{cost1}) smaller than (\ref{cost2}).

Setting
\begin{eqnarray*}
\alpha&=& \lambda\delta_{(a, b_k)} + (1-\lambda) \delta_{(a,b_0)} +
\delta_{(a',b_\lambda)},
\\
\alpha'&=& \lambda\delta_{(a', b_k)} + (1-\lambda)
\delta_{(a',b_0)} + \delta_{(a,b_\lambda)},
\end{eqnarray*}
we have thus found a competitor $ \alpha'$ which has lower costs than
$\alpha$, contradicting the choice of $\Gamma$.
\end{pf}
\subsection{The cost function $h(y-x)$ in the usual setup}

It seems worthwhile to mention that Theorem~\ref{LessThanK} is the
martingale variant of a result that belongs to the theory of the
classical problem \eqref{big_cost}. We mention it below in Theorem~\ref
{ClassicLessThanK} because we are not aware that it has been recorded
in the literature in this form. In fact for a family of special costs
we can bound the number of parts the mass can split in if it is
transported optimally. Note that this number is not attained for every
pair $(\mu,\nu)$ (see \cite{RuUc00}). The similarity with Theorem~\ref
{LessThanK} lies in the fact that we want to count the number of
intersection points of $\graph(h')$ with affine lines in the martingale
case, and with horizontal lines in the classical setup.

\begin{them}\label{ClassicLessThanK}
Let $k$ be a positive integer and let $h\dvtx \R\to\R$ be a twice 
continuously differentiable function such that the cost function
$c\dvtx (x,y)\mapsto h(y-x)$ satisfies the sufficient integrability
condition with respect to probability measures $\mu$ and $\nu$. Assume
also that $C(\mu,\nu)<+\infty$.


If the equation $h'(x)=b$ has at most $k$ different solutions for
$b\in\R$, then
there exists a disintegration $(\pi_x)_{x\in\R}$ such that for any
$x\in\R$ at least one of the two statements
\[
\mu\bigl(\{x\}\bigr)>0\quad \mbox{or}\quad \card\bigl(\Spt(\pi_x)\bigr)\leq k
\]
holds. In particular, if $\mu$ is continuous then $\card(\Spt(\pi
_x))\leq k$ is satisfied $\mu$-a.s. for any disintegration.
\end{them}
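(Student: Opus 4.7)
The plan is to reproduce the argument of Theorem \ref{LessThanK} with the martingale variational lemma (Lemma \ref{GlobalLocal}) replaced by classical $c$-cyclical monotonicity. Under the hypotheses of the theorem ($c$ continuous, sufficient integrability, $C(\mu,\nu)<\infty$), the standard theory of optimal transport (see e.g.\ \cite[Chapter 5]{Vi09}) guarantees that any minimizer $\pi$ of \eqref{big_cost} is concentrated on a Borel $c$-cyclically monotone set $\Gamma$; in particular, for every $(x_1,y_1),(x_2,y_2)\in\Gamma$,
\begin{equation*}
h(y_1-x_1)+h(y_2-x_2)\leq h(y_1-x_2)+h(y_2-x_1).
\end{equation*}
This $2$-point swap inequality plays the role of the barycenter-preserving reroutings in Lemma \ref{GlobalLocal}, and is in fact much easier to exploit since no martingale constraint needs to be respected.

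With this tool I would follow the structure of the proof of Theorem \ref{LessThanK}. Define $A=\{x\in\R:\mu(\{x\})=0 \text{ and } \card(\Gamma_x)\geq k+1\}$. If $A$ is countable, we remove $A\times\R$ from $\Gamma$; this preserves $\pi(\Gamma)=1$ (since $\mu(A)=0$), and the disintegration induced by the new $\Gamma$ satisfies the required dichotomy. Assume for contradiction that $A$ is uncountable. Lemma \ref{AccumulatingBadPoints} applied with $k+1$ in place of $k$ yields $a\in A$ and points $y_0<y_1<\cdots<y_k$ in $\Gamma_a$ with the property that for every $\varepsilon>0$ one can find $a'>a$ with $a'-a<\varepsilon$ and $y_0'<\cdots<y_k'$ in $\Gamma_{a'}$ with $|y_i-y_i'|<\varepsilon$. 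For every pair $(i,j)\in\{0,\ldots,k\}^2$, $c$-cyclical monotonicity applied to $(a,y_i)$ and $(a',y_j')$ and rearrangement give
\begin{equation*}
\frac{h(y_j'-a)-h(y_j'-a')}{a'-a}\;\geq\;\frac{h(y_i-a)-h(y_i-a')}{a'-a}.
\end{equation*}
Letting $a'\to a^+$ and $y_j'\to y_j$, and using $h\in C^2$ (so $h'$ is uniformly continuous on compacts), the two difference quotients converge respectively to $h'(y_j-a)$ and $h'(y_i-a)$. We therefore obtain $h'(y_j-a)\geq h'(y_i-a)$ for every $i,j$, and by symmetry
\begin{equation*}
h'(y_0-a)=h'(y_1-a)=\cdots=h'(y_k-a).
\end{equation*}
Since $y_0-a<y_1-a<\cdots<y_k-a$ are $k+1$ distinct real numbers, this contradicts the hypothesis that $h'(x)=b$ has at most $k$ solutions for any $b\in\R$.

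The only mildly delicate step is justifying the joint limit $a'\to a$, $y_j'\to y_j$ inside the difference quotient. This is routine from the mean value theorem and continuity of $h'$, and is essentially the same calculation that underlies the Taylor estimate in the proof of Theorem \ref{LessThanK}; the $C^2$ assumption in the statement is slightly stronger than needed but makes this step immediate.
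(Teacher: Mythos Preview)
Your argument is correct and follows the same route as the paper's: $c$-cyclical monotonicity gives a set $\Gamma$, Lemma \ref{AccumulatingBadPoints} produces an accumulation point $a$ with $k+1$ targets, and a two-point swap plus a first-order expansion yields the contradiction with the hypothesis on $h'$. The only cosmetic difference is that the paper first singles out two consecutive indices with $h'(b_{i+1}-a)\neq h'(b_i-a)$ and derives the contradiction from a single swap, whereas you run the swap for all pairs $(i,j)$ and conclude that all $h'(y_i-a)$ coincide; your organization avoids the small case distinction on the sign of $h'(b_{i+1}-a)-h'(b_i-a)$ but is otherwise the same argument.
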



\subsection{(Counter)examples based on the cost function $c(x,y)=(y-x)^4$}

In this section, we give two counterexamples that distinguish the
general behavior from the one of the curtain transport plan: the
optimizer is in general not unique and it may very well split into more
than two parts even if the starting distribution is continuous (see
Corollary~\ref{FamousCoro}, resp., Theorem~\ref{LessThanK}). Throughout
this subsection, we consider the cost function $c(x,y)=(y-x)^4$.

\subsubsection{Example of nonuniqueness of the transport} \label{non_uniq}
Let $\mu$ be uniformly distributed on $\{-1;1\}$ and $\nu$ uniformly
distributed on $\{-2;0;2\}$. We denote $-1$ and~$1$ by $(x_i)_{i=1,2}$
and $-2, 0$ and $2$ by $(y_j)_{j=1,2,3}$. To any matrix $A=(a_{i,j})$
of two rows and three columns satisfying $\sum_j a_{i,j}=1/2$ and
$\sum_i a_{i,j}=1/3$, we associate the transport plan defined by $\pi(\{
(x_i,y_j)\})=a_{i,j}$. For such a transport plan, the accumulated costs equal
\begin{eqnarray*}
\sum_{i,j}a_{i,j}\cdot
|x_i-y_j|^4&=&(a_{1,1}+a_{1,2}+a_{2,2}+a_{2,3})+3^4
\cdot (a_{1,3}+a_{2,1})
\\
&=&1+80(a_{1,3}+a_{2,1}).
\end{eqnarray*}
The matrices associated to a martingale transport plan are
\[
A_\lambda=\pmatrix{1/4 & 1/4 & 0
\vspace*{2pt}\cr
1/12 & 1/12 & 1/3}
+\lambda\pmatrix{1/12 & -1/6 & 1/12
\vspace*{2pt}\cr
-1/12 & 1/6 & -1/12}, %
\]
where $\lambda\in[0,1]$. Therefore, the martingale transport plan
associated to the parameter $\lambda$ gives rise to total costs of
$1+80(\lambda/12+1/12-\lambda/12)=23/3$, independently of $\lambda$. We
conclude that every martingale transport plan is optimal.

\subsubsection{Example of splitting in exactly three points in the
continuous case} \label{threepoints}
Roughly speaking, we have proved in Theorem~\ref{LessThanK} that if
$\mu
$ is continuous, $\dd\mu(x)$-mass elements split in at most three
points. Indeed, $t\mapsto t^4$ has derivative $t\mapsto4t^3$ which is
of degree $3$. In this paragraph, we give a numerical example showing
that this upper bound is sharp. The construction is inspired by the
dual theory of the martingale transport problem mentioned in Section
\ref{dual_pro}. Briefly, Figure~\ref{graddrei} depicts a family of
curves indexed by $x$. These curves touch three envelope curves at
three moving points $y_1, y_2$ and $y_3$ close to $-1, 0$ and $1$. The
optimal martingale transport plan that we construct is supported by the
union of the graphs $\Gamma_i=\{(x,y_i(x))\in\R^2\dvtx x\in\,]0,1/5[\}$ for
$i=1,2,3$.

\begin{figure}

\includegraphics{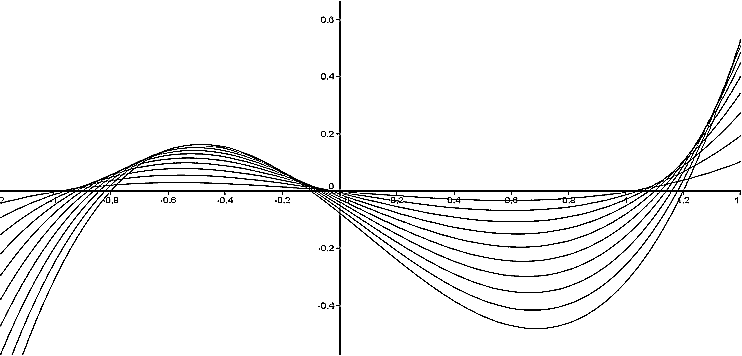}

\caption{Graphs and envelope of the functions $y\mapsto F(x,y)$ for
$x\in[0,1/5]$.}\label{graddrei}
\end{figure}

%
Let $\psi\dvtx \R\to\R$ be defined by
%
\begin{equation}
\label{defsup} \psi(y)=y^4-\max_{x\in[0,1/2]} \biggl\{4x
\biggl(y+\frac
{x}2\biggr) (y+1-x) (y-1-x) \biggr\}.
\end{equation}
Hence, for any $(x,y)\in[0,1/2]\times\R$
\[
y^4-\psi(y)\geq4xy^3-6x^2
y^2+a_1(x)y+b_1(x),
\]
where $a_1(x)=4x-4x^2-4x^3$ and $b_1(x)=2x^2-2x^4$. But
$y^4=(y-x)^4+4xy^3-6x^2y^2+a_2(x)y+b_2(x)$ so that
%
\begin{equation}
\label{dualeq} (y-x)^4\geq a_3(x)+b_3(x)y+
\psi(y)
\end{equation}
for $a_3=a_1-a_2$ and $b_3=b_1-b_2$. 
Here, (\ref{dualeq}) is an equality at the point $(x_0,y_0)$ if and
only $\psi(y_0)$ is realized in (\ref{defsup}) by $x=x_0$. Integrating
(\ref{dualeq}) against a transport plan $\pi$, one obtains
\[
\iint(y-x)^4 \,\dd\pi(x,y)\geq\int a_3(x) \,\dd\mu(x)+\iint
b_3(x)y \,\dd\pi (x,y)-\int\psi(y) \,\dd\nu(y) %
\]
and the equality holds if and only if $\pi$ is concentrated on
\[
\bigl\{(x,y)\in[0,1/2]\times\R\dvtx (y-x)^4=a_3(x)+b_3(x)y+
\psi(y)\bigr\}.
\]
Moreover, as we are considering a martingale transport plan we have
\[
\iint(y-x)^4 \,\dd\pi(x,y)\geq\int a_3(x) \,\dd\mu(x)+\int
b_3(x)x \,\dd\mu (x)+\int\psi(y) \,\dd\nu(y). %
\]
Here, the lower bound on the right-hand side is the same for every
martingale transport plan $\pi$. It follows that martingale transport
plans concentrated on
$\{(x,y)\in[0,1/2]\times\R\dvtx (y-x)^4=a_3(x)+b_3(x)y+\psi(y)\}$ are
optimal with respect to their marginals.
We set $F(x,y)=4x(y+\frac{x}2)(y+1-x)(y-1-x)$ so that \eqref{defsup} is
$\psi(y)=y^4-\sup_{x\in[0,1/2]}F(x,y)$. In Figure~\ref{graddrei}, one
can see the graphs of $F(x,\cdot)$ for values of $x$ between $0$ and $1/5$.

\step{We will prove that for $y\in\,]{-}1,0[\, \cup\,]1,2[$, $F(\cdot
,y)\dvtx [0,1/2]\to\R$ has a unique global maximum in $]0,1/2[$.} Actually,
$F(\cdot,y)$ has main term $2x^4$. Therefore, it is sufficient to prove
that $\partial_x F(\cdot,y)$ is positive for $x=0$ and negative for
$x=1/2$. Indeed this means that we are analyzing the variation of the
polynomial function $F(\cdot,y)$ of degree $4$ on an interval where its
variations are different from the asymptotic ones. In particular
$F(\cdot,y)$ will have a unique maximum on $]0,1/2[$. This turns out to
be true. Indeed,
%
\begin{equation}
\partial_x F(x,y)=4 \bigl((x+y)\bigl[(x-y)^2-1
\bigr]+x(x+2y) (x-y) \bigr),
\end{equation}
so that for any parameter $y$ in $]{-}1,0[ \,\cup\,]1,2[$, the function
$\partial_x F(\cdot,y)$ is positive in $x=0$ since it equals
$y\mapsto
4 (y(y^2-1) )$. For $x=1/2$, straightforward considerations
show that $\partial_x F(1/2,y)$ is negative for all $y\in\,]{-}\infty,2]$.

\step{We will now show that for a given parameter $x\in\,]0,1/5[ $, $x$
is the maximum of $F(\cdot,y)$ on $[0,1/2]$ for exactly three elements
$y$ of $]{-}1,0[\, \cup\,]1,2[$.} For this purpose, we consider $y\mapsto
\partial_x F(x,y)$. We prove that it vanishes exactly three times on
$]{-}1,0[ \,\cup\,]1,2[ $. For fixed $x\in\,]0,1/5[$, this function is indeed
negative in $0$ and $-1$ while it is positive in $-1/2$. The sign is
also different for $y=1$ and $y=2$ so that we have found the three
zeros of $y\mapsto\partial_x F(x,y)$. But as explained in the previous
step, for $y\in\,]{-}1,0[\, \cup\,]1,2[$ being a maximum of $F(\cdot,y)$ is
exactly the same as having zero derivate.

\step{Therefore, any $x\in\,]0,1/5[$ gives rise to the maximum of
$F(\cdot,y)$ for three different $y\in[-1,0]\cup[1,2]$.} Hence, there
are $y_1,y_2,y_3$ such that $\psi(y_i)=y_i^4-F(x,y_i)$ for $i=1,2,3$.
Notice that $x$ is in the convex hull of these points because $y_1$ is
close to $-1$, $y_2$ is close to $0$ and $y_3$ close to $1$. Hence,
there exists a martingale transport plan $\pi$ concentrated on
$[0,1/5]\times([-1,0]\cup[1,2])$ such that $\pi_x$ is supported on
$\{
y_1,y_2,y_3\}(x)$ with positive $\mu$-probability. Moreover, it follows
from the explanations above that this martingale transport plan is
optimal. Namely, \eqref{defsup} holds $\pi$-a.s. Hence, we have proved
that the bound $k=3$ of Theorem~\ref{LessThanK} is sharp in the case
$c(x,y)=(y-x)^4$.

\subsection{The Hobson--Neuberger cost function and its converse}

As mentioned in the \hyperref[sec:intro]{Introduction}, Hobson and Neuberger \cite{HoNe11}
study the case $c(x,y)= -|y-x|$, motivated by applications in
mathematical finance. They identify the minimizer $\pi_{\hn}$ based on
a construction of the maximizers for the dual problem. Here, some
conditions on the underlying measures are necessary; an example in
\cite{BeHePe11},
Proposition~5.2, shows that the dual maximizers need not
always exist. 
Based on Lemma~\ref{GlobalLocal} we partly recover their result.
Throughout this part, we will only deal with the case of a continuous
starting distribution $\mu$ (see Remark~\ref{not_ok} on this hypothesis).

\begin{them}\label{HobsonNeuberger}
Assume that $\mu$ and $\nu$ are in convex order and that $\mu$ is continuous.
There exists a unique optimal martingale transport plan $\pi_{\hn}$ for
the cost function $c(x,y)= -|y-x|$.

Moreover, there exist two nondecreasing functions $T_1,T_2\dvtx \R\to\R$
such that $T_1(x)\leq x\leq T_2(x)$ and $\pi_{\hn}$ is concentrated on
the graphs of these functions.
\end{them}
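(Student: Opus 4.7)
The plan is to mirror the development that led to Corollary \ref{FamousCoro} and Theorem \ref{them:mono} for $\pi_\lc$, adapted to the non-smooth cost $h(t)=-|t|$. Existence of a minimizer $\pi\in\M(\mu,\nu)$ is standard: $c$ is lower semicontinuous and satisfies the sufficient integrability condition ($|y-x|\leq|x|+|y|$), and $\M(\mu,\nu)$ is weakly compact. Applying Lemma \ref{GlobalLocal} to $\pi$ yields a Borel set $\Gamma$ with $\pi(\Gamma)=1$ on which no finite competitor improves the cost. Replacing $\Gamma$ by $\Gamma\cap\{(x,y):y\in\Spt(\pi_x)\}$ (still of full $\pi$-measure) ensures that for $\mu$-a.e.\ $x$ the slice $\Gamma_x$ supports the martingale probability $\pi_x$; since $\mu$ is continuous, countable sets of exceptional $x$ may freely be discarded.

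The central claim is $|\Gamma_x|\leq 2$ for $\mu$-a.e.\ $x$. Suppose otherwise; then, since each $\pi_x$ has barycenter $x$, one can extract three points $b_1<b_2<b_3\in\Gamma_x$ with $b_2$ strictly on one side of $x$ and at least one of $b_1,b_3$ on the opposite side. Lemma \ref{AccumulatingBadPoints} then produces, on each side of $x$, arbitrarily close $x'$ together with $b_1'<b_2'<b_3'\in\Gamma_{x'}$. Choosing $\lambda\in(0,1)$ with $\lambda b_3+(1-\lambda)b_1=b_2'$, set
\[
\alpha=(1-\lambda)\delta_{(x,b_1)}+\lambda\delta_{(x,b_3)}+\delta_{(x',b_2')},
\]
\[
\alpha'=(1-\lambda)\delta_{(x',b_1)}+\lambda\delta_{(x',b_3)}+\delta_{(x,b_2')}.
\]
A direct check shows that $\alpha'$ is a competitor of $\alpha$: both marginals coincide, and the conditional barycenters at both $x$ and $x'$ equal $b_2'$ under either measure. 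Because the signs of $b_i-x$ and $b_i-x'$ coincide for $x'$ sufficiently close to $x$, a first-order expansion of $-|\cdot|$ gives
\[
\int c\,d\alpha'-\int c\,d\alpha = -2\min(\lambda,1-\lambda)\,|x'-x|
\]
after $x'$ is chosen on the appropriate side, contradicting the variational property of $\Gamma$. Lemma \ref{Borel_lemma} then provides Borel maps $T_1\leq T_2$ on $\proj^x\Gamma$ with $\Gamma\subseteq\graph(T_1)\cup\graph(T_2)$, and the martingale constraint forces $T_1(x)\leq x\leq T_2(x)$.

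For the monotonicity of $T_1$ and $T_2$, I would run a four-point variational swap: given $x_1<x_2$ with, say, $T_2(x_1)>T_2(x_2)$, consider $\alpha$ weighted by the martingale probabilities at $(x_i,T_j(x_i))$ and construct $\alpha'$ by exchanging the two $T_2$-destinations while adjusting the $T_1$-weights so that both the $y$-marginal and each conditional barycenter are preserved. The crossing of the $T_2$-graphs translates, via the piecewise-linear structure of $-|\cdot|$, into a strict cost decrease, contradicting the property of $\Gamma$. A symmetric argument treats $T_1$. Uniqueness is then immediate from Lemma \ref{half-sum}: the set of optimal martingale couplings is convex, every element is supported on the graphs of two functions by the same argument, hence it reduces to a single point.

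The main obstacle is the monotonicity step: the four-point competitor must simultaneously respect both marginals and both conditional barycenters, which requires a delicate choice of weights depending on the relative positions of the four points $(x_i,T_j(x_i))$ and of $x$ with respect to the $T_j(x_i)$. A subsidiary difficulty in the argument for $|\Gamma_x|\leq 2$ is to select the initial triple so that $b_2$ and one of the extremes lie on opposite sides of $x$; this is guaranteed $\mu$-a.s.\ by the martingale condition applied to $\pi_x$, and it is crucial because the cost difference $\int c\,d\alpha'-\int c\,d\alpha$ vanishes identically when the triple lies entirely on one side of $x$, a degeneracy specific to the non-smooth cost $h=-|\cdot|$.
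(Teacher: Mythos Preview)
Your overall strategy is right and close to the paper's, but the monotonicity step is genuinely incomplete, and the fix is simpler than you think.

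First, a minor point: your formula $-2\min(\lambda,1-\lambda)\,|x'-x|$ is not correct. With $b_1<x<b_3$ and $x'$ close to $x$, the cost difference $\int c\,d\alpha'-\int c\,d\alpha$ equals $-2\lambda(x-x')$ if $b_2'<x$ and $2(1-\lambda)(x-x')$ if $b_2'>x$. So one must choose $x'<x$ in the first case and $x'>x$ in the second, which is exactly what Lemma~\ref{AccumulatingBadPoints} allows. Your conclusion for $|\Gamma_x|\leq 2$ stands.

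The real gap is monotonicity. You propose a four-point swap that must preserve both marginals and both conditional barycenters, and you correctly identify this as delicate; in fact you never construct it. But no four-point competitor is needed: \emph{your own three-point swap already handles monotonicity}. Suppose $x<x'$ and $T_2(x)>T_2(x')$. Take $y^-=T_1(x)$, $y^+=T_2(x)$, $y'=T_2(x')$; then $(x,y^-),(x,y^+),(x',y')\in\Gamma$ with $y^-\leq x<x'\leq y'<y^+$. All the sign conditions your computation needs are met (no closeness of $x'$ to $x$ required: $y^-<x'$ since $T_1(x)\leq x<x'$, and $y^+>x'$ since $T_2(x)>T_2(x')\geq x'$), and one is in the case $y'>x$, giving the strictly negative difference $2(1-\lambda)(x-x')$. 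A symmetric choice handles $T_1$. This is precisely the route the paper takes: Lemma~\ref{HNTechCore} gives a complete sign table for the three-point swap, from which both the forbidden configurations $y'\leq x'<x$ and $x<x'\leq y'$ are read off, and these are reused verbatim for $|\Gamma_x|\leq 2$ and for the monotonicity of $T_1,T_2$.

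So the missing idea is not a new competitor but a more careful analysis of the one you already have; once that is done, your argument and the paper's coincide.
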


A similar behavior holds for the cost function $c(x,y)=|y-x|$ built on
the absolute value $h\dvtx x\mapsto|x|$. We have learned about the
structure of the optimizer for this cost function from D. Hobson and
M. Klimmek \cite{HoKl12}.
Recall that $\Gamma_x=\{y\dvtx (x,y)\in\Gamma\}$ for $\Gamma\subseteq\R^2$.

\begin{them}\label{RHN}
Assume that $\mu$ and $\nu$ are in convex order and that $\mu$ is continuous.
There exists a unique optimal martingale transport plan $\pi_{\abs}$
for the cost function $c(x,y)= |y-x|$.

Moreover, there is a set $\Gamma$ such that $\pi_{\abs}$ is
concentrated on $\Gamma$ and $|\Gamma_x|\leq3$ for every $x\in\R$.
More precisely, $\pi_{\abs}$ can be decomposed into $\pi_{\mathrm{stay}}+\pi
_{\mathrm{go}}$ where $\pi_{\mathrm{stay}}=(\id\otimes\id)_\#(\mu\wedge\nu)$ (this
measure is concentrated on the diagonal of $\R^2$) and $\pi_{\mathrm{go}}$ is
concentrated on $\graph(T_1)\cup\graph(T_2)$ where $T_1, T_2$ are real
functions.
\end{them}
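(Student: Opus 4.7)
The approach is modelled on Theorem \ref{HobsonNeuberger}: existence follows from the weak compactness of $\M(\mu,\nu)$ and lower semi-continuity of the cost functional (Section \ref{sec:basic}); then the Variational Lemma \ref{GlobalLocal} is applied to any optimizer $\pi$ to extract a Borel set $\Gamma$ with $\pi(\Gamma)=1$ enjoying the local optimality property. The target structural information is twofold: (i) modulo $\mu$-null sets every fiber $\Gamma_x$ contains at most one point strictly below $x$ and one strictly above, and (ii) the diagonal portion of $\pi$ coincides with $(\id\otimes\id)_\#(\mu\wedge\nu)$.

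For (i) I would argue by contradiction. Should there be uncountably many $x$ with two distinct points $y_1<y_2<x$ in $\Gamma_x$ (the other case being symmetric), the martingale property at $x$ forces a further point $y_3>x$ in $\Gamma_x$. Applying Lemma \ref{AccumulatingBadPoints} with $k=3$ produces a base configuration $(a,b_1,b_2,b_3)$ with $b_1<b_2<a<b_3$ and an arbitrarily close accumulating configuration $(a',b'_1,b'_2,b'_3)$, with the side of $a'$ relative to $a$ freely selectable. On these six points I build a finite measure $\alpha$ respecting the conditional-mean constraints $\sum\lambda_i b_i=a$ and $\sum\mu_i b'_i=a'$, and a competitor $\alpha'$ obtained by performing small reciprocal mass transfers between the two fibers which preserve both marginals and both conditional means (the constraints leave a one-parameter family of admissible transfers). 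Because $|y-x|$ has slopes $\pm 1$ on either side of its kink, the cost change $\int c\,\d\alpha-\int c\,\d\alpha'$ telescopes to a signed linear expression in $(a'-a)$ whose sign is fixed by the choice of direction in Lemma \ref{AccumulatingBadPoints}, yielding the contradiction.

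For (ii) I would use the variational lemma to forbid the mixed configuration $(x,y^-),(x,y^+),(x',x)\in\Gamma$ with $y^-<x<y^+$ and $x'\neq x$, via the explicit comparison $\alpha=\lambda\delta_{(x,y^+)}+(1-\lambda)\delta_{(x,y^-)}+\delta_{(x',x)}$ versus $\alpha'=\delta_{(x,x)}+\lambda\delta_{(x',y^+)}+(1-\lambda)\delta_{(x',y^-)}$, where $\lambda$ solves $\lambda y^++(1-\lambda)y^-=x$. A direct computation gives $\int c\,\d\alpha-\int c\,\d\alpha'=2\lambda(x'-x)$ when $x'>x$ and $2(1-\lambda)(x-x')$ when $x'<x$, both strictly positive. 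Setting $B=\{x:\Gamma_x\setminus\{x\}\neq\emptyset\}$ and $C=\{x:\exists\, x'\neq x,\,(x',x)\in\Gamma\}$, this forbidden configuration together with the martingale property (every $x\in B$ has $\Gamma_x$ straddling $x$) gives $B\cap C=\emptyset$ modulo $\mu$-null sets. Since the off-diagonal $x$-marginal $\mu-\sigma$ of $\pi$ is concentrated on $B$ and the off-diagonal $y$-marginal $\nu-\sigma$ on $C$, we deduce $(\mu-\sigma)\wedge(\nu-\sigma)=0$, which combined with the automatic inequality $\sigma\leq\mu\wedge\nu$ forces $\sigma=\mu\wedge\nu$ and hence $\pi_{stay}=(\id\otimes\id)_\#(\mu\wedge\nu)$. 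Lemma \ref{Borel_lemma} applied to the off-diagonal part of $\Gamma$ then produces Borel functions $T_1\leq x\leq T_2$ whose graphs carry $\pi_{go}$.

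Uniqueness is the standard half-sum argument (Lemma \ref{half-sum}): if $\pi$ and $\pi'$ are both optimal, then so is $(\pi+\pi')/2$, and the preceding analysis identifies its common diagonal part and confines its off-diagonal part to a single union of two Borel graphs; the lemma applied to the reduced martingale transport problem between $\mu-\mu\wedge\nu$ and $\nu-\mu\wedge\nu$ then yields $\pi_{go}=\pi'_{go}$. The main obstacle I expect is the rerouting calculation for (i): since $|y-x|$ is only piecewise affine, the dominant contribution to $\int c\,\d\alpha-\int c\,\d\alpha'$ cancels, and the signed remainder must be extracted by delicate coordination between the choice of masses in $\alpha$ and the accumulation direction supplied by Lemma \ref{AccumulatingBadPoints}.
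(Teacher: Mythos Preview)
Your overall architecture is correct and matches the paper: existence by compactness, the Variational Lemma to produce $\Gamma$, a forbidden-configuration analysis, and uniqueness by the half-sum Lemma~\ref{half-sum} applied to the reduced problem between $\mu-\mu\wedge\nu$ and $\nu-\mu\wedge\nu$. Your treatment of~(ii) is essentially the paper's: the configuration $(x,y^-),(x,y^+),(x',x)$ with $y^-<x<y^+$, $x'\neq x$ is exactly case~(3) of Lemma~\ref{HNTechCore}, and your explicit formula (valid for $x'\in(x,y^+)$, with a different but still positive value outside) confirms $A>B$.

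The weak point is~(i). The six-point rerouting you propose is not needed, and your own closing paragraph correctly flags it as the obstacle. Here is why it is fragile: if $a'$ is taken close enough to $a$ then all six $y$-coordinates $b_1,b_2,b_1',b_2'$ lie strictly below $\min(a,a')$ and $b_3,b_3'$ strictly above $\max(a,a')$, so on the support of any admissible $(\alpha,\alpha')$ the cost $|y-x|$ is \emph{affine} in $(x,y)$ separately on each half-plane. Any competitor that preserves both marginals and both conditional means then produces zero cost change; the ``signed remainder'' you hope for does not appear unless the rerouting pushes mass across the kink, which your setup does not arrange.

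The paper avoids this entirely by reusing the \emph{same} three-point competitor as in~(ii), but with $y'\neq x$. Lemma~\ref{HNTechCore} analyses $A-B$ for all positions of $y'$ relative to $x$: in case~(1) ($y'<x$) one has $A>B$ whenever $x'>x$ or $x'<x_0$ for some $x_0\in(y^-,y')$, and symmetrically in case~(2). So if $\bar\Gamma_a$ (with $\bar\Gamma=\Gamma\setminus\Delta$) contains three points $b^-<b<b^+$, one takes the middle point $b\neq a$; say $a<b$. Lemma~\ref{AccumulatingBadPoints} (approximation from the left) supplies $(a',b')\in\Gamma$ with $a'<a$ and $b'$ close to $b$, hence $a<b'<b^+$. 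Setting $(x,y^-,y^+,x',y')=(a,b^-,b^+,a',b')$ gives the forbidden pattern $x'<x\le y'$ from \eqref{3BadConf}. Thus $|\bar\Gamma_x|\le 2$ after deleting countably many fibers, and Lemma~\ref{Borel_lemma} yields $T_1,T_2$. In short: extend your computation in~(ii) to the general $y'$ of Lemma~\ref{HNTechCore}, and~(i) follows from the same three-point variation with no six-point bookkeeping.
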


The ``combinatorial core'' of the proofs to Theorems \ref
{HobsonNeuberger} and \ref{RHN} is contained in the following
lengthy but simple lemma.

\begin{lem}\label{HNTechCore}
Let $x,y^-,y,^+,y'\in\R$ such that $y^-< x,y'< y^+$. Pick $\lambda$
such that $\lambda y^++(1-\lambda) y^-= y'$.
For $x'\in\R$ we want to compare the quantities
\begin{eqnarray*}
A &:=&\lambda\bigl|x-y^+\bigr|+ (1-\lambda) \bigl|x-y^-\bigr|+ \bigl|x'-y'\bigr|,
\\
B&:=&\lambda\bigl|x'-y^+\bigr|+ (1-\lambda) \bigl|x'-y^-\bigr|+
\bigl|x-y'\bigr|.
\end{eqnarray*}
\begin{longlist}[(1)]
\item[(1)] Assume that $y'< x$. Then there exists $x_0\in\,]y^-,y'[$ such
that $(A-B)$ seen as a function of $x'$ exactly vanishes at $x_0$ and
$x$, is strictly positive outside $[x_0,x]$ and strictly negative in $]x_0,x[$.
\[
\begin{array} {c|c@{\hspace*{4pt}}c@{\hspace*{4pt}}c@{\hspace*{4pt}}c@{\hspace*{4pt}}
c@{\hspace*{4pt}}c@{\hspace*{4pt}}c}
x' &-\infty& y^-
&x_0&y'&x&&+\infty
\\
\hline
(A-B) \bigl(x'\bigr)& &+&0 &
-&0&+&
\end{array}.
\]


\item[(2)] Assume that $y'> x$. Then there exists $x_1\in\,]y', y^+[$ such
that $(A-B)$ vanishes if $x'\in\{x_1,x\}$, is strictly positive outside
$[x,x_1]$ and strictly negative in $]x,x_1[$:
\[
\begin{array} {c|c@{\hspace*{4pt}}c@{\hspace*{4pt}}c@{\hspace*{4pt}}
c@{\hspace*{4pt}}c@{\hspace*{4pt}}c@{\hspace*{4pt}}c} x' &-\infty& &
x&y'&x_1&y^+&+\infty
\\
\hline(A-B) \bigl(x'\bigr)& &+&0&-\phantom{'}&0\phantom{_1}&+\phantom{^+}&
\end{array} %
. %
\]


\item[(3)] Assume that $y'= x$. Then $(A-B)$ is nonnegative and vanishes
exactly in~$x$.
\[
\begin{array} {c|c@{\hspace*{4pt}}c@{\hspace*{4pt}}c@{\hspace*{4pt}}c
@{\hspace*{4pt}}c@{\hspace*{4pt}}c@{\hspace*{4pt}}c} x' &-\infty&y^- &
x=y'&y^+&+\infty
\\
\hline(A-B) \bigl(x'\bigr)& &+\phantom{^-}&0 &+\phantom{^+}&
\end{array} %
. %
\]
\end{longlist}
\end{lem}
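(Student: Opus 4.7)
The idea is to reduce the comparison of $A$ and $B$ to a qualitative study of a single one-variable function. Since $x'$ enters $A-B$ only through $-\lambda|x'-y^+|-(1-\lambda)|x'-y^-|+|x'-y'|$, I would introduce
\[
\phi(t)\;:=\;\lambda|t-y^+|+(1-\lambda)|t-y^-|-|t-y'|,
\]
so that the clean identity $A-B=\phi(x)-\phi(x')$ holds. The whole lemma thus reduces to understanding the sign of $\phi(x)-\phi(x')$ as $x'$ varies.

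The second step is to identify $\phi$ as a continuous, piecewise-affine tent. Using the barycenter identity $y'=\lambda y^++(1-\lambda)y^-$, a routine piecewise computation yields $\phi\equiv 0$ on $(-\infty,y^-]\cup[y^+,+\infty)$, while on $[y^-,y']$ the function $\phi$ is affine with positive slope $2(1-\lambda)$, rising from $0$ to the value $M:=2\lambda(1-\lambda)(y^+-y^-)>0$, and on $[y',y^+]$ it is affine with negative slope $-2\lambda$, descending from $M$ back to $0$. In particular $\phi$ is non-negative, strictly increasing on $[y^-,y']$, strictly decreasing on $[y',y^+]$, and attains its strict global maximum exactly at $t=y'$.

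With this picture of $\phi$ in hand the three cases of the lemma are read off directly. In case (3) the hypothesis $x=y'$ forces $\phi(x)=M$, whence $A-B=M-\phi(x')\geq 0$ with equality precisely at $x'=y'=x$. In case (1), the hypothesis $y'<x$ together with the implicit range $x<y^+$ (which is also forced by the table, since otherwise $\phi(x)=0$ and the asserted positivity interval $(x,+\infty)$ would fail) places $x$ strictly on the decreasing flank, so $\phi(x)\in(0,M)$; the strict monotonicity of $\phi$ on the increasing flank then provides a unique $x_0\in(y^-,y')$ with $\phi(x_0)=\phi(x)$, and inspecting the tent shape one finds $\phi(x')<\phi(x)$ exactly on $(-\infty,x_0)\cup(x,+\infty)$ and $\phi(x')>\phi(x)$ exactly on $(x_0,x)$. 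This is precisely the sign pattern of $A-B$ claimed in case (1). Case (2) follows by the left-right symmetry that exchanges $(y^-,\lambda)\leftrightarrow(y^+,1-\lambda)$.

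The whole argument is bookkeeping, and the only place that needs real care is the explicit piecewise evaluation of $\phi$: the barycenter identity must be invoked on each of the three non-trivial intervals to check that the constant terms cancel and that $\phi$ really does vanish outside $[y^-,y^+]$. Once this computation is done, the case distinction is trivial one-variable calculus, so I do not anticipate any substantive conceptual obstacle.
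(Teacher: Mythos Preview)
Your approach is essentially identical to the paper's: the paper also introduces the auxiliary function $f(t)=\lambda|t-y^+|+(1-\lambda)|t-y^-|-|t-y'|$, observes that $A-B=f(x)-f(x')$, describes $f$ as a piecewise-linear tent vanishing outside $[y^-,y^+]$ with peak value $2\lambda(1-\lambda)(y^+-y^-)$ at $y'$, and then reads off the three cases from this shape. Your remark that case~(1) implicitly requires $x<y^+$ (otherwise $\phi(x)=0$ and the asserted sign pattern fails) is a valid observation that the paper leaves unstated.
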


\begin{pf}
Consider the function
\[
f(t)= \lambda\bigl|t-y^+\bigr|+(1-\lambda) \bigl|t-y^-\bigr|-\bigl|t-y'\bigr|.
\]
Then $A> B$ is equivalent to $f(x) > f(x')$ and $A= B$ is equivalent to
$f(x) = f(x')$.

The behavior of the function $f$ is easy enough to understand. On the
intervals $]{-}\infty, y^-]$, $[y^+,\infty[$, the function is zero. On
the interval $[y^-, y']$ it increases linearly from $0$ to $2\lambda
(1-\lambda) (y^+-y^-)$. On the interval $[ y', y^+]$ it decreases
linearly from $2\lambda(1-\lambda) (y^+-y^-)$ to $0$.

The above assertions are simple consequences of this behavior.
Moreover, it is easy to calculate $x_0, x_1$ explicitly. For instance,
in the case $y'< x$ pick $t\in\,]0,1[$ such that $x=y'+t(y^+-y')$. Then
$x_0= y' + t(y^--y')$.
\end{pf}

\begin{pf*}{Proof of Theorem~\ref{HobsonNeuberger}}
Pick $\Gamma$ according to Lemma~\ref{GlobalLocal} and $(x,y^-)$, $
(x,y^+)$, $(x',y')\in\Gamma$, with $y^-< y' < y^+$. Then it cannot
happen that
%
\begin{equation}
\label{2BadConf} y'\leq x' < x\quad \mbox{or}\quad
x<x'\leq y'.
\end{equation}
%
Indeed, choosing $\lambda\in\,]0,1[$ and $\alpha$, respectively,
$\alpha
'$ as in the proof of Theorem~\ref{OptForExp}, we find that an
improvement is possible if
\begin{eqnarray*}
&&-\lambda\bigl|x-y^+\bigr|- (1-\lambda) \bigl|x-y^-\bigr|- \bigl|x'-y'\bigr| > -
\lambda\bigl|x'-y^+\bigr|- (1-\lambda) \bigl|x'-y^-\bigr|\\
&&\qquad{}-\bigl|x-y'\bigr|.
\end{eqnarray*}
This inequality holds in the just mentioned cases by Lemma~\ref{HNTechCore}.

Consider the set $A$ of points $a$ such that $\Gamma_a$ contains more
than two points and assume by contradiction that this set is
uncountable. According to Lemma~\ref{AccumulatingBadPoints}, there is
an accumulation effect at some $a\in A$ together with $b^-, b, b^+\in
\Gamma_a$ in the order $b^-< b< b^+$. (Without loss of generality, one
may assume $b\leq a$.) In particular, Lemma~\ref{AccumulatingBadPoints}
provides $(a_0, b_0^-),(a_0, b_0^+)\in\Gamma$ such that $a< a_0<
b_0^+$ and $b_0^-< b$. We have settled the first forbidden situation of
\eqref{2BadConf} for $(x,y^-)=(a_0,b_0^-)$, $(x,y^+)=(a_0,b_0^+)$ and
$(x',y')=(a,b)$, which provides the desired contradiction. Hence, $A$
is countable and $\mu(A)=0$. It follows that one can assume $|\Gamma
_a|\leq2$ for every $a\in\R$.

We may thus assume that there exist $T_1$ and $T_2$ from $\proj
^x(\Gamma
)$ to $\R$ such that $\Gamma_x=\{T_1(x), T_2(x)\}$ where $T_1(x)\leq
x\leq T_2(x)$ for $\mu$-almost every $x\in\proj^x(\Gamma)$. It remains
to show that $T_1$ and $T_2$ are monotone. Let $x,x'\in\R$ with $x<
x'$. We necessarily have $T_2(x)\leq T_2(x')$ since the opposite
inequality leads to the second forbidden inequality in \eqref{2BadConf}
taking $y^-= T_1(x), y'= T_2(x')$ and $y^+= T_2(x)$. The monotonicity
of $T_1$ is established in the same way. 

%

It remains to show that the optimizer is unique. Due to the linear
structure of the optimization problem the set of solutions is convex.
Hence, Lemma~\ref{half-sum} applies.
\end{pf*}
%

\begin{rem}\label{not_ok}
If $\mu$ is not continuous, there may be more than one minimizer. This
is the case, for example, if $\mu$ and $\nu$ are chosen as in Section
\ref{non_uniq}. In fact, if $h$ is an even function then for the cost
function $c(x,y)=h(y-x)$ (e.g., $x\mapsto-|y-x|$) every martingale
transport plan is optimal. Hence, it seems that it is not directly
possible to define the Hobson--Neuberger transport plan for a general
starting distribution $\mu$
in an unambiguous way.
\end{rem}

\begin{pf*}{Proof of Theorem~\ref{RHN}}
Let $\pi$ be an optimal martingale transport plan.
Pick $\Gamma$ according to Lemma~\ref{GlobalLocal} and $(x,y^-)$, $
(x,y^+)$, $(x',y')\in\Gamma$, with $y^-< y' < y^+$. Then it cannot
happen that
%
\begin{equation}
\label{3BadConf} x' < x\leq y' \quad\mbox{or}\quad
y'\leq x<x' \quad\mbox{or}\quad x' \notin
\bigl[y^-, y^+\bigr].
\end{equation}
%
Indeed, choosing $\lambda\in\,]0,1[$, $\alpha$ and $\alpha'$ as in the
proof of Theorem~\ref{OptForExp} above we find that an improvement of
$\alpha$ by $\alpha'$ is possible if
\[
\lambda\bigl|x-y^+\bigr|+ (1-\lambda) \bigl|x-y^-\bigr|+\bigl |x'-y'\bigr| >
\lambda\bigl|x'-y^+\bigr|+ (1-\lambda) \bigl|x'-y^-\bigr|+\bigl|x-y'\bigr|.
\]
Indeed, this inequality holds in the just mentioned cases by Lemma~\ref
{HNTechCore}. Note in particular that one of the forbidden cases of
\eqref{3BadConf} occurs if $x\neq x'$ and $x= y'$. This will be crucial
in the following argument which establishes that as much mass as
possible is transported by the identity mapping. (Roughly speaking, the
following is forbidden: Some mass goes from $x$ to $y^-$ and $y^+$
while some mass goes from $x'$ to $y'=x$.)

Set $\pi_0= \pi|_{\Delta}$, where $\Delta$ is the diagonal $\{
(x,y)\in\R
^2\dvtx x=y\}$ and $\bar\pi= \pi-\pi_0$, let $\rho$ be the projection of
$\pi_0$ onto the first (or the second) coordinate. As $\rho\leq\mu$
and $\rho\leq\nu$, we have $\rho\leq\mu\wedge\nu$. We want to prove
that $\rho=\mu\wedge\nu$, that is, $\pi_0$ is $(\id\otimes\id
)_\#(\mu
\wedge\nu)$. Let us define the reduced measures $\bar\mu= \mu-\rho
, \bar
\nu= \nu-\rho$ and $\kappa= \mu\wedge\nu-\rho$. Note that $\bar
\pi\in
\M(\bar\mu,\bar\nu)$ and that $\bar\pi$ is concentrated on $\bar
\Gamma=
\Gamma\setminus\Delta$.
Hence, we have the following:
\begin{itemize}
\item For $\bar\mu$-almost every $a$, there exist $b^-$ and $ b^+$ such
that $a\in\,] b^- ,b^+[$ and $(a,b^-), (a, b^+)\in\bar\Gamma$.
\item For $\kappa$-almost every $b$, there exists some $a\neq b$ such
that $(a, b) \in\bar\Gamma$.
\end{itemize}
As $\kappa\leq\bar\mu$, we conclude that $\kappa$-almost every real
number satisfies both of these conditions. Thus, for $\kappa$-almost
every $x$ there exist $y^-, y^+$ and $x'$ such that the points
$(x,y^-), (x,y^+)$ and $(x',x)$ are included in $\bar\Gamma$ and one
has $x'\neq x$ and $x\in\,]y^-,y^+[$. This coincides with one of the
forbidden situations of \eqref{3BadConf}. Hence, $\kappa$ has mass $0$
and $\pi_0=(\id\otimes\id)_\#(\mu\wedge\nu)$ as claimed above.

Our next goal is to establish that, removing countably many points if
necessary, we have $|\bar\Gamma_x|\leq2$ for every $x\in\R$. Indeed,
if this is not true, then there exist $a, b', b^-$ and $b^+$ with $b^-<
b <b^+\in\bar\Gamma_a$ to which the assertion of Lemma~\ref
{AccumulatingBadPoints} applies. We know that $b< a $ or $a< b$; assume
without loss of generality that $a<b$. But then there exist $a'$ with
$b^-< a'<a$ and $b'$ with $a < b' < b$ such that $(a', b')\in\Gamma$.
This contradicts \eqref{3BadConf} (with $x=a$, $y^-=b^-$, $y^+=b^+$,
$x'=a', y'=b'$).

It remains to establish that there exists at most one optimizer. For
optimal transports $\pi$, the static part $\pi_0=\pi|_\Delta$ equals
$(\id\otimes\id)_\#(\mu\wedge\nu)$. Hence, the reduced measure
$\bar\pi
=\pi-\pi_0$ is a minimizer of the martingale transport problem between
$\bar\mu=\mu-\mu\wedge\nu$ and $\bar\nu=\nu-\mu\wedge\nu$.
Note that
$\bar\mu\wedge\bar\nu=0$ so that the optimal martingale couplings are
concentrated on two Borel graphs. We conclude by Lemma~\ref{half-sum}.
\end{pf*}

\begin{rem}
Exactly as in Remark~\ref{not_ok}, the hypothesis that $\mu$ is
continuous is needed to prove uniqueness of the optimizer; $\pi_{\abs}$
is not well defined otherwise. 
\end{rem}
\begin{appendix}\label{app}
\section{A converse to the variational lemma}\label{appA}

In this section, we prove that the optimality criterion given in the
variational Lemma~\ref{GlobalLocal} is not only necessary but also
\emph
{sufficient} provided that the cost function is assumed to be bounded
and continuous. We conjecture that these regularity assumptions can be
relaxed. Before we state the variational lemma, let us give a
definition. 

\begin{defi}
Let $c$ be a cost function with values in $\R$. We say that a Borel set
$\Gamma$ is \emph{finitely optimal} for $c$ if for every measure
$\alpha
$ on $\R\times\R$ with $|\operatorname{spt}(\alpha)|<\infty$ and
$\operatorname{spt}(\alpha)
\subseteq\Gamma$ and every competitor $\alpha'$ of $\alpha$ we have
$\int c \,\d\alpha\leq\int c \,\d\alpha'$.
\end{defi}

As $c$ only takes finite values, the integrals exist.

\begin{lem}[(Variational lemma, part II)]\label{LocalGlobal}
Assume that $\mu, \nu\in\mathcal{P}$ are in convex order and that
$c\dvtx \R^2\to\R
$ is a continuous bounded cost function.
Let $\pi\in\M(\mu, \nu)$. It there exists a finitely optimal set
$\Gamma$ such that $\pi(\Gamma)=1$, then $\pi$ is an optimal martingale
transport plan.
\end{lem}

The strategy of our proof will be to establish \emph{dual maximizers}
(see Section~\ref{dual_pro}). Such dual maximizers do not exist in
general as follows from \cite{BeHePe11}, Proposition~4.1. However, the
following simple lemma allows us to reduce the martingale transport
problem to ``irreducible components.'' It turns out that on each of
these components it \emph{is} possible to construct the desired dual
maximizers.\footnote{Roughly speaking, the construction given in \cite{BeHePe11},
Proposition~4.1, uses an infinite number of such irreducible
components. While it is possible to construct optimizers on each
component, it turns out to be impossible to glue them together.}

\subsection{Irreducible decompositions}
Let us now introduce some of the necessary vocabulary.

\begin{defi}\label{def_irred}
Let $\mu,\nu$ be elements of $\mathcal{M}$ such that $\mu\leqc\nu
$. We say
that $(\mu,\nu)$ is irreducible if there exists an open interval $I$
(bounded or not) such that $\mu(I)$ and $\nu(\bar{I})$ have the total
mass and $u_\mu<u_\nu$ on $I$.
\end{defi}

Note that on $\R\setminus I$ we have $u_\mu=u_\nu$ so that $I$ is
exactly $\{u_\mu<u_\nu\}$.

\begin{them}[{[Decomposition of $(\mu,\nu)$ into irreducible
components]}]\label{yo1}
%
Let $\mu,\nu$ be elements of $\mathcal{M}$ such that $\mu\leqc\nu
$. Let
$(I_k)_k$ be the (in essence unique) sequence of disjoint open
intervals such that $\bigcup_k I_k=\{u_\mu<u_\nu\}$ and write $F$ for
the closed set $\R\setminus\bigcup_k I_k$.
Set $\mu_k=\mu|_{I_k}$ and define $\eta=\mu|_F$ such that $\mu
=(\sum_k
\mu_k)+\eta$.

There exists a unique decomposition $\nu=(\sum_k \nu_k)+\upsilon$ such
that $\mu_k\leqc\nu_k$ for each $k$ and $\eta\leqc\upsilon$.

For this decomposition $\eta=\upsilon$ and $(\mu_k,\nu_k)$ is
irreducible with $\{ u_{\mu_k} < u_{\nu_k} \}=I_k$. Moreover, any
martingale transport plan $\pi\in\M(\mu, \nu)$ can be decomposed in
the form
%
\begin{equation}
\label{PiDec} \pi= \biggl(\sum_k
\pi_k \biggr)+\pi_F,
\end{equation}
where $\pi_k$ is a martingale transport from $\mu_k$ to $\nu_k$. This
decomposition is unique and $\pi_F=(\id\otimes\id)_\#\eta$.
\end{them}
Note that the measure $\eta\wedge\nu_k$ does not necessarily vanish.
\begin{pf*}{Proof of Theorem \ref{yo1}}
To establish the uniqueness part, we need two auxiliary results.

\begin{lem}\label{SepProblems}
Assume that $\mu, \nu$ are elements of $\mathcal{P}$ and let $\pi
\in\M(\mu,\nu
), s\in\R$.
The following are equivalent:
\begin{longlist}[(ii)]
\item[(i)]$\pi ( ]{-}\infty, s[ \,\times\,]{-}\infty, s] \cup\{(s,s)\}
\,\cup\,]s,\infty[ \,\times\,[s, \infty[  )=1$.
\item[(ii)]$u_\mu(s)=u_\nu(s)$.
\end{longlist}
Consequently, as \textup{(ii)} does not depend on $\pi$, if \textup{(i)} holds for one
measure in $\M(\mu, \nu)$, then it applies to \emph{all} elements
of $\M
(\mu, \nu)$.
\end{lem}

\begin{pf} This is essentially \cite{BeHePe11}, Lemma~4.2; the only
difference is that the formulation in \cite{BeHePe11} refers to the
function $u_\mu^+(x):= \int(y-x)_+ \,\d\mu(y)$ rather than to $u_\mu$.
However, the proof goes through in the same way if $(\cdot )_+$ is replaced
by $|\cdot|$.
\end{pf}

We record the following consequence.

\begin{lem}\label{Irr1Step}
Let $I$ be an open interval such that $u_\mu=u_\nu$ on the boundary
of $I$.
Let $\mu_I$ be $\mu|_I$ and $\pi$ be a transport plan of $\M(\mu
,\nu)$.
Set also $\nu_I:=\proj^y_\#(\pi|_{ I\times\R})$.

The measure $\nu_I$ is concentrated on $\bar{I}$ and does not actually
depend on the particular choice of $\pi$. Moreover, we have $u_{\nu
_I}-u_{\mu_I}=0$ on $\R\setminus I$ and $u_{\nu_I}-u_{\mu_I}=u_\nu
-u_\mu$ on $I$.
\end{lem}

\begin{pf}
Pick $\pi\in\Pi_M(\mu, \nu)$ and apply Lemma~\ref{SepProblems} to
every $s\in\partial I$. Then
%
\begin{equation}
\label{SomeRestriction} \pi \bigl((I\times\bar I) \cup(\R\setminus I)^2
\bigr)=1.
\end{equation}
Set $\pi_I:=\pi|_{I\times\R}$. Relation \eqref{SomeRestriction}
asserts that no mass of $\mu$ is moved from $\R\setminus I$ to $I$ and
that the mass of $I$ is transported into $\bar{I}$. Thus, $\mu_I\leqc
\nu
_I=\proj^y_\# \pi_I$ (so that the two measures have the same integral
against linear functions) and $\nu_I$ is concentrated on $\bar I$. It
follows directly from the definition of the potential functions that
$u_{\nu_I}=u_{\mu_I}$ on $\R\setminus I$. Applying similar arguments
to $\mu|_J$ and $\nu_J=\proj^y_\#\pi|_{J\times\R}$ for every (closed)
connected component $J$ of $\R\setminus I$ and recalling that $\alpha
\mapsto u_\alpha$ is linear, we obtain
$u_{\mu-\mu_I}=u_{\nu-\nu_I}$ on $I$. Hence, $u_{\nu_I}-u_{\mu
_I}=u_\nu
-u_\mu$ holds on this interval.
\end{pf}
We first prove the existence of some decomposition of $\nu$. We fix
some $\pi\in\M(\mu,\nu)$ and for every $k$, we define $\mu_k$ and
$\nu
_k$ as the marginals of $\pi_k:=\pi|_{I_k\times\R}$. Denote by
$\eta
,\upsilon$ the marginals of $\pi_F:=\pi|_{F\times\R}$. The transport
plans $\pi_k$ and $\pi_F$ are martingale transport plans so that $\mu
_k\leqc\nu_k$ and $\eta\leqc\upsilon$.

For the uniqueness part, we take for $i=1,2$ a decomposition $(\nu
^i_k)_k,\upsilon^i$ of $\nu$ such that $\mu_k\leqc\nu^i_k$ and
$\eta
\leqc\upsilon^i$. According to Example~\ref{delta_b}, there exists a
martingale transport plan $\pi^i$ that transports every $\mu_k$ on
$\nu
^i_k$ and $\eta$ on $\upsilon^i$. But the $\mu_k$'s are concentrated on
disjoint intervals so that $\nu^i_k=\proj^y_\#\pi^i|_{I_k\times\R}$
and $\upsilon^i=\proj^y_\#\pi^i|_{F\times\R}$. It follows from Lemma~\ref{Irr1Step} that $\proj^y_\#\pi|_{I_k\times\R}$ does not depend on
the particular choice of $\pi\in\M(\mu,\nu)$. Hence, $\nu^1_k=\nu^2_k$
for every $k$ and $\upsilon^1=\nu-\sum_k \nu^1_k=\upsilon^2$.

Let us now prove the properties listed in the second part of Theorem~\ref{yo1}. We continue to use the notation of the existence part ($\pi,
\pi_k, \pi_F, \mu_k, \nu_k, \eta$ and $\upsilon$). As a
consequence of
Lemma~\ref{Irr1Step} (applied to $\mu,\nu$ and $I_k$), we have the following:
\begin{longlist}[(ii)]
\item[(i)] $\nu_k$ is concentrated on $\bar{I_k}$;

\item[(ii)]$u_{\nu_k}-u_{\mu_k}$ is $0$ on $\R\setminus I_k$ and
$u_\nu
-u_\mu$ on $I_k$.
\end{longlist}

As the $I_k$'s are disjoint, we have
\begin{eqnarray*}
u_{\sum\nu_k}-u_{\sum\mu_k}=\sum_k(u_{\nu_k}-u_{\mu_k})=
\cases{u_\nu-u_\mu,&\quad $\mbox{on $\displaystyle\bigcup
_k I_k$,}$\vspace*{2pt}
\cr
0=u_\nu-u_\mu,&\quad $
\mbox{on $F=\displaystyle\bigcap_k \bar{I_k}$.}$}
\end{eqnarray*}
Hence,
\[
u_\upsilon=u_\nu-u_{\sum\nu_k}=u_\mu-u_{\sum\mu_k}=u_\eta
\]
on the whole real line. Thus, we have $\upsilon=\eta$. The fact that
$(\mu_k, \nu_k)$ is irreducible and $\{u_{\mu_k}<u_{\nu_k}\}=I_k$
follows directly from Definition~\ref{def_irred} and what has been
proved so far. Finally, concerning $\pi$, note that $\pi=(\sum_k \pi
_k)+\pi_F$ where $\pi_k$ has marginals $\mu_k$ and $\nu_k$. As $\pi_F$
is a martingale transport plan from $\eta$ to $\upsilon=\eta$ it is the
identical transport plan $(\id\otimes\id)_\#\eta$. The uniqueness of
the decomposition \eqref{PiDec} follows from the fact that the $\mu
_k$'s are concentrated on disjoint intervals.
\end{pf*}

As a consequence of Theorem~\ref{yo1}, we have the following straightforward corollary:

\begin{cor}[(Reducing the transport problem)]\label{yo2}
Let $\mu,\nu$ be elements of $\mathcal{M}$ and $\mu\leqc\nu, \pi
\in\M(\mu, \nu
)$ with decompositions $(\mu_k)_k,(\nu_k)_k,\eta$, $\pi=(\sum_k
\pi
_k)+(\id\otimes\id)_\#\eta$ as in Theorem~\ref{yo1}.
Let $c$ be a cost function such that the martingale transport problem
satisfies the sufficient integrability condition and leads to finite
costs. Then the transport $\pi$ is optimal if and only if every $\pi_k$
is optimal for the transport problem between $\mu_k$ and $\nu_k$.
\end{cor}



Recall that in Lemma~\ref{LocalGlobal}, the main result of this
section, one is assuming that some particular finitely optimal set
exists for the cost $c$. We will need several times to assume that this
set satisfies some additional properties that we introduce in the next
definition. Recall for the sequel that for a set $G\subseteq\R^2$ we
write $G_x=\{y\dvtx (x,y)\in G\}$ and denote the projections of $G$ by
$X_G$ and $Y_G$, respectively.

\begin{defi}
Let $I$ be an open interval. A set $G$ satisfies the regularity
property on $I$ if $G\subseteq I\times\bar{I}$ and for every
$x\in I$ we have
$G_x=\varnothing$ or $G_x=\{x\}$ {or} $x\in\,]\inf G_x,\sup G_x[$.

A set $G$ satisfies the irreducibility property on $I$ if
$G\subseteq I\times\bar{I}$ and for every $y\in I$ there exist $x\in
I$ and $y^-, y^+\in G_x$ so that $y^-<y< y^+$.
\end{defi}

Note that if $G$ is irreducible on $I$, we can apply this property to
points $y\in I$ close to the boundary of $I$. Therefore, we have
$I=\accentset{\circ}{\Conv(Y_G)}$.

\begin{lem}\label{yo3}
Let $\mu,\nu$ be elements of $\mathcal{P}$ such that $(\mu,\nu)$
is irreducible
with $I=\{u_\mu<u_\nu\}$. Let $c$ be a cost function. Let moreover $G$
be a finitely optimal set and $\pi$ a martingale transport plan with
$\pi(G)=1$. Then there exists a Borel set $G'\subseteq G\cap(I\times
\bar{I})$ that is regular and irreducible on $I$ and such that $\pi
(G')=1$. Moreover, $G'$ is finitely optimal.
\end{lem}


\begin{pf}
Let $G$ and $\pi$ be as in the statement. Since $\pi$ is a martingale
transport plan we find that for $\mu$-almost all $x\in I$
\[
x\in\accentset{\circ} {\Conv(G_x)}\quad \mbox{or}\quad \{x\}= G_x.
\]
Erasing a negligible set if necessary, we can assume that the regularity property is
satisfied on~$I$. Let $G'$ be the resulting set.
Assume by contradiction that $G'$ does not satisfy the irreducibility property on $I$. Hence,
there exists $y\in I$ such that for every $x\in I$, the set $G_x$ is
included in $]{-}\infty,y]$ or in $[y,+\infty[$. By regularity,
$G_x\subseteq\,]{-}\infty,y]$ if $x\leq y$ and $G_x\subseteq[y,+\infty[$
otherwise. Hence, $\pi(]{-}\infty,y]^2\cup[y,+\infty[^2)=1$ so that
$u_\mu
(y)=u_\nu(y)$, according to Lemma~\ref{SepProblems}. But $y\in I=\{
u_\mu
<u_\nu\}$, which yields a contradiction. Therefore, the set $G'$
is regular and irreducible on $I$. Each subset of $G$ is finitely
optimal, hence so is $G'$.
\end{pf}

\subsection{Existence of dual maximizers \texorpdfstring{$\varphi, \psi, \Delta$}{varphi,psi,Delta} on an
irreducible component}\label{DualMaxSection}

In this paragraph, we aim to prove Proposition~\ref{DualMax}. The cost
function $c$, the sets $\Gamma\subseteq\R^2$ and $I$ are fixed
accordingly throughout Sections~\ref{DualMaxSection} and \ref{yaha}.

\begin{pro}\label{DualMax}
Assume that $c\dvtx \R\to\R$ is continuous and
let $\Gamma$ be a finitely optimal set that is regular and irreducible on some open interval $I$.

Then there exist upper semicontinuous functions $\phi\dvtx I\to[-\infty,
\infty[ ,\psi\dvtx J=\Conv(Y_\Gamma) \to[-\infty, \infty[$ and a measurable
function $\Delta\dvtx I\to\R$ such that
\[
\phi(x)+\psi(y) + \Delta(x) (y-x) \leq c(x,y)
\]
for all $x\in I, y\in J$, with equality holding whenever $(x,y)\in
\Gamma$.
\end{pro}

We emphasize that the functions appearing in Proposition~\ref{DualMax}
can be interpreted as a sort of maximizer for the dual problem
described in Section~\ref{dual_pro}.

Throughout Section \ref{DualMaxSection}, we will work under the
assumptions of Proposition~\ref{DualMax}; some preparations will be
necessary to establish the result.

\begin{defi}
Let $\psi$ be a function from a subset of $\R$ into $\R$ and let $G$ be
a subset of $\R\times\R$ such that $\psi$ is defined on $Y_G=\proj^y
(G)$. The function $\psi$ is called \emph{$G$-good} if the following
holds true:

For every $x\in X_G=\proj^x (G)$, there exists an affine function $y
\mapsto a_x(y)$ such that
%
\begin{equation}
a_x(y) \leq-\psi(y)+c(x,y)
\end{equation}
for all $y\in Y_G$ with equality holding true if $y\in G_x=\{y\in\R\dvtx (x,y)\in G\}$.
\end{defi}

Note that the function $a_x $ is uniquely determined if $|G_x|\geq2$.
Clearly, a function $\psi$ is $G$-good if and only if there exist
functions $\phi$, $\Delta$ (defined on some set containing $X_G$)
such that
\[
\phi(x)+\psi(y) + \Delta(x) (y-x) \leq c(x,y)
\]
for all $x\in X_G$ and $y\in Y_G$ with equality being satisfied
whenever $(x,y)\in G$.

Subsequently, we will show that in Proposition~\ref{DualMax} there
exists a $\Gamma$-good function $\psi$. We want to explain already at
this stage that for a given $\Gamma$-good function $\psi$, suitable
functions $\phi$ and $\Delta$ can be defined rather explicitly in terms
of the function $\psi$: Fix $x\in X_\Gamma$. By the regularity property, there exist $y^-,y^+$ with
$y^-< x < y^+, (x,y^-), (x,y^+)\in\Gamma$ and a unique affine function
$a_x $ such that $a_x(y^-)= -\psi(y^-)+c(x,y^-)$ and $a_x(y^+)= -\psi
(y^+)+c(x,y^+)$; moreover, $a_x$ lies below the function $y\mapsto
-\psi(y)+c(x,y)$. Writing $g(\cdot)^{**}$ for the convex hull of a
function $y\mapsto g(y)$, we find further that $a_x(y)$ is also smaller
or equal than $(-\psi(\cdot)+c(x,\cdot))^{**}(y)$, with equality holding true
for all $y\in[y^-,y^+]$.
This implies that $a_x(y)=\phi(x)+\Delta(x)(y-x)$,
where
%
\begin{equation}
\label{FromHull1}\phi(x):= \bigl(-\psi(\cdot)+c(x,\cdot)\bigr)^{**}(x),
\end{equation}
%
and $\Delta(x)$ denotes the derivative of $y\mapsto(-\psi
(\cdot)+c(x,\cdot))^{**}(y)$ at the point $y=x$.


%

The first step toward the existence of a $\Gamma$-good function in
Proposition~\ref{DualMax} is the following auxiliary result.

\begin{lem}\label{itsgood}
Let $G\subseteq\Gamma$ be a finite set. Then there exists a $G$-good function.
\end{lem}

\begin{pf}
As $\Gamma$ is regular, there exists a finite set $\tilde G$,
$G\subseteq\tilde G\subseteq\Gamma$ such that $\tilde G$ is regular.
As a consequence of the regularity property, there exists a probability measure
$\alpha$ which has support $\tilde G$ and is a martingale transport
plan between its marginals, that is, satisfies $\alpha\in\M(\mu_0,
\nu_0)$ for $\mu_0:=\proj^x_\# \alpha, \nu_0:=\proj^y_\# \alpha
$. As
$\Gamma$ is finitely optimal, every competitor of $\alpha$ leads at
least to the same amount of costs as $\alpha$, that is, $\alpha$~is an
optimal martingale measure. By the duality theorem of linear
programming, 
there exist functions $\phi, \Delta\dvtx X_{\tilde G} \to\R,\psi
\dvtx Y_{\tilde
G} \to\R$ such that
\[
\phi(x)+\psi(y) + \Delta(x) (y-x) \leq c(x,y)
\]
for all $(x,y)\in X_{\tilde G}\times Y_{\tilde G}$ with equality
holding for all elements of the set ${\tilde G}$. In particular, $\psi
$ is a $G$-good function.
\end{pf}

The following technical lemma will give us some control over the
variety of different $G$-good functions which can exist for a specified
set $G$.

\begin{lem}\label{BoundedWobble}
Let $G=\{(x_i,y_i^-),(x_i,y_i^+)\dvtx i=1,2\}$, where $y_i^-< x_i< y_i^+$.
Assume that $]y_1^-,y_1^+[\, \cap\,]y_2^-,y_2^+[ \,\neq\varnothing$.
Given bounded intervals $K_1^\pm$ there exist bounded intervals
$K_2^\pm
$ such that the following holds:
If $\psi$ is $G$-good and $\psi(y_1^\pm)\in K_1^\pm$, then $\psi
(y_2^\pm
)\in K_2^\pm$.

Let $G=\{(x_1,y_1^-), (x_1,y_1^+),(x_2,y_2)\}$, where $y_1^-< x_1< y_1^+$.
Assume that $y_2\in\,]y_1^-,y_1^+[$.
Given bounded intervals $K_1^\pm$ there exists a bounded interval $K_2$
such that the following holds:
if $\psi$ is $G$-good and $\psi(y_1^\pm)\in K_1^\pm$, then $\psi
(y_2)\in K_2$.
\end{lem}

\begin{pf}
We will only prove the first part of the lemma, the second is similar.
Moreover, we will assume that $y_1^- < y_2^- < y_2^+
< y_1^+$. If these numbers are ordered in a different way, the argument
can be adapted easily.
Since $\psi$ is $G$-good, there is an affine function $a_{x_1}$ such that
%
\begin{eqnarray}
\label{NM1} a_{x_1}\bigl(y_1^-\bigr)&=& - \psi\bigl(
y_1^-\bigr)+c\bigl(x_1,y_1^-\bigr)
\in-K_1^-+c\bigl(x_1,y_1^-\bigr),
\\
\label{NM2}a_{x_1}\bigl(y_1^+\bigr)&=& - \psi\bigl(
y_1^+\bigr)+c\bigl(x_1,y_1^-\bigr) \in
-K_1^++c\bigl(x_1,y_1^+\bigr),
\\
\label{NM3}a_{x_1}\bigl(y_2^-\bigr) &\leq&- \psi\bigl(
y_2^-\bigr)+c\bigl(x_1,y_2^-\bigr),
\\
\label{NM4}a_{x_1}\bigl(y_2^+\bigr) &\leq&- \psi\bigl(
y_2^+\bigr)+c\bigl(x_1,y_2^+\bigr).
\end{eqnarray}
From \eqref{NM1} and \eqref{NM2}, we have a good control over the
possible positions of the affine function $a_{x_1}$. By \eqref{NM3} and
\eqref{NM4}, 
this translates to a lower bounded for the value of $-\psi(y_2^-) $
[resp., $-\psi(y_2^+) $]. More precisely, we obtain that there exists
a real number $q$ which depends on $K_1^\pm, x_1,y_1^\pm, y_2^\pm$ and
$c$ [but not on the particular values of $\psi(y_2^\pm)$] such that $q
\leq- \psi( y_2^\pm)$.

On the other hand, there exists an affine function $a_{x_2}$ such that
\begin{eqnarray*}
a_{x_2}\bigl(y_1^-\bigr)&\leq&- \psi\bigl(
y_1^-\bigr)+c\bigl(x_1,y_1^-\bigr) \in
-K_1^-+c\bigl(x_2,y_1^-\bigr),
\\
a_{x_2}\bigl(y_1^+\bigr)&\leq&- \psi\bigl(
y_1^+\bigr)+c\bigl(x_1,y_1^-\bigr) \in
-K_1^++c\bigl(x_2,y_1^+\bigr),
\\
a_{x_2}\bigl(y_2^-\bigr)& =&- \psi\bigl( y_2^-
\bigr)+c\bigl(x_2,y_2^-\bigr),
\\
a_{x_2}\bigl(y_2^+\bigr)& =& - \psi\bigl( y_2^+
\bigr)+c\bigl(x_2,y_2^+\bigr).
\end{eqnarray*}
This implies the existence of a constant $p$ such that $p\geq-\psi
(y_2^\pm)$. Summing up, we may choose $K_2^+=K_2^-= [-p,-q]$.
\end{pf}

\begin{lem}\label{CompactnessNonsense}
There exists a $\Gamma$-good function $\psi$.
\end{lem}
\begin{pf} 
In Lemma~\ref{itsgood}, we have already seen that for every finite set
$G\subseteq\Gamma$ there exists a $G$-good function. The idea of the
proof is thus to pass to some sort of limit of these functions. To do
so, we aim to confine (properly chosen) $G$-good functions to a compact
subset of the space $\R^{Y_G}$. The existence of this compact set will
be a consequence of Lemma~\ref{BoundedWobble} and Tychonoff's theorem.

We claim that there exist compact intervals $ (K_y)_{y\in Y_\Gamma}$
such that for any finite set $G\subseteq\Gamma$ there is a $G$-good
function $\psi$ such that $\psi(y)\in K_y$ for $y\in Y_G$.

We give the proof under the assumption that $Y_\Gamma\subseteq I$ is
such that $\Conv(Y_{\Gamma})$ is open [such that $\Conv(Y_{\Gamma
})=I$], the other cases are similar.
The irreducibility and regularity properties imply that for every $y\in I$ there exist
$(x, y^-),(x,y^+)\in\Gamma$ such that $y^-< y < y^+$ and $y^-< x <
y^+$. That is, $I$ is the union of intervals of the form $]y^-, y^+[$,
where $(x, y^-),(x,y^+)\in\Gamma$ and $y^-< x < y^+$.
Using that the set $I$ can be written as a countable union of compact
sets, it is straightforward that there exist sequences $(x_k)_{k\in\N
}$, $(y^-_k)_{k\in\N}$, $(y^+_k)_{k\in\N}$ such that the points
$(x_k,y_k^-)$ and $(x_k,y_k^+)$ are in $\Gamma$, we have $y_k^-< x_k<y_k^+$,
\[
\bigcup_{i=0}^k\, \bigl]y_i^-,y_i^+\bigr[
\cap\bigl]y_{k+1}^-,y_{k+1}^+\bigr[ \neq \varnothing ,\qquad k\in\N\quad
\mbox{and}\quad \bigcup_{k\in\N}\, \bigl]y_k^-,
y_k^+\bigr[ =I.
\]
%
Given an arbitrary set $G$, a $G$-good function $\psi$ and an affine
function $a$, the function $\psi'=\psi-a$ is again a $G$-good function.
Thus, for all finite $G$ satisfying $(x_0,y_0^-),(x_0,y_0^+)\in G$,
there is a $G$-good function $\psi$ such that $\psi(y_0^-)=\psi(y_0^+)
= 0$.
Iterating (the first part of) Lemma~\ref{BoundedWobble} for $k\in\N$
we find the desired intervals $K_y$ for $y\in\{y^-_k,y^+_k\dvtx k \in\N\}$.

For every $y\in Y_\Gamma$, there exist $x\in\R$ and $k\in\N$ such
that $(x,y) \in\Gamma$ and $y\in(y_k^-,y_k^+)$. Hence, (the second
part of) Lemma~\ref{BoundedWobble} yields the existence of the desired
interval $K_y$ for $y\in Y_\Gamma\setminus\{y^-_k,y^+_k\dvtx k \in\N\}$.

We can view the set $\mathcal{K}:=\prod_{y\in Y_\Gamma} K_y$ as a
subset of the space of all functions from $Y_G$ to $\R$. In the
topology of pointwise convergence, the set $\mathcal{K}$ is compact by
Tychonoff's theorem.

For every finite $G\subseteq\Gamma$, the set
\[
\Psi_G:= \{\psi\in\mathcal{K}\dvtx \psi\mbox{ is $G$-good} \}
\]
is a nonempty closed subset of the set $\mathcal{K}$.
Moreover, the family $(\Psi_G)_{G}$ has the finite intersection
property. For instance, given finite sets $G_1, G_2\subseteq\Gamma$
the intersection of $\Psi_{G_1}$ and $\Psi_{G_2}$ contains $\Psi
_{G_1\cup G_2}$ and is therefore nonempty. By compactness of $\mathcal
{K}$, the intersection
\[
\bigcap_{G\subseteq\Gamma, |G|<\infty} \Psi_G=:
\Psi_\Gamma
\]
of all these sets is nonempty as well. Obviously, any element $\psi\in
\Psi_\Gamma$ is $\Gamma$-good.
\end{pf}

\begin{pf*}{Proof of Proposition~\ref{DualMax}}
By Lemma~\ref{CompactnessNonsense}, there exists a $\Gamma$-good
function $\psi$. We have to show that $\psi$ can be replaced by an
upper semicontinuous function and that there exist appropriate
functions $\phi$ and $\Delta$. We start with the latter task.

Recall that we write $J=\Conv(Y_\Gamma)$ and note that $I\subseteq
J\subseteq\overline I$. 

For fixed $x\in X_\Gamma$, consider the function $y\mapsto g_x(y)=
-\psi
(y)+ c(x,y), y\in Y_\Gamma$.
For any $x\in I$, let $g_x^{**}\dvtx \R\to[-\infty, +\infty]$ be the
largest convex function which is smaller than $g_x$ on the set
$Y_\Gamma
$ for $x\in X_\Gamma$ and $g_x^{**}=+\infty$ if $x\in I\setminus
X_\Gamma$. For $x\in X_\Gamma$, there exists an affine function which
is smaller than $g_x$. Hence, $g_x^{**}$ does not take the value
$-\infty$ in this case. 

Since $I= \accentset{\circ}{\Conv(Y_\Gamma)}$ the function $g_x^{**}$
is continuous and finitely valued on the set $J$ for $x\in X_\Gamma$.
As a function on the set $\R$, $g_x^{**}$ may possibly assume the value
$+\infty$.
Moreover, if $x\in I\setminus X_\Gamma$ then $g_x^{**}$ can take the
value $-\infty$.

We now define the function $H\dvtx I\times\R\to[-\infty, \infty]$ by
\[
H(x,y):= \bigl(-\psi(\cdot)+ c(x,\cdot)\bigr)^{**}(y)
\]
and emphasize that $H$ takes finite values on $X_\Gamma\times J$.
Thus, the function $\phi\dvtx I \to[-\infty, \infty[$, defined by
%
\begin{equation}
\label{FromHull}\phi(x):= \bigl(-\psi(\cdot)+c(x,\cdot)\bigr)^{**}(x)=H(x,x)
\end{equation}
takes finite values on the set $X_\Gamma$.

To prove that $\phi$ is upper semicontinuous, consider for $n\in\N$
the function
\[
H_n(x,y):= \bigl(\bigl(-\psi(\cdot)\vee(-n)\bigr)+ c(x,\cdot)
\bigr)^{**}(y).
\]
It is straightforward to prove that $H_n$ is \emph{continuous} on the
set $I\times J$. Thus, $H= \inf_{n\in\N} H_n$ is upper semicontinuous,
and hence $\phi$ is upper semicontinuous as well.

For each $x\in I$, denote by $\Delta(x)$ the right-derivative of the
convex function $y\mapsto H(x,y)$ in the point $x$ if $H(x,x) >-\infty$
and set $\Delta(x)=0$ otherwise. 

%
By construction, we then have
\[
\phi(x)+\psi(y) +\Delta(x) (y-x) \leq c(x,y),
\]
for all $(x,y) \in X_\Gamma\times J$. Moreover, as $\psi$ was assumed
to be $\Gamma$-good, equality holds for all $(x,y)\in\Gamma$. [See the
discussion preceding \eqref{FromHull1}.] 

Next, we define a function $\tilde\psi$ by
\[
\tilde\psi(y) = \inf_{x} c(x,y) -\bigl[ \phi(x)+\Delta(x)
(y-x)\bigr].
\]
For every $x$, the function $y\mapsto c(x,y) -[ \phi(x)+\Delta(x)
(y-x)]$ is continuous, hence $\tilde\psi$ is upper semicontinuous. %
As above, $\phi(x)+\tilde\psi(y) +\Delta(x) (y-x) \leq c(x,y)$ holds
by construction and since $\tilde\psi(y)$ is greater or equal to
$\psi
(y) $ for all $y\in I$ we conclude that the inequality is indeed an
equality on the set $\Gamma$.
\end{pf*}

\subsection{Integrating the duality relation between \texorpdfstring{$\varphi$, $\psi$,
$\Delta$}{varphi,psi,Delta} and $c$ on the irreducible components}\label{yaha}
Section \ref{DualMaxSection} was a first step in the direction of the
proof of Lemma~\ref{LocalGlobal}. Unfortunately, the functions $\phi,
\psi$ constructed in Proposition~\ref{DualMax} are measurable but not
necessarily integrable. The following lemma will provide a remedy for
this. 

\begin{lem}
Let $\chi$ be a convex or concave function on some (possibly unbounded)
interval $I$ and assume that $\mu, \nu$ are in convex order and
concentrated on~$I$. Then
%
\begin{equation}
\label{Quantities}\int \biggl[ \int\chi(y) \,\d\pi _x(y)-\chi(x) \biggr]
\,\d\mu(x)=\int \biggl[ \int\chi(y) \,\d\tilde \pi _x(y)-\chi(x) \biggr]
\,\d\mu(x)
\end{equation}
for all measures $\pi,\tilde\pi\in\M(\mu,\nu)$.
\end{lem}

\begin{pf} We will give the proof in the case where $I=\R$ and $\chi$
convex, the other cases being similar. Note that, leaving integrability
issues aside, the left as well as the right-hand side of \eqref
{Quantities} equal $\int\chi \,\mathrm{d}\nu-\int\chi \,\mathrm{d}\mu$ and in particular we
expect them to be equal. To give a formal proof, we approximate $\chi$
by functions which grow at most linearly so that all involved integrals
do exist.

Denote by $\chi_n$ the smallest convex function which agrees with
$\chi
$ on the interval $[-n,n]$. (So $\chi_n$ is affine on the complement of
$[-n,n]$.)
We have to show that for each $\pi\in\M(\mu, \nu)$.
\[
\int \biggl[ \underbrace{\int\chi(y) \,\d\pi_x(y)-\chi
(x)}_{=:f(x)} \biggr] \,\d \mu(x)=\lim_n\int \biggl[
\underbrace{ \int\chi_n(y) \,\d\pi _x(y)-\chi
_n(x)}_{=:f_n(x)} \biggr] \,\d\mu(x)
\]
Applying Jensen's inequality to the functions $\chi, \chi_n$, we see
that $f, f_n\geq0$ and applying Jensen's inequality to the convex
function $\chi_n-\chi_m$, we see that $f_n\leq f_m$ for $n\leq m$.
Hence, the desired equality follows from the monotone convergence theorem.
\end{pf}
As a consequence of this lemma, the following definition is unambiguous.

\begin{defi} 
Assume that $\phi,\psi$ are measurable functions and that $\mu, \nu$
are in convex order. Let $\chi$ be a convex\footnote{Of course, the
assertion is also true in the case where $\chi$ is concave, but we do
not need this.} function such that $\phi_0=\phi+\chi$, $\psi_0=\psi
-\chi
$ are uniformly bounded.
Then we set
\[
\int\phi\,\d\mu+ \int\psi\,\d\nu:= \int\phi_0 \,\d\mu+ \int\psi
_0 \,\d \nu+\int \biggl[ \int\chi(y) \,\d\pi_x(y)-\chi(x)
\biggr] \,\d\mu(x),
\]
where $\pi$ is some martingale transport plan.
\end{defi}

\begin{cor}\label{SimpleDualForBadFunctions}
Assume that we are given measurable functions $\phi,\psi, \Delta$
and a
convex function $\chi$ such that
%
\begin{equation}
\label{SmallerAgain}\phi(x)+\psi(y) + \Delta(x) (y-x) \leq c(x,y)
\end{equation}
for all $x,y\in I$ and such that $\phi$ and $-\psi$ differ from $\chi$
only by some bounded functions.
Then we have
\[
\int\phi\,\d\mu+\int\psi\,\d\nu\leq\int c \,\d\pi
\]
for any martingale transport plan $\pi$. Furthermore, if equality holds
$\pi$-a.s. in \eqref{SmallerAgain}, then $\int\phi\,\d\mu+\int
\psi\,\d
\nu= \int c \,\d\pi$.
\end{cor}

We are now finally in the position to establish the main result of this section.

\subsection{Proof of Lemma \texorpdfstring{\protect\ref{LocalGlobal}}{A.2}}
We will first give the proof assuming that $(\mu,\nu)$ is irreducible
on the open interval $I$ (bounded or not). According to Lemma~\ref
{yo3}, we may assume that the finitely optimal set $\Gamma$ is included
in $I\times\bar{I}$ and is regular and irreducible on $I$. It follows
from Proposition~\ref{DualMax} that there exist upper semi-continuous
functions $\phi,\psi\dvtx I\to\,]{-}\infty, \infty]$ and a measurable function
$\Delta\dvtx I\to\R$ such that
\[
\phi(x)+\psi(y) + \Delta(x) (y-x) \leq c(x,y)
\]
for all $x,y\in I$, with equality holding for $(x,y)$ in $\Gamma$.
Recall that the function $\psi$ constructed in Proposition~\ref
{DualMax} is of the form
\[
\inf_{x} c(x,y) -\bigl[ \phi(x)+\Delta(x) (y-x)\bigr].
\]
This leads us to define the convex function $\chi\dvtx I\to\R$ by
\[
\chi(y) = \sup_x \phi(x)+\Delta(x) (y-x).
\]
Since $c$ is assumed to be bounded, it follows that $\psi$ differs
from $-\chi$ only by a bounded function (i.e., $\psi+\chi$ is
bounded). Replacing $\phi$ by
\[
\bigl(-\psi(\cdot)+c(x,\cdot)\bigr)^{**}(x),
\]
it follows also that $\phi$ differs from $\chi$ only by a bounded
function (i.e., $\phi-\chi$ is bounded). Thus, Corollary~\ref
{SimpleDualForBadFunctions} implies that $\pi$ is an optimal transport plan.

Consider now the general case and the decomposition $\pi=(\sum_k \pi
_k)+\eta$ of Theorem~\ref{yo1}, \eqref{PiDec}, where $(\proj^x \pi
_k,\proj^y \pi_k)=:(\mu_k,\nu_k)$ is irreducible. 
But $\Gamma$ has full measure for $\pi_k$ [if not $\pi(\Gamma)$ would
be smaller than $1$] and it is finitely optimal for the cost $c$.
According to the first part of the proof, $\pi_k$ is an optimal
martingale transport plan from $\mu_k$ to $\nu_k$. By Theorem~\ref
{yo2}, $\pi$ is optimal and this completes the proof of Lemma~\ref
{LocalGlobal}.

\section{A self-contained approach to the variational lemma}\label{appB}

In this appendix, we provide a self-contained proof of the variational
lemma (Lemma~\ref{GlobalLocal}, established in Section~\ref
{sec:gamma}). Indeed, we obtain a somewhat stronger conclusion in
Theorem~\ref{cyclic} below. The benefit of this second version is that
Theorem~\ref{cyclic} does not rely on the Choquet's capacability
theorem and that the new approach provides an explicit set $\Gamma$. A
drawback is that we have to assume that the cost function is
continuous. Compared to the approach given in Section~\ref{sec:gamma},
another disadvantage is that the argument does not seem to be adaptable
from $\R\times\R$ to more general product spaces.

\subsection{Preliminaries based on Lebesgue's density theorem}
Our aim is to establish Corollary~\ref{utile} which may be viewed as an
avatar of Lemma~\ref{AccumulatingBadPoints}, the uncountable set of
points $a$ being replaced by a set $A$ of positive measure. We start
with the well-known Lebesgue density theorem. It asserts that for an
integrable function $f$ on $[0,1]$ we have
%
\begin{equation}
\lim_{\varepsilon\to0} \frac{1}{2\varepsilon}\int_{s-\varepsilon
}^{s+\varepsilon}
\bigl\llvert f(s)-f(t)\bigr\rrvert \,\dd t=0
\end{equation}
for almost every $s\in\,]0,1[$. In sloppy language, almost every point
is a ``good'' point. Those points will be called \emph{regular points}
of $f$.
In those regular points $s$, we also have
%
\begin{equation}
\label{Lg2} \lim_{n \to+\infty} \frac{1}{\lambda(M_n)}\int
_{M_n}\bigl\llvert f(s)-f(t)\bigr\rrvert \,\dd t=0
\end{equation}
for every sequence $(M_n)$ of measurable sets satisfying $M_n\subset
[s-\varepsilon_n,s+\varepsilon_n]$ with $\frac{\lambda
(M_n)}{\varepsilon
_n}$ bounded from below and $\varepsilon_n\to0$. Particular admissible
choices are $M_n=[s,b_n]$ or $]s,b_n]$ and $M_n=[a_n,s]$ or $[a_n,s[$.
As a consequence of (\ref{Lg2}), we have that
%
\begin{equation}
\lim_{n \to+\infty} \frac{1}{\lambda(M_n)}\int_{M_n}f(t)
\,\dd t=f(s).
\end{equation}

Intervals $B=\, ]q,q']$ or $]{-}\infty,q' ]$ with $q,q'\in\Q\cup\{
-\infty
,+\infty\}$ will be called
\textit{rational semiopen intervals}. By Fubini's theorem, \eqref{Lg2}
implies the following result.

\begin{lem}\label{littlelemma}
Let $\pi$ be a probability measure on $\R\times\R$ with first marginal
$\lambda_{[0,1]}$. Fix a disintegration $(\pi_x)_{x\in[0,1]}$. There
exists a set $R\subset[0,1], \lambda(R)=1$ such that for $s\in R$, any
rational semiopen interval $B$ and any two sequences $(a_n)_n, (b_n)_n$
satisfying $a_n,b_n\to s$ as well as $a_n\leq s< b_n$ or $a_n< s\leq
b_n$, we have
\[
\lim_{n \to+\infty}\frac{1}{b_n-a_n}\int_{a_n}^{b_n}
\bigl\llvert \pi _t(B)-\pi _s(B)\bigr\rrvert \,\dd
\lambda(t)=0.
\]
\end{lem}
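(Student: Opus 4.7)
The plan is to apply Lebesgue's density theorem a countable number of times and then handle each sequence by splitting the interval at $s$ and invoking the one-sided version \eqref{Lg2} from the preceding paragraph.

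First, I would enumerate the rational semi-open intervals as $(B_k)_{k\in\N}$, a countable family. For each $k$, set $f_k\colon [0,1]\to[0,1]$, $f_k(t)=\pi_t(B_k)$. This function is Borel measurable (by the defining property of the disintegration, $\int_A f_k\,\dd\lambda=\pi(A\times B_k)$ for every Borel $A$) and bounded, hence belongs to $L^1([0,1],\lambda)$. Let $R_k$ be the set of regular points of $f_k$, so $\lambda(R_k)=1$ by Lebesgue's density theorem. Define
$$R = \bigcap_{k\in\N} R_k;$$
as a countable intersection of full-measure sets, $\lambda(R)=1$.

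Next, I would fix $s\in R$, a rational semi-open interval $B=B_k$, and sequences $(a_n),(b_n)$ with $a_n,b_n\to s$ satisfying, say, $a_n\leq s<b_n$ (the other case is symmetric). Splitting the integral at $s$ gives
$$\frac{1}{b_n-a_n}\int_{a_n}^{b_n}\!|\pi_t(B)-\pi_s(B)|\,\dd\lambda(t)=\frac{s-a_n}{b_n-a_n}\,I_n^- + \frac{b_n-s}{b_n-a_n}\,I_n^+,$$
where $I_n^-$ and $I_n^+$ denote the averages of $|f_k-f_k(s)|$ over $[a_n,s]$ and $[s,b_n]$ respectively, with the convention $I_n^-=0$ when $a_n=s$ (the corresponding weight also vanishes).

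Then I would invoke \eqref{Lg2} at the regular point $s$ twice. Taking $M_n=[a_n,s]$ with $\varepsilon_n=s-a_n$, one has $M_n\subset[s-\varepsilon_n,s+\varepsilon_n]$ and $\lambda(M_n)/\varepsilon_n=1$, so $I_n^-\to 0$; the choice $M_n=[s,b_n]$, $\varepsilon_n=b_n-s$ gives $I_n^+\to 0$. Since the two weights are non-negative and sum to $1$, the convex combination tends to $0$, as required.

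The argument is essentially accounting on top of the classical density theorem. The only mild subtlety is that $a_n\to s^-$ and $b_n\to s^+$ need not approach at comparable rates, which prevents a direct use of the two-sided density statement; the split above circumvents this by applying the one-sided version \eqref{Lg2} separately on each side of $s$. Countability of the family of rational semi-open intervals is what allows the null sets accumulated over all $B$'s to be discarded in a single set $R$.
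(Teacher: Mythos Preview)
Your proof is correct and is precisely the argument the paper has in mind; the paper's own proof is the single line ``By Fubini's theorem, \eqref{Lg2} implies the following result,'' and you have simply unpacked it: enumerate the countable family of rational semi-open intervals, apply the density theorem to each $t\mapsto \pi_t(B_k)$, intersect the full-measure sets, and then split $[a_n,b_n]$ at $s$ to reduce to the one-sided choices $M_n=[a_n,s]$ and $M_n=[s,b_n]$ that the paper already singled out as admissible in \eqref{Lg2}. The only cosmetic remark is that when infinitely many $a_n$ equal $s$ and infinitely many are strictly smaller, your appeal to \eqref{Lg2} for $I_n^-$ is along the latter subsequence; this is harmless since on the former the weight vanishes, but you may want to say so explicitly.
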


We now extend this lemma to the case where the first marginal of $\pi$
is a general measure $\mu$, not necessarily equal to $\lambda
|_{[0,1]}$. Recall from Section \ref{summary} that $G_\mu$ denotes
the quantile function of $\mu$ and $F_\mu$ the cumulative distribution
function. See Figure~\ref{cumulative} for the graphs of $F_\mu$ and
$G_\mu$ in an example: Here, $\mu$ satisfies $\mu(\{1\})=1/3$ and is
uniform of mass $2/3$ on $[0,1]\cup[2,3]$ (the axis are not scaled in
the same way). Recall that the measure $\mu$ can be written as $(G_\mu
)_\#\lambda$.

%
\begin{figure}

\includegraphics{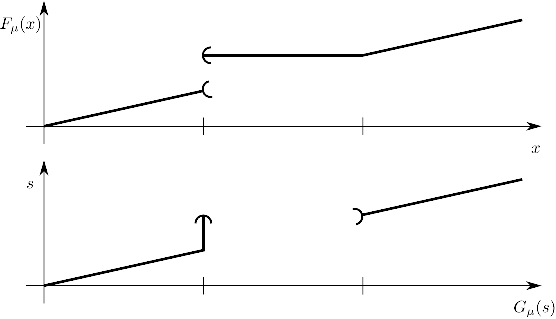}

\caption{The quantile and cumulative distribution functions.}\label
{cumulative}
\end{figure}

%

The map $G_\mu$ is increasing on $[0,1]$, and hence continuous on the
complement of a countable set $D$, the set of $s\in[0,1]$ such that
$F_\mu^{-1}(s)$ is a nontrivial interval. For such a $s\in D$, the
$\mu
$-measure of $F_\mu^{-1}(s)$ is zero so that $\mu(G_\mu(D))\leq \mu
(F_\mu^{-1}(D))=0$.

Consider a random variable $(U,G_\mu(U),Y)$ on $[0,1]\times\R\times
\R$
such that the law of $U$ is $\lambda$ and the law of $(G_\mu(U),Y)$ is
$\pi$. Let $\tilde{\pi}$ be the law of $(U,Y)$ and $(\tilde{\pi
}_s)_{s\in[0,1]}$ a disintegration with respect to $\lambda$, that
is,
$\tilde{\pi}_s$ is the conditional law of $Y$ given the event $\{U=s\}
$. Apply Lemma~\ref{littlelemma} to this disintegration of $\tilde\pi$
to obtain a set $R$. Let $S\subset\R$ be the set $G_\mu(R\setminus D)$
and let us call $S$ the \emph{set of regular points}.

Note that $S$ has full measure and that it may depend on the
disintegration of $\tilde\pi$.

\begin{lem}\label{biglemma}
Let $\pi$ be a probability measure on $\R^2$ with first marginal $\mu$
and $(\pi_x)_{x\in\R}$ a disintegration of $\pi$. There exists a set
$S\subset\R$ of measure $\mu(S)=1$ satisfying the following: for any
$x\in S$ and any rational semiopen interval $B$ the limit
\[
\lim_{n \to+\infty}\frac{1}{\mu(N_n)}\int_{N_n}
\bigl\llvert \pi_t(B)-\pi _x(B)\bigr\rrvert \,\dd\mu(t)
\]
is zero for any sequence $N_n=[x-\varepsilon_n,x+\varepsilon_n]$ with
$\eps_n\downarrow0$. 
\end{lem}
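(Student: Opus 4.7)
The plan is to pull the assertion back to the uniform setting of Lemma \ref{littlelemma} through the quantile change of variables $G_\mu$ prepared just before the statement. First I would apply Lemma \ref{littlelemma} to the disintegration $(\tilde\pi_s)_{s\in[0,1]}$ of $\tilde\pi$ to obtain a set $R\subset[0,1]$ of full Lebesgue measure. Writing $D$ for the countable set of jumps of $G_\mu$, I would then take $S:=G_\mu(R\setminus D)$ as the candidate set of regular points; since $(G_\mu)_\#\lambda=\mu$ and $\mu(G_\mu(D))=\lambda(D)=0$, one has $\mu(S)=1$.

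The heart of the argument will be the change-of-variables identity $\int_{N_n} f\, d\mu = \int_{G_\mu^{-1}(N_n)} f\circ G_\mu\, d\lambda$ applied to $f(t)=|\pi_t(B)-\pi_x(B)|$. For a given $x\in S$, pick $s\in R\setminus D$ with $G_\mu(s)=x$ and set $M_n:=G_\mu^{-1}(N_n)$: by monotonicity of $G_\mu$, $M_n$ is an interval of Lebesgue measure $\mu(N_n)$ containing $s$, and since $\tilde\pi_u=\pi_{G_\mu(u)}$ for $\lambda$-almost every $u$, the quotient to be estimated rewrites as
$$
\frac{1}{\lambda(M_n)} \int_{M_n} |\tilde\pi_u(B)-\tilde\pi_s(B)| \, d\lambda(u).
$$
When $\mu(\{x\})=0$, continuity of $G_\mu$ at $s$ together with the absence of any plateau at height $x$ force the endpoints of $M_n$ to tend to $s$, and Lemma \ref{littlelemma} applies directly. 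When $\mu(\{x\})>0$, the plateau $G_\mu^{-1}(\{x\})=[s_a,s_b]$ sits inside $M_n$ and $\tilde\pi_u$ equals $\pi_x$ throughout it, so the integrand vanishes on $[s_a,s_b]$; the numerator is then bounded by $2\lambda(M_n\setminus[s_a,s_b])=2\mu(N_n\setminus\{x\})\to 0$ while the denominator stays bounded below by $\mu(\{x\})>0$.

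For the one-sided assertion with $N_n=\,]x,x+\eps_n]$ or $N_n=[x-\eps_n,x[$, the hypothesis $\mu(\{x\})=0$ is exactly what will ensure $G_\mu^{-1}(\{x\})$ reduces to the singleton $\{s\}$, so that $M_n$ is an interval sitting strictly to one side of $s$ with an endpoint at $s$, matching the admissibility clause $a_n\leq s<b_n$ or $a_n<s\leq b_n$ of Lemma \ref{littlelemma}. The main technical nuisance throughout is matching the half-open endpoint types between $N_n$ and its preimage $M_n$ so that $\mu(N_n)=\lambda(M_n)$ holds exactly; any residual mismatch will sit in boundary singletons of $\lambda$-measure zero and therefore will not affect the limits.
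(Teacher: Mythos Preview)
Your argument is the paper's own: pull back through $G_\mu$, apply Lemma \ref{littlelemma} at non-atoms, and handle atoms by the crude bound $|\pi_t(B)-\pi_x(B)|\le 2$ together with $\mu(N_n\setminus\{x\})/\mu(N_n)\to 0$. The one step deserving a word of care is the identity $\tilde\pi_u=\pi_{G_\mu(u)}$ for $\lambda$-a.e.\ $u$: it is not automatic from the preamble, since the coupling $(U,Y)$ is only constrained by $\law(U)=\lambda$ and $\law(G_\mu(U),Y)=\pi$, and on a plateau $G_\mu^{-1}(\{x\})$ this pins down only the \emph{average} of $\tilde\pi_u$, not its pointwise value. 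The paper sidesteps this by first observing that the conclusion for one disintegration transfers to any other and then \emph{defining} a convenient $(\pi_x)$ from $(\tilde\pi_s)$; your route works just as well once you specify the coupling so that $Y$ is conditionally independent of $U$ given $G_\mu(U)$, after which your claimed identity holds and both cases go through exactly as you wrote.
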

\begin{pf}
We note that if the statement of the lemma holds for one particular
disintegration of $\pi$, then it automatically carries over to any
other disintegration.

Therefore, we will consider a disintegration of $\pi$ which is
convenient for the proof.
Let $\tilde\pi$ and $S$ be as in the discussion preceding Lemma~\ref
{biglemma} and set for $x\in\R$
%
\begin{eqnarray}
\pi_x= \cases{ \tilde{\pi}_{F_\mu(x)},&\quad $\mbox{if }\mu\bigl(\{x\}
\bigr)=0$,\vspace*{2pt}
\cr
\displaystyle\frac{1}{\mu(\{x\})}\int_{G_\mu^{-1}(\{x\})}\tilde{
\pi}_s \,\dd s,&\quad $\mbox{if }\mu\bigl(\{x\}\bigr)>0$.}
\end{eqnarray}
Let $x$ be a point in $S$ and $N_n=[x-\eps_n,x+\eps_n]$. To prove that
the limit is zero, we distinguish two cases depending on whether or not
$x$ is an atom of $\mu$. The first case is quite straightforward. In
the second case, we will apply Lemma~\ref{littlelemma}.
\begin{itemize}
\item Assume $\mu(\{x\})>0$. As $\bigcap_{n\in\N} N_n=\{x\}$ we have
$\mu(N_n)\downarrow\mu(\{x\})$ as $\eps_n\to0$. Hence,
\begin{eqnarray*}
\frac{1}{\mu(N_n)}\int_{N_n}\bigl\llvert
\pi_t(B)-\pi_x(B)\bigr\rrvert \,\dd\mu (t)&= &
\frac{1}{\mu(N_n)}\int_{\{x\}}\bigl\llvert \pi_t(B)-
\pi_x(B)\bigr\rrvert \,\dd \mu(t)
\\
&&{}+ \frac{1}{\mu(N_n)}\int_{N_n\setminus\{x\}}\bigl\llvert
\pi_t(B)-\pi _x(B)\bigr\rrvert \,\dd\mu(t).
\end{eqnarray*}
The first part of the sum equals $0$ and the second part tends to $0$
since $\llvert \pi_t(B)-\pi_x(B)\rrvert \leq2$ and $[\mu(N_n)-\mu
(x)]/\mu
(N_n)\downarrow0$ as $\eps_n\to0$.

\item Assume $\mu(\{x\})=0$. As $x\in S=G_\mu(R\setminus D)$ there
exists a regular $s_0$ [w.r.t. the disintegration $(\tilde{\pi}_s)_s$]
such that $x=G_\mu(s_0)$ and $G_\mu$ is continuous in $s_0$. As $x$ is
in the interior of $N_n$, $s_0$ is in the interior of $M_n:=G_\mu
^{-1}(N_n)$. Hence, $\lambda(M_n)=\mu(N_n)$ is positive.

We can separate the push-forward measure $\mu=(G_\mu)_\#\lambda$ into
its atomic and its continuous part and integrate accordingly, and thus obtain
%
\begin{eqnarray}
\label{eq}
&&
\frac{1}{\mu(N_n)}\int_{N_n}\bigl
\llvert \pi_t(B)-\pi_x(B)\bigr\rrvert \,\dd\mu(t)\nonumber\\
&&\qquad=
\frac{1}{\lambda(M_n)}\int_{M_n}\bigl\llvert \pi_{G_\mu(s)}(B)-
\pi _x(B)\bigr\rrvert \,\dd\lambda(s)
\\
&&\qquad\leq\frac{1}{\lambda(M_n)}\int_{M_n}\bigl\llvert \tilde{
\pi}_s(B)-\tilde {\pi }_{s_0}(B)\bigr\rrvert \,\dd\lambda(s).\nonumber
\end{eqnarray}
Here, we used the following properties: (i) if $\mu(\{t\})>0$: Jensen's
inequality for the integration on $\{s\dvtx G_\mu(s)=t\}$, (ii) if $\mu(\{
t\}
)=0$: $G_\mu(s)=t$ implies that $F_\mu(t)=s$ or $s$ is a discontinuity
point of $G_\mu$, so that $F_\mu(t)=s$ almost surely.

But $\lambda(M_n)=\mu(N_n)\to\mu(\{x\})=0$ as $n$ tends to infinity.
Note also that $M_n$ is an interval because $G_\mu$ is nondecreasing.
Hence, we can apply Lemma~\ref{littlelemma} [with the point $s_0$, the
disintegration $(\tilde{\pi}_s)_s$ and the sequence $M_n$] to equation~\eqref{eq}.
Summing up, we obtain that the limit equals zero as required.\quad\qed
\end{itemize}
\noqed\end{pf}

We remark that for $\pi\in\mathcal{P}(\R^2)$, if $y\in\Spt(\pi
_x)$, it is not
always true that $(x,y)\in\Spt(\pi)$. We have introduced $S$ in order
to obtain this conclusion for $x\in S$. More precisely, we have the following.

\begin{cor} \label{utile}
Let $S$ be a set of regular points associated to $(\pi_x)_x$ as in
Lemma~\ref{biglemma} and let $x\in S$. Let $B_1,\ldots, B_k$ be a
family of pairwise disjoint rational semiopen intervals such that $\pi
_x(B_j)>0$ for $j=1,\ldots,k$.

For every $\eps>0$, there exists $A\subset S\cap[x-\varepsilon
,x+\varepsilon]$ such that $\mu(A)>0$ and $\pi_t(B_j)>0$ for
$(j,t)\in\{
1,\ldots,k\}\times A$.
\end{cor}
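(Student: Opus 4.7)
The plan is to combine Lemma \ref{biglemma} with a routine Markov-inequality estimate and a finite union bound over the $k$ intervals $B_j$. First, I would set $m := \min_{1\leq j\leq k} \pi_x(B_j)>0$ and pick a sequence of intervals $N_n$ shrinking to $\{x\}$, chosen among those for which Lemma \ref{biglemma} applies: the symmetric intervals $[x-\eps_n,x+\eps_n]$ in the general case, or, when $\mu(\{x\})=0$, the one-sided intervals $\,]x,x+\eps_n]$ resp.\ $[x-\eps_n,x[\,$ for the ``moreover'' part. By truncating I may assume $N_n \subseteq [x-\eps,x+\eps]$, and since $x\in S$ each $N_n$ has positive $\mu$-mass.

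Next, I would apply Lemma \ref{biglemma} simultaneously to each of the finitely many semi-open rational intervals $B_1,\ldots,B_k$: for $n$ large enough, every one of the $k$ averages
$$\frac{1}{\mu(N_n)}\int_{N_n}\bigl|\pi_t(B_j)-\pi_x(B_j)\bigr|\,\d\mu(t)$$
lies below the threshold $\eta := m/(4k)$. Markov's inequality applied to the nonnegative integrand, at the level $\pi_x(B_j)/2$, then forces the exceptional set
$$A_j := \{\,t\in N_n : \pi_t(B_j)\leq \pi_x(B_j)/2\,\}$$
to satisfy $\mu(A_j)\leq 2\eta\,\mu(N_n)/\pi_x(B_j)\leq \mu(N_n)/(2k)$.

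Summing these bounds gives $\mu\bigl(\bigcup_j A_j\bigr)\leq \mu(N_n)/2$, so that $A := S\cap\bigl(N_n\setminus \bigcup_j A_j\bigr)$ has $\mu$-measure at least $\mu(N_n)/2>0$ (using $\mu(S)=1$). By construction every $t\in A$ is regular and satisfies $\pi_t(B_j)>\pi_x(B_j)/2>0$ for each $j$, which is the desired conclusion (the symbol $R$ in the statement is here read as $S$, apparently a typo).

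The only genuine subtlety — really bookkeeping rather than an obstacle — is calibrating $\eta$ relative to $k$ and $m$ so that the $k$ bad sets $A_j$ cannot collectively swallow $N_n$; this is why the factor $1/(4k)$ appears. Everything else is an immediate consequence of the Lebesgue-differentiation content already established in Lemma \ref{biglemma}, applied termwise to the finite family $B_1,\ldots,B_k$.
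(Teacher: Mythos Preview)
Your proof is correct and follows essentially the same route as the paper: apply Lemma \ref{biglemma} to each $B_j$, use a Markov-type estimate at the level $\pi_x(B_j)/2$ to bound the $\mu$-measure of the ``bad'' set in $N_n$, and take a finite union over $j$. The paper phrases the bound as $o(\mu(N_n))$ rather than fixing an explicit threshold $\eta=m/(4k)$, but the content is identical; your observation that the symbol $R$ in the statement should read $S$ is also correct.
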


\begin{pf}
Let $\pi, x, \varepsilon$ and the sets $B_j$ be given. Let $(\eps_n)_n$
be a decreasing sequence of positive numbers tending to $0$. For every
$j$, we have
\[
\lim_{n\to+\infty}\frac{1}{\mu(N_n)}\int_{N_n}
\bigl\llvert \pi _x(B_j)-\pi _t(B_j)
\bigr\rrvert \,\dd\mu(t)=0,
\]
where $N_n$ is $[x-\eps_n,x+\eps_n]$ or, in the case $\mu(\{x\})=0$,
one of the intervals $]x,x+\eps_n]$, respectively, $[x-\eps_n,x[$. This implies
\[
\mu\bigl( \bigl\{t\in N_n\dvtx \bigl|\pi_x(B_k)-
\pi_t(B_k)\bigr|>\pi_x(B_k)/2 \bigr\}
\bigr)=o\bigl(\mu(N_n)\bigr).
\]
Therefore,
\[
\mu\bigl( \bigl\{t\in N_n\dvtx \exists j\in\{1,\ldots,k\},\bigl |
\pi_x(B_k)-\pi _t(B_k)\bigr|>
\pi_x(B_k)/2 \bigr\}\bigr)=o\bigl(\mu(N_n)
\bigr)
\]
and
\[
\mu\bigl( \bigl\{t\in N_n\dvtx \exists j\in\{1,\ldots,k\},
\pi_t(B_k)=0 \bigr\} \bigr)=o\bigl(\mu(N_n)
\bigr).
\]
Hence, for $n$ sufficiently large the set
\[
A= \bigl\{t\in N_n\dvtx \forall j\in\{1,\ldots,k\},
\pi_t(B_k)>0 \bigr\}
\]
has positive measure. For $n$ large enough, we also have $\eps
_n<\varepsilon$, which completes the proof.
\end{pf}

\subsection{Construction of a better competitor when \texorpdfstring{$\Gamma$}{Gamma} supports
a finite nonoptimal coupling}

Let $\V$ be the set of signed measures $\sigma$ on $\R^2$ with Hahn
decomposition $\sigma=\sigma^+-\sigma^-$ such that the following
conditions are satisfied:
\begin{itemize}
\item The total mass of $\sigma$ is $0$.
\item The marginals $\proj^x_\#\sigma$ and $\proj^y_\#\sigma$ vanish
identically.
\item The measure $\proj^y_\#(|\sigma|)=\proj^y_\#\sigma^++\proj
^y_\#
\sigma^-$ has finite first moment.
\item$\sigma$ has a disintegration $(\sigma_x)_x$ such that $\proj
^x_\#
|\sigma|$-a.s., the positive and the negative parts of $\sigma_x$ have
the same mean.
\end{itemize}
If only the three first conditions are satisfied, $\sigma$ will be an
element of $\V'$.

Here, the letter $\V$ is reminiscent to the term \emph{variation}.
Indeed, observe that if $\alpha$ is a positive measure on $\R^2$ such
that $\proj^y_\#\alpha$ has finite first moment and $\beta=\alpha
-\sigma
$ is a positive measure, then $\beta$ is a competitor of $\alpha$ in
the sense of Definition~\ref{def:competitor}. Conversely, for a pair of
competitors $(\alpha,\beta)$, the measures $\alpha-\beta$ and
$\beta
-\alpha$ are elements of $\V$. A notable element of $\V$ is $(\delta
_x-\delta_{x'})\otimes(\lambda\delta_{y^+}+(1-\lambda)\delta
_{y-}-\delta_{\lambda y^++(1-\lambda) y^-})$, the kind of measure that
we have used repeatedly in Sections~\ref{sec:optimal} and \ref
{sec:general}. An element of $\V$ will be called a \emph{variation}. A
variation $\sigma$ is \emph{positive} (resp., \emph{negative}) if
$\int c(x,y) \,\dd\sigma(x,y)> 0$ (resp., $< 0$).

For a cost function satisfying the sufficient integrability condition,
it is not difficult to prove that the following statements are equivalent:
\begin{longlist}[(1)]
\item[(1)] The martingale transport plan $\alpha$ is optimal for the cost $c$.
\item[(2)] For any $\sigma\in\V$ such that $\sigma^+\leq\alpha$, one has
$\int c(x,y) \,\d\sigma(x,y)\leq0$.
\end{longlist}

We can now state the main result of this appendix.

\begin{them}\label{cyclic}
Assume that $\mu, \nu$ are probability measures in convex order and
that $c\dvtx \R^2\to\R$ is a continuous cost function satisfying the
sufficient integrability condition. Assume that $\pi\in\M(\mu,\nu
)$ is
an optimal martingale transport plan which leads to finite costs. Let
$(\pi_x)_x $ be a disintegration of $\pi$ and $S\subset\R$ a set of
regular points associated to $(\pi_x)_x$ in the sense of Lemma~\ref
{biglemma}. We set
\[
\Gamma=\bigl\{(x,y)\in\R^2\dvtx x\in S\mbox{ and }y\in\Spt(
\pi_x)\bigr\}.
\]
If $\alpha$ is a martingale transport plan such that:
\begin{itemize}
\item the support $\Spt(\alpha)$ of $\alpha$ is finite and
\item the support $\Spt(\alpha)$ is included in $\Gamma$,
\end{itemize}
then the martingale transport plan $\alpha$ is optimal for $c$ between
$\proj^x_\#\alpha$ and $\proj^y_\#\alpha$.

Furthermore, if $\sigma$ is a measure of finite support in $\V$ with
$\Spt(\sigma^+)\subset\Gamma$, it is a nonpositive variation.
\end{them}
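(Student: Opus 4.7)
The strategy is to first reduce the optimality statement for $\alpha$ to the furthermore statement about finite-support variations in $\mathcal{V}$, and then prove the latter by contradiction via a smearing construction that turns $\sigma$ into a variation of $\pi$ itself.

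\emph{Reduction.} If $\alpha$ is a finitely supported martingale transport plan with $\Spt(\alpha) \subseteq \Gamma$ and $\alpha'$ is any competitor of $\alpha$ in the sense of Definition \ref{def:competitor}, then $\sigma := \alpha - \alpha'$ has vanishing $x$- and $y$-marginals and satisfies the fiberwise equal-conditional-mean condition (because $\alpha$ and $\alpha'$ share both marginals and all conditional means). Moreover $\Spt(\sigma^+) \subseteq \Spt(\alpha) \subseteq \Gamma$. Hence $\sigma \in \mathcal{V}$, and the furthermore statement would yield $\int c\,d\sigma \leq 0$, i.e.\ $\int c\,d\alpha \leq \int c\,d\alpha'$.

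\emph{Proof of the furthermore by contradiction.} Suppose $\sigma \in \mathcal{V}$ has finite support, $\Spt(\sigma^+) \subseteq \Gamma$ and $\int c\,d\sigma > 0$. Decompose $\sigma = \sum_{i=1}^n \delta_{x_i}\otimes m_i$, where each $m_i$ is a finitely supported signed measure of mass zero and mean zero, and $\sum_i m_i = 0$ as signed measures on $\R$. The hypothesis $\Spt(\sigma^+) \subseteq \Gamma$ forces $x_i \in S$ and $\Spt(m_i^+) \subseteq \Spt(\pi_{x_i})$ for every $i$. The goal is to construct a signed measure $\tau$ on $\R^2$ satisfying $\tau \in \mathcal{V}$, $\tau^+ \leq \pi$ and $\int c\,d\tau > 0$. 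Once such $\tau$ is in hand, $\pi - \tau$ is a non-negative element of $\M(\mu,\nu)$ with cost strictly less than that of $\pi$, contradicting optimality.

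\emph{Smearing construction.} For each $i$ and each $y \in \Spt(m_i^+)$, choose a rational semi-open interval $B_{i,y}$ containing $y$, pairwise disjoint for distinct $y$ at fixed $i$, and of small diameter. Corollary \ref{utile} applied at the regular point $x_i$ yields a measurable set $A_i$ arbitrarily close to $x_i$, with $\mu(A_i) > 0$ and $\pi_t(B_{i,y}) \geq \tfrac{1}{2}\pi_{x_i}(B_{i,y}) > 0$ uniformly in $t \in A_i$ and $y \in \Spt(m_i^+)$; the $A_i$ can be arranged pairwise disjoint. For a small $\varepsilon > 0$, define $\tau$ block-wise: on each $A_i$, each positive atom of $m_i$ of weight $p^+_{i,y}$ at $y$ is smeared into $\varepsilon\,p^+_{i,y}\,\pi|_{A_i \times B_{i,y}}/\pi(A_i \times B_{i,y})$, which is dominated by $\pi|_{A_i \times B_{i,y}}$ once $\varepsilon$ is small enough; each negative atom of $m_i$ is smeared into a tight distribution on a small interval around the corresponding $y$, chosen so that the resulting fiber measure $\tau_t$ for $t \in A_i$ has mass zero and mean zero.

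\emph{Main obstacle.} The zero $x$-marginal of $\tau$ is automatic from $m_i(\R) = 0$, and the fiberwise zero-mean condition is built into the smearing choice. The delicate point is enforcing the zero $y$-marginal exactly: after smearing, the identity $\sum_i m_i = 0$ only holds approximately, since the smeared pieces from different blocks live on different small intervals. This residual defect is controlled by shrinking the $A_i$ and $B_{i,y}$, and then exactly cancelled by an auxiliary correction term — itself a small element of $\mathcal{V}$, again built via Corollary \ref{utile} and supported very close to the support of $\sigma$ so as to have arbitrarily small total variation. Because $c$ is continuous (hence uniformly continuous on a compact neighborhood containing every piece of the construction), the correction contributes $o(\varepsilon)$ to the cost. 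Combined with $\int c\,d\tau = \varepsilon \int c\,d\sigma + o(\varepsilon)$, which follows from continuity of $c$ as the $A_i$ and $B_{i,y}$ shrink, this gives $\int c\,d\tau > 0$ for sufficiently fine parameters, delivering the desired contradiction.
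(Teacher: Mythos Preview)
Your reduction step and overall contradiction strategy match the paper's, and the idea of smearing the positive atoms of $\sigma$ into pieces of $\pi$ via Corollary \ref{utile} is on the right track. However, the step labeled ``main obstacle'' contains a genuine gap rather than just a technicality.

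You propose to cancel the residual $y$-marginal defect by adding ``an auxiliary correction term --- itself a small element of $\mathcal{V}$''. This cannot work: by definition every element of $\mathcal{V}$ has vanishing $y$-marginal, so adding one does not alter the $y$-marginal at all. The defect is therefore not cancellable in the way you describe. Nor can one simply absorb it by readjusting the negative smearing: you have already committed the shape of $\tau$ on each fiber $\{t\}\times\R$ (to enforce fiberwise mass zero and mean zero, with the positive part of the fiber dictated by $\pi_t|_{B_{i,y}}$), and these fiberwise constraints are coupled to the $y$-marginal constraint through $\sum_i m_i=0$. Satisfying all three conditions \emph{exactly} and simultaneously is precisely where the work lies, and a continuity/``$o(\varepsilon)$'' argument does not close the gap because membership in $\mathcal{V}$ is not an open condition.

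The paper handles this differently. It never produces an approximate element of $\mathcal{V}$ and then corrects; instead it stays in $\mathcal{V}$ throughout. For each tuple $(t_1,\ldots,t_n)\in A_1\times\cdots\times A_n$ it first translates $\sigma$ horizontally (which trivially preserves $\mathcal{V}$), and then moves the $y$-coordinates of the positive atoms one by one to the conditional means of $\pi_{t_i}$ on the boxes $B_k$. Each such vertical move destroys the fiberwise mean-zero condition at two points, and the key combinatorial device --- the ``bridge'' relation of Lemma \ref{deco} and the moving procedure of Lemma \ref{move} --- shows how to compensate exactly using elements of $\mathcal{V}'$ supported on existing atoms, so that one lands back in $\mathcal{V}$ with the same positive support. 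Only after this does one replace the positive atoms by the actual normalized measures $\pi_{t_i}|_{B_k}$ (a mass- and mean-preserving substitution, hence still in $\mathcal{V}$), and finally average over tuples (Lemma \ref{arrival}) to obtain $\tilde\sigma^+\leq\pi$. Your proposal is missing an analogue of this exact compensation mechanism.
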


\begin{pf}
Let $\alpha$ be as in the theorem and assume for contradiction that
there exists a competitor $\beta$ that leads to smaller costs. We will
prove that $\pi$ cannot be optimal, thus establishing the desired
contradiction. In other words, assume that there is a variation $\sigma
\in\V$ with $\Spt\sigma^+\subset\Spt\alpha$ and $\int c(x,y) \,\d
\sigma(x,y)>0$. We will construct $\tilde\sigma\in\V$ by applying
modifications to $\sigma$ so that $\tilde\sigma^+\leq\pi$ and
$\int
c(x,y) \,\d\tilde\sigma(x,y)>0$. This yields a contradiction since the
competitor $\pi-\tilde\sigma$ is cheaper than $\pi$ with respect to
the cost function $c$.

The argument is based on two lemmas and Proposition~\ref{durdur}, whose
proof is postponed to the next subsection. Let us introduce some
notation. Assume first that $\Spt|\sigma|$ is included in $\{
x_1,\ldots
,x_n\}\times\{y_1,\ldots, y_m\}$ and define for $\eps>0$ the rectangle
$R_{ij}(\eps)=[x_i-\eps,x_i+\eps]\times[y_j-\eps,y_j+\eps]$.

\begin{lem}\label{boxes}
There exists $\varepsilon>0$ such that the sets $R_{ij}(\eps)$ are
disjoint and any measure $\sigma' \in\V$ satisfying:
\begin{itemize}
\item$|\sigma'|$ is concentrated on $\bigcup_{i,j} R_{ij}(\eps)$ and
\item for $(i,j)\in\{1,\ldots,n\}\times\{1,\ldots,m\}$
\[
\bigl|\sigma-\sigma'\bigr|(R_{ij})\leq\eps,
\]
\end{itemize}
is a positive variation.
\end{lem}

\begin{pf}
The argument relies on the continuity of $c$ and is straightforward.
\end{pf}
Let us call $\V(\sigma,\eps)$ the subset of the measures $\sigma
'\in\V$
such that $\sigma'$ satisfies the conditions of the above lemma.
Elements of $\V(\sigma,\eps)$ are positive variations and so are the
elements of the cone $\mathcal{CV}(\sigma,\eps)=\{w\sigma'\in\V
\dvtx w>0\mbox
{ and }\sigma'\in\V(\sigma,\eps)\}$. We want to find a measure
$\sigma
'\in\V(\sigma,\eps)$ and $v$ such that $w\sigma'^+\leq\pi$. For this
purpose, we will use the fact that $\sigma^+$ is concentrated on
$\Gamma$.

Using the notation of Corollary~\ref{utile}, let $A_i$ be the set $A$
associated to $x_i$ and consider an arbitrary family of rational
semiopen intervals $B_k$ with $y_j\in B_j\subset[y_j- \eps,y_j+\eps]$
and $\pi_{x_i}(B_j)>0$ for each $j$. Moreover, we take $A_i\subset
S\cap
[x_i-\eps,x_i+\eps]$ for every $i$.

\begin{pro}\label{durdur}
Let $\eps>0$.
There are sets $A_1,\ldots, A_n$ with $\mu(A_i)>0$ and $A_i\subset
[x_i-\eps,x_i+\eps]$ such that for $(t_1,\ldots,t_n)\in A_1\times
\cdots
\times A_n$ there is a measure $\sigma_{t_1,\ldots,t_n}\in E$
satisfying the following:
\begin{itemize}
\item We have $\sigma_{t_1,\ldots,t_n}\in\mathcal{CV}(\sigma,\eps)$.
\item The first marginal of $|\sigma_{t_1,\ldots,t_n}|$ has support
$\{
t_1,\ldots,t_n\}$.
\item$\sigma_{t_1,\ldots,t_n}^+\leq\sum_{i=1}^n \mu(A_i)\times
(\delta
_{t_i}\otimes\pi_{t_i})$.
\end{itemize}
\end{pro}

We postpone the proof of Proposition~\ref{durdur} to the next subsection.

Note that $\sigma_{t_1,\ldots,t_n}$ is not the measure $\tilde\sigma$
we are looking for. Nevertheless, it satisfies almost all the
conditions. It is in $\V$ and even in $\mathcal{CV}(\sigma,\eps)$ so
that according to Lemma~\ref{boxes} it is a positive variation. The
only missing condition it that $\sigma_{t_1,\ldots,t_n}^+$ is not
smaller than $\pi$. We provide a remedy in the following lemma.

\begin{lem}[(A variation $\tilde\sigma$ leading to the
contradiction)]\label{arrival}
The measure
\[
\tilde\sigma=\frac{1}{\mu(A_1)\times\cdots\times\mu(A_n)}\iiint _{A_1\times
\cdots\times A_n}\sigma_{t_1,\ldots,t_n} \,\dd
\mu(t_1)\otimes\cdots \otimes\,\dd\mu(t_n)
\]
is in $\mathcal{CV}(\sigma,\eps)$ and satisfies both $\iint c(x,y)
\,\d
\tilde\sigma(x,y)>0$ and $\tilde\sigma^+\leq\pi$. Hence, $\pi
-\tilde
\sigma$ gives rise to smaller costs than $\pi$.
\end{lem}

\begin{pf}
As all $\sigma_{t_1,\ldots,t_n}$ are in $\mathcal{CV}(\sigma,\eps)$,
they are positive variations. Hence, $\tilde\sigma$ which is an average
of these measures in $\V$ is also a positive variation. Let us prove
that $\tilde\sigma^+\leq\pi$. Observe that $\tilde\sigma^+$ is again
the average of the positive parts $\sigma^+_{s_1,\ldots,s_n}$. By
Proposition~\ref{durdur}, this is smaller than
\begin{eqnarray*}
&&\frac{1}{\mu(A_1)\times\cdots\times\mu(A_n)}\iiint_{A_1\times\cdots
\times
A_n}\sum_{i=1}^n
\mu(A_i) (\delta_{t_i}\otimes\pi_{t_i}) \,\dd\mu
(t_1)\otimes\cdots\otimes\,\dd\mu(t_n)
\\
&&\qquad=\sum_{i=1}^n\int_{A_i}
\biggl(\frac{\iiint(\delta_{t_i}\otimes
\pi
_{t_i}) \,\dd\mu(t_1)\otimes\cdots\otimes\widehat{\,\d\mu
(t_i)}\otimes\cdots
\otimes\,\d\mu(t_n)}{\mu(A_1)\times\cdots\times\widehat{\mu
(A_i)}\times
\cdots\times\mu(A_n)} \biggr)\,\d\mu(t_i)
\\
&&\qquad=\sum_{i=1}^n\int_{A_i}
(\delta_{t_i}\otimes\pi_{t_i} )\,\dd\mu (t_i)=
\pi|_{\bigcup_{i=1}^n A_i\times\R}. 
\end{eqnarray*}
\upqed\end{pf}
Up to Proposition~\ref{durdur}, we have thus proved Theorem~\ref{cyclic}.
\end{pf}

\subsection{Proof of Proposition \texorpdfstring{\protect\ref{durdur}}{B.6}}

Recall the definitions and notation of Theorem~\ref{cyclic} and
Proposition~\ref{durdur}. In particular, $\sigma$ has finite support
included in $\Gamma$. It is also included in some product set $\{
x_1,\ldots,x_n\}\times\{y_1,\ldots,y_m\}$ where we choose $m$ and $n$
as small as possible. For $\tau\in\V$, we denote the support of
$\proj
^x_\#(|\tau|)$ by $X(\tau)$ and the support of $\proj^y_\#(|\tau|)$ by
$Y(\tau)$ so that $\{x_1,\ldots,x_n\}=X(\sigma)$ and $\{y_1,\ldots
,y_m\}
=Y(\sigma)$. Let $d\leq n\cdot m$ be the cardinality of $\operatorname
{spt}(\sigma
^+)$ and denote its elements by $p_1,\ldots,p_d$.

For measures of finite support, the conditions for being in $\V$ can be
simplified. A measure $\tau$ is in $\V$ if:
\begin{longlist}[(1)]
\item[(1)] for every $y\in Y(\tau)$, $L_y(\tau)$ defined as $\sum_{x\in
X}\tau(x,y)$ is zero,
\item[(2)] for every $x\in X(\tau)$, $C_x(\tau)$ defined as $\sum_{y\in
Y}\tau(x,y)$ is zero,
\item[(3)] for every $x\in X(\tau)$, $M_x(\tau)$ defined as $\sum_{y\in
Y}\tau(x,y)\times y$ is zero.
\end{longlist}
Moreover, the measure $\tau$ is an element of $\V'$ if the conditions
(1) and (2) are satisfied.

We introduce some further notation. For every $\tau\in\V'$ of finite
support, we introduce a relation between the points of $X(\tau)$. We
write $x\to x'$ if there are $y,y'$ such that $y>y'$ and $\tau(x,y),
\tau(x',y')$ are not zero. If $x\to x'$ and $x'\to x$ we write
$x\leftrightarrow x'$ and will say that $x$
\textit{double-touches} $x'$. If $\tau\in\V$, for any point $x\in
X(\tau)$ an important consequence of condition (3) is that there exist
three distinct points $y, y', y''$ such that $\tau(x,y), \tau(x,y')$
and $\tau(x,y'')$ are not zero. Hence, $x\leftrightarrow x$ if $x\in
X(\tau)$. However the relation $\leftrightarrow$ is not transitive. If
$x\in X$ double-touches both $x'$ and $x''$, we say that
\textit{$x$ is a bridge over $x'$ and $x''$}. In particular, if
$x\leftrightarrow x'$ the point $x$ is a bridge over $x'$ and $x$ itself.

Roughly speaking for $\tau\in\V'$, the relation $x\to x'$ means that it
is possible to replace $\tau$ (in a continuous manner) by a signed
measure $\tau'\in\V'$ such that $\tau^+$ and $\tau'^+$ have the same
support. Applying this modification $\tau\mapsto M_x(\tau)$ increases
while $\tau\mapsto M_{x'}(\tau)$ decreases (and their sum remains
constant). More precisely, consider $y,y'$ such that $y>y'$ and $\tau
(x,y),\tau(x',y')$ are both nonzero. Let $m$ be the measure $(\delta
_{x}-\delta_{x'})\otimes(\delta_{y}-\delta_{y'})$. Notice that $m$ is
an element of $\V'\setminus\V$. Considering $\tau^h=\tau+h\cdot m$ and
$h>0$, we have
\[
M_{x}\bigl(\tau^h\bigr)-M_{x}(\tau)=h\cdot
M_{x}(m)=h\cdot\bigl(y-y'\bigr)>0.
\]
We only consider positive $h$ in order to keep the same support for
$(\tau^h)^+$ and $\tau^+$. In particular this prohibits that $\tau
(x,y')>0$ and $\tau(x',y)>0$. For the same reason, we choose $h\in
[0,h_0[$ where $h_0=\max(|\tau(x,y)|,|\tau(x',y')|)$. Indeed, if
$\tau
(x,y)<0$ then the same applies to $\tau^h(x,y)$.

If we want to make $M_{x}$ and $M_{x'}$ vary in the opposite direction,
we may consider the relation $x'\to x$ in place of $x\to x'$. Thus, $x
\leftrightarrow x'$ allows to make small variations of $M_{x}$ and
$M_{x'}$ in the one or the other direction. If there is a bridge
$x''\in X(\tau)$ over $x$ and $x'$, we have exactly the same freedom as
if $x\leftrightarrow x'$. The next lemma is a tool for finding bridges
between points when $\tau\in\V$.

\begin{lem}\label{deco}
Let $\tau$ be a finitely supported element of $\V$ and $(x,y)\in
X(\tau
)\times Y(\tau)$ such that $\tau(x,y)> 0$. Let $G \subset X(\tau)$ be
the subset of points $x'$ such that:
\begin{itemize}
\item there exists a bridge over $x$ and $x'$,
\item$\tau(x',y)<0$.
\end{itemize}
Then
\[
\tau(x,y)+\sum_{x'\in G }\tau\bigl(x',y
\bigr)\leq0.
\]
\end{lem}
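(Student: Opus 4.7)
The plan is to reduce the inequality to a non-negativity claim using the row-sum condition and then exploit the interval structure that the bridge relation induces on $X(\tau)$.

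First, since $L_y(\tau)=\sum_{x''\in X(\tau)}\tau(x'',y)=0$, the desired inequality $\tau(x,y)+\sum_{x'\in G}\tau(x',y)\leq 0$ is equivalent to
\[\sum_{x''\in X(\tau)\setminus(\{x\}\cup G)}\tau(x'',y)\geq 0.\]
The complement $X(\tau)\setminus(\{x\}\cup G)$ partitions disjointly into $H_1=\{x''\in X(\tau)\setminus\{x\}:\tau(x'',y)>0\}$ and $H_2=\{x''\in X(\tau):\tau(x'',y)<0\text{ and no bridge over }x\text{ and }x''\}$, so the task reduces to showing $\sum_{H_1}\tau(x'',y)\geq\sum_{H_2}|\tau(x'',y)|$: the non-bridged negative mass at level $y$ must be dominated by the positive mass at level $y$ sitting at indices other than $x$.

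Next I would interpret the bridge relation geometrically. To each $x''\in X(\tau)$ I attach the interval $I_{x''}=[m(x''),M(x'')]$, where $m$ and $M$ are the minimum and maximum of the support of $\tau(x'',\cdot)$. A direct check shows that $x''\leftrightarrow x'''$ iff $I_{x''}$ and $I_{x'''}$ overlap on an interval rather than meeting at most at a single point, so a bridge over $x$ and $x'$ exists iff some $x'''\in X(\tau)$ has $I_{x'''}$ non-degenerately overlapping both $I_x$ and $I_{x'}$. Because $x$ itself is always a candidate bridge (its interval overlaps itself non-degenerately, since $\tau(x,\cdot)$ has both positive and negative atoms by Conditions (2) and (3)), the hypothesis $x'\in H_2$ together with $y\in I_{x'}$ forces $I_x$ and $I_{x'}$ to meet at most at the single point $y$, so the remaining supports of $\tau(x,\cdot)$ and $\tau(x',\cdot)$ lie on opposite sides of $y$.

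Using this dichotomy one invokes the column conditions $C_{x'}=M_{x'}=0$ for each $x'\in H_2$: the negative atom at $y$ must be balanced by positive mass of $\tau(x',\cdot)$ concentrated on the side of $y$ opposite to $I_x$, and iterating the row conditions $L_{y'}=0$ at those auxiliary levels $y'$ should produce compensating positive mass at level $y$ at indices $x''$ that themselves fail to bridge $x$ and hence land in $H_1$ rather than $G$. The main obstacle I foresee is verifying that this ``chain of compensations'' remains inside $H_1$ and does not leak back into $G$; handling it will likely require either an induction on $|H_2|$ or the construction of an auxiliary variation supported entirely outside the bridged component of $x$, in the same measure-theoretic spirit as the rest of the appendix.
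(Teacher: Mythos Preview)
Your reformulation via $L_y(\tau)=0$ is correct, and your interval description of $\leftrightarrow$ is exactly the right picture. But the proof is genuinely incomplete: the ``chain of compensations'' in the last paragraph is not an argument, and you yourself flag the obstacle. The gap is that you apply the interval observation only to the pair $(x,x')$ for $x'\in H_2$, whereas the decisive step is to apply it to \emph{every} column at once.

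Here is the missing idea, which is precisely what the paper does. Fix one $x'\in H_2$. If some $x_0\in X(\tau)$ had support points both strictly above and strictly below $y$, then $I_{x_0}$ would overlap $I_x$ and $I_{x'}$ non-trivially (both contain $y$), so $x_0$ would be a bridge over $x$ and $x'$ --- contradiction. Hence every column has support contained in $[y,\infty[$ or in $]-\infty,y]$, giving a partition $X(\tau)=X^1\sqcup X^2$ with $x\in X^1$. Set $\tau^i=\tau|_{X^i\times\R}$. Conditions (2) and (3) are columnwise, so they persist; for Condition (1), any level $y'\neq y$ is met only by columns from one side, so $L_{y'}(\tau^i)=L_{y'}(\tau)=0$, and then $\tau^i(\R^2)=0$ forces $L_y(\tau^i)=0$ as well. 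Thus $\tau^1\in\F$. Finally one checks $G=\{x_1\in X^1:\tau(x_1,y)<0\}$: any such $x_1$ has, by $C_{x_1}=0$ and $x_1\in X^1$, a support point strictly above $y$, and so does $x$; hence $x\leftrightarrow x_1$ and $x$ itself bridges them; conversely, the dichotomy forces any $x_1\in G$ into $X^1$. Now the trivial case ($H_2=\emptyset$) applied to $\tau^1$ gives the inequality.

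So your geometric instinct was right but stopped one step short: instead of tracking mass through a chain, the clean move is to cut the whole configuration at the level $y$ into two self-contained pieces of $\F$.
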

\begin{pf}
Condition (1) implies that if every $x'\in X(\tau)$ satisfying $\tau
(x',y)< 0$ is connected with $x$ by a bridge, we are done. Conversely,
assume that there exists $x'\in X(\tau)$ such that $\tau(x',y)<0$ and
there is no bridge between $x$ and $x'$. Then for any $x_0\in X(\tau)$
the measure $|\tau|$ restricted to $\{x_0\}\times\R$ is concentrated
on $\{x_0\}\times[y,+\infty[$ or $\{x_0\}\,\times\,]{-}\infty,y]$ (if not it
would be a bridge between $x$ and $x'$). Let $X^1\sqcup X^2$ be the
partition of $X(\tau)$ induced by this remark and $\tau^i$ the
restriction of $\tau$ to $X^i\times\R$ for $i=1,2$. Without loss of
generality, we can assume $x\in X^1$. Let us prove that $\tau^1$ and
$\tau^2$ are in $\V$. Actually, they coincide with $\tau$ on vertical
lines so that they satisfy conditions (2) and (3). 
The total mass of $\tau$ on the horizontal lines that are not equal to
$\R\times\{y\}$ is zero as well. Thus, as $\tau^i(\R^2)=0$, we obtain
$\tau^i(X^i\times\{y\})=0$ for $i=1,2$. This yields condition (1) 
for $\tau^1$ and $\tau^2$. Hence, these measures are in $\V$.

As $\tau^1\in\V$, applying condition (1) we obtain that any $x'_1\in
X^1$ such that $\tau(x'_1,y)< 0$ is connected with $x$ by a bridge.
Indeed with condition (2) and the definition of $X^1$, we know that
there are $y'$ and $y''$ in $]y,+\infty[$ such that $\tau(x,y')\neq0$
and $\tau(x'_1,y'')\neq0$. Hence, we have $x\leftrightarrow x'_1$. So
we can apply the first remark to $\tau^1$ in place of $\tau$. Indeed,
$G$ is the set of points of $x_1\in X(\tau^1)$ such that $\tau
(x_1,y)=\tau^1(x_1,y)<0$.
\end{pf}


\begin{lem}\label{move}
Let $\tau$ be a finitely supported positive variation and consider
$\operatorname{spt}
(\tau^+)=\{p_1,\ldots,p_d\}\subset\R\times\R$. There exists $\eps>0$
such that if $q_k\in\R^2$ has the same first coordinate as $p_k$ and
$|p_k-q_k|<\eps$ for every $k\in\{1,\ldots,d\}$, then there exists a
sequence of positive variations $(\tau_k)_{k=1}^d$ such that $|\tau_k|$
has finite support and $\tau_k^+$ has support $\{q_1,\ldots
,q_k,p_{k+1},\ldots,p_d\}$.
\end{lem}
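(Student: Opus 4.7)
The plan is to build the sequence $\tau_1,\ldots,\tau_d$ inductively with $\tau_0=\tau$, so that at step $k$ we move only the single positive atom $p_k$ of $\tau_{k-1}$ to $q_k$. It is therefore enough to prove the following one-step claim: given any $\sigma\in\F$ of finite support with a positive atom $p=(x,y)$ of mass $a>0$, and a target $q=(x,y')$ with $|y-y'|$ less than a threshold $\eps(\sigma)>0$, one can produce $\sigma'\in\F$ of finite support whose positive support is that of $\sigma$ with $p$ replaced by $q$. Iterating and taking the minimum of the finitely many thresholds encountered yields the uniform $\eps>0$ of the lemma.

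For the one-step claim I would write $\sigma'=\sigma+\omega$ for an explicit elementary perturbation $\omega\in\F$ defined as follows. Because $\sigma(x,y)=a>0$ and $\sigma\in\F$, the vanishing of the row sum and the row first moment in row $x$ forces at least two further atoms in that row at distinct coordinates $y^{(1)},y^{(2)}$, which can be arranged to differ from $y'$ by shrinking $\eps$. The vanishing column sum also forces $|X(\sigma)|\geq 2$, so an auxiliary $x^*\in X(\sigma)\setminus\{x\}$ is available. Setting $\mu=a(y-y')/(y^{(1)}-y^{(2)})$, take
\[
\omega=(\delta_x-\delta_{x^*})\otimes\bigl(-a\delta_y+a\delta_{y'}+\mu\delta_{y^{(1)}}-\mu\delta_{y^{(2)}}\bigr).
\]
The second factor has zero mass and (by the choice of $\mu$) zero first moment, while the tensor form with $\delta_x-\delta_{x^*}$ kills all column sums; thus $\omega\in\F$. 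By construction $\sigma'(p)=0$ and $\sigma'(q)=a>0$, and the remaining positive atoms $p_j$ keep their masses whenever $\eps$ is small enough that the finitely many coordinates appearing in $\omega$ other than $q$ do not coincide with any $p_j$.

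The hard part will be to ensure that $\sigma'$ has no \emph{unwanted} positive atoms at the four points $(x^*,y),(x^*,y'),(x^*,y^{(1)}),(x^*,y^{(2)})$, where $\omega$ adds coefficients of magnitude $a$ (at the first two) or $|\mu|$ (at the last two). The $\pm\mu$ contributions are small and flip no signs once $\eps$ is small enough; the coefficient $-a$ at $(x^*,y')$ can only create a new \emph{negative} atom, which is harmless. The real issue is the $+a$ at $(x^*,y)$: one needs $\sigma(x^*,y)+a\leq 0$, i.e.\ $\sigma(x^*,y)\leq -a$. When $|X(\sigma)|=2$ this is automatic, since the column-sum identity $\sigma(x,y)+\sigma(x^*,y)=0$ forces $\sigma(x^*,y)=-a$. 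In general one handles this either by splitting $\omega$ into several two-row pieces whose $+a$ corrections are distributed across multiple auxiliary rows in proportion to their (necessarily nonpositive) contributions to column $y$ (whose total equals $-a$ by the column-sum identity), using the bridge/connectivity structure of Lemma~\ref{deco} to keep the corresponding row corrections consistent with the first-moment condition; or, more conceptually, one notes that the requirement ``$\sigma'\in\F$ with the prescribed positive support'' is a linear feasibility problem whose equality constraints depend continuously on $y'$ and whose sign inequalities are open, trivially solved at $y'=y$ by $\sigma'=\sigma$, and hence remaining solvable on a neighborhood of $y$.
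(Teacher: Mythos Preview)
Your inductive reduction to a one-step claim is correct and is how the paper proceeds as well. The gap is in the execution of that step.

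Your explicit $\omega$ works when $|X(\sigma)|=2$, but in general the first-moment correction through $y^{(1)},y^{(2)}$ is the wrong packaging. Even after you split into two-row pieces $(\delta_x-\delta_{x^i})\otimes(\cdot)$ and distribute the $+a$ over column $y$ via Lemma~\ref{deco}, each piece deposits $\pm\mu_i$ at $(x^i,y^{(1)})$ and $(x^i,y^{(2)})$. There is no reason for $\sigma(x^i,y^{(2)})$ to be nonzero---the column-sum identity controls only column $y$, not $y^{(2)}$---so a genuinely new positive atom of size $|\mu_i|$ can appear there, violating the prescribed positive support. The paper avoids this by separating the two tasks your $\omega$ attempts simultaneously: first it translates vertically via $\tau_{k-1}\mapsto\tau_{k-1}+\zeta\otimes(\delta_{b'}-\delta_b)$, where $\zeta$ is the zero-mass signed measure on $X(\sigma)$ with $\zeta^+=a\,\delta_x$ and $\zeta^-$ supported on the set $G$ of Lemma~\ref{deco} and dominated by $\tau_{k-1}^-$ on column $b$. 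This stays in $\F'$ and, by the defining inequality of $G$, creates no new positive atom, but it breaks Condition~(3) by amounts of order $|b-b'|$. Only then does the paper repair Condition~(3), using small multiples of the elementary blocks $(\delta_{x_1}-\delta_{x_2})\otimes(\delta_{y_1}-\delta_{y_2})$ coming from the bridges between $x$ and each $x'\in G$ (whose existence is part of the definition of $G$); these moves only perturb existing support points or create new \emph{negative} atoms. Your phrase ``using the bridge/connectivity structure of Lemma~\ref{deco}'' points in this direction, but the proof requires dropping the $y^{(1)},y^{(2)}$ device entirely rather than combining it with bridges.

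The continuity alternative is not a proof as stated. The requirement that $\sigma'^+$ have \emph{exactly} the prescribed support includes closed constraints $\sigma'(p)\leq 0$ at every other grid point, and at $y'=y$ these are tight wherever $\sigma$ vanishes. Worse, as $y'$ leaves $y$ an entire new column of variables and constraints appears and the sign constraint at $(x,y)$ flips from $>0$ to $\leq 0$; there is no continuous family of feasibility problems with $\sigma$ as an interior feasible point. Making this rigorous would require exhibiting an explicit feasible direction, which is precisely the two-step construction above.
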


\begin{pf}
Let $\eps$ be a positive real number. Let us denote by $X$ the support
of $\proj^x_\#(|\tau_k|)$ for some $k\in\{1,\ldots,d\}$ (which does not
depend on $k$). We explain how to build $\tau_k$ from $\tau_{k-1}$.
Roughly speaking, we are moving $p_k=(a,b)$ to a position $q_k=(a,b')$,
where $|b'-b|<\eps$. Doing this, we have to take care to stay in $\V$.
The conditional measure $\tau_k|_x$ can easily be forced to preserve
mass zero [condition~(2)] during this operation but there are two
difficulties: for each $y$ the conditional measures $\tau_k|_y$ must
have mass zero [condition (1)]. The second problem is that for each
$x\in X$ the positive and the negative part of $\tau_k|_x$ must have
the same mean [condition (3)].

Let us go into details. We define $\tau_k$ from $\tau_{k-1}$ in two
steps: the first step is a vertical translation. Applying Lemma~\ref
{deco} to $p_k=(a,b)$, we obtain a measure $m$ concentrated on $X(\tau
)\times\{b\}$ that satisfies the following conditions:
\begin{itemize}
\item$m(\R^2)=0$,
\item$m^+$ is concentrated on the point $p_k=(a,b)$ and $m(a,b)=\tau
_{k-1}(a,b)$,
\item$m^-$ is concentrated on a set $G\times\{b\}$ such that any
$x\in
G$ is connected with $a$ by a bridge and $m^-\leq\tau_{k-1}^-$.
\end{itemize}
Let us denote $m$ by $\zeta\otimes\delta_{b}$. We replace $\tau_{k-1}$
by $\tau'_{k-1}=\tau_{k-1}+\zeta\otimes(\delta_{b'}-\delta_{b})$. Doing
this, we preserve conditions (1) and (2), that is, the measure is
still in $\V'$, but condition (3) is possibly violated. Recall that
$\zeta$ has mass zero. 
It follows that
\[
M_a\bigl(\tau'_{k-1}\bigr)+\sum
_{x \in G}M_x\bigl(\tau'_{k-1}
\bigr)=0.
\]
Using the bridges between $a$ and the elements of $G$ (these bridges
are available for $\tau'_{k-1}$ as they were for $\tau_{k-1}$ assuming
that $\eps$ is sufficiently small), we can modify the measure and make
$M_a$ and $M_x$ for $x\in G$ equal to $0$. Call $\tau_k$ the result of
this procedure. Observe that if the variations are sufficiently small
then the points of positive mass are exactly $q_1,\ldots
,q_k,p_{k+1},\ldots,p_d$ as we want. As in Lemma~\ref{boxes}, we also
obtain that the variations $(\sigma_k)_{k=1}^d$ are positive provided
that $\eps>0$ is sufficiently small.
\end{pf}

We can now prove Proposition~\ref{durdur}. Let $\sigma\in\V$ of finite
support as in the proof of Theorem~\ref{cyclic}. Observe that $\sigma$
can be written as a sum
\[
\sum_{k=1}^d \zeta_k\otimes
\delta_{y_k},
\]
where for $k\in\{1,\ldots,d\}$ the signed measure $\zeta_k$ has its
positive part concentrated in one point. Given $k$, let $\omega_k$ be a
probability measure on $\R$ with expectation $y_k$ (the same as
$\delta
_{y_k}$). We consider
\[
\sum_{k=1}^d \zeta_k\otimes
\omega_k
\]
and easily convince ourselves that this measure is an element of $\V$.
We will apply this transformation not directly to $\sigma$ but to a
measure $\sigma_d\in\mathcal{V}(\sigma,\eps)$, that we build in the
following paragraph.

The proof of the proposition proceeds as follows. Consider the family
of points $(r_1,\ldots, r_d)$ of the support of $\sigma^+$ and pick
$\eps$ as in Lemma~\ref{boxes}. For each point $r_k=(a,b)$, we consider
a rational semiopen interval $B_k\ni b$ of diameter smaller than $\eps
$. Using Corollary~\ref{utile}, we obtain a family $(A_i)_{1\leq i\leq
n}$ and we can assume that these sets are included in $[x_i-\eps
,x_i+\eps]$. We fix a point $(t_1,\ldots,t_n)$ of $A_1\times\cdots
\times A_n$. For each $k\in\{1,\ldots,d\}$ we can write $r_k$ in the
form $(x_i,b)$. We have $\pi_{t_i}(B_k)>0$. Let now $p_k=(t_i,b)$ and
$q_k=(t_i,\tilde{y})$ where $\tilde{y}=\frac{1}{\pi_{t_i}(B_k)}\int_{B_k}
y \,\dd\pi_{t_i}(y)$. Apply Lemma~\ref{move} to the measure $\sigma
_0\in
\V$ obtained from $\sigma$ by translating horizontally the mass
concentrated on the line $\{x_i\}\times\R$: The measure $\sigma
|_{x_i}$ equals precisely $\sigma_0|_{t_i}$. The other parameters
$(p_1,\ldots,p_d)$ and $(q_1,\ldots,q_d)$ have just been constructed.
Applying Lemma~\ref{move}, we obtain a measure $\sigma_d\in\V
(\sigma
,\eps)$ concentrated on $\{t_1,\ldots,t_n\}\times\R$ and
$\operatorname{spt}\sigma
_d^+=\{q_1,\ldots,q_d\}$. Next, we perform the transformation explained
above where each $\omega_k$ has the form $\frac{1}{\pi_{t_i}(B_k)}\pi
_{t_i}|_{B_k}$ for some $(i,k)$. The measure $\overline{\sigma_d}$ we
obtain is in $\V(\sigma,\eps)$ but it may not satisfy the condition
$\overline{\sigma_d}^+\leq\sum_{i=1}^n \mu(A_i)\delta
_{t_i}\otimes\pi
_{t_i}$. However, this inequality does hold for $w\overline{\sigma
_d}^+\in\mathcal{CV}(\sigma,\eps)$ if $w$ is a sufficiently small
positive constant.
\end{appendix}

\section*{Acknowledgments}
The authors wish to thank Michel \'Emery, Martin Goldstern, Claus
Griessler, Martin Keller-Ressel, Vincent Vigon and the participants of
the Winter school 2012 in Regen for enlightening discussions on the
topic of this paper. We are also indebted to a particularly careful
referee for numerous valuable suggestions and for pointing out a
mistake in the initial version of this manuscript.

%





\printaddresses
\end{document}